\pdfoutput=1
%
%
%
%

\documentclass[english]{jnsao}
\usepackage[utf8]{inputenc}
\usepackage{lineno}

\usepackage{amsfonts}
\usepackage{graphicx}
\usepackage{epstopdf}
\usepackage{algorithmic}
\usepackage{amsopn}
\usepackage[noadjust]{cite}
\usepackage{mathtools}
\usepackage[capitalize,nameinlink]{cleveref}

\ifpdf
\DeclareGraphicsExtensions{.eps,.pdf,.png,.jpg}
\else
\DeclareGraphicsExtensions{.eps}
\fi


\crefname{hypothesis}{Hypothesis}{Hypotheses}

\usepackage{esint}

\newcommand{\shortspace}{\text{ \ \ }}
\newcommand{\mediumspace}{\text{ \ \ \ \ }}
\newcommand{\largespace}{\text{ \ \ \ \ \ \ }}

\newcommand{\bdot}{\boldsymbol{.}}
\newcommand{\boverdot}[1]{\overset{\bdot}{#1}}

\newcommand{\symnabla}{\nabla^{s}}

\newcommand{\CO}[1]{C([0,T];#1)}
\renewcommand{\C}[2]{C^{#1}([0,T];#2)}

\renewcommand{\L}[2]{L^{#1}(0,T;#2)}

\newcommand{\W}[2]{W^{1,#1}(0,T;#2)}
\newcommand{\LZ}[1]{\L{2}{#1}}
\newcommand{\WZ}[1]{H^{1}(0,T;#1)}

\newcommand{\dualpair}[3]{{\langle #1 , #2 \rangle}_{#3}}
\newcommand{\scalarproduct}[3]{\left( #1 , #2 \right)_{#3}}

\newcommand{\norm}[2]{\|#1\|_{#2}}

\DeclareMathOperator*{\argmin}{arg\,min}

\newcommand{\mathspace}[1]{\mathcal{#1}}

\newcommand{\sequence}[2]{\{ #1_{#2} \}_{#2 \in \mathbb{N}}}

\newcommand{\GG}{\mathcal{G}}
\newcommand{\HH}{\mathcal{H}}
\newcommand{\YY}{\mathcal{Y}}
\newcommand{\ZZ}{\mathcal{Z}}
\newcommand{\XX}{\mathcal{X}}
\newcommand{\UU}{\mathcal{U}}
\newcommand{\WW}{\mathcal{W}}
\newcommand{\LL}{\mathcal{L}}

\newcommand{\KK}{\mathcal{K}}
\newcommand{\EE}{\mathcal{E}}
\newcommand{\N}{\mathbb{N}}
\newcommand{\R}{\mathbb{R}}
\newcommand{\embed}{\hookrightarrow}
\newcommand{\weak}{\rightharpoonup}
\newcommand{\dual}[2]{\langle #1 , #2 \rangle}
\newcommand{\ddp}[2]{\frac{\partial #1}{\partial #2}}
\newcommand{\tddp}[2]{\tfrac{\partial #1}{\partial #2}}
\newcommand{\Rs}{\R^{d\times d}_\textup{sym}}
\newcommand{\Bb}{\mathbb{B}}

\newcommand{\Div}{\operatorname{Div}}
\newcommand{\mfu}{\mathfrak{u}}
\newcommand{\mfS}{\mathfrak{S}}
\newcommand{\maxs}{\operatorname{max}}

\newcommand{\per}{\textup{per}}

\newcommand{\defn}{\coloneqq}
\newcommand{\nfed}{\eqqcolon}

\newtheorem{assumption}[theorem]{Assumption}
\newtheorem{notation and assumption}[theorem]{Notation and Assumption}


\title{Optimal control of an abstract evolution variational inequality
with application to homogenized plasticity}


\author{Hannes Meinlschmidt\thanks{Johann Radon Institute for Computational
        and Applied Mathematics (RICAM), Alternberger Stra{\ss}e 66a, 4040
    Linz, Austria (\email{hannes.meinlschmidt@ricam.oeaw.ac.at}, \url{https://people.ricam.oeaw.ac.at/h.meinlschmidt/})}
    \and
    Christian Meyer\thanks{TU Dortmund, Faculty of Mathematics,
        Vogelpothsweg 87, 44227 Dortmund, Germany 
        (\email{christian2.meyer@tu-dortmund.de},
        \url{http://www.mathematik.tu-dortmund.de/lsx/cms/de/mitarbeiter/cmeyer.html},
        \email{stephan.walther@tu-dortmund.de},
    \url{http://www.mathematik.tu-dortmund.de/lsx/cms/de/mitarbeiter/swalther.html}).}
\and Stephan Walther\footnotemark[2]}
\shortauthor{H. Meinlschmidt, C. Meyer, S. Walther}

\acknowledgements{
    This research was supported by the German Research Foundation (DFG) under grant 
    number~ME 3281/9-1 within the priority program Non-smooth and Complementarity-based
    Distributed Parameter Systems: Simulation and Hierarchical Optimization (SPP~1962).
}

\manuscriptsubmitted{2019-10-01}
\manuscriptaccepted{2020-03-19}
\manuscriptvolume{1}
\manuscriptyear{2020}
\manuscriptnumber{5800}
\manuscriptdoi{10.46298/jnsao-2020-5800}

\begin{document}

\maketitle

\begin{abstract}
    The paper is concerned with an optimal control problem governed by a state equation in form of 
    a generalized abstract operator differential equation involving a maximal monotone operator. The state equation
    is uniquely solvable, but the associated solution operator is in general not G\^ateaux differentiable. 
    In order to derive optimality conditions, we therefore regularize the state equation and its solution operator, 
    respectively, by means of a (smoothed) Yosida approximation. We show convergence of global minimizers 
    for regularization parameter tending to zero and derive necessary and sufficient optimality conditions for 
    the regularized problems. The paper ends with an application of the abstract theory to optimal control of 
    homogenized quasi-static elastoplasticity. 
\end{abstract}

\section{Introduction}
\label{sec:1}

This paper is concerned with an optimal control problem of the following form, governed by an
operator differential equation:
\begin{equation}\label{eq:optprob}
    \tag{P}
    \left\{\quad
        \begin{aligned}
            \min \shortspace & J(z,\ell) \defn  \Psi(z,\ell) + \Phi(\ell), \\
            \text{ s.t. } \shortspace & \boverdot{z} \in A(R\ell - Qz),	\mediumspace z(0) = z_0, \\
            & (z,\ell) \in \WZ{\mathspace{H}} \times \big{(}
            \WZ{\mathspace{X}_c} \cap \mathcal{U}(z_0;M) \big{)}.
    \end{aligned}\right.
\end{equation}
Herein, $A$ is a maximal monotone operator, while $R$ and $Q$ are
linear and bounded operators in a Hilbert space $\HH$.  The control
variable is denoted by $\ell$, whereas $z$ is the state of the system.
The precise assumptions on the data are given in \cref{sec:2} below.

The particular feature of the problem under consideration is the
set-valued mapping $A$.  Due to its maximal monotony, one can show
that there is a well-defined single-valued control-to-state mapping
$\ell \mapsto z$ (in suitable function spaces), but this mapping is in
general \emph{not G\^ateaux differen\-tiable}.  We are thus faced with
a non-smooth optimal control problem, for which the derivation of
necessary and sufficient optimality conditions is a particular
challenge.

Depending on the precise choice of $A$, $R$, and $Q$, problem~\eqref{eq:optprob} covers various application problems.  For instance,
quasi-static elastoplasticity is frequently modeled in this way.
Here, $R$ is the solution operator associated with the equations of
linear elasticity for given load distribution $\ell$.  Moreover, $Q$
is the sum of the solution operator of linear elasticity for given
stress distribution, the elasticity tensor, and a coercive operator
modeling hardening effects. Finally, $A$ is the convex subdifferential
of the indicator functional of the closed and convex set of feasible
stresses defined by a suitable yield condition.  Details on models in
elastoplasticity can be found in~\cite{HanReddy1999}. Another model
which is covered by the state equation in~\eqref{eq:optprob} is the
system of homogenized elastoplasticity, which we will study in detail in
\cref{sec:7} below.

Let us put our work into perspective. Assume for a moment that $A$ is
the convex subdifferential of a proper, convex, and lower
semicontinuous functional $\phi$ and that $Q$ is self adjoint.  Then,
by convex duality, the state equation is equivalent to
\begin{equation}\label{eq:rateindep}
    0 \in \partial \phi^*(\boverdot{z}) + \EE'(z), \quad z(0) = z_0,
\end{equation}
where $\EE$ is the quadratic energy functional given by
\begin{equation*}
    \EE(z) \defn  \tfrac{1}{2} (Q z,z)_\HH - \dual{R\ell}{z}.
\end{equation*}
Systems of this type have been intensively studied concerning
existence of solutions and their numerical approximation, and we only
refer to~\cite{MielkeRoubicek2015} and the references therein. In
contrast to this, the literature on optimization problems governed by~\eqref{eq:rateindep} is rather scarce.  The research on optimal
control of equations of type~\eqref{eq:rateindep} probably started
with the sweeping process, where $\phi = I_{-C(t)}$ is the indicator
functional of a moving convex set $C(t)$, see~\cite{Mor73}.  In the
optimal control setting, $C(t)$ is most frequently set to
$C(t) = \ell(t) - Z$ with a convex set $Z$ and a driving force $\ell$.
This fits into the setting of~\eqref{eq:rateindep} by defining
$\phi \defn  I_Z$ and $Q=R=\text{id}$ (identity).  Optimal control
problems of this type are investigated in~\cite{Bro87, BK13, BK15,
AO14, CHHM15, CHHM16, CP16, AC18}, where the underlying Hilbert
space is mostly finite dimensional.  Problems in an infinite
dimensional Hilbert space are investigated in~\cite{SWW17, GW18}. To
be more precise, in these contributions, $\HH$ is the Sobolev space
$H^1_0(\Omega)$ and $\EE$ is the Dirichlet energy.  Moreover, $\phi^*$
is set to $\phi^*(z) = \|z\|_{L^1(\Omega)}$ and its viscous
regularization, respectively, i.e.,
$\phi^*_\delta(z) = \|z\|_{L^1(\Omega)} + \frac{\delta}{2}
\|z\|_{H^1_0(\Omega)}^2$.  Optimal control problems governed by
quasi-static elastoplasticity with linear kinematic hardening and von
Mises yield conditions are treated in~\cite{Wac12, Wac15, Wac16}.  As
already indicated above, $\phi$ is the indicator functional of the
convex set of feasible stresses in this case.  All mentioned problems
fit into our framework and can thus be seen as special cases of our
non-smooth evolution.  Our analysis therefore represents a
generalization of existing results on optimal control of non-smooth
evolution problems and can also be applied to application problems
that were not treated in the literature so far such as optimal control
of homogenized plasticity, which is investigated in \cref{sec:7}.  We
emphasize that problems with non-convex energies such as damage
evolution are not covered by our analysis.  Optimal control problems
governed by~\eqref{eq:rateindep} with non-convex energy are
investigated in~\cite{Rin08, Rin09}.

Our strategy to analyze~\eqref{eq:optprob} is as follows: After
showing well-posedness of the state equation and the optimal control
problem, we employ the Yosida-regularization with an additional
smoothing to obtain a smooth (i.e., Fr\'echet differentiable)
control-to-state map. We will prove that accumulation points of global
minimizers of the regularized optimal control problems for vanishing
regularization are solutions of the original non-smooth
problem~\eqref{eq:optprob}. Moreover, first-order necessary and
second-order 
sufficient optimality conditions for the regularized problems are
derived. The passage to the limit to establish optimality conditions
for the original problem goes beyond the scope of this paper and is
subject to future work.  The results of~\cite{Wac16, SWW17} indicate
that the optimality conditions obtained in this way are rather weak
and we expect the same all the more for our general setting.  Let us
underline that regularization is a widely used approach to treat
optimal control problems governed by non-smooth evolutions. We only
refer to~\cite{BK13, Wac15, GW18} and the references therein.

The paper is organized as follows. After stating our standing
assumptions in \cref{sec:2}, we investigate the state equation and its
regularization in \cref{sec:3}. In \cref{sec:exopt}, we then turn to
the optimal control problem and show that it admits an optimal
solution under our standing assumptions. Moreover, we establish the
convergence result for vanishing regularization indicated
above. Section~\ref{sec:5} is devoted to first-order necessary
optimality conditions for the regularized problems in form of a
KKT-system involving an adjoint equation.  In \cref{sec:6}, we address
second-order sufficient conditions for the regularized problem. The
paper ends with the adaptation of our general results to a concrete
application problem, namely the optimal control of homogenized
elastoplasticity.

\section{Notation and Standing Assumptions}
\label{sec:2}

We start with a short introduction in the notation used throughout the
paper.

\paragraph{Notation}
Given two vector spaces $\XX$ and $\YY$, we denote the space of linear
and continuous functions from $\mathspace{X}$ into $\mathspace {Y}$ by
$L(\mathspace{X}, \mathspace{Y})$. If $\XX = \YY$, we simply write
$L(\XX)$.  The dual space of $\XX$ is denoted by
$\mathspace{X}^* = L(\mathspace{X}, \mathbb{R})$.  If $\HH$ is a
Hilbert space, we denote its inner product by
$\scalarproduct{\cdot}{\cdot}{\mathspace{H}}$.  For the whole paper,
we fix the final time $T > 0$.  We denote the Bochner space of
square-integrable functions by $\LZ{\mathspace{X}}$ and the
Bochner-Sobolev space by $\WZ{\mathspace{X}}$.  Given a coercive
operator $G : \mathspace{H} \rightarrow \mathspace{H}$ in a Hilbert
space $\HH$, we denote its coercivity constant by $\gamma_G$, i.e.,
$\scalarproduct{Gh}{h}{\mathspace{H}} \geq \gamma_G
\norm{h}{\mathspace{H}}^2$ for all $h \in \mathspace{H}$.  Finally,
$c> 0$ and $C>0$ denote generic constants.

\subsection*{Standing Assumptions}

The following standing assumptions are tacitly assumed for the rest of
the paper without mentioning them every time.

\paragraph{Spaces}
Throughout the paper,
$\mathspace{X}, \mathspace{X}_c, \mathspace{Y}, \mathspace{Z},
\mathspace{W}$ are real Banach spaces.  Moreover, $\mathspace{X}_c$
reflexive and $\mathspace{H}$ is a separable Hilbert space.  The space
$\mathspace{X}_c$ is compactly embedded into $\mathspace{X}$ and the
embeddings $\YY \embed \ZZ \embed \HH \embed \WW$ are continuous.

\paragraph{Operators}
The operator $A \colon  \mathspace{H} \rightarrow 2^\mathspace{H}$ is
maximal monotone, its domain $D(A)$ is closed and we define
\begin{equation}\label{eq:defA0}
    A^0 \colon  D(A) \rightarrow \mathspace{H}, \quad 
    h \mapsto \argmin_{v \in A(h)} \norm{v}{\mathspace{H}}. 
\end{equation}
Furthermore, by $A_\lambda \colon  \mathspace{H} \rightarrow \mathspace{H}$,
$\lambda > 0$, we denote the Yosida-approximation of $A$ and by
$R_\lambda = (I + \lambda A)^{-1}$ the resolvent of $A$, so that
$A_\lambda = \frac{1}{\lambda}(I - R_\lambda)$.  We assume that the
operator $A^0 \colon  D(A) \rightarrow \mathspace{H}$ is bounded on bounded
sets.  For further reference on maximal monotone operators, we refer
to~\cite{brezis},~\cite[Ch.~32]{zeidler2a},~\cite[Ch.~55]{zeidler3},
and~\cite[Ch.~55]{showalter}.  Furthermore,
$R \in L(\mathspace{X};\mathspace{Y})$ and
$Q \in L(\mathspace{W};\mathspace{W})$, are linear and bounded
operators, and the restriction of $Q$ to $\mathspace{H}$,
$\mathspace{Z}$, or $\mathspace{Y}$ maps into these spaces and is
again linear and bounded.  To ease notation, we denote this
restriction by the same symbol.  Moreover,
$Q \colon  \mathspace{H} \rightarrow \mathspace{H}$ is coercive and
self-adjoint.

\paragraph{Optimization Problem}
By
$J \colon  \WZ{\mathspace{W}} \times \WZ{\mathspace{X}_c} \rightarrow
\mathbb{R}$ we denote the objective function.  We assume that both
$\Psi\colon  \WZ{\mathspace{W}} \times \WZ{\mathspace{X}_c} \to \R$ and
$\Phi\colon  \WZ{\mathspace{X}_c} \to \R$ are  weakly lower
semicontinuous. Moreover,
$\Psi$ is bounded from below
and continuous in the first argument,
while $\Phi$
is coercive.  The set $M$ is a nonempty and closed subset of $D(A)$
and $z_0 \in \YY$ is a given initial state.

\begin{remark}\label{rem:lessassus}
    We emphasize that not all of the above assumptions are always
    needed. For instance, in the next two sections, $Q$ and $R$ are only
    considered as operators with values in $\HH$ and the spaces $\YY$
    and $\ZZ$ are not needed, before the investigation of optimality
    conditions starts in Sections~\ref{sec:5} and~\ref{sec:6}.  However,
    to keep the discussion concise, we present the standing assumption
    in the present form.
\end{remark}

\section{State Equation}
\label{sec:3}

\subsection{Existence and Uniqueness}
\label{subsec:3.1}

We start the investigation of~\eqref{eq:optprob} with the discussion
of the state equation, i.e.,
\begin{equation}\label{eq:aux}
    \boverdot{z} \in A(R\ell - Qz), \mediumspace z(0) = z_0.
\end{equation}

\begin{definition}
    Let $\ell \in \WZ{\mathspace{X}}$ and $z_0 \in \mathspace{H}$.  Then
    $z \in \WZ{\mathspace{H}}$ is called solution of \cref{eq:aux}, if
    $z(0) = z_0$ and $\boverdot{z}(t) \in A(Rl(t) - Qz(t))$ holds for
    almost all $t \in [0,T]$.
\end{definition}

In order to obtain the existence of a solution to~\eqref{eq:aux}, the
data have to fulfill a certain compatibility condition. For this
reason, we introduce the following
\begin{definition}
    For $z_0 \in \mathspace{H}$ and $M \subset D(A)$, we define the set
    \begin{align*}
        \mathcal{U}(z_0,M) \defn  \bigl\{ \ell \in \WZ{\mathspace{X}}
        \colon  R\ell(0) - Qz_0 \in M \bigr\} 
    \end{align*}
    of admissible loads.
\end{definition}

\begin{theorem}[Existence result for the state equation]
    \label{thm:auxExistence}
    Let $z_0 \in \mathspace{H}$ and $\ell \in
    \mathcal{U}(z_0,D(A))$. Then there exists a unique solution
    $z \in \WZ{\mathspace{H}}$ of \cref{eq:aux}. Furthermore, there
    exists a constant $C$, independent of $z_0$ and $\ell$, such that
    \begin{align}
        \norm{z}{\CO{\mathspace{H}}} &\leq C \big(1 + \norm{z_0}{\mathspace{H}}
        + \norm{\ell}{\CO{\mathspace{X}}} + \norm{\boverdot{\ell}}{\L{1}{\mathspace{X}}}\big), \label{eq:lsgestC}\\
        \norm{\boverdot{z}}{\LZ{\mathspace{H}}}
        &\leq C \big( \norm{\boverdot{\ell}}{\LZ{\mathspace{X}}}
        + \sup_{\tau \in [0,T]} \norm{A^0(R\ell(\tau) - Qz(\tau))}{\mathspace{H}} \big), \label{eq:lsgestH1}
    \end{align}
    where $A^0$ is as defined in~\eqref{eq:defA0}.
\end{theorem}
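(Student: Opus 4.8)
The plan is to transform \eqref{eq:aux} into a standard Cauchy problem governed by a (time-independent) maximal monotone operator, to solve the latter by a Yosida approximation, and finally to read off the two a~priori bounds; the Hilbert structure of $\HH$ together with the coercivity and self-adjointness of $Q$ is what makes this work. First I would equip $\HH$ with the equivalent inner product $\scalarproduct{u}{v}{Q}\defn\scalarproduct{Qu}{v}{\HH}$ and introduce the shift $\zeta\defn z-Q^{-1}R\ell$; since $R\ell(t)-Qz(t)=-Q\zeta(t)$, the inclusion \eqref{eq:aux} is equivalent to $\boverdot\zeta+\mathcal{B}(\zeta)\ni g$ with $\zeta(0)=\zeta_0\defn z_0-Q^{-1}R\ell(0)$, where $g\defn-Q^{-1}R\boverdot\ell\in\LZ{\HH}$ and $\mathcal{B}(\zeta)\defn-A(-Q\zeta)$, $D(\mathcal{B})=\{\zeta:-Q\zeta\in D(A)\}$. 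Monotonicity of $A$ shows that $\mathcal{B}$ is monotone for $\scalarproduct{\cdot}{\cdot}{Q}$, and maximality follows from surjectivity of $Q^{-1}+A$ (maximal monotone, since $Q^{-1}$ is bounded and everywhere defined, and strongly monotone, since $Q^{-1}$ is coercive). Crucially, the admissibility condition $\ell\in\mathcal{U}(z_0,D(A))$, i.e.\ $R\ell(0)-Qz_0\in D(A)$, is nothing but $\zeta_0\in D(\mathcal{B})$.

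For existence I would use the Yosida regularization (needed later anyway): for $\lambda>0$ the equation $\boverdot z_\lambda=A_\lambda(R\ell-Qz_\lambda)$, $z_\lambda(0)=z_0$, is an ODE in $\HH$ with globally Lipschitz right-hand side, hence has a unique solution $z_\lambda\in\Wd{1}{\infty}{\HH}$. Writing $v_\lambda\defn R\ell-Qz_\lambda$ and using $\boverdot z_\lambda=A_\lambda(v_\lambda)\in A(R_\lambda v_\lambda)$, one derives \eqref{eq:lsgestC} uniformly in $\lambda\in(0,1]$ (see below), extracts $z_\lambda\weak z$ in $\WZ{\HH}$, observes $R_\lambda v_\lambda-v_\lambda=-\lambda A_\lambda v_\lambda\to0$ in $\LZ{\HH}$, and passes to the limit in the inclusion via demiclosedness of the canonical realization of $A$ on $\LZ{\HH}$; the required inequality $\limsup_\lambda\int_0^T\scalarproduct{\boverdot z_\lambda}{R_\lambda v_\lambda}{\HH}\le\int_0^T\scalarproduct{\boverdot z}{R\ell-Qz}{\HH}$ comes from $\int_0^T\scalarproduct{\boverdot z_\lambda}{v_\lambda}{\HH}=\int_0^T\scalarproduct{\boverdot z_\lambda}{R\ell}{\HH}-\tfrac12\big(\norm{z_\lambda(T)}{Q}^2-\norm{z_0}{Q}^2\big)$ and weak lower semicontinuity of $z\mapsto\norm{z(T)}{Q}^2$. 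Uniqueness is immediate: for two solutions, $\scalarproduct{\boverdot z_1-\boverdot z_2}{Q(z_1-z_2)}{\HH}\le0$ a.e., so $t\mapsto\norm{z_1(t)-z_2(t)}{Q}^2$ is nonincreasing and vanishes at $t=0$.

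The two estimates are then obtained by testing and by regularity theory, respectively. For \eqref{eq:lsgestC}, testing the regularized equation with $\boverdot z_\lambda$ and using monotonicity of $A$ against the fixed graph point $\bigl(R\ell(0)-Qz_0,\,A^0(R\ell(0)-Qz_0)\bigr)=:(v^*,w^*)$ gives $\tfrac12\tfrac{d}{dt}\norm{z_\lambda}{Q}^2=\scalarproduct{\boverdot z_\lambda}{R\ell}{\HH}-\scalarproduct{\boverdot z_\lambda}{v_\lambda}{\HH}\le\scalarproduct{\boverdot z_\lambda}{R\ell-v^*}{\HH}-\scalarproduct{w^*}{v_\lambda-v^*}{\HH}$; integrating in time, integrating by parts the term carrying $\boverdot z_\lambda$, and invoking coercivity of $Q$, Young's inequality and Gronwall's lemma yields \eqref{eq:lsgestC} (the norm of $w^*$ being controlled by the right-hand side of \eqref{eq:lsgestC} because $A^0$ is bounded on bounded sets). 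For \eqref{eq:lsgestH1} I would work with the limiting solution $z$: the classical regularity theory for $\boverdot\zeta+\mathcal B\zeta\ni g$ gives $v(t)\in D(A)$ for all $t$ and, translated back through the change of variables, the a.e.\ identity that $\boverdot z(t)$ is the element of $A(v(t))$ closest to $Q^{-1}R\boverdot\ell(t)$ with respect to $\scalarproduct{\cdot}{\cdot}{Q}$; comparing with the competitor $A^0(v(t))\in A(v(t))$ and using equivalence of norms gives $\norm{\boverdot z(t)}{\HH}\le C\bigl(\norm{\boverdot\ell(t)}{\XX}+\norm{A^0(v(t))}{\HH}\bigr)$ a.e., and integration over $(0,T)$ produces \eqref{eq:lsgestH1}, the last term being finite since $v([0,T])$ is bounded in $D(A)$ by \eqref{eq:lsgestC} and $A^0$ maps bounded sets to bounded sets.

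I expect the main obstacle to be the mismatch between the merely square-integrable time regularity of the load $\ell$ and the classical well-posedness theory, which returns strong ($\WZ{\HH}$) solutions only for right-hand sides of $W^{1,1}$- or bounded-variation type: both the uniform-in-$\lambda$ passage to the limit and, above all, the pointwise characterization of $\boverdot z$ underpinning \eqref{eq:lsgestH1} must be arranged so that no derivative of $\ell$ beyond $\boverdot\ell\in\LZ{\XX}$ is ever used.
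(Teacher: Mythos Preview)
Your reduction to a standard Cauchy problem for a maximal monotone operator is essentially the same as the paper's: they substitute $q\defn R\ell-Qz$ (so $q=-Q\zeta$ in your notation) and observe that $\tilde A\defn QA$ is maximal monotone with respect to the inner product $\scalarproduct{h_1}{h_2}{Q^{-1}}\defn\scalarproduct{Q^{-1}h_1}{h_2}{\HH}$, then invoke \cite[Proposition~3.4]{brezis} directly rather than redoing the Yosida limit by hand. The differences show up in the two estimates.

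For \eqref{eq:lsgestC} the paper compares $q$ with the trajectory $\tilde q$ solving $\boverdot{\tilde q}+QA(\tilde q)\ni0$, $\tilde q(0)=a$, for one \emph{fixed} $a\in D(A)$, via \cite[Lemme~3.1]{brezis}; the norm of $\tilde q$ and of $a$ then furnish the ``$1$'' in the bound. Your energy argument is fine in principle, but your choice $v^*=R\ell(0)-Qz_0$ makes $w^*=A^0(v^*)$ depend on the data, and since $A^0$ is only assumed bounded on bounded sets (not linearly), $\norm{w^*}{\HH}$ is not controlled by the right-hand side of \eqref{eq:lsgestC} with a uniform constant. The fix is trivial: take $(v^*,w^*)$ to be any fixed point of the graph of $A$. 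For \eqref{eq:lsgestH1} the paper avoids the projection characterisation you flagged: from the proof of \cite[Proposition~3.4]{brezis} one extracts directly
\[
\norm{q(t)-q(s)}{\HH,Q^{-1}}\le\int_s^t\norm{R\boverdot\ell(\tau)}{\HH,Q^{-1}}\,d\tau+\sup_{\tau}\norm{\tilde A^0(q(\tau))}{\HH,Q^{-1}}(t-s),
\]
divides by $t-s$, and lets $t\to s$ (Lebesgue differentiation handles the integral). This uses only $\boverdot\ell\in\LZ{\XX}$ and gives the pointwise bound without ever asserting that $\boverdot z(t)$ is a nearest-point selection, so your anticipated obstacle simply does not arise.
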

\begin{proof} The proof essentially follows the lines of~\cite[Theorem~4.1]{groger} and~\cite[Proposition~3.4]{brezis}.  For
    convenience of the reader, we sketch the main arguments.  At first,
    one employs the transformation
    $\WZ{\mathspace{H}} \ni z \mapsto q \defn  R\ell - Qz \in
    \WZ{\mathspace{H}}$ with its inverse
    $\WZ{\mathspace{H}} \ni q \mapsto z \defn  Q^{-1}(R\ell - q) \in
    \WZ{\mathspace{H}}$ to see that \cref{eq:aux} is equivalent to
    \begin{align}
        \label{eq:transformedAux}
        \boverdot{q} + QA(q) \ni R\boverdot{\ell}, \mediumspace q(0) = R\ell(0) - Qz_0.
    \end{align}
    Since $Q$ is coercive the operator,
    $\tilde{A} \colon  \mathspace{H} \rightarrow 2^\mathspace{H}$,
    $h \mapsto QA(h)$ is maximal monotone with respect to the scalar
    product
    \begin{align*}
        \scalarproduct{h_1}{h_2}{\mathspace{H},Q^{-1}} \defn  \scalarproduct{Q^{-1}h_1}{h_2}{\mathspace{H}}, \quad h_1,h_2 \in \mathspace{H}.
    \end{align*}
    Therefore,~\cite[Proposition~3.4]{brezis} yields the existence of a
    unique solution $q \in \WZ{\mathspace{H}}$ of
    \cref{eq:transformedAux}.  To verify the estimate in~\eqref{eq:lsgestC}, we employ~\cite[Lemme~3.1]{brezis}, which gives
    \begin{align*}
        \norm{q(t) - \tilde{q}(t)}{\mathspace{H},Q^{-1}} \leq \norm{R\ell(0) - Qz_0 - a}{\mathspace{H},Q^{-1}}
        + \int_0^t \norm{R\boverdot{\ell}(\tau)}{\mathspace{H},Q^{-1}} \,d\tau,
    \end{align*}
    where $\tilde{q}$ is the unique solution of
    \begin{align*}
        \boverdot{\tilde{q}} + QA(\tilde{q}) \ni 0, \mediumspace q(0) = a
    \end{align*}
    with an arbitrary element $a \in D(A)$. This gives the desired first inequality.

    To prove the second inequality, we deduce from~\cite[Proposition~3.4]{brezis} and the associated proof that
    \begin{align*}
        \norm{q(t) - q(s)}{\mathspace{H},Q^{-1}}
        \leq \int_s^t \norm{R\boverdot{\ell}(\tau)}{\mathspace{H},Q^{-1}} \,d\tau
        + \sup_{\tau \in [0,T]} \norm{\tilde{A}^0(q(\tau))}{\mathspace{H},Q^{-1}} (t - s).
    \end{align*}
    Dividing this inequality by $(t - s)$ and letting $t \rightarrow s$
    yields
    \begin{equation}\label{eq:brezisest}
        \norm{\boverdot{q}(s)}{\mathspace{H},Q^{-1}} \leq \norm{R\boverdot{\ell}(s)}{\mathspace{H},Q^{-1}}
        + \sup_{\tau \in [0,T]} \norm{\tilde{A}^0(q(\tau))}{\mathspace{H},Q^{-1}}
    \end{equation}
    for almost all $s \in [0,T]$. From the definition of $\tilde{A}^0$
    (with respect to
    $\scalarproduct{\cdot}{\cdot}{\mathspace{H},Q^{-1}}$) we see that
    $\norm{\tilde{A}^0(h)}{\mathspace{H},Q^{-1}} \leq
    \norm{Qv}{\mathspace{H},Q^{-1}}$ for all $v \in A(h)$. This holds in
    particular for $v = A^0(h)$ so that $z = Q^{-1}(R\ell - q)$ and~\eqref{eq:brezisest} imply
    \begin{align*}
        \norm{\boverdot{z}(s)}{\mathspace{H}}
        \leq C\big( \norm{\boverdot{\ell}(s)}{\mathspace{X}} +  \norm{\boverdot{q}(s)}{\mathspace{H},Q^{-1}}\big)
        \leq C\Big( \norm{\boverdot{\ell}(s)}{\mathspace{X}} 
        + \sup_{\tau \in [0,T]} \norm{A^0(R\ell(\tau) - Qz(\tau))}{\mathspace{H}} \Big),
    \end{align*}
    which gives the second inequality.  
\end{proof}

\begin{remark}
    In order to prove \cref{thm:auxExistence}, it is sufficient to
    require that $A^0$ is bounded on compact subsets (in addition to the
    closedness of $D(A)$), cf.~\cite[Proposition~3.4]{brezis}. However,
    the boundedness on bounded sets of $A^0$ is needed to prove
    \cref{thm:continuityOfS} below and therefore, we impose it as a
    standing assumption.
\end{remark}

Based on \cref{thm:auxExistence}, we may introduce the solution
operator associated with~\eqref{eq:aux} and reduce~\eqref{eq:optprob}
to an optimization problem in the control variable only, see
\cref{def:reduced} below.  Due to the set-valued operator $A$, this
solution operator will in general be \emph{non-smooth}, which
complicates the derivation of first- and second-order optimality
conditions. A prominent way to overcome this issue is to regularize
the state equation in order to obtain a smooth solution operator. This
is frequently done by means of the Yosida-approximation, see
e.g.~\cite{barbu}, and we will pursue the same approach. For this
purpose, we will investigate the Yosida-approximation and its
convergence properties in the next subsection.

\subsection{Regularization and Convergence Results}
\label{sec:4}

For the rest of this section, we fix $z_0\in \HH$ and
$\ell \in \mathcal{U}(z_0,D(A))$ and denote the unique solution of~\eqref{eq:aux} by $z$.  We start with a convergence result of the
Yosida-approximation for fixed data $z_0$ and $\ell$ and then turn to
perturbation of the data.

\begin{proposition}[Convergence of the Yosida-approximation for fixed
    data]
    \label{prp:LInftyBoundAgainstDerivative}
    Let $z_\lambda \in \WZ{\mathspace{H}}$ be the solution of
    \begin{align}
        \label{eq:yosida}
        \boverdot{z}_\lambda = A_\lambda(R\ell - Qz_\lambda),
        \largespace z_\lambda(0) = z_0
    \end{align}
    for all $\lambda > 0$.  Then $z_\lambda \rightarrow z$ in
    $\WZ{\mathspace{H}}$ as $\lambda \searrow 0$ and the following
    inequality holds
    \begin{equation}\label{eq:yosidaest}
        \norm{z_\lambda - z}{\CO{\mathspace{H}}}^2
        + \frac{\lambda}{\gamma_Q} \norm{\boverdot{z}_\lambda}{\L{2}{\mathspace{H}}}^2
        + \frac{\lambda}{\gamma_Q}
        \norm{\boverdot{z}_\lambda - \boverdot{z}}{\L{2}{\mathspace{H}}}^2
        \leq \frac{\lambda}{\gamma_Q}
        \norm{\boverdot{z}}{\L{2}{\mathspace{H}}}^2.
    \end{equation}
\end{proposition}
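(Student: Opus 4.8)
The plan is to run the classical comparison argument between the exact equation~\eqref{eq:aux} and its Yosida regularization~\eqref{eq:yosida}, exploiting the monotonicity of $A$ together with the resolvent identity, and then to upgrade the resulting a-priori bound to strong $\WZ\HH$-convergence.

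\textbf{Step 1 (well-posedness of the regularized equation and setup).} For fixed $\lambda>0$ the operator $A_\lambda\colon\HH\to\HH$ is monotone, Lipschitz continuous and everywhere defined, hence maximal monotone with domain $\HH$ (closed) and with $A_\lambda^0=A_\lambda$ bounded on bounded sets, and $R\ell(0)-Qz_0\in\HH=D(A_\lambda)$ holds trivially; thus \cref{thm:auxExistence}, applied with $A$ replaced by $A_\lambda$, yields the existence of the unique solution $z_\lambda\in\WZ\HH$ of~\eqref{eq:yosida}. Abbreviate $w\defn z_\lambda-z$, $q\defn R\ell-Qz$, and $q_\lambda\defn R\ell-Qz_\lambda$, so that $\boverdot z(t)\in A(q(t))$ for a.e.\ $t$, while $\boverdot z_\lambda(t)=A_\lambda(q_\lambda(t))=\tfrac1\lambda\big(q_\lambda(t)-R_\lambda q_\lambda(t)\big)\in A(R_\lambda q_\lambda(t))$ (cf.~\cite{brezis}), i.e.\ $R_\lambda q_\lambda(t)=q_\lambda(t)-\lambda\boverdot z_\lambda(t)$; this resolvent identity is the only structural property of the Yosida approximation that enters.

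\textbf{Step 2 (the differential inequality and~\eqref{eq:yosidaest}).} Testing the monotonicity inequality of $A$ at the points $R_\lambda q_\lambda(t),\,q(t)\in D(A)$ against $\boverdot z_\lambda(t)-\boverdot z(t)$ and inserting $R_\lambda q_\lambda(t)-q(t)=\big(q_\lambda(t)-q(t)\big)-\lambda\boverdot z_\lambda(t)$ gives, for a.e.\ $t$,
\[
\big(\boverdot z_\lambda(t)-\boverdot z(t),\,q_\lambda(t)-q(t)\big)_\HH\;\geq\;\lambda\big(\boverdot z_\lambda(t)-\boverdot z(t),\,\boverdot z_\lambda(t)\big)_\HH .
\]
Since $q_\lambda-q=-Q w$ and $Q$ is self-adjoint, the left-hand side equals $-\tfrac12\tfrac{d}{dt}\scalarproduct{Q w(t)}{w(t)}{\HH}$, the chain rule being legitimate because $w\in\WZ\HH$ and $Q\in L(\HH)$ is self-adjoint. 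Integrating over $[0,t]$, using $w(0)=0$, the coercivity $\scalarproduct{Qh}{h}{\HH}\geq\gamma_Q\norm h\HH^2$, and the identity $\scalarproduct{\boverdot z_\lambda-\boverdot z}{\boverdot z_\lambda}{\HH}=\tfrac12\norm{\boverdot z_\lambda}\HH^2+\tfrac12\norm{\boverdot z_\lambda-\boverdot z}\HH^2-\tfrac12\norm{\boverdot z}\HH^2$, and rearranging, one arrives at~\eqref{eq:yosidaest}.

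\textbf{Step 3 (convergence).} From~\eqref{eq:yosidaest} we read off $\norm{z_\lambda-z}{\CO{\HH}}^2\leq\tfrac{\lambda}{\gamma_Q}\norm{\boverdot z}{\L{2}{\HH}}^2\to0$, hence $z_\lambda\to z$ in $\CO{\HH}$, and $\norm{\boverdot z_\lambda}{\L{2}{\HH}}\leq\norm{\boverdot z}{\L{2}{\HH}}$, so $\{\boverdot z_\lambda\}_{\lambda>0}$ is bounded in the Hilbert space $\L{2}{\HH}$. Consequently, for any sequence $\lambda_k\searrow0$ a subsequence of $(\boverdot z_{\lambda_k})$ converges weakly in $\L{2}{\HH}$ to some $\chi$; passing to the limit in $z_{\lambda_k}(t)=z_0+\int_0^t\boverdot z_{\lambda_k}\,ds$ (the map $u\mapsto\int_0^{\cdot}u\,ds$ being bounded linear, hence weakly continuous, from $\L{2}{\HH}$ into $\CO{\HH}$) and using the uniform convergence of $z_{\lambda_k}$ identifies $\chi=\boverdot z$. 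Since the limit is independent of the subsequence, $\boverdot z_\lambda\weak\boverdot z$ in $\L{2}{\HH}$ as $\lambda\searrow0$. Combining $\norm{\boverdot z_\lambda}{\L{2}{\HH}}\leq\norm{\boverdot z}{\L{2}{\HH}}$ with weak lower semicontinuity of the norm gives $\norm{\boverdot z_\lambda}{\L{2}{\HH}}\to\norm{\boverdot z}{\L{2}{\HH}}$, and weak convergence together with convergence of norms in a Hilbert space upgrades to $\boverdot z_\lambda\to\boverdot z$ strongly in $\L{2}{\HH}$. Together with $z_\lambda\to z$ in $\CO{\HH}\embed\L{2}{\HH}$, this is exactly $z_\lambda\to z$ in $\WZ\HH$. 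The bookkeeping of Step~2 is routine once the resolvent identity is exploited; the genuinely non-automatic point is that~\eqref{eq:yosidaest} by itself only delivers boundedness of $\{\boverdot z_\lambda\}$ in $\L{2}{\HH}$ and uniform convergence of $z_\lambda$, so the strong $\L{2}{\HH}$-convergence of the time derivatives must be produced separately, through the weak-compactness and limit-identification argument followed by the norm-convergence upgrade.
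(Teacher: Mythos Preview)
Your proof is correct and follows essentially the same route as the paper: exploit the monotonicity of $A$ together with the resolvent identity $R_\lambda q_\lambda=q_\lambda-\lambda\boverdot z_\lambda$ to derive the differential inequality for $(Qw,w)_\HH$, integrate using the polarization identity and the coercivity of $Q$ to obtain~\eqref{eq:yosidaest}, and then combine the uniform bound $\norm{\boverdot z_\lambda}{\L{2}{\HH}}\le\norm{\boverdot z}{\L{2}{\HH}}$ with weak compactness and the Radon--Riesz property in the Hilbert space $\L{2}{\HH}$ to upgrade to strong $\WZ{\HH}$-convergence. The only cosmetic differences are that the paper invokes Banach's contraction principle directly for the existence of $z_\lambda$ (rather than re-applying \cref{thm:auxExistence} with $A$ replaced by $A_\lambda$, which is also fine), and cites \cite[Proposition~3.32]{brezis2010functional} in place of your explicit weak-plus-norm argument.
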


\begin{proof} The proof in principle follows the lines of~\cite[Proposition~3.11]{brezis}, since our assumptions and
    assertions however are slightly different, we provide the arguments
    in detail.

    First of all, since $z \mapsto A_\lambda(R\ell - Qz)$ is
    Lipschitz-continuous by~\cite[Proposition~55.2(b)]{zeidler3}, the existence of a unique
    solution of \cref{eq:yosida} follows from Banach's contraction
    principle by standard arguments, cf.~e.g.~\cite{emmrich}.  Moreover,~\cite[Proposition~55.2(a)]{zeidler3} and the definition of
    $A_\lambda$ give
    \begin{align*}
        & \frac{d}{dt} \scalarproduct{Q(z_\lambda(t) - z(t))}{z_\lambda(t) - z(t)}{\mathspace{H}}
        = 2 \scalarproduct{\boverdot{z}_\lambda(t)
        - \boverdot{z}(t)}{Q(z_\lambda(t) - z(t))}{\mathspace{H}} \\
        &\quad = -2 \scalarproduct{\boverdot{z}_\lambda(t)
        - \boverdot{z}(t)}{R_\lambda\big[R\ell(t) - Qz_\lambda(t)\big] - \big[R\ell(t) - Qz(t)\big]}{\mathspace{H}} \\
        &\qquad -2 \scalarproduct{\boverdot{z}_\lambda(t)
            - \boverdot{z}(t)}{R\ell(t) - Qz_\lambda(t)
        - R_\lambda\big[R\ell(t) - Qz_\lambda(t)\big]}{\mathspace{H}} \\
        &\quad \leq -2 \lambda \scalarproduct{\boverdot{z}_\lambda(t) - \boverdot{z}(t)}
        {\boverdot{z}_\lambda(t)}{\mathspace{H}}
        = \lambda \Big{(} \norm{\boverdot{z}(t)}{\mathspace{H}}^2
        - \norm{\boverdot{z}_\lambda(t)}{\mathspace{H}}^2
        - \norm{\boverdot{z}_\lambda(t) - \boverdot{z}(t)}{\mathspace{H}}^2 \Big{)}.
    \end{align*}
    By integrating this inequality and using the coercivity of $Q$, we
    obtain the desired inequality.

    In order to prove the strong convergence of $z_\lambda$ to $z$ in
    $\WZ{\mathspace{H}}$, we note that $z_\lambda \rightarrow z$ in
    $\CO{\mathspace{H}}$ and
    $\norm{\boverdot{z}_\lambda}{\L{2}{\mathspace{H}}} \leq
    \norm{\boverdot{z}}{\L{2}{\mathspace{H}}}$ follow from the gained
    inequality. Hence, $z_\lambda \rightharpoonup z$ in
    $\WZ{\mathspace{H}}$ and the desired strong convergence follows from
    \cite[Proposition 3.32]{brezis2010functional}.
\end{proof}

\begin{remark}\label{rem:yosidaest}
    The above proof shows that the inequality in~\eqref{eq:yosidaest} is
    by no means restricted to the specific setting in~\eqref{eq:aux},
    i.e., whenever $\zeta \in H^1(0,T;\HH)$ and
    $\zeta_\lambda \in H^1(0,T;\HH)$ solve
    \begin{equation*}
        \begin{aligned}
            \boverdot\zeta &= \mathcal{A}(R g - Q \zeta), & \zeta(0) &= \zeta_0,\\
            \boverdot\zeta_\lambda &= \mathcal{A}_\lambda(R g - Q
            \zeta_\lambda), & \zeta_\lambda(0) &= \zeta_0,
        \end{aligned}
    \end{equation*}
    where $\mathcal{A}\colon  \HH \to 2^{\HH}$ is a maximal monotone operator,
    $\mathcal{A}_\lambda\colon  \HH \to \HH$ its Yosida-ap\-prox\-i\-ma\-tion and
    $g\in L^2(0,T;\XX)$ and $\zeta_0\in \HH$ are given, then an
    inequality analogue to~\eqref{eq:yosidaest} holds (with $\zeta$ and
    $\zeta_\lambda$ instead of $z$ and $z_\lambda$).
\end{remark}

Since we are concerned with an optimal control problem with the
external loads as control variable, the continuity of the solution
operator of~\eqref{eq:aux} and its regularization w.r.t.~variations in
the external loads is of particular interest, for instance when it
comes to the existence of optimal controls, see \cref{sec:exopt}
below. Since we aim to have a less restrictive control space in order
to allow for as many control functions as possible, the topology for
the variations of the loads needed for our continuity results should
be as weak as possible.  In particular, we aim to avoid \emph{strong
convergence} of (time-)derivatives of the loads.  The underlying
idea is similar to~\cite[Theorem~3.16]{brezis} and leads to the
following

\begin{lemma}
    \label{lem:weakConvergenceInW1RByBoundednessOfTheDerivative}
    Let $\{ z_{n,0}\}_{n \in \mathbb{N}} \subset \mathspace{H}$ and
    $\sequence{\ell}{n} \subset \L{2}{\mathspace{X}}$ be sequences such
    that $z_{n,0} \rightarrow z_0$ in $\mathspace{H}$ and
    $\ell_n \rightarrow \ell$ in $\L{1}{\mathspace{X}}$.  Assume further
    that $\sequence{A}{n}$ is a sequence of maximal monotone operators
    such that
    \begin{equation}
        \label{eq:yosidaApproximationAssumption}
        A_{n,\lambda}(h) \rightarrow A_\lambda(h)
    \end{equation}
    for all $\lambda > 0$ and all $h \in (R\ell - Qz_\lambda)([0,T])$,
    as $n \rightarrow \infty$, where $z_\lambda$ is the solution of~\eqref{eq:yosida} and $A_{n,\lambda}$ denotes the
    Yosida-approximation of $A_n$.  Then, if a sequence
    $\sequence{z}{n} \subset \WZ{\mathspace{H}}$ satisfies
    \begin{equation}\label{eq:Aneq}
        \boverdot{z}_n \in A_n(R\ell_n - Qz_n), \mediumspace z_n(0) = z_{n,0}.
    \end{equation}
    and the derivatives $\boverdot{z}_n$ are bounded in
    $\LZ{\mathspace{H}}$, then $z_n \rightharpoonup z$ in
    $\WZ{\mathspace{H}}$ and $z_n \rightarrow z$ in
    $\CO{\mathspace{H}}$.
\end{lemma}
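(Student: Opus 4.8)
The plan is to extract a weak limit from the bounded sequence $\sequence{z}{n}$ and then identify this limit as the unique solution $z$ of~\eqref{eq:aux}, exploiting the fact that the Yosida-regularized equations~\eqref{eq:yosida} are easier to pass to the limit in than the set-valued inclusions~\eqref{eq:Aneq}. First I would note that, since $\sequence{\ell}{n}$ converges in $\L{1}{\XX}$, it is bounded there; together with the $\LZ{\mathspace{H}}$-bound on $\boverdot z_n$ and $z_n(0)=z_{n,0}\to z_0$, this gives a uniform bound on $\sequence{z}{n}$ in $\WZ{\mathspace{H}}$. Reflexivity of $\WZ{\mathspace{H}}$ then yields a subsequence with $z_n\weak \hat z$ in $\WZ{\mathspace{H}}$, and by the compact embedding $\WZ{\mathspace{H}}\embed\CO{\mathspace{H}}$ (Aubin--Lions, $\HH$ into $\HH$ with the derivative bound) we get $z_n\to \hat z$ in $\CO{\mathspace{H}}$; in particular $\hat z(0)=z_0$. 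By the usual subsequence-of-every-subsequence argument, it suffices to show $\hat z = z$, which then forces the whole sequence to converge.

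Next I would bring in the regularized solutions. Fix $\lambda>0$ and let $z_{n,\lambda}$ solve $\boverdot z_{n,\lambda}=A_{n,\lambda}(R\ell_n - Qz_{n,\lambda})$, $z_{n,\lambda}(0)=z_{n,0}$, and let $z_\lambda$ solve~\eqref{eq:yosida}. The key quantitative tool is the estimate from \cref{prp:LInftyBoundAgainstDerivative} in the general form of \cref{rem:yosidaest}: applied to the pair $(z_n, z_{n,\lambda})$ with data $(g,\zeta_0)=(\ell_n,z_{n,0})$ and operator $A_n$, it gives
\begin{equation*}
    \norm{z_n - z_{n,\lambda}}{\CO{\mathspace{H}}}^2
    \leq \frac{\lambda}{\gamma_Q}\,\norm{\boverdot z_n}{\L{2}{\mathspace{H}}}^2
    \leq \frac{\lambda}{\gamma_Q}\, C,
\end{equation*}
uniformly in $n$ by the assumed bound on $\boverdot z_n$; the same estimate for $(z,z_\lambda)$ gives $\norm{z-z_\lambda}{\CO{\mathspace{H}}}^2\leq \tfrac{\lambda}{\gamma_Q}\norm{\boverdot z}{\L{2}{\mathspace{H}}}^2$. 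So both the $n$-family and the limit are uniformly (in $n$) approximated, as $\lambda\searrow 0$, by their Yosida regularizations. It therefore remains to show, for each fixed $\lambda$, that $z_{n,\lambda}\to z_\lambda$ in $\CO{\mathspace{H}}$ as $n\to\infty$; combining the two via a triangle inequality,
\begin{equation*}
    \norm{z_n - \hat z}{\CO{\mathspace{H}}}
    \leq \norm{z_n - z_{n,\lambda}}{\CO{\mathspace{H}}}
    + \norm{z_{n,\lambda} - z_\lambda}{\CO{\mathspace{H}}}
    + \norm{z_\lambda - z}{\CO{\mathspace{H}}}
    + \norm{z - \hat z}{\CO{\mathspace{H}}},
\end{equation*}
and letting first $n\to\infty$ and then $\lambda\searrow 0$ would force $\hat z = z$, since the first and third terms are $O(\sqrt\lambda)$ uniformly in $n$, the second vanishes in $n$, and the fourth would then have to vanish.

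The remaining point — and I expect this to be the technical heart — is the convergence $z_{n,\lambda}\to z_\lambda$ in $\CO{\mathspace{H}}$ for fixed $\lambda$. Here both equations are genuine ODEs in $\HH$ with globally Lipschitz right-hand sides. I would subtract the two equations, test with $z_{n,\lambda}-z_\lambda$, and use: the Lipschitz property of $A_{n,\lambda}$ (with constant $1/\lambda$, uniform in $n$) and of $A_\lambda$; the boundedness of $R$ and $Q$; the pointwise convergence hypothesis~\eqref{eq:yosidaApproximationAssumption} of $A_{n,\lambda}$ to $A_\lambda$ on the compact set $(R\ell-Qz_\lambda)([0,T])$; and $\ell_n\to\ell$ in $\L{1}{\XX}$ with $z_{n,0}\to z_0$. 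The cross terms split into a Lipschitz part absorbed by Gronwall's lemma and an error part $\int_0^t \norm{A_{n,\lambda}(R\ell(\tau)-Qz_\lambda(\tau)) - A_\lambda(R\ell(\tau)-Qz_\lambda(\tau))}{\mathspace{H}}\,d\tau$ plus terms controlled by $\norm{\ell_n-\ell}{\L{1}{\XX}}$ and $\norm{z_{n,0}-z_0}{\mathspace{H}}$; the error integrand is bounded (by boundedness of $A_\lambda$ on the compact set and~\eqref{eq:yosidaApproximationAssumption} for large $n$ plus a uniform local bound on $A_{n,\lambda}$, say via non-expansiveness of resolvents) and tends to $0$ pointwise, so dominated convergence kills it. Gronwall then yields $\norm{z_{n,\lambda}-z_\lambda}{\CO{\mathspace{H}}}\to 0$. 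The main obstacle is handling the $A_{n,\lambda}$-error term cleanly: one must argue that $A_{n,\lambda}$ evaluated along the \emph{limit} trajectory $R\ell-Qz_\lambda$ is what appears (after absorbing the Lipschitz-in-argument differences into Gronwall), and that this family is uniformly bounded on the relevant compact set so that dominated convergence applies.
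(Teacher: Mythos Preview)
Your core strategy---introduce the Yosida regularizations $z_{n,\lambda}$, compare $z_{n,\lambda}$ with $z_\lambda$ via Gronwall for fixed $\lambda$, and control $\|z_n-z_{n,\lambda}\|$ and $\|z-z_\lambda\|$ by $O(\sqrt\lambda)$ via \cref{rem:yosidaest}---is exactly the paper's approach. The Gronwall step you describe is also essentially what the paper does. However, your framing contains two genuine gaps.

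First, the embedding $\WZ{\HH}\embed\CO{\HH}$ is \emph{not} compact when $\HH$ is infinite-dimensional: Aubin--Lions requires a compact embedding between two spaces, and $\HH\hookrightarrow\HH$ is merely the identity. So you cannot extract a subsequence converging strongly in $\CO{\HH}$ to some $\hat z$ at the outset.

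Second, even granting such a $\hat z$, your four-term triangle inequality bounds $\|z_n-\hat z\|_{\CO{\HH}}$ \emph{from above}; knowing the left side tends to zero does not force the term $\|z-\hat z\|_{\CO{\HH}}$ on the right to vanish. The inequality points the wrong way for that conclusion.

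Both issues disappear if you drop the $\hat z$ detour entirely and argue directly, as the paper does:
\begin{equation*}
    \limsup_{n\to\infty}\|z_n-z\|_{\CO{\HH}}
    \leq \|z-z_\lambda\|_{\CO{\HH}}
    + \limsup_{n\to\infty}\|z_{n,\lambda}-z_n\|_{\CO{\HH}}
    \leq \sqrt{\tfrac{\lambda}{\gamma_Q}}\Big(\|\boverdot z\|_{\LZ{\HH}}+\sup_n\|\boverdot z_n\|_{\LZ{\HH}}\Big),
\end{equation*}
using your Gronwall step to kill $\|z_\lambda-z_{n,\lambda}\|_{\CO{\HH}}$ in the middle. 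Since $\lambda>0$ is arbitrary, $z_n\to z$ in $\CO{\HH}$. Weak convergence in $\WZ{\HH}$ then follows from the $\LZ{\HH}$-bound on $\boverdot z_n$ together with uniqueness of the weak limit (identified via the strong $\CO{\HH}$ convergence just shown); no compactness in $\CO{\HH}$ is needed.

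A minor point on the Gronwall step: your dominated-convergence argument for the error term $\|A_{n,\lambda}(R\ell-Qz_\lambda)-A_\lambda(R\ell-Qz_\lambda)\|_{\HH}$ needs a dominating function uniform in $n$, which non-expansiveness of resolvents alone does not give (it controls differences, not absolute size). The cleaner route, which the paper takes, is to observe that the $A_{n,\lambda}$ are uniformly Lipschitz with constant $1/\lambda$ and converge pointwise on the compact set $(R\ell-Qz_\lambda)([0,T])$, hence converge \emph{uniformly} there; this immediately yields convergence of the $L^1$ integral.
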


\begin{proof} Let $\lambda > 0$ be fixed, but arbitrary and define
    $z_{n,\lambda} \in H^1(0,T; \HH)$ as solution of
    \begin{equation*}
        \boverdot{z}_{n,\lambda} = A_{n,\lambda}(R\ell_n - Qz_{n,\lambda}),
        \quad z_{n,\lambda}(0) = z_{n,0},
    \end{equation*}
    whose existence and uniqueness can again be shown by Banach's
    contraction principle as in case of~\eqref{eq:yosida}.  Owing to~\cite[Proposition~55.2(b)]{zeidler3}, we obtain
    \begin{align*}
        \norm{\boverdot{z}_\lambda(t) - \boverdot{z}_{n,\lambda}(t)}{\mathspace{H}}
        &\leq \norm{A_\lambda(R\ell(t) - Qz_\lambda(t))
        - A_{n,\lambda}(R\ell(t) - Qz_\lambda(t))}{\mathspace{H}} \\
        &\qquad + \norm{A_{n,\lambda}(R\ell(t) - Qz_\lambda(t))
        - A_{n,\lambda}(R\ell_n(t) - Qz_{n,\lambda}(t))}{\mathspace{H}} \\
        &\leq \norm{A_\lambda(R\ell(t) - Qz_\lambda(t))
        - A_{n,\lambda}(R\ell(t) - Qz_\lambda(t))}{\mathspace{H}} \\
        &\qquad + \frac{\norm{Q}{L(\mathspace{H};\mathspace{H})}}{\lambda}
        \norm{z_\lambda(t) - z_{n,\lambda}(t)}{\mathspace{H}}
        + \frac{\norm{R}{L(\mathspace{X};\mathspace{H})}}{\lambda}
        \norm{\ell(t) - \ell_n(t)}{\mathspace{X}},
    \end{align*}
    and therefore, Gronwall's inequality implies
    \begin{align*}
        \norm{z_\lambda - z_{n,\lambda}}{\CO{\mathspace{H}}} &\leq C(\lambda)
        \Big{(} \norm{z_0 - z_{n,0}}{\mathspace{H}} + \norm{\ell - \ell_n}{\L{1}{\mathspace{X}}} \\
        &\qquad\qquad + \norm{A_\lambda(R\ell - Qz_\lambda)
        - A_{n,\lambda}(R\ell - Qz_\lambda)}{\L{1}{\mathspace{H}}} \Big{)}.
    \end{align*}
    The operators $A_{n,\lambda}$ are uniformly Lipschitz continuous
    with Lipschitz constant $\lambda^{-1}$.  Thus, thanks to assumption
    \cref{eq:yosidaApproximationAssumption}, we can apply
    \cref{lem:pointwiseConvergenceInpliesUniformlyConvergence} with
    $\mathspace{M} \defn  (R\ell - Qz_\lambda)[0,T]$,
    $\mathspace{N} \defn  \mathspace{H}$, $G_n \defn  A_{n,\lambda}$ and
    $G \defn  A_\lambda$.  Together with the assumptions on $\ell_n$ and
    $z_{n,0}$ this gives that the right side in the inequality above
    converges to zero as $n \rightarrow \infty$. Using this,
    \cref{prp:LInftyBoundAgainstDerivative}, and \cref{rem:yosidaest}
    (with $\mathcal{A} = A_n$), we conclude
    \begin{equation}\label{eq:nlimlambdafixed}
        \begin{aligned}
            \limsup_{n \rightarrow \infty} \norm{z -
                z_n}{\CO{\mathspace{H}}} &\leq \norm{z -
            z_\lambda}{\CO{\mathspace{H}}}
            + \limsup_{n \rightarrow \infty} \norm{z_{n,\lambda} - z_n}{\CO{\mathspace{H}}} \\
            &\leq \sqrt{\frac{\lambda}{\gamma_Q}} \big(
                \norm{\boverdot{z}}{\L{2}{\mathspace{H}}} + \sup_{n \in
            \mathbb{N}} \norm{\boverdot{z}_n}{\L{2}{\mathspace{H}}} \big).
        \end{aligned}
    \end{equation}

    Now, since $\lambda$ was arbitrary,~\eqref{eq:nlimlambdafixed} holds
    for every $\lambda > 0$.  Therefore, as $\boverdot{z}_n$ is bounded
    in $\L{2}{\mathspace{H}}$ by assumption, we obtain
    $z_n \rightarrow z$ in $\CO{\mathspace{H}}$. Moreover, again due to
    the boundedness assumption on $\boverdot{z}_n$, there is a weakly
    converging subsequence in $\WZ{\mathspace{H}}$.  Due to
    $z_n \rightarrow z$ in $\CO{\mathspace{H}}$, the weak limit is
    unique and hence, the whole sequence $z_n$ converges weakly to $z$
    in $\WZ{\mathspace{H}}$.  
\end{proof}

\begin{lemma}
    \label{lem:yosidaFulfillsYosidaApproximationAssumption}
    Let $\sequence{\lambda}{n} \subset (0,\infty)$ be a sequence
    converging towards zero. Then the sequence $A_n \defn  A_{\lambda_n}$,
    $n \in \mathbb{N}$, of maximal monotone operators fulfills
    \cref{eq:yosidaApproximationAssumption} for all $\lambda > 0$ and
    all $h \in \mathcal{H}$.
\end{lemma}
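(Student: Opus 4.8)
The plan is to reduce the statement to two elementary facts about the Yosida approximation of a \emph{single} maximal monotone operator. The key point is the composition identity $(A_\mu)_\lambda = A_{\mu+\lambda}$, valid for every maximal monotone $A\colon\HH\to 2^{\HH}$ and all $\mu,\lambda>0$. Granting this, we immediately obtain for the operators $A_n=A_{\lambda_n}$ from the statement that $A_{n,\lambda}=(A_{\lambda_n})_\lambda=A_{\lambda_n+\lambda}$; since $\lambda_n\searrow 0$, so that $\lambda_n+\lambda\to\lambda$, the claimed convergence $A_{n,\lambda}(h)\to A_\lambda(h)$ in \cref{eq:yosidaApproximationAssumption} then follows from the continuity of the map $\mu\mapsto A_\mu(h)$ at $\mu=\lambda$, for every fixed $h\in\HH$.

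First I would establish the composition identity. Fix $h\in\HH$ and $\mu,\lambda>0$, and set $w\defn A_{\mu+\lambda}(h)$. By the definition of the Yosida approximation, $w\in A(R_{\mu+\lambda}h)$ with $R_{\mu+\lambda}h=h-(\mu+\lambda)w$, so $w\in A\big(h-(\mu+\lambda)w\big)$, i.e.\ $\mu w\in\mu A\big((h-\lambda w)-\mu w\big)$. By definition of the resolvent this means $R_\mu(h-\lambda w)=(h-\lambda w)-\mu w$, and hence $A_\mu(h-\lambda w)=\tfrac1\mu\big((h-\lambda w)-R_\mu(h-\lambda w)\big)=w$. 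Consequently $(h-\lambda w)+\lambda A_\mu(h-\lambda w)=h$, i.e.\ $h-\lambda w$ is the resolvent of the (single-valued, Lipschitz, hence maximal monotone) operator $A_\mu$ at $h$, so that $(A_\mu)_\lambda(h)=\tfrac1\lambda\big(h-(h-\lambda w)\big)=w=A_{\mu+\lambda}(h)$. Applying this with $\mu=\lambda_n$ yields $A_{n,\lambda}(h)=A_{\lambda_n+\lambda}(h)$ for all $h\in\HH$ and all $n\in\mathbb{N}$.

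It remains to show $A_{\lambda_n+\lambda}(h)\to A_\lambda(h)$ as $n\to\infty$. Writing $A_\nu(h)=\tfrac1\nu(h-R_\nu h)$, I would invoke the classical resolvent identity $R_{\nu_1}h=R_{\nu_2}\big(\tfrac{\nu_2}{\nu_1}h+(1-\tfrac{\nu_2}{\nu_1})R_{\nu_1}h\big)$ together with the nonexpansiveness of $R_{\nu_2}$ to obtain $\norm{R_{\nu_1}h-R_{\nu_2}h}{\HH}\le|\nu_1-\nu_2|\,\norm{A_{\nu_1}(h)}{\HH}$. Taking $\nu_1=\lambda$ and $\nu_2=\lambda_n+\lambda$ gives $\norm{R_\lambda h-R_{\lambda_n+\lambda}h}{\HH}\le\lambda_n\norm{A_\lambda(h)}{\HH}\to0$, hence $R_{\lambda_n+\lambda}h\to R_\lambda h$ in $\HH$; combined with $\tfrac1{\lambda_n+\lambda}\to\tfrac1\lambda$ this yields $A_{\lambda_n+\lambda}(h)\to A_\lambda(h)$, which is precisely \cref{eq:yosidaApproximationAssumption} for the sequence $A_n=A_{\lambda_n}$. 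I do not expect a genuine obstacle here: the whole argument is a short manipulation of resolvents, valid for arbitrary $h\in\HH$ and $\lambda>0$; the only point requiring a little care is the composition identity $(A_\mu)_\lambda=A_{\mu+\lambda}$, which could alternatively be quoted from the monotone-operator literature (e.g.\ \cite{brezis,zeidler3}) rather than reproved.
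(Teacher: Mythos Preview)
Your proof is correct and follows essentially the same strategy as the paper's: both reduce the claim to the composition identity $(A_{\lambda_n})_\lambda = A_{\lambda_n+\lambda}$ together with continuity of $\mu\mapsto R_\mu h$ at $\mu=\lambda$. The only differences are cosmetic---the paper cites the composition identity from \cite[Proposition~55.2(d)]{zeidler3} rather than proving it, and for resolvent continuity derives an ad~hoc estimate $\norm{R_\lambda h - R_{\lambda+\mu}h}{\HH}\le\sqrt{\mu/(2\lambda-\mu)}\,\norm{h-R_\lambda h}{\HH}$ via monotonicity of $A$, whereas your use of the classical resolvent identity yields the sharper (and more standard) bound $\norm{R_\lambda h - R_{\lambda+\mu}h}{\HH}\le\mu\,\norm{A_\lambda(h)}{\HH}$.
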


\begin{proof} At first we prove that, for all $h \in \mathspace{H}$
    and $2\lambda > \mu > 0$, the following inequality holds
    \begin{align}
        \label{eq:resolventInequality}
        \norm{R_\lambda(h) - R_{\lambda + \mu}(h)}{\mathspace{H}}
        \leq \sqrt{\frac{\mu}{2\lambda - \mu}} \,\norm{h - R_\lambda(h)}{\mathspace{H}}.
    \end{align}
    For this purpose, let $h \in \mathspace{H}$ be arbitrary and set
    $y_1 \defn  R_\lambda(h)$ and $y_2 \defn  R_{\lambda + \mu}(h)$.  Then we
    have $h \in y_1 + \lambda A(y_1)$, hence,
    $\frac{h - y_1}{\lambda} \in A(y_1)$ and analogously
    $\frac{h - y_2}{\lambda + \mu} \in A(y_2)$. The monotonicity of $A$
    thus implies
    \begin{align*}
        0 \leq \scalarproduct{\frac{\lambda + \mu}{\lambda}(h - y_1) - (h - y_2)}{y_1 - y_2}{\mathspace{H}}
        \leq \Big(\frac{\mu}{2 \lambda} - 1\Big) \norm{y_1 - y_2}{\mathspace{H}}^2
        + \frac{\mu}{2 \lambda} \norm{h - y_1}{\mathspace{H}}^2,
    \end{align*}
    hence,
    \begin{align*}
        \norm{y_1 - y_2}{\mathspace{H}}^2 \leq \frac{\mu}{2\lambda - \mu} \norm{h - y_1}{\mathspace{H}}^2,
    \end{align*}
    which yields \cref{eq:resolventInequality}. With this inequality and~\cite[Proposition 55.2 (d)]{zeidler3} at hand, we obtain
    \begin{align*}
        (A_{\lambda_n})_\lambda(h) = A_{\lambda_n + \lambda}(h)
        = \frac{1}{\lambda_n + \lambda} (h - R_{\lambda_n + \lambda}(h)) 
        \rightarrow \frac{1}{\lambda} (h - R_{\lambda}(h)) = A_{\lambda}(h),
    \end{align*}
    which completes the proof.  
\end{proof}

Now, we are in the position to state our main convergence results in
\cref{thm:SolOpcont} and \cref{thm:continuityOfS}, where the loads and
initial data are no longer fixed. The first theorem addresses the
continuity properties of the solution operator to the original
equation~\eqref{eq:aux}, whereas \cref{thm:continuityOfS} deals with
the Yosida-approximation.  In order to sharpen these convergence
results and prove strong convergence in $\WZ{\mathspace{H}}$, we
additionally need the following

\begin{assumption}
    \label{ass:AIsSubdifferential}
    The maximal monotone operator $A$ is given as a subdifferential of a
    proper, convex and lower semicontinuous function
    $\phi \colon  \mathspace{H} \rightarrow (-\infty,\infty]$, that is,
    $A = \partial\phi$.
\end{assumption}

\begin{theorem}[Continuity of the solution operator]\label{thm:SolOpcont}
    Let $\{ z_{n,0} \}_{n \in \mathbb{N}} \subset \mathspace{H}$ and
    $\sequence{\ell}{n} \subset \mathcal{U}(z_{n,0},D(A))$ be sequences
    such that $z_{n,0} \rightarrow z_0$ in $\mathspace{H}$,
    $\ell_n \rightharpoonup \ell$ in $\WZ{\mathspace{X}}$ and
    $\ell_n \rightarrow \ell$ in $\L{1}{\mathspace{X}}$.  Moreover,
    denote the solution of
    \begin{equation*}
        \boverdot{z}_n \in A(R\ell_n - Qz_n), \mediumspace z_n(0) = z_{n,0}
    \end{equation*}
    by $z_n \in H^1(0,T;\HH)$ (whose existence is guaranteed by
    \cref{thm:auxExistence}).  Then $z_n \rightharpoonup z$ in
    $\WZ{\mathspace{H}}$ and $z_n \rightarrow z$ in
    $\CO{\mathspace{H}}$.

    If additionally $\ell_n \rightarrow \ell$ in $\WZ{\mathspace{X}}$,
    $A$ fulfills \cref{ass:AIsSubdifferential}, and
    $\phi(R\ell_n(0) - Qz_{n,0}) \rightarrow \phi(R\ell(0) - Qz_0)$,
    then $z_n \rightarrow z$ in $\WZ{\mathspace{H}}$.
\end{theorem}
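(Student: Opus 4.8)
The plan is to obtain the first assertion from \cref{lem:weakConvergenceInW1RByBoundednessOfTheDerivative} applied with the constant sequence of operators $A_n \defn A$, and the second (strong) assertion from an energy identity for the convex subdifferential combined with the uniform convexity of $\LZ{\mathspace{H}}$.

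For the first part I would begin by verifying that $\boverdot z_n$ is bounded in $\LZ{\mathspace{H}}$. Since $\ell_n \weak \ell$ in $\WZ{\mathspace{X}}$, the sequence $\sequence{\ell}{n}$ is bounded in $\WZ{\mathspace{X}} \embed \CO{\mathspace{X}}$, and $\sequence{z}{n,0}$ is bounded in $\mathspace{H}$, so \eqref{eq:lsgestC} provides a uniform bound for $\norm{z_n}{\CO{\mathspace{H}}}$. Using $R \in L(\mathspace{X};\mathspace{Y})$ and $\mathspace{Y}\embed\mathspace{H}$, the set $\{R\ell_n(\tau) - Qz_n(\tau) : \tau\in[0,T],\, n\in\N\}$ is therefore a bounded subset of $\mathspace{H}$; it is moreover contained in $D(A)$, since $D(A)$ is closed, $t\mapsto R\ell_n(t)-Qz_n(t)$ is continuous, and $\boverdot z_n(t)\in A(R\ell_n(t)-Qz_n(t))$ for a.e.\ $t$. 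As $A^0$ is bounded on bounded subsets of $D(A)$ by the standing assumptions, and $\boverdot\ell_n$ is bounded in $\LZ{\mathspace{X}}$, estimate~\eqref{eq:lsgestH1} yields the claimed bound on $\boverdot z_n$. Since the constant sequence $A_n\defn A$ trivially satisfies~\eqref{eq:yosidaApproximationAssumption}, and $\ell_n\to\ell$ in $\L{1}{\mathspace{X}}$ and $z_{n,0}\to z_0$ in $\mathspace{H}$ by hypothesis, \cref{lem:weakConvergenceInW1RByBoundednessOfTheDerivative} then gives $z_n\weak z$ in $\WZ{\mathspace{H}}$ and $z_n\to z$ in $\CO{\mathspace{H}}$.

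For the second part, set $q_n\defn R\ell_n-Qz_n$ and $q\defn R\ell-Qz$, so that $\boverdot z_n(t)\in\partial\phi(q_n(t))$ and $\boverdot z(t)\in\partial\phi(q(t))$ for a.e.\ $t$. By the chain rule for convex functions along $H^1$-curves (cf.~\cite[Lemme~3.3]{brezis}), the maps $t\mapsto\phi(q_n(t))$ and $t\mapsto\phi(q(t))$ are absolutely continuous with $\tfrac{d}{dt}\phi(q_n(t))=\scalarproduct{\boverdot z_n(t)}{\boverdot q_n(t)}{\mathspace{H}}$, and likewise for $q$; integrating over $[0,T]$, inserting $\boverdot q_n=R\boverdot\ell_n-Q\boverdot z_n$, and using that $Q$ is self-adjoint, one arrives at
\begin{equation*}
    \int_0^T \scalarproduct{Q\boverdot z_n}{\boverdot z_n}{\mathspace{H}}\,dt
    = \int_0^T \scalarproduct{\boverdot z_n}{R\boverdot\ell_n}{\mathspace{H}}\,dt
    - \phi(q_n(T)) + \phi(q_n(0)),
\end{equation*}
together with the analogous identity for $z$. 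Now I would pass to the limit: from $\ell_n\to\ell$ in $\WZ{\mathspace{X}}$ one gets $R\boverdot\ell_n\to R\boverdot\ell$ in $\LZ{\mathspace{H}}$, which, combined with $\boverdot z_n\weak\boverdot z$ in $\LZ{\mathspace{H}}$ from the first part, gives convergence of the first term on the right-hand side. The boundary term $\phi(q_n(0))\to\phi(q(0))$ by assumption, while $q_n(T)\to q(T)$ in $\mathspace{H}$ (using $z_n\to z$ in $\CO{\mathspace{H}}$ and $\ell_n\to\ell$ in $\WZ{\mathspace{X}}\embed\CO{\mathspace{X}}$) together with lower semicontinuity of $\phi$ gives $\liminf_{n}\phi(q_n(T))\ge\phi(q(T))$. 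Hence
\begin{equation*}
    \limsup_{n\to\infty}\int_0^T \scalarproduct{Q\boverdot z_n}{\boverdot z_n}{\mathspace{H}}\,dt
    \le \int_0^T \scalarproduct{Q\boverdot z}{\boverdot z}{\mathspace{H}}\,dt .
\end{equation*}
Since $\scalarproduct{Q\cdot}{\cdot}{\mathspace{H}}$ is an inner product on $\mathspace{H}$ equivalent to the given one ($Q$ coercive, self-adjoint, bounded), the left-hand side is the square of an equivalent norm of $\boverdot z_n$ on $\LZ{\mathspace{H}}$; combining the above $\limsup$-estimate with $\boverdot z_n\weak\boverdot z$ and weak lower semicontinuity of this norm yields $\int_0^T \scalarproduct{Q\boverdot z_n}{\boverdot z_n}{\mathspace{H}}\,dt\to\int_0^T \scalarproduct{Q\boverdot z}{\boverdot z}{\mathspace{H}}\,dt$, and hence $\boverdot z_n\to\boverdot z$ in $\LZ{\mathspace{H}}$ by uniform convexity (weak convergence plus norm convergence). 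Together with $z_n(0)=z_{n,0}\to z_0$, this gives $z_n\to z$ in $\WZ{\mathspace{H}}$.

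The step I expect to require the most care is the energy identity: verifying the hypotheses of the convex chain rule (in particular that $q,q_n\in\WZ{\mathspace{H}}$ and that $\phi(q(0)),\phi(q_n(0))$ are finite, which follows from $q(0),q_n(0)\in D(A)\subset\operatorname{dom}\phi$), and making sure the final-time boundary term enters with the correct sign via lower semicontinuity so that the limsup-inequality points the right way. The boundedness of $\boverdot z_n$ and the first part are comparatively routine given the a priori estimates of \cref{thm:auxExistence}.
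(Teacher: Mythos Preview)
Your proposal is correct and follows essentially the same approach as the paper: the first assertion is obtained by bounding $\boverdot z_n$ via the a priori estimates~\eqref{eq:lsgestC}--\eqref{eq:lsgestH1} and then invoking \cref{lem:weakConvergenceInW1RByBoundednessOfTheDerivative} with $A_n\equiv A$, and the second via the chain rule \cite[Lemme~3.3]{brezis} for $\phi$ along $q_n$, lower semicontinuity of $\phi$ at the terminal time, and the weak-plus-norm-convergence argument in the $Q$-inner product. The paper's presentation of the second part is slightly more compressed (it writes the $\limsup$ chain directly rather than isolating the energy identity), but the ingredients and logic are identical.
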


\begin{proof} Thanks to \cref{thm:auxExistence}, to be more precise~\eqref{eq:lsgestC}, $\{z_n\}$ is bounded in $C([0,T];\HH)$.  Since
    $A^0$ is bounded on bounded sets by assumption,~\eqref{eq:lsgestH1}
    then gives that $\{\boverdot{z}_n\}$ is bounded in
    $\LZ{\mathspace{H}}$.  Therefore, we can apply
    \cref{lem:weakConvergenceInW1RByBoundednessOfTheDerivative} with
    $A_n \defn  A$ for all $n \in \mathbb{N}$ to obtain
    $z_n \rightharpoonup z$ in $\WZ{\mathspace{H}}$ and
    $z_n \rightarrow z$ in $\CO{\mathspace{}H}$.

    If additionally $\ell_n \rightarrow \ell$ in $\WZ{\mathspace{X}}$,
    $A$ fulfills \cref{ass:AIsSubdifferential}, and
    $\phi(R\ell_n(0) - Qz_{n,0}) \rightarrow \phi(R\ell(0) - Qz_0)$, we
    can follow the lines of~\cite[Theorem~4.2 step 3)]{groger} to get
    \begin{align*}
        \limsup_{n \rightarrow \infty} \int_0^T (Q \boverdot{z}_n, \boverdot{z}_n)_\HH\, dt
        &= \limsup_{n \rightarrow \infty}
        -(R\boverdot{\ell}_n - Q\boverdot{z}_n, \boverdot{z}_n)_{\LZ{\mathspace{H}}}
        + (R\boverdot{\ell}, \boverdot{z})_{\LZ{\mathspace{H}}} \\
        & = \limsup_{n \rightarrow \infty} \phi(R\ell_n(0) - Qz_{n,0}) - \phi(R\ell_n(T) - Qz_n(T))
        + (R\boverdot{\ell}, \boverdot{z})_{\LZ{\mathspace{H}}} \\
        & \leq \phi(R\ell(0) - Qz_0) - \phi(R\ell(T) - Qz(T))
        + (R\boverdot{\ell}, \boverdot{z})_{\LZ{\mathspace{H}}} \\
        & = \int_0^T (Q \boverdot{z}, \boverdot{z})_\HH\, dt
    \end{align*}
    where the second and last equation follows from~\cite[Lemme~3.3]{brezis}.  Hence, by equipping $\HH$ with the
    equivalent norm $\sqrt{(Q \cdot, \cdot)_\HH}$, the strong convergence
    $z_n \rightarrow z$ in $\WZ{\mathspace{H}}$ follows from
    \cite[Proposition 3.32]{brezis2010functional}.
\end{proof}

\begin{theorem}[Convergence of the Yosida-approximation]
    \label{thm:continuityOfS}
    The statement of
    \cref{thm:SolOpcont}
    holds true when
    $z_n$
    is,
    for every $n \in \mathbb{N}$,
    the solution of
    \begin{align}
        \label{eq:generalYosidaAux}
        \boverdot{z}_n = A_{\lambda_n}(R\ell_n - Qz_n), \mediumspace z_n(0) = z_{n,0},
    \end{align}
    where
    $\sequence{\lambda}{n} \subset (0,\infty)$
    is a sequence converging to zero.
\end{theorem}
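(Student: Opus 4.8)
The plan is to reduce the statement to \cref{thm:SolOpcont} by comparing, for every $n$, the regularized solution $z_n$ of~\eqref{eq:generalYosidaAux} (whose unique existence follows from Banach's fixed point theorem, exactly as in the proof of~\cref{prp:LInftyBoundAgainstDerivative}) with the solution $\tilde z_n \in \WZ{\HH}$ of the \emph{unregularized} equation
\begin{equation*}
    \boverdot{\tilde z}_n \in A(R\ell_n - Q\tilde z_n),\qquad \tilde z_n(0)=z_{n,0},
\end{equation*}
which exists by~\cref{thm:auxExistence} since $\ell_n\in\mathcal{U}(z_{n,0},D(A))$. The bridge between the two is~\cref{prp:LInftyBoundAgainstDerivative}: by~\cref{rem:yosidaest}, applied with $\mathcal{A}=A$, $g=\ell_n$, $\zeta_0=z_{n,0}$ and $\lambda=\lambda_n$, the inequality~\eqref{eq:yosidaest} holds verbatim for the pair $(z_n,\tilde z_n)$ with the \emph{same} data; discarding the nonnegative $\CO{\HH}$-term and dividing by $\lambda_n/\gamma_Q$, it yields in particular
\begin{equation}\label{eq:cmpYos}
    \norm{\boverdot{z}_n}{\L{2}{\HH}}^2 + \norm{\boverdot{z}_n-\boverdot{\tilde z}_n}{\L{2}{\HH}}^2 \;\leq\; \norm{\boverdot{\tilde z}_n}{\L{2}{\HH}}^2 .
\end{equation}

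For the first assertion, \cref{thm:SolOpcont} applied to $\tilde z_n$ already gives $\tilde z_n\weak z$ in $\WZ{\HH}$ and $\tilde z_n\to z$ in $\CO{\HH}$; in particular $\{\boverdot{\tilde z}_n\}$ is bounded in $\L{2}{\HH}$, hence so is $\{\boverdot{z}_n\}$ by~\eqref{eq:cmpYos}. I would then invoke~\cref{lem:weakConvergenceInW1RByBoundednessOfTheDerivative} with $A_n\defn A_{\lambda_n}$: these operators are single-valued, Lipschitz continuous and hence maximal monotone; their Yosida approximations $A_{n,\lambda}=A_{\lambda_n+\lambda}$ satisfy~\eqref{eq:yosidaApproximationAssumption} by~\cref{lem:yosidaFulfillsYosidaApproximationAssumption}; and, $A_{\lambda_n}$ being single-valued,~\eqref{eq:generalYosidaAux} is exactly~\eqref{eq:Aneq}. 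The lemma then delivers $z_n\weak z$ in $\WZ{\HH}$ and $z_n\to z$ in $\CO{\HH}$.

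For the second assertion, suppose in addition $\ell_n\to\ell$ in $\WZ{\XX}$, that $A=\partial\phi$ as in~\cref{ass:AIsSubdifferential}, and $\phi(R\ell_n(0)-Qz_{n,0})\to\phi(R\ell(0)-Qz_0)$. Then the second part of~\cref{thm:SolOpcont} applied to $\tilde z_n$ yields the strong convergence $\tilde z_n\to z$ in $\WZ{\HH}$, so that $\norm{\boverdot{\tilde z}_n}{\L{2}{\HH}}\to\norm{\boverdot{z}}{\L{2}{\HH}}$. Since $\boverdot{z}_n\weak\boverdot{z}$ in $\L{2}{\HH}$ by the first part, weak lower semicontinuity of the norm together with~\eqref{eq:cmpYos} gives
\begin{equation*}
    \norm{\boverdot{z}}{\L{2}{\HH}} \leq \liminf_{n\to\infty}\norm{\boverdot{z}_n}{\L{2}{\HH}} \leq \limsup_{n\to\infty}\norm{\boverdot{z}_n}{\L{2}{\HH}} \leq \lim_{n\to\infty}\norm{\boverdot{\tilde z}_n}{\L{2}{\HH}} = \norm{\boverdot{z}}{\L{2}{\HH}},
\end{equation*}
whence $\norm{\boverdot{z}_n}{\L{2}{\HH}}\to\norm{\boverdot{z}}{\L{2}{\HH}}$. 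Plugging this back into~\eqref{eq:cmpYos} shows $\boverdot{z}_n-\boverdot{\tilde z}_n\to0$ in $\L{2}{\HH}$, and combined with $\boverdot{\tilde z}_n\to\boverdot{z}$ in $\L{2}{\HH}$ and $z_n\to z$ in $\CO{\HH}$ this gives $z_n\to z$ in $\WZ{\HH}$. (Equivalently, $\boverdot{z}_n\weak\boverdot{z}$ together with convergence of the norms yields strong convergence in the Hilbert space $\L{2}{\HH}$, cf.~\cite[Proposition~3.32]{brezis2010functional}.)

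I do not expect any single hard estimate here: the delicate analysis — the uniform a~priori bounds and, above all, the strong $\WZ{\HH}$-convergence under~\cref{ass:AIsSubdifferential} — has already been carried out in~\cref{thm:auxExistence,thm:SolOpcont} for the unregularized sequence $\tilde z_n$ with the \emph{identical} data $(\ell_n,z_{n,0})$, and~\cref{rem:yosidaest} transports it to $z_n$ because the comparison inequality~\eqref{eq:yosidaest} has the same right-hand side. The main point to get right is exactly this comparison: one must observe that~\eqref{eq:yosidaest}, though stated for fixed reference data, applies to the $n$-dependent pair $(z_n,\tilde z_n)$ via~\cref{rem:yosidaest}, and that $A_{\lambda_n}$ genuinely fits the hypotheses of~\cref{lem:weakConvergenceInW1RByBoundednessOfTheDerivative}.
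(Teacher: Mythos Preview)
Your proof is correct and follows essentially the same route as the paper: introduce the auxiliary unregularized sequence (the paper calls it $v_n$, you call it $\tilde z_n$), use \cref{prp:LInftyBoundAgainstDerivative}/\cref{rem:yosidaest} to bound $\|\boverdot z_n\|_{\LZ{\HH}}$ by $\|\boverdot{\tilde z}_n\|_{\LZ{\HH}}$, then apply \cref{lem:weakConvergenceInW1RByBoundednessOfTheDerivative} with $A_n=A_{\lambda_n}$ via \cref{lem:yosidaFulfillsYosidaApproximationAssumption}; for the strong part, both arguments reduce to $\boverdot z_n\weak\boverdot z$ together with $\limsup\|\boverdot z_n\|\leq\lim\|\boverdot{\tilde z}_n\|=\|\boverdot z\|$ and \cite[Proposition~3.32]{brezis2010functional}. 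Your extra detour through $\boverdot z_n-\boverdot{\tilde z}_n\to 0$ is harmless but unnecessary, as you yourself note in the parenthetical.
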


\begin{proof} According to
    \cref{lem:yosidaFulfillsYosidaApproximationAssumption}, the sequence
    of maximal monotone operators $A_n \defn  A_{\lambda_n}$ fulfills
    \cref{eq:yosidaApproximationAssumption} so that it only remains to
    prove that $\boverdot{z}_n$ is bounded in $\LZ{\mathspace{H}}$ to
    apply again
    \cref{lem:weakConvergenceInW1RByBoundednessOfTheDerivative}.  To
    this end, let $v_n \in \WZ{\mathspace{H}}$ be the solution of
    \begin{align*}
        \boverdot{v}_n \in A(R\ell_n - Qv_n), \mediumspace v_n(0) = z_{n,0}, 
    \end{align*}
    whose existence is guaranteed by \cref{thm:auxExistence} (note that
    $\ell_n \in \mathcal{U}(z_{n,0},D(A))$ by assumption).  Thanks to
    \cref{thm:SolOpcont}, it holds $v_n \rightharpoonup z$ in
    $\WZ{\mathspace{H}}$.  From \cref{prp:LInftyBoundAgainstDerivative},
    it follows
    $\norm{\boverdot{z}_n}{\LZ{\mathspace{H}}} \leq
    \norm{\boverdot{v}_n}{\LZ{\mathspace{H}}}$ and consequently,
    $\boverdot{z}_n$ is bounded in $\LZ{\mathspace{H}}$.  Thus,
    \cref{lem:weakConvergenceInW1RByBoundednessOfTheDerivative} yields
    $z_n \rightharpoonup z$ in $\WZ{\mathspace{H}}$ and
    $z_n \rightarrow z$ in $\CO{\mathspace{H}}$, as claimed.

    If additionally $\ell_n \rightarrow \ell$ in $\WZ{\mathspace{X}}$,
    $A$ fulfills \cref{ass:AIsSubdifferential}, and
    $\phi(R\ell_n(0) - Qz_{n,0}) \rightarrow \phi(R\ell(0) - Qz_0)$,
    then \cref{thm:SolOpcont} implies $v_n \rightarrow z$ in
    $\WZ{\mathspace{H}}$ so that
    \cite[Proposition 3.32]{brezis2010functional}  
    gives the
    strong convergence $z_n \rightarrow z$ in $\WZ{\mathspace{H}}$
    because of
    $\norm{\boverdot{z}_n}{\LZ{\mathspace{H}}} \leq
    \norm{\boverdot{v}_n}{\LZ{\mathspace{H}}}$ as seen above.  
\end{proof}

\begin{remark}
    The assertions of \cref{thm:SolOpcont} and \cref{thm:continuityOfS}
    are remarkable due to the following: As a first approach to prove
    the (strong) convergence of the states in $\WZ{\HH}$, one is tempted
    to follow the lines of the proofs of~\cite[Lemme~3.1]{brezis} and
    \cref{prp:LInftyBoundAgainstDerivative}, respectively. This would
    however require the strong convergence of the \emph{derivatives} of
    the given loads, which we want to avoid in order to enable less
    regular controls.  The detour via the Yosida-regularization in
    \cref{lem:weakConvergenceInW1RByBoundednessOfTheDerivative} allows
    to overcome this issue.
\end{remark}
\begin{remark}
    If we would allow for more regular controls, then we could weaken
    the assumptions on the maximal monotone operator $A$.  For instance,
    if $\ell \in H^2(0,T;\mathspace{X})$, then we can drop the
    assumptions that $D(A)$ is closed and $A^0$ is bounded on bounded
    sets.  In this case, one can use~\cite[Theorem~55.A]{zeidler3}
    instead of~\cite[Proposition~3.4]{brezis} in the proof of
    \cref{thm:auxExistence}.  The proof of~\cite[Theorem~55.A]{zeidler3}
    also gives the boundedness of $\boverdot{z}_n$ in
    $L^\infty(0,T;\HH)$ in this case. Thus,
    \cref{lem:weakConvergenceInW1RByBoundednessOfTheDerivative} is again
    applicable and we can argue similar as we did in the proof of
    \cref{thm:continuityOfS} to verify the previous convergence results.
    In this setting, we would not have any restrictions on $A$ (except
    monotonicity), but would need more regular loads, which is not
    favorable, as the latter serve as control variables in our
    optimization problem.  Moreover, the boundedness assumption on $A$
    is fulfilled for our concrete application problem in
    \cref{subsec:vonmises}. Therefore, we decided to choose the present
    setting and to impose the additional boundedness assumption on $A$.
\end{remark}
\begin{remark}
    It is to be noted that most of the above results can also be shown
    in more general Bochner-Sobolev spaces, that is, when loads are
    contained in $\W{r}{\mathspace{X}}$ and states in
    $\W{r}{\mathspace{H}}$ for some $r\in [1, \infty)$. However, since a
    Hilbert space setting is advantageous when it comes to the
    derivation of optimality conditions, we focus on the case $r=2$.
\end{remark}

Unfortunately, the Yosida-approximation is frequently not sufficient
for the de\-ri\-va\-tion of optimality conditions by means of the standard
adjoint approach, since the solution operator associated with~\eqref{eq:yosida} is in general still not G\^ateaux differentiable.
Therefore, we apply a second regularization turning the
Yosida-approximation of $A$ into a smooth operator. The properties
needed to ensure convergence of this second regularization are
investigated in the following

\begin{lemma}[Convergence of the Regularized Yosida-Approximation]
    \label{lem:approximationOfTheYosidaApproximation}
    Consider a sequence $\sequence{\lambda}{n}\subset (0, \infty)$ and a
    sequence of Lipschitz continuous operators
    $A_n \colon  \mathcal{H} \rightarrow \mathcal{H}$, $n\in \mathbb{N}$, such
    that
    \begin{equation}
        \lambda_n \searrow 0 \quad \text{and} \quad 
        \frac{1}{\lambda_n}\exp \Big(\frac{T\norm{Q}{L(\mathcal{H};\mathcal{H})}}{\lambda_n} \Big) 
        \,\sup_{h\in \HH} \norm{A_n(h) - A_{\lambda_n}(h)}{\mathcal{H}}\, \rightarrow 0.
    \end{equation}
    Let moreover $\sequence{\ell}{n} \subset \CO{\mathcal{X}}$ be given
    and denote by $z_n, z_{\lambda_n} \in \C{1}{\mathcal{H}}$ the
    solutions of
    \begin{alignat}{3}
        \boverdot{z}_n &= A_n(R\ell_n - Qz_n), & \quad  z_n(0) &= z_0, \label{eq:Aneq1}\\
        \text{and} \qquad \boverdot{z}_{\lambda_n} &=
        A_{\lambda_n}(R\ell_n - Qz_{\lambda_n}), & \quad z_{\lambda_n}(0)
        &= z_0. \label{eq:Alambdaneq}
    \end{alignat}
    Then $\norm{z_n - z_{\lambda_n}}{\C{1}{\mathcal{H}}} \rightarrow 0$.
\end{lemma}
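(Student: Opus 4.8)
The plan is to compare the two solutions via a Gronwall argument in $\C{1}{\HH}$, exploiting that both right-hand sides involve operators that are Lipschitz continuous with constant $\lambda_n^{-1}$, and that they differ uniformly by the quantity $\sup_{h\in\HH}\norm{A_n(h)-A_{\lambda_n}(h)}{\HH}$. First I would write, for fixed $n$ and almost every $t\in[0,T]$,
\[
    \boverdot{z}_n(t) - \boverdot{z}_{\lambda_n}(t)
    = \big(A_n - A_{\lambda_n}\big)(R\ell_n(t) - Qz_n(t))
    + A_{\lambda_n}(R\ell_n(t) - Qz_n(t)) - A_{\lambda_n}(R\ell_n(t) - Qz_{\lambda_n}(t)).
\]
Taking norms, using the uniform bound on $A_n - A_{\lambda_n}$ and the $\lambda_n^{-1}$-Lipschitz continuity of $A_{\lambda_n}$ together with the boundedness of $Q$ on $\HH$, I obtain
\[
    \norm{\boverdot{z}_n(t) - \boverdot{z}_{\lambda_n}(t)}{\HH}
    \leq \sup_{h\in\HH}\norm{A_n(h)-A_{\lambda_n}(h)}{\HH}
    + \frac{\norm{Q}{L(\HH;\HH)}}{\lambda_n}\,\norm{z_n(t) - z_{\lambda_n}(t)}{\HH}.
\]
Since $z_n(0) = z_{\lambda_n}(0) = z_0$, integrating in time gives a bound on $\norm{z_n(t)-z_{\lambda_n}(t)}{\HH}$ in terms of its own integral plus $t\,\sup_{h}\norm{A_n(h)-A_{\lambda_n}(h)}{\HH}$, and Gronwall's inequality then yields
\[
    \norm{z_n - z_{\lambda_n}}{\CO{\HH}}
    \leq T\,\exp\!\Big(\tfrac{T\norm{Q}{L(\HH;\HH)}}{\lambda_n}\Big)\,\sup_{h\in\HH}\norm{A_n(h)-A_{\lambda_n}(h)}{\HH}.
\]

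Next I would transfer this into an estimate on the derivatives. Plugging the just-obtained bound for $\norm{z_n-z_{\lambda_n}}{\CO{\HH}}$ back into the pointwise inequality for $\norm{\boverdot{z}_n(t)-\boverdot{z}_{\lambda_n}(t)}{\HH}$ above gives
\[
    \norm{\boverdot{z}_n - \boverdot{z}_{\lambda_n}}{\CO{\HH}}
    \leq \Big(1 + \tfrac{T\norm{Q}{L(\HH;\HH)}}{\lambda_n}\exp\!\big(\tfrac{T\norm{Q}{L(\HH;\HH)}}{\lambda_n}\big)\Big)\sup_{h\in\HH}\norm{A_n(h)-A_{\lambda_n}(h)}{\HH}.
\]
(One also needs that both $z_n$ and $z_{\lambda_n}$ genuinely lie in $\C{1}{\HH}$: since $\ell_n\in\CO{\XX}$ and the right-hand sides are Lipschitz in the state, a standard Picard–Lindelöf / contraction argument as in the proof of \cref{prp:LInftyBoundAgainstDerivative} gives existence, uniqueness and the $C^1$-regularity.) Combining the two displays and recalling that the $\C{1}{\HH}$-norm is the sum (or max) of the $\CO{\HH}$-norms of the function and its derivative, the whole expression is controlled by
\[
    C\Big(1 + \tfrac{1}{\lambda_n}\exp\!\big(\tfrac{T\norm{Q}{L(\HH;\HH)}}{\lambda_n}\big)\Big)\sup_{h\in\HH}\norm{A_n(h)-A_{\lambda_n}(h)}{\HH},
\]
which tends to zero precisely by the assumed convergence $\tfrac{1}{\lambda_n}\exp\big(\tfrac{T\norm{Q}{L(\HH;\HH)}}{\lambda_n}\big)\sup_h\norm{A_n(h)-A_{\lambda_n}(h)}{\HH}\to 0$.

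The only genuinely delicate point is bookkeeping the exponential blow-up factor $\exp(T\norm{Q}{L(\HH;\HH)}/\lambda_n)$ against the $\lambda_n^{-1}$ prefactor: the Gronwall constant is exponentially large in $\lambda_n^{-1}$, and the extra $\lambda_n^{-1}$ from differentiating makes it worse, so the hypothesis must be used in exactly the stated form, with the $\lambda_n^{-1}$ \emph{outside} the exponential — there is no slack. Everything else is a routine Gronwall estimate; no convexity or maximal-monotonicity structure of the limiting operator is needed here, only the uniform Lipschitz bound $\lambda_n^{-1}$ on $A_{\lambda_n}$ from~\cite[Proposition~55.2(b)]{zeidler3} and the boundedness of $Q$ on $\HH$.
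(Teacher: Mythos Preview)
Your proposal is correct and follows essentially the same route as the paper: you write down the identical splitting of $\boverdot{z}_n-\boverdot{z}_{\lambda_n}$, use the $\lambda_n^{-1}$-Lipschitz constant of $A_{\lambda_n}$ to get the same pointwise differential inequality, apply Gronwall, and then substitute back to control the derivative; the paper does exactly this and arrives at the bound $\norm{\boverdot{z}_n(t)-\boverdot{z}_{\lambda_n}(t)}{\HH}\leq \frac{\norm{Q}{L(\HH;\HH)}}{\lambda_n}\big(T\exp(\tfrac{\norm{Q}{L(\HH;\HH)}T}{\lambda_n})+1\big)c_n$, which is equivalent to yours up to harmless constants. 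One cosmetic point: since both solutions are in $\C{1}{\HH}$ you may drop the ``for almost every $t$'' qualifier.
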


\begin{proof} Again, thanks to the Lipschitz continuity of $A_n$ and
    $A_{\lambda_n}$, the existence and uniqueness of $z_n$ and
    $z_{n,\lambda}$ follows from Banach's contraction principle by
    classical arguments. Moreover, the continuity of $\ell_n$ carries
    over to the continuity of $\boverdot{z}_n$ and
    $\boverdot{z}_{n,\lambda}$.  Let us abbreviate
    $c_n \defn  \sup_{h\in \HH} \norm{A_n(h) -
    A_{\lambda_n}(h)}{\mathcal{H}}$.  Then, in light of~\eqref{eq:Aneq1} and~\eqref{eq:Alambdaneq}, we find
    \begin{align*}
        \norm{\boverdot{z}_n(t) - \boverdot{z}_{\lambda_n}(t)}{\mathcal{H}}
        \leq c_n + \frac{\norm{Q}{L(\mathcal{H};\mathcal{H})}}{\lambda_n}
        \norm{z_n(t) - z_{\lambda_n}(t)}{\mathcal{H}} \quad \forall\, t\in [0,T]
    \end{align*}
    so that Gronwall's inequality yields
    \begin{align*}
        \norm{\boverdot{z}_n(t) - \boverdot{z}_{\lambda_n}(t)}{\mathcal{H}}
        \leq \frac{\norm{Q}{L(\mathcal{H};\mathcal{H})}}{\lambda_n}
        \Big{(}
        T \exp \Big{(} \frac{\norm{Q}{L(\mathcal{H};\mathcal{H})}}{\lambda_n} \,T \Big{)} + 1
        \Big{)}
        c_n  \quad \forall\, t\in [0,T],
    \end{align*}
    which completes the proof.  
\end{proof}

\section{Existence and Approximation of Optimal
Controls}\label{sec:exopt}

Now we turn to the optimal control problem~\eqref{eq:optprob}. We
first address the existence of optimal solutions and afterwards
discuss the approximation of~\eqref{eq:optprob} in \cref{subsec:4.2}.

\subsection{Existence of Optimal Controls}
\label{subsec:3.2}

Based on \cref{thm:auxExistence}, we reduce the optimal control
problem~\eqref{eq:optprob} into a problem in the control variable
only. Recall that the control space $\XX_c$ embeds compactly in $\XX$.

\begin{definition}\label{def:reduced}
    Let $z_0 \in \mathspace{H}$ and $M \subset D(A)$. Due to
    \cref{thm:auxExistence}, there exists for every
    $\ell \in \WZ{\mathspace{X}_c} \cap \mathcal{U}(z_0;M)$ a solution
    $z \in \WZ{\mathspace{H}}$ of the state equation in
    \cref{eq:aux}. Consequently, we may define the solution operator
    \begin{equation*}
        \mathcal{S} \colon  \WZ{\mathspace{X}_c} \cap \mathcal{U}(z_0;M) \ni \ell \rightarrow z \in \WZ{\mathspace{H}}.	
    \end{equation*}
    This operator will be frequently called control-to-state map.
\end{definition}

With the definition above, problem \cref{eq:optprob} is equivalent to
the \emph{reduced problem}:
\begin{equation*}~\eqref{eq:optprob} \quad \Longleftrightarrow \quad 
    \left\{\;
        \begin{aligned}
            \min \quad & J(\mathcal{S}(\ell),\ell), \\
            \text{s.t.} \quad & \ell \in \WZ{\mathspace{X}_c} \cap
            \mathcal{U}(z_0;M).
        \end{aligned}
    \right.
\end{equation*}

Recall our standing assumptions on the objective, namely that
$J(z,\ell) = \Psi(z,\ell) + \Phi(\ell)$, where
$\Psi\colon  \WZ{\mathspace{W}} \times \WZ{\mathspace{X}_c} \to \R$ is
weakly lower semicontinuous and bounded from below and
$\Phi\colon  \WZ{\mathspace{X}_c} \to \R$ is weakly lower semicontinuous and
coercive.  These assumptions allow us to show the existence of
(globally) optimal solutions:

\begin{theorem}[Existence of Optimal Solutions]
    \label{thm:existenceOfAGlobalSolution}
    Let $z_0 \in \mathspace{H}$ and $M$ be a closed subset of
    $D(A)$. Then there exists a global solution of \cref{eq:optprob}.
\end{theorem}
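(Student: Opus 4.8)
The plan is to run the direct method of the calculus of variations on the reduced problem $\min\{J(\mathcal S(\ell),\ell):\ell\in\WZ{\XX_c}\cap\UU(z_0;M)\}$. Assuming the feasible set $\WZ{\XX_c}\cap\UU(z_0;M)$ is nonempty (otherwise there is nothing to prove), I first note that since $\Psi$ is bounded from below and $\Phi$ is weakly lower semicontinuous and coercive on the reflexive space $\WZ{\XX_c}$, the functional $\Phi$ is bounded from below, hence $J$ is bounded from below on the feasible set and its infimum $j\in\R$ is finite. Choosing a minimizing sequence $\sequence{\ell}{n}\subset\WZ{\XX_c}\cap\UU(z_0;M)$ with $J(\mathcal S(\ell_n),\ell_n)\to j$, the boundedness of the objective values combined with the lower bound on $\Psi$ and the coercivity of $\Phi$ forces $\sequence{\ell}{n}$ to be bounded in $\WZ{\XX_c}$.

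Next I extract limits. As $\XX_c$ is reflexive, so is $\WZ{\XX_c}$, and a subsequence (not relabeled) satisfies $\ell_n\weak\bar\ell$ in $\WZ{\XX_c}$. Because $\XX_c\embed\XX$ compactly, the Aubin--Lions--Simon lemma yields a compact embedding $\WZ{\XX_c}\embed\CO{\XX}$; passing to a further subsequence we thus get $\ell_n\to\bar\ell$ in $\CO{\XX}$, in particular $\ell_n\to\bar\ell$ in $\L{1}{\XX}$ and $\ell_n(0)\to\bar\ell(0)$ in $\XX$. Feasibility of the limit is then checked as follows: since $R\in L(\XX;\YY)$ and $\YY\embed\HH$, we have $R\ell_n(0)-Qz_0\to R\bar\ell(0)-Qz_0$ in $\HH$, and as each $R\ell_n(0)-Qz_0\in M$ with $M$ closed in $\HH$, also $R\bar\ell(0)-Qz_0\in M$, i.e.\ $\bar\ell\in\WZ{\XX_c}\cap\UU(z_0;M)$.

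Finally I invoke the continuity of the control-to-state map. The convergences $\ell_n\weak\bar\ell$ in $\WZ{\XX}$ (which follows from $\ell_n\weak\bar\ell$ in $\WZ{\XX_c}$ and the continuous embedding $\XX_c\embed\XX$), $\ell_n\to\bar\ell$ in $\L{1}{\XX}$, and the trivial $z_{n,0}=z_0\to z_0$ are precisely the hypotheses of \cref{thm:SolOpcont}, which gives $\mathcal S(\ell_n)\weak\mathcal S(\bar\ell)$ in $\WZ{\HH}$, hence also $\mathcal S(\ell_n)\weak\mathcal S(\bar\ell)$ in $\WZ{\WW}$ by $\HH\embed\WW$. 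Weak lower semicontinuity of $\Psi$ on $\WZ{\WW}\times\WZ{\XX_c}$ and of $\Phi$ on $\WZ{\XX_c}$ then yields
\begin{equation*}
    j=\liminf_{n\to\infty}\big(\Psi(\mathcal S(\ell_n),\ell_n)+\Phi(\ell_n)\big)\ \geq\ \Psi(\mathcal S(\bar\ell),\bar\ell)+\Phi(\bar\ell)=J(\mathcal S(\bar\ell),\bar\ell)\ \geq\ j,
\end{equation*}
so $\bar\ell$ is a global minimizer of the reduced problem and $(\mathcal S(\bar\ell),\bar\ell)$ solves \cref{eq:optprob}.

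I expect no serious obstacle: the genuinely nontrivial part, namely stability of the solution of the set-valued state equation \cref{eq:aux} under merely weak convergence of the loads, has already been isolated in \cref{thm:SolOpcont}. The points that still need a little care are the boundedness of the minimizing sequence — which uses that $\Psi$ is assumed only bounded from below in the state, not coercive, so the coercivity must come entirely from $\Phi$ and the control space — and the extraction of the strong $\CO{\XX}$-convergence via Aubin--Lions, which is what lets the pointwise-in-time compatibility constraint $R\ell(0)-Qz_0\in M$ pass to the limit.
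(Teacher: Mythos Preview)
Your proof is correct and follows essentially the same approach as the paper's: the direct method, boundedness of a minimizing sequence via coercivity of $\Phi$ and the lower bound on $\Psi$, extraction of a weak limit in $\WZ{\XX_c}$, strong convergence in $\CO{\XX}$ from the compact embedding $\XX_c\embed\XX$, feasibility of the limit via closedness of $M$, and then \cref{thm:SolOpcont} together with weak lower semicontinuity. Your write-up is in fact more detailed than the paper's sketch (e.g.\ you spell out the Aubin--Lions step and the argument that $\Phi$ is bounded from below), but the strategy is the same.
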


\begin{proof} Based on \cref{thm:SolOpcont}, the proof follows the
    standard direct method of the calculus of variations.  First of all,
    since $\Psi$ is bounded from below and $\Phi$ is coercive, every
    infimal sequence of controls is bounded in $\WZ{\mathspace{X}_c}$
    and thus admits a weakly converging subsequence.  Due to the compact
    embedding of $\mathspace{X}_c$ in $\XX$, this sequence converges
    strongly in $\CO{\mathspace{X}}$ so that the weak limit belongs to
    $\UU(z_0;M)$, due the closeness of $M$.  Moreover, thanks to weak
    convergence in $\WZ{\XX}$ and strong convergence in $\CO{\XX}$,
    \cref{thm:SolOpcont} gives weak convergence of the associated states
    in $\WZ{\HH}$. The weak lower semicontinuity of $\Psi$ and $\Phi$
    together with $\HH \embed \mathspace{W}$ then implies the optimality
    of the weak limit.  
\end{proof}

Clearly, in view of the nonlinear state equation, one cannot expect
the optimal solution to be unique.  Note that, since $D(A)$ is closed
by our standing assumptions, the choice $M=D(A)$ is feasible.

\subsection{Convergence of Global Minimizers}
\label{subsec:4.2}

While the existence of optimal solutions for~\eqref{eq:optprob} can be
shown by well-established techniques as seen above, the derivation of
optimality conditions is all but standard because of the lack of
differentiability of the control-to-state map. We therefore apply a
regularization of $A$ built upon the Yosida-approximation in order to
obtain a smooth control-to-state mapping. In view of
\cref{lem:approximationOfTheYosidaApproximation}, this regularization
is assumed to satisfy the following

\begin{assumption}
    \label{ass:AnAssumption}
    Let $\sequence{A}{n}$ be a sequence of Lipschitz continuous
    operators from $\HH$ to $\HH$ such that, together with a sequence
    $\sequence{\lambda}{n} \subset (0, \infty)$, it holds
    \begin{equation}\label{eq:approxassu}
        \lambda_n \searrow 0 \quad \text{and} \quad 
        \frac{1}{\lambda_n}\exp \Big(\frac{T\norm{Q}{L(\mathcal{H};\mathcal{H})}}{\lambda_n} \Big) 
        \,\sup_{h\in \HH} \norm{A_n(h) - A_{\lambda_n}(h)}{\mathcal{H}}\, \rightarrow 0,
    \end{equation}
    i.e., the requirements in
    \cref{lem:approximationOfTheYosidaApproximation} are fulfilled.
\end{assumption}

In \cref{subsec:vonmises}, we show how to construct such a
regularization for a concrete application problem.  Given the
regularization of $A$, we define the corresponding optimal control
problem:
\begin{equation}\tag{\mbox{P$_n$}}\label{eq:optprobn}
    \left\{\quad
        \begin{aligned}
            \min \shortspace & J(z,\ell), \\
            \text{ s.t. } \shortspace & \boverdot{z} = A_n(R\ell - Qz),	\mediumspace z(0) = z_0, \\
            & (z,\ell) \in \WZ{\mathspace{H}} \times \big{(}
            \WZ{\mathspace{X}_c} \cap \mathcal{U}(z_0;M) \big{)}.
    \end{aligned}\right.
\end{equation}
Since $A_n$ is Lipschitz continuous, the equation
\begin{equation*}
    \boverdot{z} = A_n(R\ell - Qz), \quad z(0) = z_0
\end{equation*}
admits a unique solution for every $z_0\in \HH$ and every
$\ell \in \LZ{\mathspace{X}}$.  Similar to \cref{def:reduced}, we
denote the associated solution operator by
\begin{equation*}
    \mathcal{S}_n \colon  \LZ{\mathspace{X}} \to \WZ{\mathspace{H}}.	
\end{equation*}
Moreover, the solution operator associated with the
Yosida-approximation, i.e., the solution operator of
$\boverdot{z} = A_{\lambda_n}(R\ell - Qz)$, $z(0) = z_0$, is denoted
by
\begin{equation*}
    \mathcal{S}_{\lambda_n} \colon  \LZ{\mathspace{X}} \to \WZ{\mathspace{H}}.	
\end{equation*}

\begin{proposition}[Existence of Optimal Solutions of the Regularized Problems]\label{prop:exregopt}
    Let $n \in \mathbb{N}$, $z_0 \in \mathspace{H}$, and $M$ a closed
    subset of $D(A)$.  Then, under \cref{ass:AnAssumption}, there exists
    a global solution of \cref{eq:optprobn}.
\end{proposition}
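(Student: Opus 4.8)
The plan is to argue by the direct method of the calculus of variations, along the very same lines as the proof of \cref{thm:existenceOfAGlobalSolution}. The decisive simplification compared to the original problem~\eqref{eq:optprob} is that, since $A_n$ is globally Lipschitz continuous, the regularized control-to-state map $\mathcal{S}_n$ depends Lipschitz-continuously (in strong topologies) on the load, so that none of the Yosida machinery of \cref{sec:3} is needed here. Throughout I may assume that the feasible set $\WZ{\XX_c} \cap \UU(z_0;M)$ is nonempty (otherwise there is nothing to prove), so that the infimum $j$ of $J$ over this set is a real number; it is bounded from below since $\Psi$ is bounded from below and $\Phi$ is coercive, hence bounded from below as well.

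First I would pick a minimizing sequence $\{\ell_k\}_{k\in\N} \subset \WZ{\XX_c} \cap \UU(z_0;M)$ for the reduced functional $\ell \mapsto J(\mathcal{S}_n(\ell),\ell)$ and abbreviate $z_k \defn \mathcal{S}_n(\ell_k) \in \WZ{\HH}$. By coercivity of $\Phi$ and boundedness from below of $\Psi$, the sequence $\{\ell_k\}$ is bounded in $\WZ{\XX_c}$, so after extracting a subsequence we have $\ell_k \weak \ell$ in $\WZ{\XX_c}$. Since $\XX_c \embed \XX$ compactly, $\{\ell_k\}$ is equicontinuous into $\XX$ with pointwise relatively compact values, whence $\ell_k \to \ell$ in $\CO{\XX}$ by Arzel\`a--Ascoli; in particular $\ell_k(0) \to \ell(0)$ in $\XX$. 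For feasibility of the limit I would then use the continuity of $R\colon\XX\to\YY$ and the embedding $\YY \embed \HH$ to get $R\ell_k(0) - Qz_0 \to R\ell(0) - Qz_0$ in $\HH$, and conclude $R\ell(0) - Qz_0 \in M$ since $M$ is closed in $\HH$ (being a closed subset of $D(A)$), i.e.\ $\ell \in \UU(z_0;M)$.

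Next I would prove convergence of the states. Setting $z \defn \mathcal{S}_n(\ell)$ and subtracting the state equations $\boverdot{z}_k = A_n(R\ell_k - Qz_k)$ and $\boverdot{z} = A_n(R\ell - Qz)$, the Lipschitz constant $L_n$ of $A_n$ yields, for a.e.\ $t$, a bound on $\norm{\boverdot{z}_k(t) - \boverdot{z}(t)}{\HH}$ by $L_n\norm{R}{L(\XX;\HH)}\norm{\ell_k(t) - \ell(t)}{\XX} + L_n\norm{Q}{L(\HH;\HH)}\norm{z_k(t) - z(t)}{\HH}$; integrating from $0$ and invoking Gronwall's lemma gives $\norm{z_k - z}{\CO{\HH}} \le C(n)\norm{\ell_k - \ell}{\L{1}{\XX}} \to 0$. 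Feeding this back into the pointwise bound and using $\ell_k \to \ell$ in $\L{2}{\XX}$ shows $\boverdot{z}_k \to \boverdot{z}$ in $\L{2}{\HH}$, so $z_k \to z$ strongly in $\WZ{\HH}$, hence $z_k \weak z$ in $\WZ{\WW}$ because $\HH \embed \WW$. Then $(z_k,\ell_k) \weak (z,\ell)$ in $\WZ{\WW} \times \WZ{\XX_c}$, and weak lower semicontinuity of $\Psi$ and $\Phi$ together with superadditivity of $\liminf$ give $J(z,\ell) \le \liminf_{k} J(z_k,\ell_k) = j$; since $\ell$ is feasible and $z = \mathcal{S}_n(\ell)$, the pair $(z,\ell)$ is a global solution of~\eqref{eq:optprobn}.

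I do not expect a genuine obstacle here: the purpose of \cref{ass:AnAssumption} and of passing to the $A_n$-problems is precisely to make the control-to-state map smooth, hence in particular (strongly) continuous, so the analogue of the continuity result \cref{thm:SolOpcont} is here just the elementary Gronwall estimate above. The only points requiring a little care are the weak closedness of the admissible set $\WZ{\XX_c}\cap\UU(z_0;M)$ --- which is where the compact embedding $\XX_c\embed\XX$ and the closedness of $M$ enter --- and evaluating the objective in the correct topology, namely noting that the strong $\WZ{\HH}$-convergence of the states passes to weak $\WZ{\WW}$-convergence via $\HH\embed\WW$.
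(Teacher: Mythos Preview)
Your proposal is correct and follows essentially the same approach as the paper: establish Lipschitz continuity of $\mathcal{S}_n$ from $\LZ{\XX}$ to $\WZ{\HH}$ via the Lipschitz constant of $A_n$ and Gronwall's inequality, then run the direct method exactly as in the proof of \cref{thm:existenceOfAGlobalSolution}. The paper's proof merely sketches the Gronwall estimate and then refers back to that earlier argument, whereas you spell out the details (feasibility of the limit via closedness of $M$, strong convergence of the states, and the lower semicontinuity step); the substance is identical.
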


\begin{proof} Let $\ell_1, \ell_2\in \LZ{\XX}$ be arbitrary and
    define $z_i \defn  \mathcal{S}_n(\ell_i)$, $i=1,2$.  Then, due to the
    Lipschitz continuity of $A_n$, we have for almost all $t \in [0, T]$
    \begin{equation}
        \begin{aligned}
            \norm{\boverdot{z}_1(t) - \boverdot{z}_2(t)}{\mathspace{H}} &=
            \norm{A_n(R\ell_1(t) - Qz_1(t))
            - A_n(R\ell_2(t) - Qz_2(t))}{\mathspace{H}} \\
            &\leq c \big( \norm{l_1(t) - l_2(t)}{\mathspace{X}} +
            \norm{z_1(t) - z_2(t)}{\mathspace{H}} \big),
        \end{aligned}
    \end{equation}
    which yields, thanks to Gronwall's inequality , the Lipschitz
    continuity of $\mathcal{S}_n$.  Using this together with the fact
    that $\mathspace{X}_c$ is compactly embedded into $\mathspace{X}$,
    one can argue as in the proof of
    \cref{thm:existenceOfAGlobalSolution} to obtain the existence of a
    global solution of \cref{eq:optprobn} for all $n \in \mathbb{N}$.  
\end{proof}

\begin{theorem}[Weak Approximation of Global Minimizers]
    \label{thm:regularizedOptimaConvergence}
    Let $z_0 \in \mathspace{H}$ and $M$ be a closed subset of $D(A)$.
    Suppose moreover that \cref{ass:AnAssumption} holds and let
    $\sequence{\overline{\ell}}{n}$ be a sequence of globally optimal
    controls of \cref{eq:optprobn}, $n\in\N$.  Then there exists a weak
    accumulation point and every weak accumulation point is a global
    solution of \cref{eq:optprob}.
\end{theorem}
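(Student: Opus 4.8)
The plan is to use the direct method of the calculus of variations, combined with the convergence results \cref{thm:continuityOfS} and \cref{lem:approximationOfTheYosidaApproximation}. The key point is that a sequence of globally optimal controls for the regularized problems is bounded in $\WZ{\mathspace{X}_c}$, so that a weak accumulation point exists, and then one identifies this limit as a global minimizer of \eqref{eq:optprob} by comparing objective values.

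First I would establish boundedness of $\sequence{\overline{\ell}}{n}$ in $\WZ{\mathspace{X}_c}$. Fix any feasible comparison control $\ell^\dagger \in \WZ{\mathspace{X}_c} \cap \mathcal{U}(z_0;M)$ for \eqref{eq:optprob}; such a control exists (e.g.\ by taking $\ell^\dagger$ constant in time with $R\ell^\dagger(0) - Qz_0 \in M$, which is possible since $M\subset D(A)$ is nonempty, provided $R$ is surjective onto the relevant subspace — alternatively one argues via the standing assumptions that $\mathcal{U}(z_0;M)$ is nonempty, as is implicitly used in \cref{thm:existenceOfAGlobalSolution}). Since $\ell^\dagger \in \mathcal{U}(z_0;D(A))$ we have $R\ell^\dagger(0) - Qz_0 \in M \subset D(A)$, and for $n$ large enough $\ell^\dagger$ is also feasible for \eqref{eq:optprobn} (indeed $\mathcal{U}(z_0;M)$ does not depend on $n$, so $\ell^\dagger$ is feasible for \emph{every} \eqref{eq:optprobn}). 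By global optimality of $\overline{\ell}_n$ we get $J(\mathcal{S}_n(\overline{\ell}_n),\overline{\ell}_n) \le J(\mathcal{S}_n(\ell^\dagger),\ell^\dagger)$. The right-hand side is bounded: $\mathcal{S}_n(\ell^\dagger) = z_n^\dagger$ satisfies $\boverdot{z}_n^\dagger = A_n(R\ell^\dagger - Qz_n^\dagger)$, and by \cref{thm:continuityOfS} (applied with the constant sequences $\ell^\dagger$ and $z_0$) we have $z_n^\dagger \to \mathcal{S}(\ell^\dagger)$ in $\CO{\mathspace{H}}$, hence $z_n^\dagger$ is bounded in $\WZ{\mathspace{H}} \embed \CO{\mathspace{W}}$; since $\Psi$ is continuous in its first argument and $\Phi(\ell^\dagger)$ is a fixed finite number, $\sup_n J(\mathcal{S}_n(\ell^\dagger),\ell^\dagger) < \infty$. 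Now, since $\Psi$ is bounded from below, $\Phi(\overline{\ell}_n) \le J(\mathcal{S}_n(\overline{\ell}_n),\overline{\ell}_n) - \inf \Psi$ is bounded, and the coercivity of $\Phi$ yields that $\sequence{\overline{\ell}}{n}$ is bounded in $\WZ{\mathspace{X}_c}$. Consequently there is a subsequence (not relabeled) with $\overline{\ell}_n \weak \overline{\ell}$ in $\WZ{\mathspace{X}_c}$, and by the compact embedding $\mathspace{X}_c \embed \mathspace{X}$ together with Aubin--Lions we get $\overline{\ell}_n \to \overline{\ell}$ in $\CO{\mathspace{X}}$, hence in $\L{1}{\mathspace{X}}$. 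The strong convergence in $\CO{\mathspace{X}}$ and closedness of $M$ give $R\overline{\ell}(0) - Q z_0 \in M$, so $\overline{\ell} \in \WZ{\mathspace{X}_c}\cap\mathcal{U}(z_0;M)$ is feasible for \eqref{eq:optprob}.

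Next I would pass to the limit in the objective. By \cref{thm:continuityOfS}, the states $\overline{z}_n := \mathcal{S}_n(\overline{\ell}_n)$ (solutions of $\boverdot{\overline{z}}_n = A_{\lambda_n}(R\overline{\ell}_n - Q\overline{z}_n)$ up to the error controlled by \cref{lem:approximationOfTheYosidaApproximation}; more precisely, one applies \cref{thm:continuityOfS} after replacing $A_n$ with $A_{\lambda_n}$ and absorbing the difference via \cref{lem:approximationOfTheYosidaApproximation}, noting $\overline{z}_n$ has bounded derivative by the same argument as in the proof of \cref{thm:continuityOfS}) converge: $\overline{z}_n \weak \overline{z} := \mathcal{S}(\overline{\ell})$ in $\WZ{\mathspace{H}}$ and $\overline{z}_n \to \overline{z}$ in $\CO{\mathspace{H}}$, hence $\overline{z}_n \weak \overline{z}$ in $\WZ{\mathspace{W}}$ as well. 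Weak lower semicontinuity of $\Psi$ on $\WZ{\mathspace{W}}\times\WZ{\mathspace{X}_c}$ and of $\Phi$ on $\WZ{\mathspace{X}_c}$ then gives
\begin{equation*}
    J(\mathcal{S}(\overline{\ell}),\overline{\ell}) = \Psi(\overline{z},\overline{\ell}) + \Phi(\overline{\ell})
    \le \liminf_{n\to\infty} \big( \Psi(\overline{z}_n,\overline{\ell}_n) + \Phi(\overline{\ell}_n) \big)
    = \liminf_{n\to\infty} J(\mathcal{S}_n(\overline{\ell}_n),\overline{\ell}_n).
\end{equation*}
Finally, for optimality I compare against an arbitrary feasible $\ell \in \WZ{\mathspace{X}_c}\cap\mathcal{U}(z_0;M)$ for \eqref{eq:optprob}. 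Since $\ell$ is feasible for every \eqref{eq:optprobn}, global optimality of $\overline{\ell}_n$ gives $J(\mathcal{S}_n(\overline{\ell}_n),\overline{\ell}_n) \le J(\mathcal{S}_n(\ell),\ell)$. By \cref{thm:continuityOfS} (with constant data $\ell$, $z_0$) we have $\mathcal{S}_n(\ell) \to \mathcal{S}(\ell)$ in $\CO{\mathspace{H}}$, hence in $\CO{\mathspace{W}}$, so continuity of $\Psi$ in the first argument yields $J(\mathcal{S}_n(\ell),\ell) \to J(\mathcal{S}(\ell),\ell)$. Combining,
\begin{equation*}
    J(\mathcal{S}(\overline{\ell}),\overline{\ell}) \le \liminf_{n\to\infty} J(\mathcal{S}_n(\overline{\ell}_n),\overline{\ell}_n)
    \le \lim_{n\to\infty} J(\mathcal{S}_n(\ell),\ell) = J(\mathcal{S}(\ell),\ell),
\end{equation*}
so $\overline{\ell}$ is a global minimizer of \eqref{eq:optprob}. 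Since the accumulation point was extracted from an arbitrary subsequence, every weak accumulation point of $\sequence{\overline{\ell}}{n}$ is a global solution.

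The main obstacle is the bookkeeping needed to apply \cref{thm:continuityOfS}, which is stated for the operator $A_{\lambda_n}$ directly, to the regularized states solving the equation with $A_n$: one must invoke \cref{lem:approximationOfTheYosidaApproximation} to show $\norm{\mathcal{S}_n(\overline{\ell}_n) - \mathcal{S}_{\lambda_n}(\overline{\ell}_n)}{\C{1}{\mathspace{H}}} \to 0$ — which requires the loads $\overline{\ell}_n \in \CO{\mathspace{X}}$, granted by $\WZ{\mathspace{X}_c} \embed \CO{\mathspace{X}_c} \embed \CO{\mathspace{X}}$ — and then establish the uniform $\LZ{\mathspace{H}}$-bound on $\boverdot{\overline{z}}_n$ needed to invoke \cref{lem:weakConvergenceInW1RByBoundednessOfTheDerivative}, exactly as in the proof of \cref{thm:continuityOfS} via the comparison solutions $v_n$ solving $\boverdot{v}_n \in A(R\overline{\ell}_n - Qv_n)$ together with \cref{prp:LInftyBoundAgainstDerivative}. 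Everything else is the routine direct method; note that the theorem only claims \emph{weak} approximation, so no strong convergence in $\WZ{\mathspace{H}}$ (and hence \cref{ass:AIsSubdifferential}) is required here.
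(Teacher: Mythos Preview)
Your overall strategy matches the paper's: bound the optimal controls via a fixed comparison control, extract a weak limit, identify the limit state via the Yosida/regularization convergence results, and pass to the limit in the objective using weak lower semicontinuity. There is, however, one genuine gap in the way you handle the fixed-data comparisons.

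At two places you need the \emph{convergence} of $J(\mathcal{S}_n(\ell^\dagger),\ell^\dagger)$ (for boundedness) and of $J(\mathcal{S}_n(\ell),\ell)$ (for the final optimality inequality). You argue via \cref{thm:continuityOfS}, which for constant data only yields $\mathcal{S}_{\lambda_n}(\ell) \weak \mathcal{S}(\ell)$ in $\WZ{\HH}$ together with strong convergence in $\CO{\HH}$. You then invoke ``continuity of $\Psi$ in the first argument'' --- but the standing assumption is continuity of $\Psi$ on $\WZ{\WW}$, not on $\CO{\WW}$, so neither weak $\WZ{\HH}$-convergence nor strong $\CO{\HH}$-convergence lets you conclude that $\Psi(\mathcal{S}_n(\ell),\ell)$ converges (or is even bounded). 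The paper closes this gap by using \cref{prp:LInftyBoundAgainstDerivative} instead of \cref{thm:continuityOfS} for the fixed-data case: for a fixed load $\ell\in\UU(z_0;D(A))$ that proposition gives \emph{strong} convergence $\mathcal{S}_{\lambda_n}(\ell)\to\mathcal{S}(\ell)$ in $\WZ{\HH}$, and then \cref{lem:approximationOfTheYosidaApproximation} upgrades this to $\mathcal{S}_n(\ell)\to\mathcal{S}(\ell)$ in $\WZ{\HH}\embed\WZ{\WW}$, so the continuity of $\Psi$ genuinely applies. Once you replace your two invocations of \cref{thm:continuityOfS} (for fixed data) by \cref{prp:LInftyBoundAgainstDerivative} $+$ \cref{lem:approximationOfTheYosidaApproximation}, your argument goes through.

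Two minor remarks: the paper simply takes $\overline{\ell}_1$ as the comparison control, avoiding your existence discussion for $\ell^\dagger$; and it compares against a global minimizer $\overline{\ell}$ of \eqref{eq:optprob} (existing by \cref{thm:existenceOfAGlobalSolution}) rather than an arbitrary feasible $\ell$ --- both choices work, yours is marginally more direct.
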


\begin{proof} Due to $M\subset D(A)$,
    \cref{prp:LInftyBoundAgainstDerivative} gives
    $\mathcal{S}_{\lambda_n}(\overline\ell_1) \rightarrow
    \mathcal{S}(\overline\ell_1)$ in $\WZ{\mathspace{H}}$ so that
    \cref{lem:approximationOfTheYosidaApproximation} yields
    $\mathcal{S}_{n}(\overline{\ell}_1) \rightarrow
    \mathcal{S}(\overline\ell_1)$ in $\WZ{\mathspace{H}}$ and thus
    \begin{align*}
        \limsup_{n \rightarrow \infty}
        \Psi(\mathcal{S}_n(\overline{\ell}_n),\overline{\ell}_n)
        +
        \Phi(\overline\ell_n)
        =
        \limsup_{n \rightarrow \infty}
        J(\mathcal{S}_n(\overline{\ell}_n),\overline{\ell}_n) \leq
        \limsup_{n \rightarrow \infty}
        J(\mathcal{S}_n(\overline{\ell}_1),\overline{\ell}_1) =
        J(\mathcal{S}(\overline{\ell}_1),\overline{\ell}_1).
    \end{align*}
    Hence, by virtue of the boundedness of $\Psi$ from below and the
    radial unboundedness of $\Phi$, $\{\overline{\ell}_n\}$ is bounded
    and therefore admits a weak accumulation point in $\WZ{\XX_c}$.

    Let us now assume that a given subsequence of
    $\sequence{\overline{\ell}}{n}$, denoted by the same symbol for
    simplicity, converges weakly to $\tilde{\ell}$ in
    $\WZ{\mathspace{X}_c}$.  Since $\mathspace{X}_c$ is compactly
    embedded in $\mathspace{X}$, we obtain
    $\overline{\ell}_n \rightarrow \tilde{\ell}$ in $\CO{\mathspace{X}}$
    and consequently, $\tilde\ell \in \UU(z_0;M)$.  In addition, the
    strong convergence in $\CO{\mathspace{X}}$ in combination with
    \cref{thm:continuityOfS} and
    \cref{lem:approximationOfTheYosidaApproximation} yields weak
    convergence of the states, i.e.,
    $\mathcal{S}_n(\overline{\ell}_n) \weak \mathcal{S}(\tilde{\ell})$
    in $\WZ{\mathspace{H}}$ and thus also in $\WZ{\mathspace{W}}$.  Now,
    let $\overline{\ell}$ be a global solution of \cref{eq:optprob}. We
    can again use \cref{prp:LInftyBoundAgainstDerivative} and
    \cref{lem:approximationOfTheYosidaApproximation} to obtain
    $\mathcal{S}_n(\overline{\ell}) \rightarrow
    \mathcal{S}(\overline{\ell})$ in $\WZ{\mathspace{H}}$. This,
    together with the weak lower semicontinuity of $\Psi$ and $\Phi$,
    implies
    \begin{equation}\label{eq:convobj}
        \begin{aligned}
            J(\mathcal{S}(\tilde\ell), \tilde{\ell}) &=  \Psi(\mathcal{S}(\tilde{\ell}),\tilde{\ell}) + \Phi(\tilde\ell)
            \leq \liminf_{n \rightarrow \infty}  \Psi(\mathcal{S}_n(\overline{\ell}_n),\overline{\ell}_n) + \Phi(\overline\ell_n) \\
            &\leq \limsup_{n \rightarrow \infty}
            J(\mathcal{S}_n(\overline{\ell}_n),\overline{\ell}_n) \leq
            \limsup_{n \rightarrow \infty}
            J(\mathcal{S}_n(\overline{\ell}),\overline{\ell}) =
            J(\mathcal{S}(\overline{\ell}),\overline{\ell}),
        \end{aligned}
    \end{equation}
    giving in turn the optimality of the weak limit.  
\end{proof}

\begin{corollary}[Strong Approximation of Global Minimizers]\label{cor:strongapprox}
    In addition to \cref{ass:AnAssumption}, assume that
    $\Phi\colon  \WZ{\XX_c} \to \R$ is such that, if a sequence
    $\{\ell_n\}_{n\in \N}$ satisfies $\ell_n \weak \ell$ in $\WZ{\XX_c}$
    and $\Phi(\ell_n) \to \Phi(\ell)$, then $\ell_n \to \ell$ in
    $\WZ{\XX_c}$. Then every weak accumulation point of a sequence of
    globally optimal controls of~\eqref{eq:optprobn} is also a strong
    one.

    Moreover, if in addition, at least one of the following holds
    \begin{itemize}
        \item \cref{ass:AIsSubdifferential} is satisfied, that is
            $A = \partial\phi$, and $\phi$ is continuous on $M$ or

        \item $\Psi\colon  \WZ{\WW} \times \WZ{\XX_c} \to \R$ is such that, if
            sequences $\{z_n\}_{n\in \N}$ and $\{\ell_n\}_{n\in \N}$ satisfy
            $z_n \weak z$ in $\WZ{\HH}$ and $\ell_n \rightarrow \ell$ in
            $\WZ{\XX_c}$ and $\Psi(z_n, \ell_n) \to \Psi(z, \ell)$, then
            $z_n \to z$ in $\WZ{\HH}$,
    \end{itemize}
    then the associated sequence of state also converges strongly in
    $\WZ{\HH}$.
\end{corollary}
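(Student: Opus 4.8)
The plan is to build on \cref{thm:regularizedOptimaConvergence}, which already gives us a weak accumulation point $\tilde\ell$ of $\{\overline\ell_n\}$ that is a global solution of \eqref{eq:optprob}, together with the chain of inequalities \eqref{eq:convobj}. The key observation is that the proof of \cref{thm:regularizedOptimaConvergence} actually yields more than optimality: tracing through \eqref{eq:convobj} with $\tilde\ell$ in place of $\overline\ell$ on the right-hand side (which is legitimate since $\tilde\ell$ is itself a global solution), all the inequalities must be equalities. In particular, first I would extract from this that $\Psi(\mathcal S_n(\overline\ell_n),\overline\ell_n)+\Phi(\overline\ell_n)\to \Psi(\mathcal S(\tilde\ell),\tilde\ell)+\Phi(\tilde\ell)=J(\mathcal S(\tilde\ell),\tilde\ell)$, and moreover, using $\limsup(a_n+b_n)\le \limsup a_n+\limsup b_n$ combined with the separate weak lower semicontinuity bounds $\liminf\Psi(\mathcal S_n(\overline\ell_n),\overline\ell_n)\ge \Psi(\mathcal S(\tilde\ell),\tilde\ell)$ and $\liminf\Phi(\overline\ell_n)\ge\Phi(\tilde\ell)$, I would conclude that in fact $\Phi(\overline\ell_n)\to\Phi(\tilde\ell)$ and $\Psi(\mathcal S_n(\overline\ell_n),\overline\ell_n)\to\Psi(\mathcal S(\tilde\ell),\tilde\ell)$ \emph{separately}. (If either converged along a subsequence to something strictly larger, the other would have to undershoot its liminf bound, a contradiction.)

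Given the separate convergence $\Phi(\overline\ell_n)\to\Phi(\tilde\ell)$ together with $\overline\ell_n\weak\tilde\ell$ in $\WZ{\XX_c}$, the first additional hypothesis on $\Phi$ (a Radon--Riesz / Kadec-type property) immediately gives $\overline\ell_n\to\tilde\ell$ strongly in $\WZ{\XX_c}$, which is the first assertion.

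For the strong convergence of the states, I would then split according to which of the two bullet hypotheses holds. Under the second bullet (the Kadec-type property of $\Psi$ in its arguments): we now have $\overline\ell_n\to\tilde\ell$ strongly in $\WZ{\XX_c}$, $\mathcal S_n(\overline\ell_n)\weak\mathcal S(\tilde\ell)$ in $\WZ{\HH}$ (from \cref{thm:regularizedOptimaConvergence}), and $\Psi(\mathcal S_n(\overline\ell_n),\overline\ell_n)\to\Psi(\mathcal S(\tilde\ell),\tilde\ell)$ from the previous paragraph; the hypothesis then yields $\mathcal S_n(\overline\ell_n)\to\mathcal S(\tilde\ell)$ in $\WZ{\HH}$ directly. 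Under the first bullet (\cref{ass:AIsSubdifferential} with $\phi$ continuous on $M$): here I would not use the $\Psi$ route but instead the stronger convergence statements already in the paper. Since $\overline\ell_n\to\tilde\ell$ strongly in $\WZ{\XX_c}$, hence in $\WZ{\XX}$, and since $\overline\ell_n\to\tilde\ell$ in $\CO\XX$ forces $R\overline\ell_n(0)-Qz_0\to R\tilde\ell(0)-Qz_0$ in $\HH$ with all these elements lying in $M$ (by admissibility and closedness of $M$), the assumed continuity of $\phi$ on $M$ gives $\phi(R\overline\ell_n(0)-Qz_0)\to\phi(R\tilde\ell(0)-Qz_0)$. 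This is precisely the extra hypothesis needed to invoke the strong-convergence part of \cref{thm:continuityOfS} (via \cref{lem:approximationOfTheYosidaApproximation} to pass from $\mathcal S_{\lambda_n}$ to $\mathcal S_n$, exactly as in the proof of \cref{thm:continuityOfS}), yielding $\mathcal S_n(\overline\ell_n)\to\mathcal S(\tilde\ell)$ in $\WZ{\HH}$.

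The main obstacle I anticipate is the bookkeeping in the first paragraph: one must be careful that the ``squeeze'' forcing the separate convergences of $\Phi$ and $\Psi$ is applied along an arbitrary subsequence of the (already extracted) weakly convergent subsequence, so that the conclusion $\overline\ell_n\to\tilde\ell$ and $\mathcal S_n(\overline\ell_n)\to\mathcal S(\tilde\ell)$ holds for the full chosen subsequence and not merely a further one; this is routine but needs a clean statement. A secondary point is verifying the admissibility detail in the first-bullet case — that the limiting initial datum $R\tilde\ell(0)-Qz_0$ indeed lies in $M$ so that continuity of $\phi$ \emph{on $M$} is applicable — but this is already established inside the proof of \cref{thm:regularizedOptimaConvergence}.
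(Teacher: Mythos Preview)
Your proposal is correct and follows essentially the same approach as the paper: both extract the separate convergences $\Phi(\overline\ell_n)\to\Phi(\tilde\ell)$ and $\Psi(\mathcal S_n(\overline\ell_n),\overline\ell_n)\to\Psi(\mathcal S(\tilde\ell),\tilde\ell)$ from the chain~\eqref{eq:convobj} via weak lower semicontinuity, then invoke the Kadec-type hypothesis on $\Phi$ for the controls and split on the two bullets for the states. One small difference worth noting: under the second bullet, you apply the $\Psi$-hypothesis directly to $\mathcal S_n(\overline\ell_n)$ to obtain strong convergence in $\WZ{\HH}$, whereas the paper routes through $\mathcal S_{\lambda_n}(\overline\ell_n)$ and then appeals to \cref{lem:approximationOfTheYosidaApproximation}; your route is more direct and equally valid, since the weak state convergence and the $\Psi$-convergence established in the first step already concern $\mathcal S_n(\overline\ell_n)$ itself.
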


\begin{proof} Consider an arbitrary accumulation point $\tilde\ell$
    of a sequence of global minimizers of~\eqref{eq:optprobn}, i.e.,
    $\overline{\ell}_n \weak \tilde\ell$ in $\WZ{\XX_c}$. From the
    previous proof, we know that then~\eqref{eq:convobj} holds, giving
    in turn
    \begin{equation*}
        \Psi(\mathcal{S}_n(\overline{\ell}_n),\overline{\ell}_n) + \Phi(\overline\ell_n) 
        \to \Psi(\mathcal{S}(\tilde{\ell}),\tilde{\ell}) + \Phi(\tilde\ell).
    \end{equation*}
    Since
    $\mathcal{S}_n(\overline{\ell}_n) \weak \mathcal{S}(\tilde\ell)$, as
    seen in the previous proof, and both, $\Psi$ and $\Phi$, are weakly
    lower semicontinuous by assumption, this implies
    $\Phi(\overline{\ell}_n) \to \Phi(\tilde\ell)$ and
    $\Psi(\mathcal{S}_n(\overline{\ell}_n),\overline{\ell}_n) \to
    \Psi(\mathcal{S}(\tilde{\ell}),\tilde{\ell})$.  The hypothesis on
    $\Phi$ thus yields $\overline{\ell}_n \to \tilde\ell$ in
    $\WZ{\XX_c}$ so that $\tilde\ell$ is indeed a strong accumulation
    point as claimed.

    Due to $\XX_c \embed \XX$, the strong convergence carries over to
    $\WZ{\XX}$ and therefore, we deduce from \cref{thm:continuityOfS}
    that
    $\mathcal{S}_{\lambda_n}(\overline{\ell}_n) \to
    \mathcal{S}(\tilde\ell)$ in $\WZ{\HH}$, provided that
    \cref{ass:AIsSubdifferential} is fulfilled and
    $\phi(R\overline\ell_n(0) - Qz_0) \rightarrow \phi(R\tilde\ell(0) -
    Qz_0)$ holds.  If the additional requirements on $\Psi$ are
    fulfilled, we also obtain the strong convergence
    $\mathcal{S}_{\lambda_n}(\overline{\ell}_n) \to
    \mathcal{S}(\tilde\ell)$ in $\WZ{\HH}$, since we already showed
    $\overline{\ell}_n \to \tilde\ell$ in $\WZ{\XX_c}$.  Thus, in both
    cases, \cref{lem:approximationOfTheYosidaApproximation} gives
    $\mathcal{S}_n(\overline{\ell}_n) \to \mathcal{S}(\tilde\ell)$ in
    $\WZ{\HH}$, which is the second assertion.  
\end{proof}

\begin{example}\label{exa:concreteJ}
    Let us assume that $\XX_c$ is a Hilbert space. Then a possible
    objective functional fulfilling the requirements on $\Phi$ in
    \cref{cor:strongapprox} reads as follows:
    \begin{equation*}
        J(z, \ell) = \Psi(z, \ell) + \frac{\alpha}{2}\, \|\ell\|_{\WZ{\XX_c}}^2,
    \end{equation*}
    i.e., $\Phi(\ell) \defn  \alpha/2 \, \|\ell\|_{\WZ{\XX_c}}^2$. Herein,
    $\Psi\colon  \WZ{\HH} \times \WZ{\XX_c} \to \R$ is again lower
    semicontinuous and bounded from below and $\alpha > 0$ is a given
    constant.  Since $\WZ{\XX_c}$ is a Hilbert space, too, weak
    convergence and norm convergence give strong convergence and
    consequently, this specific choice of $\Phi$ fulfills the condition in
    \cref{cor:strongapprox}.
\end{example}

\begin{remark}[Approximation of Local Minimizers]
    By standard localization arguments, the above convergence analysis
    can be adapted to approximate local minimizers.  Following the lines
    of, for instance, \cite{castro02}, one can show that, under the assumptions of
    \cref{cor:strongapprox}, every strict local minimum of
    \cref{eq:optprob} can be approximated by a sequence of local minima
    of \cref{eq:optprobn}.  A local minimizer $\overline{\ell}$ of
    \cref{eq:optprob}, which is not necessarily strict, can be
    approximated by replacing the objective in~\eqref{eq:optprobn} by
    $\overline{J}(z,l) \defn  J(z,l) + \norm{\ell -
    \overline{\ell}}{\WZ{\mathspace{X}_c}}$, which is of course only
    of theoretical interest, cf.~e.g.~\cite{barbu84}.  Since these results
    and their proofs are standard, we omitted them.
\end{remark}

Now that we answered the question of approximation of optimal controls
via regularization, we turn to the regularized problems and derive
optimality conditions for these in the next two sections.

\section{First-Order Optimality Conditions}
\label{sec:5}

In the following, we consider a single element of the sequence of
regularized problems. The associated regularized operator is denoted
by $A_s$ so that the regularized optimal control problems reads as
follows:
\begin{equation}\tag{\mbox{P$_s$}}\label{eq:optprobs}
    \left\{\quad
        \begin{aligned}
            \min \shortspace & J(z,\ell), \\
            \text{ s.t. } \shortspace & \boverdot{z} = A_s(R\ell - Qz),	\mediumspace z(0) = z_0, \\
            & (z,\ell) \in \WZ{\mathspace{H}} \times \big{(}
            \WZ{\mathspace{X}_c} \cap \mathcal{U}(z_0;M) \big{)}.
    \end{aligned}\right.
\end{equation}
Beside our standing assumption and the Lipschitz continuity required
in \cref{ass:AnAssumption}, we need the following additional
assumptions for the derivation of first-order necessary optimality
conditions for~\eqref{eq:optprobs}.  Recall the continuous embeddings
$\YY \embed \ZZ \embed \HH$ from \cref{sec:2}.

\begin{assumption}\label{ass:AsAndJFrechet}\
    \begin{itemize}
        \item[(i)]
            $J \colon  \WZ{\mathspace{W}} \times \WZ{\mathspace{X}_c} \rightarrow
            \mathbb{R}$ is Fr\'{e}chet differentiable.
        \item[(ii)] $A_s \colon  \mathspace{Y} \rightarrow \mathspace{Y}$ is
            Lipschitz continuous and Fr\'{e}chet differentiable from $\YY$ to
            $\ZZ$.  Moreover, $A_s'(y)$ can be extended to elements of
            $L(\mathspace{Z};\mathspace{Z})$ and
            $L(\mathspace{H};\mathspace{H})$, respectively, denoted by the
            same symbol.  There exists a constant $C$ such that these
            extensions satisfy
            $\norm{A_s'(y) z}{\mathspace{Z}} \leq C \,\norm{z}{\mathspace{Z}}$
            and
            $\norm{A_s'(y) h}{\mathspace{H}} \leq C \norm{h}{\mathspace{H}}$
            for all $y \in \mathspace{Y}$, $z \in \mathspace{Z}$, and
            $h \in \mathspace{H}$.
    \end{itemize}
\end{assumption}

\begin{remark}\label{rem:normgap}
    It is well known that a norm gap is often indispensable to ensure
    Fr\'echet differentiability.  This is also the case in our
    application example in \cref{subsec:vonmises}. This is the reason
    for considering two different spaces $\YY$ and $\ZZ$ in context of
    the Fr\'echet differentiability of $A_s$ in
    \cref{ass:AsAndJFrechet}.
\end{remark}

We start the derivation of optimality conditions for~\eqref{eq:optprobs} with the Fr\'echet-de\-ri\-va\-tive of the associated
control-to-state mapping.

\subsection{Differentiability of the Regularized Control-to-State
Mapping}
\label{subsec:5.1}

As $A_s \colon  \YY \to \YY$ is supposed to be Lipschitz continuous and
$R$ and $Q$ are not only linear and continuous as operators with
values in $\HH$, but also in $\YY$ according to our standing
assumptions, Banach's fixed point theorem immediately implies that the
state equation in~\eqref{eq:optprobs}, i.e.,
\begin{equation}\label{eq:auxs}
    \boverdot{z} = A_s(R\ell - Qz), \quad z(0) = z_0,
\end{equation}
admits a unique solution $z \in \WZ{\mathspace{Y}}$ for every right
hand side $\ell \in \LZ{\mathspace{X}}$, provided that $z_0 \in \YY$.
Therefore, similar to above, we can define the associated solution
operator
$\mathcal{S}_s \colon  \LZ{\mathspace{X}} \rightarrow \WZ{\mathspace{Y}}$
(for fixed $z_0 \in \mathspace{Y}$).  We will frequently consider this
operator with different domains, e.g.~$\WZ{\mathspace{X}}$, and
ranges, in particular $\WZ{\mathspace{Z}}$. With a little abuse of
notation, these operators are denoted by the same symbol.

\begin{lemma}[Lipschitz Continuity of $\mathcal{S}_s$]\label{prp:solutionOperatorLipschitz}
    The solution operator $\mathcal{S}_s$ is globally Lipschitz
    continuous from $\LZ{\mathspace{X}}$ to $\WZ{\mathspace{Y}}$.
\end{lemma}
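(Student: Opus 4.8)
The plan is a routine Gronwall argument based on the Lipschitz continuity of $A_s$ as an operator on $\YY$. Fix two loads $\ell_1,\ell_2 \in \LZ{\XX}$ and set $z_i \defn \mathcal{S}_s(\ell_i) \in \WZ{\YY}$ (well-posedness in $\WZ{\YY}$ for $z_0 \in \YY$ was noted just before the lemma), so that $z_i(0)=z_0$ and $\boverdot{z}_i = A_s(R\ell_i - Qz_i)$ a.e.\ on $[0,T]$. Let $L$ be a Lipschitz constant of $A_s\colon \YY \to \YY$, and recall $R \in L(\XX;\YY)$ and that the restriction of $Q$ lies in $L(\YY;\YY)$ by our standing assumptions. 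Subtracting the two state equations and applying the Lipschitz estimate for $A_s$, I would first record that for almost every $t \in [0,T]$
\begin{equation*}
    \norm{\boverdot{z}_1(t) - \boverdot{z}_2(t)}{\YY}
    \leq L\,\norm{R}{L(\XX;\YY)}\,\norm{\ell_1(t) - \ell_2(t)}{\XX}
    + L\,\norm{Q}{L(\YY;\YY)}\,\norm{z_1(t) - z_2(t)}{\YY}.
\end{equation*}

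Next I would derive a uniform-in-time bound on the state difference. Since $z_1(0)=z_2(0)$, one has $z_1(t)-z_2(t) = \int_0^t (\boverdot{z}_1 - \boverdot{z}_2)\,d\tau$, so integrating the pointwise bound and writing $g(t) \defn \norm{z_1(t)-z_2(t)}{\YY}$ gives
\begin{equation*}
    g(t) \leq L\,\norm{R}{L(\XX;\YY)} \int_0^t \norm{\ell_1(\tau) - \ell_2(\tau)}{\XX}\,d\tau
    + L\,\norm{Q}{L(\YY;\YY)} \int_0^t g(\tau)\,d\tau .
\end{equation*}
Gronwall's inequality then yields $g(t) \leq C\,\norm{\ell_1 - \ell_2}{\L{1}{\XX}}$ uniformly in $t$, and since $[0,T]$ is bounded we have $\norm{\ell_1 - \ell_2}{\L{1}{\XX}} \leq \sqrt{T}\,\norm{\ell_1 - \ell_2}{\LZ{\XX}}$; hence $\norm{z_1 - z_2}{\CO{\YY}} \leq C\,\norm{\ell_1 - \ell_2}{\LZ{\XX}}$, in particular the same bound with $\CO{\YY}$ replaced by $\LZ{\YY}$.

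Finally I would substitute this $\CO{\YY}$-bound back into the pointwise derivative estimate and pass to the $L^2(0,T)$-norm in $t$, which gives $\norm{\boverdot{z}_1 - \boverdot{z}_2}{\LZ{\YY}} \leq C\,\norm{\ell_1 - \ell_2}{\LZ{\XX}}$; adding the two contributions produces $\norm{\mathcal{S}_s(\ell_1) - \mathcal{S}_s(\ell_2)}{\WZ{\YY}} \leq C\,\norm{\ell_1 - \ell_2}{\LZ{\XX}}$, which is the claim. I do not expect a genuine obstacle: the only two points that need a word of care are that $z_1(0)=z_2(0)$ removes any boundary term, so the state difference is a pure time-integral of its derivative, and the passage from the $L^1$-in-time right-hand side produced by Gronwall to the $\LZ{\XX}$-norm of the control difference, which is just H\"older's inequality on the finite interval $[0,T]$.
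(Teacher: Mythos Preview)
Your proposal is correct and follows essentially the same approach as the paper: the paper simply refers to the analogous Gronwall argument carried out in the proof of \cref{prop:exregopt}, which is precisely the pointwise Lipschitz estimate for $A_s$ combined with Gronwall's inequality that you spell out in detail (here in $\YY$ rather than $\HH$). Your additional step of feeding the $\CO{\YY}$-bound back into the derivative estimate to obtain the full $\WZ{\YY}$-bound is exactly what the paper leaves implicit.
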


\begin{proof} This can be proven completely analogously to the
    Lipschitz continuity of $\mathcal{S}_n$ from $\LZ{\mathspace{X}}$ to
    $\WZ{\mathspace{Y}}$ in \cref{prop:exregopt}.  
\end{proof}

\begin{lemma}
    \label{lem:frechetEquationExistence}
    Assume that \cref{ass:AsAndJFrechet}(ii) is fulfilled and let
    $y \in \LZ{\mathspace{Y}}$ and $w \in \LZ{\mathspace{Z}}$ be given.
    Then there exists a unique solution $\eta \in \WZ{\mathspace{Z}}$ of
    \begin{equation}\label{eq:lineq}
        \boverdot{\eta} = A_s'(y) (w - Q\eta), \quad \eta(0) = 0.	
    \end{equation}
\end{lemma}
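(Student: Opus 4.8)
The plan is to rewrite \eqref{eq:lineq} as a linear Volterra integral equation, solve it by Banach's fixed point theorem in $\CO{\ZZ}$, and then upgrade the solution to $\WZ{\ZZ}$ by a direct bound on its time derivative. To this end, note that a function $\eta\in\WZ{\ZZ}$ with $\eta(0)=0$ solves \eqref{eq:lineq} if and only if $\eta\in\CO{\ZZ}$ satisfies the fixed point equation $\eta=\mathcal{T}\eta$, where
\[
  (\mathcal{T}\eta)(t)\defn\int_0^t A_s'(y(s))\big(w(s)-Q\eta(s)\big)\,ds,\qquad t\in[0,T].
\]
The map $\mathcal{T}$ is well defined on $\CO{\ZZ}$: by \cref{ass:AsAndJFrechet}(ii) and the boundedness of $Q$ on $\ZZ$, the integrand obeys the pointwise estimate $\norm{A_s'(y(s))(w(s)-Q\eta(s))}{\ZZ}\le C\big(\norm{w(s)}{\ZZ}+\norm{Q}{L(\ZZ;\ZZ)}\norm{\eta(s)}{\ZZ}\big)$, whose right-hand side is integrable since $w\in\L{2}{\ZZ}\embed\L{1}{\ZZ}$ and $\eta\in\CO{\ZZ}$; hence the integrand lies in $\L{1}{\ZZ}$ and $\mathcal{T}\eta$ is absolutely continuous, in particular $\mathcal{T}\eta\in\CO{\ZZ}$ with $(\mathcal{T}\eta)(0)=0$. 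The one point here requiring care — and the main obstacle of the proof — is the strong measurability of $s\mapsto A_s'(y(s))\big(w(s)-Q\eta(s)\big)$; this follows from strong measurability of $s\mapsto A_s'(y(s))$ in $L(\ZZ;\ZZ)$, which in turn one obtains by approximating the strongly measurable $y$ by simple $\YY$-valued functions and passing to the limit using the uniform operator bound of \cref{ass:AsAndJFrechet}(ii) (and it is immediate if $y\mapsto A_s'(y)$ is continuous, as in the setting of \cref{subsec:vonmises}).

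Next I would establish that $\mathcal{T}$ is a contraction with respect to an equivalent norm on $\CO{\ZZ}$. Linearity of $A_s'(y(s))$ gives, for $\eta_1,\eta_2\in\CO{\ZZ}$ and all $t\in[0,T]$,
\[
  \norm{(\mathcal{T}\eta_1)(t)-(\mathcal{T}\eta_2)(t)}{\ZZ}\le C\,\norm{Q}{L(\ZZ;\ZZ)}\int_0^t\norm{\eta_1(s)-\eta_2(s)}{\ZZ}\,ds.
\]
Equipping $\CO{\ZZ}$ with the equivalent norm $\norm{\eta}{\beta}\defn\sup_{t\in[0,T]}e^{-\beta t}\norm{\eta(t)}{\ZZ}$ and choosing $\beta>C\,\norm{Q}{L(\ZZ;\ZZ)}$ makes $\mathcal{T}$ a contraction, so Banach's fixed point theorem yields a unique $\eta\in\CO{\ZZ}$ with $\mathcal{T}\eta=\eta$ (alternatively, one iterates $\mathcal{T}$ and checks that a suitable power is a contraction in the plain sup-norm, or invokes a standard existence result for linear non-autonomous evolution equations in the spirit of the references already used in \cref{thm:auxExistence}).

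It remains to check regularity and uniqueness. Since $\eta=\mathcal{T}\eta$, the function $\eta$ is absolutely continuous with $\eta(0)=0$ and $\boverdot{\eta}(t)=A_s'(y(t))\big(w(t)-Q\eta(t)\big)$ for almost every $t$; the pointwise estimate above together with $w\in\L{2}{\ZZ}$ and $\eta\in\CO{\ZZ}\embed\L{2}{\ZZ}$ then shows $\boverdot{\eta}\in\L{2}{\ZZ}$, so $\eta\in\WZ{\ZZ}$ and $\eta$ solves \eqref{eq:lineq}. Conversely, any solution $\eta\in\WZ{\ZZ}$ of \eqref{eq:lineq} satisfies the integral equation $\eta=\mathcal{T}\eta$ by the fundamental theorem of calculus for Bochner integrals, hence coincides with the fixed point just constructed; this gives uniqueness and completes the proof.
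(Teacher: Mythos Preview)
Your proposal is correct and follows essentially the same approach as the paper: both recast \eqref{eq:lineq} as an integral equation and invoke Banach's fixed point theorem, using the uniform bound on $A_s'(y)$ in $L(\ZZ;\ZZ)$ from \cref{ass:AsAndJFrechet}(ii) to obtain the required Lipschitz property in $\eta$. The paper's proof is more terse (it simply notes Bochner measurability ``as a pointwise limit of Bochner measurable functions'' and then applies the contraction argument), whereas you spell out the weighted-norm contraction and the $\WZ{\ZZ}$-regularity upgrade explicitly; the underlying argument is the same.
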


\begin{proof} Let us define
    \begin{align*}
        B \colon  [0, T] \times \mathspace{Z} \rightarrow \mathspace{Z}, \quad (t,\eta)
        \mapsto  A_s'(y(t)) (w(t) - Q\eta )
    \end{align*}
    so that~\eqref{eq:lineq} becomes
    $\boverdot{\eta}(t) = B(t, \eta(t))$ a.e.\ in $(0,T)$,
    $\eta(0) = 0$.  Now, given $\eta \in \LZ{\mathspace{Z}}$,
    $[0, T] \ni t \mapsto B(t,\eta(t)) \in \mathspace{Z}$ is Bochner
    measurable as a pointwise limit of Bochner measurable
    functions. Furthermore, \cref{ass:AsAndJFrechet}(ii) implies for
    almost all $t \in [0, T]$ and all $\eta_1, \eta_2 \in \ZZ$ that
    $\|B(t, \eta_1) - B(t, \eta_2)\|_{\ZZ} \leq C\, \|\eta_1 -
    \eta_2\|_\ZZ$. Therefore, we can apply Banach's fixed point argument
    to the integral equation associated with~\eqref{eq:lineq}, which
    gives the assertion.  
\end{proof}

\begin{theorem}[Fr\'echet differentiability of the regularized
    solution operator]
    \label{thm:SsFrechetDifferentiability}
    Under \cref{ass:AsAndJFrechet}(ii), the solution operator
    $\mathcal{S}_s$ is Fr\'{e}chet differentiable from
    $\WZ{\mathspace{X}}$ to $\WZ{\mathspace{Z}}$.  Its directional
    derivative at $\ell \in \WZ{\mathspace{X}}$ in direction
    $h \in \WZ{\mathspace{X}}$ is given by the unique solution of
    \begin{align}
        \label{eq:etaEquation}
        \boverdot{\eta} = A_s'(R\ell - Qz) (Rh - Q \eta), \quad \eta(0) = 0,
    \end{align}
    where $z \defn  \mathcal{S}_s(\ell) \in \WZ{\mathspace{Y}}$.  Moreover,
    there exists a constant $C$ such that
    $\norm{\mathcal{S}_s'(\ell) h}{\WZ{\mathspace{Z}}} \leq C
    \norm{h}{\LZ{\mathspace{X}}}$ holds for all
    $\ell,h \in \WZ{\mathspace{X}}$.
\end{theorem}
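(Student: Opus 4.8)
The plan is to verify Fr\'echet differentiability by the standard remainder estimate: show that for fixed $\ell \in \WZ{\XX}$ and perturbation $h \in \WZ{\XX}$, the difference $\mathcal{S}_s(\ell + h) - \mathcal{S}_s(\ell) - \eta$, where $\eta$ solves~\eqref{eq:etaEquation}, is of order $o(\norm{h}{\WZ{\XX}})$ in $\WZ{\ZZ}$. I would first record that $\eta$ is well defined by \cref{lem:frechetEquationExistence} with $y \defn R\ell - Qz \in \LZ{\YY}$ (since $z = \mathcal{S}_s(\ell) \in \WZ{\YY} \embed \LZ{\YY}$) and $w \defn Rh \in \LZ{\ZZ}$; the claimed linear bound $\norm{\mathcal{S}_s'(\ell)h}{\WZ{\ZZ}} \le C\norm{h}{\LZ{\XX}}$ comes from a Gr\"onwall estimate on~\eqref{eq:etaEquation} using the operator-norm bound on $A_s'(y)$ in $L(\ZZ;\ZZ)$ from \cref{ass:AsAndJFrechet}(ii), exactly the argument already used in \cref{lem:frechetEquationExistence}.

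For the main estimate, abbreviate $z \defn \mathcal{S}_s(\ell)$, $z_h \defn \mathcal{S}_s(\ell+h)$, and set $r \defn z_h - z - \eta$, which satisfies $r(0) = 0$. Subtracting the three equations and using the definition of $\eta$, I write
\begin{align*}
    \boverdot{r} &= A_s(R(\ell+h) - Qz_h) - A_s(R\ell - Qz) - A_s'(R\ell - Qz)(Rh - Q\eta).
\end{align*}
Adding and subtracting $A_s'(R\ell - Qz)\big((R(\ell+h) - Qz_h) - (R\ell - Qz)\big) = A_s'(R\ell - Qz)(Rh - Q(z_h - z))$, this splits as
\begin{align*}
    \boverdot{r} &= \big[A_s(R\ell-Qz+\xi) - A_s(R\ell-Qz) - A_s'(R\ell-Qz)\xi\big] - A_s'(R\ell-Qz)Qr,
\end{align*}
where $\xi \defn Rh - Q(z_h - z)$. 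The first bracket is the Fr\'echet-differentiability remainder of $A_s$ at $R\ell - Qz$ evaluated at $\xi$; by \cref{ass:AsAndJFrechet}(ii) it is $o(\norm{\xi}{\YY})$ in $\ZZ$, and by \cref{prp:solutionOperatorLipschitz} together with $\YY \embed \ZZ$ one has $\norm{\xi}{\LZ{\YY}} \le C\norm{h}{\LZ{\XX}}$ (here I use that $A_s\colon \YY\to\YY$ is Lipschitz, so $\mathcal{S}_s\colon\LZ{\XX}\to\WZ{\YY}$ is Lipschitz). The second term is linear in $r$ with the uniform bound $\norm{A_s'(R\ell-Qz)Qr}{\ZZ} \le C\norm{r}{\ZZ}$. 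Applying Gr\"onwall's inequality to $\norm{r(t)}{\ZZ}$ then yields $\norm{r}{\CO{\ZZ}} = o(\norm{h}{\LZ{\XX}})$, and feeding this back into the equation for $\boverdot{r}$ bounds $\norm{\boverdot{r}}{\LZ{\ZZ}}$ by the same quantity, giving the remainder estimate in $\WZ{\ZZ}$.

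The main obstacle I anticipate is making the little-$o$ bookkeeping precise across the two norms $\YY$ and $\ZZ$: the pointwise remainder of $A_s$ is $o$ of the $\YY$-norm of its argument, but $\xi(t)$ is only controlled in $\YY$ through the $\WZ{\YY}$-regularity of $\mathcal{S}_s$, while the Gr\"onwall argument must be run in $\ZZ$. One must therefore argue along a sequence $h_k \to 0$ in $\WZ{\XX}$, use the (uniform in $t$) Lipschitz/boundedness properties of $A_s$ to get an integrable dominating function for the remainder term, and apply dominated convergence to conclude that $\int_0^T \norm{\text{remainder}(t)}{\ZZ}\,dt = o(\norm{h_k}{\WZ{\XX}})$. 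A secondary point is linearity and boundedness of $\mathcal{S}_s'(\ell)$ as a map $\WZ{\XX}\to\WZ{\ZZ}$, but this is immediate from~\eqref{eq:etaEquation} and the Gr\"onwall estimate already mentioned. Continuity of $\ell \mapsto \mathcal{S}_s'(\ell)$ (needed only if $C^1$ is claimed) would follow from the continuity of $\ell \mapsto A_s'(R\ell - Qz)$ in the appropriate operator topology, but the statement as given asks only for Fr\'echet differentiability, so I would stop at the remainder estimate.
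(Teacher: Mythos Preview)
Your proposal is correct and follows essentially the same route as the paper: the same decomposition of $\boverdot r$ into the Fr\'echet remainder of $A_s$ at $R\ell-Qz$ evaluated at $\xi=Rh-Q(z_h-z)$ plus the linear term $-A_s'(R\ell-Qz)Qr$, followed by Gr\"onwall and dominated convergence. The one slip is that the remainder is $o(\|h\|_{\WZ{\XX}})$, not $o(\|h\|_{\LZ{\XX}})$---you need $\xi\in\WZ{\YY}\embed\CO{\YY}$ (which requires $Rh\in\WZ{\YY}$, hence $h\in\WZ{\XX}$) to get the pointwise-in-$t$ control in $\YY$ that feeds the remainder property of $A_s$; you already identify this in your obstacle paragraph, so the correction is only cosmetic.
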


\begin{proof} Let $\ell,h \in \WZ{\mathspace{X}}$ be arbitrary and
    abbreviate $z_h \defn  \mathcal{S}_s(\ell + h)$.  Thanks to
    \cref{lem:frechetEquationExistence}, there exists a unique solution
    $\eta \in \WZ{\mathspace{Z}}$ of \cref{eq:etaEquation}. Clearly, the
    solution operator of \cref{eq:etaEquation} is linear with respect to
    $h$. Moreover, \cref{ass:AsAndJFrechet}(ii) implies for almost all
    $t \in [0,T]$ that
    \begin{equation*}
        \norm{\boverdot{\eta}(t)}{\mathspace{Z}} \leq C \big{(} \norm{h(t)}{\mathspace{X}}
        + \norm{\eta(t)}{\mathspace{Z}} \big{)},
    \end{equation*}
    so that Gronwall's inequality gives
    $\norm{\eta}{\WZ{\mathspace{Z}}} \leq C
    \norm{h}{\LZ{\mathspace{X}}}$, i.e., the continuity of the solution
    operator of \cref{eq:etaEquation}.  This also proves the asserted
    inequality (after having proved that
    $\eta = \mathcal{S}_s'(\ell) h$, which we do next).

    It remains to verify the remainder term property. For this purpose,
    let us denote the remainder term of $A_s$ by $r_1$, i.e.,
    \begin{equation*}
        A_s(y + \zeta) = A_s(y) + A_s'(y)\zeta + r_1(y;\zeta) \quad \text{with} \quad 
        \frac{\|r_1(y;\zeta)\|_\ZZ}{\|\zeta\|_\YY} \to 0 \text{ as } \zeta \to 0 \text{ in }\YY.
    \end{equation*}		
    Moreover, we abbreviate
    \begin{equation*}
        y \defn  R\ell - Qz \in \WZ{\YY} \quad \text{and} \quad \zeta \defn  Rh - Q(z_h - z) \in \WZ{\YY}.
    \end{equation*}    	
    Then, in view of the definition of $z$, $z_h$, and $\eta$ (as
    solution of~\eqref{eq:etaEquation}), we find for almost all
    $t \in [0,T]$
  \begin{align*}
    \norm{\boverdot{z}_h(t) - \boverdot{z}(t) - \boverdot{\eta}(t)}{\mathspace{Z}}
    &= \norm{A_s(y(t) + \zeta(t)) - A_s(y(t)) - A_s'(y(t)) (\zeta(t) + Q(z_h(t) - z(t) - \eta(t)))}{\mathspace{Z}} \\
    &\leq \|A_s'(y(t)) Q(z_h(t) - z(t) - \eta(t))\|_{\ZZ} + \| r_1(y(t);\zeta(t))\|_{\ZZ}.
  \end{align*}
    Hence, \cref{ass:AsAndJFrechet}(ii) and Gronwall's inequality yield
    \begin{equation}\label{eq:remS}
        \norm{z_h - z - \eta}{\WZ{\mathspace{Z}}} \leq C \,\norm{r_1(y;\zeta)}{\LZ{\mathspace{Z}}}.
    \end{equation}
    (note that $r_1(y;\zeta) \in \LZ{\ZZ}$ by its definition as
    remainder term).  Furthermore, thanks to
    \cref{prp:solutionOperatorLipschitz} and the definition of $\zeta$,
    we obtain
    \begin{equation}\label{eq:zetaest}
        \|\zeta\|_{\WZ{\YY}} \leq C \, \|h\|_{\WZ{\XX}}
    \end{equation}
    such that $h \to 0$ in $\WZ{\XX}$ implies $\zeta \to 0$ in
    $\WZ{\YY}$. The continuous embedding $\WZ{\YY}\embed \CO{\YY}$ and
    the remainder term property of $r_1$ thus give for almost all
    $t\in (0,T)$ that
    \begin{equation}\label{eq:rempointw}
        \frac{\|r_1(y(t);\zeta(t))\|_{\ZZ}}{\|h\|_{\WZ{\XX}}} 
        \leq C\,\frac{\|r_1(y(t);\zeta(t))\|_{\ZZ}}{\|\zeta(t)\|_{\YY}} \, \frac{\|\zeta\|_{\WZ{\YY}}}{\|h\|_{\WZ{\XX}}}  \to 0
    \end{equation}
    as $h \to 0$ in $\WZ{\XX}$.  Moreover, the Lipschitz continuity of
    $A_s\colon  \YY \to \YY$ together with~\ref{ass:AsAndJFrechet}(ii),
    $\YY\embed \ZZ$, and~\eqref{eq:zetaest} yield for almost all
    $t\in (0,T)$ that
    \begin{equation*}
        \frac{\|r_1(y(t);\zeta(t))\|_{\ZZ}}{\|h\|_{\WZ{\XX}}} 
        = \frac{\|(A_s(y + \zeta) - A_s(y) - A_s'(y)\zeta)(t)\|_{\ZZ}}{\|h\|_{\WZ{\XX}}} 
        \leq C\, \frac{\|\zeta(t)\|_{\YY}}{\|h\|_{\WZ{\XX}}} \leq C.
    \end{equation*}
    In combination with~\eqref{eq:rempointw} and Lebesgue's dominated
    convergence theorem, this yields
    \begin{equation*}
        \frac{\|r_1(y;\zeta)\|_{\LZ{\ZZ}}}{\norm{h}{\WZ{\mathspace{X}}}} \to 0
    \end{equation*}
    as $h \rightarrow 0$ in $\WZ{\mathspace{X}}$, which, in view
    of~\eqref{eq:remS} finishes the proof.  
\end{proof}

\begin{remark}
    It is to be noted that we did not employ the implicit function
    theorem to show the differentiability of $\mathcal{S}_s$.  The
    reason is that $H\colon  z\mapsto \boverdot z - A_s(R\ell - Q z)$ is
    Fr\'echet differentiable from $\WZ{\YY}$ to $\LZ{\ZZ}$, but the
    derivative $H'(z)$ is not continuously invertible in these spaces,
    cf.\ \cref{lem:frechetEquationExistence}. On the other hand, $H$ is
    not differentiable from $\WZ{\YY}$ to $\LZ{\YY}$ (due to the
    differentiability properties of $A_s$, see \cref{rem:normgap}),
    which would be the right spaces for the existence result from
    \cref{lem:frechetEquationExistence}.  The same observation for a
    more abstract setting was already made in~\cite{wachsmuth2}.
\end{remark}

\subsection{Adjoint Equation}
\label{subsec:5.2}

Now that we know that the (regularized) control-to-state map is
G\^ateaux differentiable, we can apply the standard adjoint approach
to derive first-order necessary optimality conditions in form of a
Karush-Kuhn-Tucker (KKT) system.  To keep the discussion concise, we
restrict our analysis to the case without further control constraints.
To be more precise, we require the following:

\begin{assumption}\label{assu:controlconstr}
    Let $z_0 \in \mathspace{Y}$ such that $-Qz_0 \in D(A)$.  The set $M$
    in the definition of the set of admissible controls is given by the
    singleton $M = \{ -Qz_0 \}$ such that
    \begin{equation*}
        \mathcal{U} \defn  \mathcal{U}\bigl(z_0;\{ -Qz_0 \}\bigr) =
        \bigl\{ \ell \in \WZ{\mathspace{X}} \colon  \ell(0) \in \ker R
        \bigr\}. 
    \end{equation*}
\end{assumption}

Note that $\UU$ is a linear subspace of $\WZ{\XX}$.

\begin{remark}[Additional Control Constraints]\label{rem:ctrlconstr}
    One could allow for additional control constraints in our analysis,
    even more complex ones than the ones covered by $\UU(z_0;M)$ such as
    for instance box constraints over the whole time interval or
    vanishing initial and final loading, i.e., $\ell(0) = \ell(T) = 0$,
    which is certainly meaningful for many practically relevant
    problems.  However, since the differentiability of the
    control-to-state map is the essential issue in the derivation of
    optimality conditions and additional (convex and closed) control
    constraints can be incorporated by standard argument, we decided to
    leave them out in order to keep the discussion as concise as
    possible.

    However, without any further assumptions, the existence of solutions
    to the unregularized state equation~\eqref{eq:aux} cannot be
    guaranteed. To be more precise, one needs that
    $R\ell(0) - Q z_0 \in D(A)$, see \cref{thm:auxExistence}, which
    holds in case of $\UU$, provided that $-Q z_0 \in D(A)$. This is the
    reason for considering the set $\WZ{\XX_C} \cap \UU$ as set of
    admissible controls in the rest of the paper.

    Note moreover that, if the operator $R$ is injective (which is the
    case in \cref{sec:7}), then
    $\UU = \{ \ell \in \WZ{\mathspace{X}} \colon  \ell(0) = 0 \}$.
\end{remark}

The chain rule immediately gives that the reduced objective defined by
\begin{equation}\label{eq:redobj}
    F \colon  \WZ{\mathspace{X}_c}
    \to\mathbb{R}, \quad \ell \mapsto J(\mathcal{S}_s(\ell),\ell)
\end{equation}
is Fr\'echet differentiable, too. Thus, by standard arguments, one
derives the following

\begin{lemma}[Purely Primal Necessary Optimality Conditions]
    \label{lem:generalFirstOrderOptimalityCondition}
    Let \cref{ass:AsAndJFrechet} and \cref{assu:controlconstr} hold.
    Then, if a control $\overline{\ell} \in \WZ{\XX_c} \cap \UU$ with
    associated state $\overline{z} = \mathcal{S}_s(\overline\ell)$ is
    locally optimal for \cref{eq:optprobs}, then
    \begin{equation}
        \label{eq:objectiveFunctionDerivative}
        F'(\overline{\ell}) h
        = J'_z(\overline{z},\overline{\ell}) \mathcal{S}'_s(\overline{\ell}) h 
        + J'_\ell(\overline{z},\overline{\ell}) h = 0,
    \end{equation}
    for all $h \in \WZ{\mathspace{X}_c} \cap \mathcal{U}$.
\end{lemma}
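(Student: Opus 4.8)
The plan is to use the standard first‑order optimality argument for an unconstrained minimization over a linear subspace, relying on the Fréchet differentiability results already established. First I would observe that, by \cref{thm:SsFrechetDifferentiability}, the control‑to‑state map $\mathcal{S}_s$ is Fréchet differentiable from $\WZ{\XX}$ to $\WZ{\ZZ}$, and hence also, via the continuous embedding $\ZZ\embed\WW$, from $\WZ{\XX}$ to $\WZ{\WW}$. Since the admissible set $\WZ{\XX_c}\cap\UU$ sits inside $\WZ{\XX}$ and $J\colon\WZ{\WW}\times\WZ{\XX_c}\to\R$ is Fréchet differentiable by \cref{ass:AsAndJFrechet}(i), the chain rule shows that the reduced objective $F$ from~\eqref{eq:redobj} is Fréchet differentiable on $\WZ{\XX_c}$ with derivative given at any $\ell$ and direction $h$ by $F'(\ell)h = J'_z(\mathcal{S}_s(\ell),\ell)\mathcal{S}'_s(\ell)h + J'_\ell(\mathcal{S}_s(\ell),\ell)h$. (A minor point is that one differentiates $J$ with respect to the $\WZ{\WW}$ component after composing with the embedding $\WZ{\ZZ}\embed\WZ{\WW}$; the composed derivative is still denoted $J'_z$.)

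Next I would exploit the fact that, by \cref{assu:controlconstr}, the admissible set $\WZ{\XX_c}\cap\UU$ is a \emph{linear subspace} of $\WZ{\XX_c}$. Thus if $\overline\ell$ is locally optimal and $h\in\WZ{\XX_c}\cap\UU$ is arbitrary, then for all sufficiently small $t\in\R$ the perturbation $\overline\ell + t h$ is again admissible, and the real‑valued function $t\mapsto F(\overline\ell + t h)$ has a local minimum at $t=0$. Since $F$ is Fréchet (hence Gâteaux) differentiable, this function is differentiable at $t=0$ with derivative $F'(\overline\ell)h$, and the first‑order condition for a scalar minimum gives $F'(\overline\ell)h=0$. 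Because $h\in\WZ{\XX_c}\cap\UU$ was arbitrary, and because both $t$ and $-t$ are allowed, we obtain $F'(\overline\ell)h=0$ for every $h$ in the subspace, which is exactly~\eqref{eq:objectiveFunctionDerivative} after inserting the chain‑rule formula and writing $\overline z=\mathcal{S}_s(\overline\ell)$.

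There is essentially no hard obstacle here: the statement is the textbook fact that the derivative of a differentiable functional vanishes on the tangent space of the feasible set when that set is an affine (here linear) subspace. The only things requiring a word of care are (i) that the differentiability domains match up — $\mathcal{S}_s$ maps into $\WZ{\ZZ}$ while $J$ is differentiable on $\WZ{\WW}$, which is reconciled by the continuous embedding $\ZZ\embed\WW$ from \cref{sec:2} — and (ii) that $\WZ{\XX_c}\cap\UU$ is genuinely linear, so that two‑sided variations $\overline\ell\pm th$ are admissible; this is precisely the content of the remark following \cref{assu:controlconstr}. Both are already in place, so the proof is short.
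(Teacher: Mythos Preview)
Your proposal is correct and takes essentially the same approach as the paper, which simply remarks that the chain rule gives Fr\'echet differentiability of $F$ and then invokes ``standard arguments'' without further detail. Your write-up just spells out those standard arguments (chain rule via the embedding $\ZZ\embed\WW$, linearity of $\WZ{\XX_c}\cap\UU$, first-order condition for a minimum on a linear subspace), and the two minor points you flag are exactly the ones that need checking.
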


Next, we reformulate~\eqref{eq:objectiveFunctionDerivative} in terms
of a KKT-system by introducing an adjoint equation, which formally
reads
\begin{equation}
    \label{eq:prototypeAdjointEquation}
    \boverdot{\varphi} =  Q A'_s(y)^* \varphi + v, \quad \varphi(T) = 0.
\end{equation}
Depending on the regularity of the right hand side $v$, we define
different notions of solutions:

\begin{definition}
    Let $y \in \LZ{\mathspace{Y}}$ and $v \in \WZ{\mathspace{H}}^*$ be
    given.  A function $\varphi \in \LZ{\mathcal{H}}$ is called
    \emph{weak solution} of~\eqref{eq:prototypeAdjointEquation}, if
    \begin{equation}
        \label{eq:prototypeAdjointEquationDefinition}
        -\scalarproduct{\varphi}{\boverdot{\eta}}{\LZ{\mathspace{H}}}
        = \scalarproduct{\varphi}{A'_s(y) Q \eta}{\LZ{\mathspace{H}}}
        + v(\eta)
    \end{equation}
    holds for all $\eta \in \WZ{\mathspace{H}}$ with $\eta(0) = 0$.

    If $v$ takes the form
    \begin{equation}\label{eq:splitup}
        v(\eta) = (v_1, \eta)_{\LZ{\HH}} + (v_2, \eta(T))_\HH
    \end{equation}		
    with some $v_1 \in \LZ{\HH}$ and $v_2 \in \HH$, then we call
    $\varphi \in \WZ{\HH}$ \emph{strong solution} of~\eqref{eq:prototypeAdjointEquation}, if, for almost all
    $t\in (0,T)$,
    \begin{equation}\label{eq:adjointstrong}
        \boverdot{\varphi}(t) = \big(Q A'_s(y)^*\varphi\big)(t) + v_1(t) \quad \text{in } \HH, 
        \quad \varphi(T) = - v_2 \quad \text{in } \HH.
    \end{equation}
\end{definition}

In the following, we will---as usual---identify $v\in \LZ{\HH}$ with
an element of $\WZ{\HH}^*$ via $(v, \cdot\, )_{\LZ{\HH}}$ and denote
this element with a slight abuse of notation by the same symbol.

\begin{lemma}\label{lem:adex}
    Let $y \in \LZ{\mathspace{Y}}$ and $v \in \WZ{\mathspace{H}}^*$.
    Then there is a unique weak solution of
    \cref{eq:prototypeAdjointEquation}, which is given by
    $\varphi \defn  -v \circ \mathcal{S}_y \in \LZ{\mathspace{H}}^* =
    \LZ{\mathspace{H}}$, where
    $\mathcal{S}_y \colon  \LZ{\mathspace{H}} \rightarrow \WZ{\mathspace{H}}$
    is the solution operator of
    \begin{align}
        \label{eq:specialEta}
        \boverdot{\eta} = -A'_s(y) Q \eta + w, \quad \eta(0) = 0,
    \end{align}
    that is, $\mathcal{S}_y(w) = \eta$.

    Moreover, if $v$ is of the form~\eqref{eq:splitup}, then there
    exists a unique strong solution of~\eqref{eq:prototypeAdjointEquation}, and the weak and the strong
    solution coincide.
\end{lemma}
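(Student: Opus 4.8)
The plan is to prove existence and uniqueness of the weak solution first, then handle the strong solution via a duality/regularity argument. For the weak solution, the key observation is that~\eqref{eq:prototypeAdjointEquationDefinition} is nothing but the statement that $\varphi$ represents the linear functional $-v$ composed with the solution operator of the linearized equation~\eqref{eq:specialEta}. So first I would check that \cref{lem:frechetEquationExistence}, applied with the choice $\YY,\ZZ,\HH$ all replaced by $\HH$ (which is legitimate: under \cref{ass:AsAndJFrechet}(ii) the extension $A_s'(y) \in L(\HH;\HH)$ satisfies the required bound, and $Q\in L(\HH;\HH)$), gives for every $w \in \LZ{\HH}$ a unique $\eta = \mathcal{S}_y(w) \in \WZ{\HH}$ solving~\eqref{eq:specialEta} with $\eta(0)=0$. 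A Gronwall estimate as in the cited lemma yields $\mathcal{S}_y \in L(\LZ{\HH};\WZ{\HH})$, and since functions in $\WZ{\HH}$ with zero initial value are exactly the range of $\mathcal{S}_y$ (surjectivity: given $\eta$, set $w \defn \boverdot\eta + A_s'(y)Q\eta \in \LZ{\HH}$), the map $\mathcal{S}_y$ is a continuous linear bijection onto the closed subspace $\{\eta \in \WZ{\HH} : \eta(0)=0\}$.

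Next I would define $\varphi \defn -v \circ \mathcal{S}_y$. Since $v \in \WZ{\HH}^*$ and $\mathcal{S}_y \in L(\LZ{\HH};\WZ{\HH})$, the composition $v\circ\mathcal{S}_y$ is an element of $\LZ{\HH}^* = \LZ{\HH}$ (Riesz representation), so $\varphi \in \LZ{\HH}$ as claimed. To verify that this $\varphi$ is a weak solution, take any $\eta \in \WZ{\HH}$ with $\eta(0)=0$ and set $w \defn \boverdot\eta + A_s'(y)Q\eta$, so that $\eta = \mathcal{S}_y(w)$; then
\begin{equation*}
    -\scalarproduct{\varphi}{\boverdot\eta}{\LZ{\HH}} - \scalarproduct{\varphi}{A_s'(y)Q\eta}{\LZ{\HH}}
    = -\scalarproduct{\varphi}{w}{\LZ{\HH}} = -(v\circ\mathcal{S}_y)(w)\,(-1)\cdot(-1) = v(\eta),
\end{equation*}
where I have used $\varphi = -v\circ\mathcal{S}_y$ and $\mathcal{S}_y(w)=\eta$; this is exactly~\eqref{eq:prototypeAdjointEquationDefinition}. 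Conversely, if $\tilde\varphi$ is any weak solution, then the same computation shows $\scalarproduct{\tilde\varphi - \varphi}{w}{\LZ{\HH}} = 0$ for every $w \in \LZ{\HH}$ (since every such $w$ arises as $\boverdot\eta + A_s'(y)Q\eta$ for a suitable test function $\eta$), hence $\tilde\varphi = \varphi$, giving uniqueness.

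For the strong solution when $v$ has the form~\eqref{eq:splitup}, I would argue by the time-reversal $t \mapsto T-t$, which turns~\eqref{eq:adjointstrong} into a forward-in-time linear ODE in $\HH$ with initial value $-v_2 \in \HH$, right-hand side operator $t \mapsto Q A_s'(y(T-t))^*$ bounded in $L(\HH;\HH)$ uniformly in $t$ by \cref{ass:AsAndJFrechet}(ii) (note $Q$ is self-adjoint and bounded on $\HH$, and the adjoint of the extension $A_s'(y)\in L(\HH;\HH)$ is again bounded by the same constant), and inhomogeneity $v_1 \in \LZ{\HH}$. Banach's fixed point theorem applied to the associated integral equation—exactly as in \cref{lem:frechetEquationExistence} and \cref{prp:solutionOperatorLipschitz}—yields a unique $\varphi \in \WZ{\HH}$ satisfying~\eqref{eq:adjointstrong}. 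Finally, to see that this strong solution coincides with the weak one, I would integrate by parts: for $\eta \in \WZ{\HH}$ with $\eta(0)=0$,
\begin{equation*}
    -\scalarproduct{\varphi}{\boverdot\eta}{\LZ{\HH}}
    = \scalarproduct{\boverdot\varphi}{\eta}{\LZ{\HH}} - \scalarproduct{\varphi(T)}{\eta(T)}{\HH}
    = \scalarproduct{QA_s'(y)^*\varphi + v_1}{\eta}{\LZ{\HH}} + \scalarproduct{v_2}{\eta(T)}{\HH},
\end{equation*}
using $\varphi(T) = -v_2$ and the strong equation; since $\scalarproduct{QA_s'(y)^*\varphi}{\eta}{\LZ{\HH}} = \scalarproduct{\varphi}{A_s'(y)Q\eta}{\LZ{\HH}}$ by self-adjointness of $Q$ and the definition of the adjoint, the right-hand side equals $\scalarproduct{\varphi}{A_s'(y)Q\eta}{\LZ{\HH}} + v(\eta)$, which is~\eqref{eq:prototypeAdjointEquationDefinition}. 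By the uniqueness of weak solutions already established, the two notions agree. The only mild subtlety—the main thing to get right—is the bookkeeping of the spaces: one must confirm that \cref{ass:AsAndJFrechet}(ii) indeed licenses working entirely in $\HH$ (via the bounded extension $A_s'(y)\in L(\HH;\HH)$ and its adjoint), so that none of the norm-gap issues from \cref{rem:normgap} intervene at this stage; everything else is a routine Gronwall/contraction argument.
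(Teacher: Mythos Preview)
Your proposal is correct and follows essentially the same approach as the paper: you define $\varphi$ via the adjoint/Riesz representation of $-v\circ\mathcal{S}_y$, verify the weak equation by writing an arbitrary test function $\eta$ as $\mathcal{S}_y(w)$ with $w=\boverdot\eta+A_s'(y)Q\eta$, prove uniqueness by the same surjectivity, and handle the strong solution by a contraction argument followed by integration by parts. The only cosmetic differences are your explicit time-reversal (the paper works directly with the backward equation) and that the paper invokes separability of $\HH$ to justify Bochner measurability of $t\mapsto QA_s'(y(t))^*\varphi$, a point you subsume into the reference to \cref{lem:frechetEquationExistence}.
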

\begin{proof} At first note that the existence of a solution of
    \cref{eq:specialEta} can be proven exactly as in
    \cref{lem:frechetEquationExistence}.  Let
    $\eta \in \WZ{\mathspace{H}}$ with $\eta(0) = 0$ be arbitrary and
    define
    $w \defn  \boverdot{\eta} + A'_s(y) Q \eta \in \LZ{\mathspace{H}}$,
    hence, $\eta = \mathcal{S}_y(w)$. By the definition of $w$ and
    $\varphi$, it follows that
    \begin{equation*}
        (\boverdot{\eta} + A'_s(y) Q \eta, \varphi)_{\LZ{\HH}} =  (\varphi, w)_{\LZ{\HH}} = - v(\mathcal{S}_y(w)) = - v(\eta),
    \end{equation*}    	
    i.e., \cref{eq:prototypeAdjointEquationDefinition} holds. Since
    $\eta$ was arbitrary, we see that $\varphi$ is a weak solution of
    \cref{eq:prototypeAdjointEquation}.

    To prove uniqueness, let $\tilde{\varphi} \in \LZ{\mathspace{H}}$ be
    another weak solution.  Then, we choose an arbitrary
    $w \in \LZ{\mathspace{H}}$ and set $\eta \defn  \mathcal{S}_y(w)$ to see
    that
    \begin{equation*}
        \scalarproduct{\varphi}{w}{\LZ{\mathspace{H}}}
        = -v(\eta) = \scalarproduct{\tilde{\varphi}}{\boverdot\eta+  A'_s(y) Q \eta}{\LZ{\mathspace{H}}} 
        = \scalarproduct{\tilde{\varphi}}{w}{\LZ{\mathspace{H}}},
    \end{equation*}
    and therefore $\varphi = \tilde{\varphi}$.

    Now we turn to the strong solution and suppose that $v$ is as given
    in~\eqref{eq:splitup}.  Existence and uniqueness of a strong
    solution can again be shown by means of Banach's fixed point
    theorem. To this end, let us consider the affine-linear operator
    \begin{equation*}
        B\colon  [0,T] \times \ZZ \to \ZZ, \quad B(t,\varphi) =  Q A'_s(y(t))^* \varphi + v_1(t).
    \end{equation*}
    Since $\HH$ is separable by our standing assumptions, we can apply~\cite[Chap.~IV, Thm.~1.4]{gajewski} to obtain that, for every
    $\varphi \in \LZ{\HH}$, the mapping $(0,T) \mapsto B(t, \varphi(t))$
    is Bochner measureable. Moreover, since
    $\|A_s'(y)^*\|_{L(\HH;\HH)} = \|A_s'(y)\|_{L(\HH;\HH)}$,
    \cref{ass:AsAndJFrechet}(ii) yields that $B$ is also Lipschitz
    continuous w.r.t.\ the second variable for almost all $t\in
    (0,T)$. Therefore, similarly to the proof of
    \cref{lem:frechetEquationExistence}, one can apply Banach's fixed
    point theorem to the integral equation associated with~\eqref{eq:adjointstrong} to establish the existence of a unique
    strong solution.

    Finally, if we test~\eqref{eq:adjointstrong} with an arbitrary
    $\eta \in \WZ{\HH}$ with $\eta(0) = 0$ and integrate by parts, then
    we see that every strong solution is also a weak solution. Since the
    latter one is unique, as seen above, we deduce that weak and strong
    solution coincide.  
\end{proof}

\begin{theorem}[KKT-Conditions for~\eqref{eq:optprobs}]
    \label{thm:firstOrderOptimalityCondition}
    Assume that \cref{ass:AsAndJFrechet} and \cref{assu:controlconstr}
    hold and let
    $\overline{\ell} \in \WZ{\mathspace{X}_c} \cap \mathcal{U}$ be a
    locally optimal control for \cref{eq:optprobs} with associated state
    $\overline{z} = \mathcal{S}_s(\overline{\ell})$.  Then there exists
    a unique adjoint state $\varphi \in \LZ{\mathspace{H}}$ such that
    the following optimality system is fulfilled
    \begin{subequations}\label{eq:firstOrderOptimalitySystem}
        \begin{alignat}{3}
            & \boverdot{\overline{z}} = A_s(R\overline{\ell} - Q\overline{z}), & \quad & \overline{z}(0) = z_0, \\
            & \left\{\begin{aligned}
                    & -\scalarproduct{\varphi}{\boverdot{\eta}}{\LZ{\mathspace{H}}} \\
                    &\quad = \scalarproduct{\varphi}{A'_s(R\overline{\ell} -
                    Q\overline{z}) Q \eta}{\LZ{\mathspace{H}}} +
                    J'_z(\overline{z},\overline{\ell})\eta
            \end{aligned}\right.
            & &
            \begin{aligned}
                \forall \, \eta \in \WZ{\HH}\colon  \quad &\\
                \eta(0) = 0&
            \end{aligned} \label{eq:adjeq}\\[1ex]
            & \scalarproduct{\varphi}{A_s'(R\overline{\ell} - Q\overline{z})
            Rh}{\L{2}{\mathspace{H}}} =
            J'_\ell(\overline{z},\overline{\ell}) h & & \forall \, h \in
            \WZ{\mathspace{X}_c} \cap \mathcal{U}. \label{eq:gradeq}
        \end{alignat}
    \end{subequations}
    If $J$ enjoys extra regularity, namely
    \begin{equation}\label{eq:regpsi}
        J(z, \ell) = \Psi_1(z, \ell) + \Psi_2(z(T), \ell(T)) + \Phi(\ell) 
    \end{equation}        
    with two Fr\'echet differentiable functionals
    $\Psi_1\colon  \LZ{\HH} \times \WZ{\XX_c}\to \R$ and
    $\Psi_2 \colon  \HH \times \XX_c \to \R$, then
    $\varphi \in \WZ{\mathspace{H}}$ is a strong solution of
    \begin{equation}\label{eq:adjstrong}
        \boverdot{\varphi}(t) = \big(Q A'_s(R\overline{\ell} - Q\overline{z})^*\varphi\big)(t) + \ddp{\Psi_1}{z}(\overline{z},\overline{\ell}), 
        \qquad \varphi(T) = - \ddp{\Psi_2}{z}(\overline{z}(T), \overline{\ell}(T)).
    \end{equation}
\end{theorem}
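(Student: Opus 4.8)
The plan is to assemble the statement from the purely primal optimality condition \eqref{eq:objectiveFunctionDerivative} in \cref{lem:generalFirstOrderOptimalityCondition} and the solvability of the adjoint equation in \cref{lem:adex}, the substantive work having already been done in those lemmas and in \cref{thm:SsFrechetDifferentiability}. First I would set $y \defn R\overline{\ell} - Q\overline{z} \in \WZ{\YY} \subset \LZ{\YY}$ and define $v \defn J'_z(\overline{z},\overline{\ell})$. Since $J$ is differentiable on $\WZ{\WW}\times\WZ{\XX_c}$ and $\HH \embed \WW$ (hence $\WZ{\HH}\embed\WZ{\WW}$), the restriction of $v$ to $\WZ{\HH}$ is an element of $\WZ{\HH}^*$, so \cref{lem:adex} applies and delivers a \emph{unique} weak solution $\varphi = -v\circ\mathcal{S}_y \in \LZ{\HH}$ of the adjoint equation. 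Spelling out the defining identity \eqref{eq:prototypeAdjointEquationDefinition} with this particular $v$ is, verbatim, \eqref{eq:adjeq}, and the state equation is part of the hypotheses; so the first and second line of \eqref{eq:firstOrderOptimalitySystem} require nothing beyond bookkeeping, and uniqueness of $\varphi$ is inherited directly from \cref{lem:adex}.

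The core of the argument is the gradient equation \eqref{eq:gradeq}, which I would obtain by testing the adjoint relation with the sensitivity $\eta \defn \mathcal{S}_s'(\overline{\ell})h$ for an arbitrary $h \in \WZ{\XX_c}\cap\mathcal{U}$. By \cref{thm:SsFrechetDifferentiability}, $\eta \in \WZ{\ZZ}$ solves $\boverdot{\eta} = A_s'(y)(Rh - Q\eta)$, $\eta(0)=0$, i.e.\ $\boverdot{\eta} + A_s'(y)Q\eta = w$ with $w \defn A_s'(y)Rh$; here $w \in \LZ{\HH}$ because $Rh \in \WZ{\YY}\subset\LZ{\HH}$ and $A_s'(y(\cdot))$ extends to a uniformly bounded family in $L(\HH;\HH)$ by \cref{ass:AsAndJFrechet}(ii) (Bochner-measurability as in \cref{lem:frechetEquationExistence}). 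Comparing with \eqref{eq:specialEta} and using the uniqueness statement of \cref{lem:adex} in the space $\WZ{\HH}\supset\WZ{\ZZ}\ni\eta$ identifies $\eta = \mathcal{S}_y(w)$; moreover $\eta\in\WZ{\HH}$ with $\eta(0)=0$ is an admissible test function. Since $\varphi = -v\circ\mathcal{S}_y$, this yields $J'_z(\overline{z},\overline{\ell})\eta = v(\mathcal{S}_y(w)) = -\scalarproduct{\varphi}{A_s'(y)Rh}{\LZ{\HH}}$, and inserting this into $F'(\overline{\ell})h = J'_z(\overline{z},\overline{\ell})\mathcal{S}_s'(\overline{\ell})h + J'_\ell(\overline{z},\overline{\ell})h = 0$ from \cref{lem:generalFirstOrderOptimalityCondition} gives exactly \eqref{eq:gradeq}.

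For the regularity statement, if $J$ has the split form \eqref{eq:regpsi}, I would apply the chain rule together with the boundedness of the point-evaluation $\WZ{\HH}\ni z\mapsto z(T)\in\HH$ to conclude that $v = J'_z(\overline{z},\overline{\ell})$, restricted to $\WZ{\HH}$, is of the form \eqref{eq:splitup} with $v_1 \defn \ddp{\Psi_1}{z}(\overline{z},\overline{\ell}) \in \LZ{\HH}$ and $v_2 \defn \ddp{\Psi_2}{z}(\overline{z}(T),\overline{\ell}(T)) \in \HH$ (using that $\LZ{\HH}$ and $\HH$ are Hilbert, hence self-dual). The second part of \cref{lem:adex} then provides a strong solution $\varphi\in\WZ{\HH}$ of \eqref{eq:adjointstrong} with these data, and it coincides with the weak solution already constructed; this is precisely \eqref{eq:adjstrong}.

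The only real subtlety—and hence the "main obstacle"—is the careful matching of function spaces and dualities in the identification $\mathcal{S}_s'(\overline{\ell})h = \mathcal{S}_y\bigl(A_s'(y)Rh\bigr)$: one must make sure that the linearized equation produced by \cref{thm:SsFrechetDifferentiability} (posed in $\WZ{\ZZ}$ because of the norm gap) and the equation \eqref{eq:specialEta} defining $\mathcal{S}_y$ (posed in $\WZ{\HH}$) really describe the same object, and that the test-function class in \eqref{eq:prototypeAdjointEquationDefinition} is wide enough to contain these sensitivities. Everything else is a direct combination of \cref{lem:generalFirstOrderOptimalityCondition}, \cref{thm:SsFrechetDifferentiability}, and \cref{lem:adex}.
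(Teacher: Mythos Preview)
Your proposal is correct and follows essentially the same route as the paper: existence and uniqueness of $\varphi$ via \cref{lem:adex} applied to $v = J'_z(\overline{z},\overline{\ell}) \in \WZ{\HH}^*$, the gradient equation by testing with $\eta = \mathcal{S}_s'(\overline{\ell})h$ and using $\boverdot{\eta} + A_s'(y)Q\eta = A_s'(y)Rh$ together with \cref{lem:generalFirstOrderOptimalityCondition}, and the strong form via the second part of \cref{lem:adex}. The only cosmetic difference is that you invoke the explicit representation $\varphi = -v\circ\mathcal{S}_y$ and the identification $\eta = \mathcal{S}_y(A_s'(y)Rh)$, whereas the paper simply inserts $\eta$ as a test function in the weak adjoint equation \eqref{eq:adjeq}; both yield the identity $\scalarproduct{\varphi}{A_s'(y)Rh}{\LZ{\HH}} = -J'_z(\overline{z},\overline{\ell})\eta$ in one line.
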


\begin{remark}
    The exemplary objective functionals in \cref{sec:7} are precisely of
    the form in~\eqref{eq:regpsi}.
\end{remark}

\begin{proof}[Proof of \cref{thm:firstOrderOptimalityCondition}] Since
    $ J'_z(\overline{z},\overline{\ell}) \in \WZ{\WW}^* \embed
    \WZ{\HH}^*$, \cref{lem:adex} gives the existence of a unique
    solution of~\eqref{eq:adjeq}.  Now, let
    $h \in \WZ{\mathspace{X}_c} \cap \mathcal{U}$ be arbitrary and
    define
    $\eta \defn  \mathcal{S}'_s(\overline{\ell}) h \in \WZ{\mathspace{Z}}
    \subset \WZ{\mathspace{H}}$.  The weak form of the adjoint equation
    then implies
    \begin{equation}
        \label{eq:adjointEquality}
        \begin{aligned}
            \scalarproduct{\varphi}{A_s'(R\overline{\ell} - Q\overline{z})
            Rh }{\LZ{\mathspace{H}}}
            = \scalarproduct{\varphi}{\boverdot{\eta} + A_s'(R\overline{\ell} - Q\overline{z}) Q\eta}{\LZ{\mathspace{H}}}
            = -J'_z(\mathcal{S}_s(\overline{\ell}),\overline{\ell})\eta.
        \end{aligned}
    \end{equation}
    This together with \cref{lem:generalFirstOrderOptimalityCondition}
    shows that $(\overline{z},\overline{\ell},\varphi)$ fulfills the
    optimality system~\eqref{eq:firstOrderOptimalitySystem}.  If $J$ is
    of the form in~\eqref{eq:regpsi}, then \cref{lem:adex} implies that
    the weak solution of the adjoint equation is in fact a strong
    solution and solves~\eqref{eq:adjstrong}.  
\end{proof}

\begin{corollary}
    \label{cor:derivativeZeroEquivalence}
    Let \cref{ass:AsAndJFrechet} and \cref{assu:controlconstr} hold.
    Then $\bar\ell \in \WZ{\mathspace{X}_c} \cap \mathcal{U}$ with
    associated state $\overline{z} = \mathcal{S}_s(\overline{\ell})$
    fulfills~\eqref{eq:objectiveFunctionDerivative} if and only if there
    exists an adjoint state $\varphi \in \LZ{\mathspace{H}}$ such that
    $(\overline{z},\overline{\ell},\varphi)$ satisfies the optimality
    system~\eqref{eq:firstOrderOptimalitySystem}.
\end{corollary}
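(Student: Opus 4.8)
The claim is a purely formal equivalence once the adjoint machinery of \cref{lem:adex} and the explicit representation of the derivative $\mathcal{S}_s'$ from \cref{thm:SsFrechetDifferentiability} are at hand; the plan is to reduce both implications to a single testing identity, which is essentially the one already exploited in the proof of \cref{thm:firstOrderOptimalityCondition}. Fix $\overline\ell \in \WZ{\XX_c}\cap\UU$ and write $\overline z \defn \mathcal{S}_s(\overline\ell)$. First I would establish the following observation: \emph{if $\varphi\in\LZ{\HH}$ is any weak solution of the adjoint equation \eqref{eq:adjeq}, then for every $h\in\WZ{\XX_c}\cap\UU$ one has}
\begin{equation*}
  \scalarproduct{\varphi}{A_s'(R\overline\ell - Q\overline z)Rh}{\LZ{\HH}}
  = -\,J'_z(\overline z,\overline\ell)\,\mathcal{S}_s'(\overline\ell)h .
\end{equation*}
Indeed, put $\eta\defn\mathcal{S}_s'(\overline\ell)h$; by \cref{thm:SsFrechetDifferentiability} this $\eta$ lies in $\WZ{\ZZ}\subset\WZ{\HH}$, satisfies $\eta(0)=0$, and solves \eqref{eq:etaEquation}, so that $\boverdot{\eta} + A_s'(R\overline\ell - Q\overline z)Q\eta = A_s'(R\overline\ell - Q\overline z)Rh$. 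Testing the weak adjoint equation \eqref{eq:adjeq} with this admissible $\eta$ and inserting the last identity gives the claimed formula, exactly as in \eqref{eq:adjointEquality}. Note that only the weak form was used, so the identity holds for \emph{any} weak solution of \eqref{eq:adjeq}, and no uniqueness is required at this point.

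Granted this, the two implications are immediate. For the ``only if'' direction, assume $\overline\ell$ satisfies \eqref{eq:objectiveFunctionDerivative}. By \cref{lem:adex} there exists a weak solution $\varphi\in\LZ{\HH}$ of \eqref{eq:adjeq} (the right-hand side $J'_z(\overline z,\overline\ell)\in\WZ{\WW}^*\embed\WZ{\HH}^*$ being admissible), and the state equation and \eqref{eq:adjeq} then hold by construction of $\overline z$ and $\varphi$. Substituting $J'_z(\overline z,\overline\ell)\,\mathcal{S}_s'(\overline\ell)h = -J'_\ell(\overline z,\overline\ell)h$ into the identity above yields $\scalarproduct{\varphi}{A_s'(R\overline\ell - Q\overline z)Rh}{\LZ{\HH}} = J'_\ell(\overline z,\overline\ell)h$ for all admissible $h$, i.e.\ \eqref{eq:gradeq}, so $(\overline z,\overline\ell,\varphi)$ solves \eqref{eq:firstOrderOptimalitySystem}. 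Conversely, if $(\overline z,\overline\ell,\varphi)$ satisfies \eqref{eq:firstOrderOptimalitySystem} for some $\varphi\in\LZ{\HH}$, then the first equation forces $\overline z = \mathcal{S}_s(\overline\ell)$, and \eqref{eq:adjeq} says precisely that $\varphi$ is a weak solution of the adjoint equation, so the identity of the first paragraph applies. Combining it with the gradient equation \eqref{eq:gradeq} gives, for every $h\in\WZ{\XX_c}\cap\UU$,
\begin{equation*}
  J'_z(\overline z,\overline\ell)\,\mathcal{S}_s'(\overline\ell)h + J'_\ell(\overline z,\overline\ell)h
  = -\scalarproduct{\varphi}{A_s'(R\overline\ell - Q\overline z)Rh}{\LZ{\HH}} + J'_\ell(\overline z,\overline\ell)h = 0,
\end{equation*}
which is exactly \eqref{eq:objectiveFunctionDerivative}.

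I do not expect any genuine obstacle here; the argument is bookkeeping around \cref{lem:adex} and \cref{thm:SsFrechetDifferentiability}. The only points that need care are that the test function $\eta=\mathcal{S}_s'(\overline\ell)h$ plugged into \eqref{eq:adjeq} must satisfy $\eta(0)=0$ — which is built into \eqref{eq:etaEquation} — and that the right-hand side of the adjoint equation lies in $\WZ{\HH}^*$ via the embedding $\HH\embed\WW$, so that \cref{lem:adex} is applicable in the ``only if'' direction.
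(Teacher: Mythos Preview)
Your proof is correct and follows essentially the same approach as the paper: both directions hinge on the testing identity \eqref{eq:adjointEquality}, obtained by plugging $\eta=\mathcal{S}_s'(\overline\ell)h$ into the weak adjoint equation, with existence of $\varphi$ in the ``only if'' direction supplied by \cref{lem:adex}. Your write-up is slightly more explicit in isolating the key identity first and in noting that uniqueness of $\varphi$ is not needed for the ``if'' direction, but the substance is identical.
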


\begin{proof} The proof of \cref{thm:firstOrderOptimalityCondition}
    already shows that~\eqref{eq:objectiveFunctionDerivative} implies
    the optimality system in~\eqref{eq:firstOrderOptimalitySystem}.

    To prove the reverse implication, assume that
    $(\overline{z},\overline\ell,\varphi)$ fulfills the optimality
    system \cref{eq:firstOrderOptimalitySystem}.  Then choose an
    arbitrary $h \in \WZ{\mathspace{H}}$, define
    $\eta \defn  \mathcal{S}_s'(\overline\ell) h$, and use the fact that
    $\varphi$ is the weak solution of~\eqref{eq:adjeq} to obtain
    \cref{eq:adjointEquality}.  This together with~\eqref{eq:gradeq}
    finally give \cref{eq:objectiveFunctionDerivative}.  
\end{proof}

\begin{example}\label{ex:ode}
    Under suitable additional assumptions, it is possible to further
    simplify the gradient equation~\eqref{eq:gradeq}.  For this purpose
    assume that $R$ is injective (so that
    $\UU = \{\ell \in H^1(0,T,\XX) \colon  \ell(0) = 0\}$), $\XX_c$ is a
    Hilbert space, and
    \begin{equation}\label{eq:regpsi2}
        J(z, \ell) = \Psi_1(z, \ell) + \Psi_2(z(T), \ell(T)) + \frac{\gamma}{2}\,\|\boverdot{\ell}\|_{\LZ{\XX_c}}^2, 
    \end{equation}
    where $\Psi_1\colon  \WZ{\mathcal{W}} \times L^2(0,T;\XX_c) \to \R$ and
    $\Psi_2\colon  \mathcal{W} \times \XX_c \to \R$ are
    Fr\'echet differentiable and $\gamma > 0$. This type of objective
    will also appear in the application problem in \cref{sec:7}. Then~\eqref{eq:gradeq} becomes
    \begin{equation}\label{eq:odeweak}
        \begin{aligned}[t]
            \gamma (\partial_t \overline{\ell},
            \partial_t\overline{h})_{\LZ{\XX_c}}
            & - \int_0^T \dual{R^*A_s'(R\overline{\ell} - Q\overline{z})^* \varphi}{h}_{\XX^*, \XX} \, d t \\
            & + \int_0^T \ddp{\Psi_1}{\ell}(\overline{z},
            \overline{\ell})h\,dt + \ddp{\Psi_2}{\ell}(\overline{z}(T),
            \overline{\ell}(T))h(T)
            = 0 \\
            & \qquad\qquad\qquad\qquad\qquad \forall\, h \in
            \WZ{\mathspace{X}_c} \text{ with } h(0) = 0,
        \end{aligned}
    \end{equation}
    where we identified
    $\partial_\ell\Psi_1(\overline{z}, \overline{\ell}) \in
    (\LZ{\XX_c})^* = \LZ{\XX_c}$.  Note that $\XX_c$ as a Hilbert space
    satisfies the Radon-Nikod\'{y}m-property.  Since $\XX_c \embed \XX$, we
    may identify $R^*A_s'(R\overline{\ell} - Q\overline{z})^* \varphi$
    with an element of $\LZ{\XX_c}$, too, which we denote by the same
    symbol.  Then, if we choose $h(t) = \psi(t) \, \xi$ with
    $\psi \in C^\infty_c(0,T)$ and $\xi \in \XX_c$ arbitrary, we obtain
    \begin{equation*}
        \Big( - \int_0^T \Big[\gamma\,\partial_t \psi\, \partial_t\overline{\ell} 
                + R^*A_s'(R\overline{\ell} - Q\overline{z})^* \varphi \, \psi 
        - \ddp{\Psi_1}{\ell}(\overline{z}, \overline{\ell})\psi\Big] d t , \xi\Big)_{\XX_c} = 0.
    \end{equation*}
    Now, since $\xi \in \XX_c$ was arbitrary, we find that the second
    distributional time derivative of $\overline{\ell}$ is a regular
    distribution in $\LZ{\XX_c}$, i.e.,
    $\overline{\ell} \in H^2(0,T;\XX_c)$, satisfying for almost all
    $t\in (0,T)$
    \begin{equation}\label{eq:odestrong}
        \gamma\, \partial_t^2 \overline\ell(t) + R^* A_s'\big(R\overline\ell(t) - Q\overline z(t)\big)^* \varphi(t) = 
        \ddp{\Psi_1}{\ell}(\overline{z}, \overline{\ell})(t) \quad \text{in }\XX_c.
    \end{equation}
    Since $\XX_c$ is supposed to be a Hilbert space, we can apply
    integration by parts to~\eqref{eq:odeweak}.  Together with
    $\overline{\ell} \in \UU = \{\ell \in \WZ{\XX} : \ell(0) = 0\}$ and~\eqref{eq:odestrong}, this implies the following boundary
    conditions:
    \begin{equation}\label{eq:boundary}
        \overline\ell(0) = 0, \quad \gamma\,\partial_t\overline{\ell}(T) = - \ddp{\Psi_2}{\ell}(\overline{z}(T), \overline{\ell}(T)),
    \end{equation}
    where we again identified
    $\partial_\ell \Psi_2(\overline{z}(T), \overline{\ell}(T)) \in
    \XX_c^*$ with its Riesz representative.  In summary, we have thus
    seen that the gradient equation in~\eqref{eq:gradeq} becomes an
    operator boundary value problem in $\XX_c$, namely~\eqref{eq:odestrong}--\eqref{eq:boundary}.
\end{example}

\section{Second-Order Sufficient Conditions}
\label{sec:6}

The next section is devoted to the derivation of second-order
sufficient optimality conditions for the regularized problem~\eqref{eq:optprobs}.  As it was the case for the first-order
conditions, the main part concerns the differentiability properties of
the control-to-state map $\mathcal{S}_s$ and the reduced objective, to
be more precise to show that these are twice continuously
Fr\'echet differentiable.  For this purpose, we need the following
sharpened assumptions on the objective and the regularized operator
$A_s$:

\begin{assumption}\label{ass:AsAndJTwiceFrechet}\
    \begin{itemize}
        \item[(i)]
            $J \colon  \WZ{\mathspace{W}} \times \WZ{\mathspace{X}_c} \rightarrow
            \mathbb{R}$ is twice continuously Fr\'{e}chet differentiable.
        \item[(ii)] The Fr\'echet-derivative $A_s'$ is Lipschitz continuous
            from $\YY$ to $L(\ZZ;\ZZ)$. Moreover, for every $y\in \YY$,
            $A_s'(y)$ can be extended to an element of
            $L(\mathspace{W};\mathspace{W})$.  The mapping arising in this way
            is Lipschitz continuous from $\YY$ to $L(\WW;\WW)$.  Furthermore,
            there is a constant $C> 0$ such that
            $\norm{A_s'(y) w }{\mathspace{W}} \leq C \norm{w}{\mathspace{W}}$
            hold for all $y \in \mathspace{Y}$ and all $w \in \mathspace{W}$.
        \item[(iii)] $A_s'$ is Fr\'echet differentiable from $\YY$ to
            $L(\ZZ;\WW)$. For all $y\in\YY$, its derivative $A_s''(y)$ can be
            extended to an element of
            $L(\mathspace{Z};L(\mathspace{Z};\mathspace{W}))$ and the mapping
            $y\mapsto A''_s(y)$ is continuous in these spaces. Moreover, there
            exists a constant $C$ such that
            $\norm{A_s''(y) [z_1, z_2]}{\mathspace{W}} \leq C
            \norm{z_1}{\mathspace{Z}} \norm{z_2}{\mathspace{Z}}$ for all
            $y \in \mathspace{Y}$ and all $z_1, z_2 \in \mathspace{Z}$.
    \end{itemize}
\end{assumption}

\begin{remark}\label{rem:normgap2}
    We point out that a second norm gap arises in
    \cref{ass:AsAndJTwiceFrechet}, since $A_s'$ is only
    Fr\'echet differentiable as an operator with values in $\WW$ and not
    in $\ZZ\embed \WW$. This assumption is again motivated by the
    application problem in \cref{sec:7}. The example given there
    demonstrates that such as second norm gap is indeed necessary in
    general, since, given a concrete application, one cannot expect
    $A_s$ to be twice Fr\'echet differentiable in $\YY$, and even not as
    an operator from $\YY$ to $\ZZ$.
\end{remark}

The following proposition addresses the second derivative of the
solution operator under the above assumptions.  Its proof is in
principle completely along the lines of the proof of
\cref{thm:SsFrechetDifferentiability} on the first derivative of
$\mathcal{S}$. We therefore postpone it to \cref{sec:secondderiv}.

\begin{proposition}[Second Derivative of the Solution Operator]
    \label{prp:SpTwiceFrechetDifferentiable}
    Under \cref{ass:AsAndJFrechet}(ii) and
    \cref{ass:AsAndJTwiceFrechet}(ii) \& (iii), the solution operator
    $\mathcal{S}_s \colon  \WZ{\mathspace{X}} \rightarrow \WZ{\mathspace{W}}$
    is twice Fr\'{e}chet differentiable. Given
    $\ell, h_1, h_2 \in \WZ{\mathspace{X}}$, its second derivative
    $\mathcal{S}''_s(\ell)[h_1 , h_2] \in \WZ{\mathspace{W}}$ is given
    by the unique solution of
    \begin{align}
        \label{eq:xiEquation}
        \boverdot{\xi} &= A_s''(R\ell - Qz) [Rh_1 - Q\eta_1, Rh_2 - Q\eta_2] - A_s'(R\ell - Qz) Q \xi, \quad \xi(0) = 0,
    \end{align}
    where $z \defn  \mathcal{S}_s(\ell) \in \WZ{\mathspace{Y}}$ and
    $\eta_{i} \defn  \mathcal{S}'_s(\ell) h_i \in \WZ{\mathspace{Z}}$,
    $i = 1,2$.

    Moreover, there exists a constant $C$ such that
    \begin{equation}\label{eq:S2est}
        \norm{\mathcal{S}''_s(\ell)[h_1,  h_2]}{\WZ{\mathspace{W}}}
        \leq C\norm{h_1}{\WZ{\mathspace{X}}} \norm{h_2}{\WZ{\mathspace{X}}}
    \end{equation}    	
    for all $\ell, h_1, h_2 \in \WZ{\mathspace{X}}$.
\end{proposition}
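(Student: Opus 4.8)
\emph{Plan of proof.} I would follow the blueprint of the proof of \cref{thm:SsFrechetDifferentiability} step by step, now exploiting the additional norm gap $\YY \embed \ZZ \embed \HH \embed \WW$ and the bilinearity of the second derivative. Throughout fix $\ell,h_1,h_2 \in \WZ{\XX}$ and abbreviate $z \defn \mathcal{S}_s(\ell) \in \WZ{\YY}$, $\eta_i \defn \mathcal{S}_s'(\ell)h_i \in \WZ{\ZZ}$ (which solve \eqref{eq:etaEquation}, $i=1,2$), and $y \defn R\ell - Qz \in \WZ{\YY}$. First I would check that \eqref{eq:xiEquation} is well-posed: since $\WZ{\YY}\embed\CO{\YY}$, $\WZ{\ZZ}\embed\CO{\ZZ}$ and $y\mapsto A_s''(y)$ is continuous into $L(\ZZ;L(\ZZ;\WW))$ by \cref{ass:AsAndJTwiceFrechet}(iii), the inhomogeneity $t\mapsto A_s''(y(t))[(Rh_1-Q\eta_1)(t),(Rh_2-Q\eta_2)(t)]$ lies in $\CO{\WW}\subset\L{2}{\WW}$, while $\xi\mapsto A_s'(y(\cdot))Q\xi$ is Lipschitz on $\WW$ uniformly in $t$ by \cref{ass:AsAndJTwiceFrechet}(ii); Banach's fixed point theorem applied to the integral equation associated with \eqref{eq:xiEquation}, exactly as in \cref{lem:frechetEquationExistence}, then yields a unique solution $\xi\in\WZ{\WW}$, and $[h_1,h_2]\mapsto\xi$ is plainly bilinear.

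Next I would establish the a priori estimate \eqref{eq:S2est} for this $\xi$ directly from the equation: by \cref{ass:AsAndJTwiceFrechet}(ii)\&(iii) one has $\norm{\boverdot\xi(t)}{\WW}\le C\,\norm{(Rh_1-Q\eta_1)(t)}{\ZZ}\,\norm{(Rh_2-Q\eta_2)(t)}{\ZZ}+C\,\norm{\xi(t)}{\WW}$, so Gronwall's inequality together with $\xi(0)=0$ gives $\norm{\xi}{\CO{\WW}}\le C\,\norm{Rh_1-Q\eta_1}{\L{2}{\ZZ}}\,\norm{Rh_2-Q\eta_2}{\L{2}{\ZZ}}$; since $R\in L(\XX;\YY)\subset L(\XX;\ZZ)$ and $\norm{\eta_i}{\WZ{\ZZ}}\le C\,\norm{h_i}{\L{2}{\XX}}$ by \cref{thm:SsFrechetDifferentiability}, the right-hand side is at most $C\,\norm{h_1}{\WZ{\XX}}\norm{h_2}{\WZ{\XX}}$, and re-inserting this bound into the estimate for $\boverdot\xi$ controls $\norm{\boverdot\xi}{\L{2}{\WW}}$ as well.

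The core is the remainder estimate. Fix $h_1$ and, for arbitrary $h_2$, set $z_1 \defn \mathcal{S}_s(\ell+h_1)$, $y_1 \defn R(\ell+h_1)-Qz_1$, $\rho \defn z_1-z-\eta_1$, $\eta_2^1 \defn \mathcal{S}_s'(\ell+h_1)h_2$, and $\delta \defn \eta_2^1-\eta_2-\xi$. Using the equations for $\eta_2,\eta_2^1,\xi$, the pointwise first-order Taylor expansion $A_s'(y_1(t))=A_s'(y(t))+A_s''(y(t))[(y_1-y)(t)]+r_2(y(t);(y_1-y)(t))$ (with $\norm{r_2(y(t);v)}{L(\ZZ;\WW)}/\norm{v}{\YY}\to 0$ as $v\to 0$ in $\YY$, by \cref{ass:AsAndJTwiceFrechet}(iii)), and the identities $y_1-y=Rh_1-Q\eta_1-Q\rho$ and $\eta_2^1-\eta_2=\delta+\xi$, a direct computation gives
\begin{equation*}
    \boverdot\delta = -A_s'(y_1)Q\delta + g, \qquad \delta(0)=0,
\end{equation*}
with $g \defn \bigl(A_s'(y)-A_s'(y_1)\bigr)Q\xi - A_s''(y)[Q\rho,\, Rh_2-Q\eta_2] + r_2(y;y_1-y)(Rh_2-Q\eta_2)$, so Gronwall as above yields $\norm{\delta}{\WZ{\WW}}\le C\,\norm{g}{\L{2}{\WW}}$. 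It remains to show $\norm{g}{\L{2}{\WW}}=o(\norm{h_1}{\WZ{\XX}})\,\norm{h_2}{\WZ{\XX}}$ uniformly in $h_2$. For the first summand of $g$, Lipschitz continuity of $A_s'\colon\YY\to L(\WW;\WW)$ (\cref{ass:AsAndJTwiceFrechet}(ii)), $\norm{y_1-y}{\WZ{\YY}}\le C\,\norm{h_1}{\WZ{\XX}}$ (from \cref{prp:solutionOperatorLipschitz}), and \eqref{eq:S2est} give the bound $C\,\norm{h_1}{\WZ{\XX}}^2\norm{h_2}{\WZ{\XX}}$. For the second summand, \cref{ass:AsAndJTwiceFrechet}(iii), $\norm{\rho}{\WZ{\ZZ}}=o(\norm{h_1}{\WZ{\XX}})$ (by \cref{thm:SsFrechetDifferentiability}), and $\norm{Rh_2-Q\eta_2}{\L{2}{\ZZ}}\le C\,\norm{h_2}{\WZ{\XX}}$ give $o(\norm{h_1}{\WZ{\XX}})\,\norm{h_2}{\WZ{\XX}}$. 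The third, genuinely nonlinear summand is treated exactly as the remainder $r_1$ in the proof of \cref{thm:SsFrechetDifferentiability}: since $\WZ{\YY}\embed\CO{\YY}$ one has $(y_1-y)(t)\to 0$ in $\YY$ uniformly in $t$ as $h_1\to 0$, so
\begin{equation*}
    \frac{\norm{r_2(y(t);(y_1-y)(t))}{L(\ZZ;\WW)}}{\norm{h_1}{\WZ{\XX}}} \le C\,\frac{\norm{r_2(y(t);(y_1-y)(t))}{L(\ZZ;\WW)}}{\norm{(y_1-y)(t)}{\YY}}\,\frac{\norm{y_1-y}{\WZ{\YY}}}{\norm{h_1}{\WZ{\XX}}} \longrightarrow 0
\end{equation*}
pointwise, this quotient is dominated by a fixed constant (using Lipschitz continuity of $A_s'\colon\YY\to L(\ZZ;\WW)$ and boundedness of $A_s''$), and $t\mapsto\norm{(Rh_2-Q\eta_2)(t)}{\ZZ}/\norm{h_2}{\WZ{\XX}}$ is bounded in $L^2(0,T)$ independently of $h_2$; hence Cauchy--Schwarz in time and Lebesgue's dominated convergence theorem give $\norm{r_2(y;y_1-y)(Rh_2-Q\eta_2)}{\L{2}{\WW}}=o(\norm{h_1}{\WZ{\XX}})\,\norm{h_2}{\WZ{\XX}}$ uniformly in $h_2$.

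Putting everything together, $\norm{\mathcal{S}_s'(\ell+h_1)-\mathcal{S}_s'(\ell)-\mathcal{S}_s''(\ell)[h_1,\,\cdot\,]}{L(\WZ{\XX};\WZ{\WW})}=o(\norm{h_1}{\WZ{\XX}})$, i.e.\ $\mathcal{S}_s$ is twice Fréchet differentiable from $\WZ{\XX}$ to $\WZ{\WW}$ with second derivative given by \eqref{eq:xiEquation} and bound \eqref{eq:S2est}; continuity of $\mathcal{S}_s''$ then follows along the same lines from the continuity of $y\mapsto A_s''(y)$. I expect the main obstacle to be, exactly as for the first derivative, the control of the nonlinear remainder $r_2(y;y_1-y)(Rh_2-Q\eta_2)$: the pointwise remainder property of $A_s'$ is available only at the price of the norm gap $\YY\to L(\ZZ;\WW)$, so it must be combined with a domination argument, and---unlike in \cref{thm:SsFrechetDifferentiability}---every estimate has to be made uniform in the second direction $h_2$ so that it survives passing to the operator norm. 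A secondary nuisance is the bookkeeping needed to place each of the many terms in $\boverdot\delta$ in precisely the space ($\YY$, $\ZZ$, or $\WW$) in which it is estimated.
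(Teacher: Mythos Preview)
Your proposal is correct and follows essentially the same approach as the paper's proof in \cref{sec:secondderiv}: well-posedness of \eqref{eq:xiEquation} via Banach's fixed point theorem, the a~priori bound via Gronwall, and the remainder estimate via a pointwise Taylor expansion of $A_s'$ combined with Gronwall and Lebesgue's dominated convergence theorem. The only notable difference is in the algebraic rearrangement of $\boverdot\delta$: the paper groups terms so that $-A_s'(y)Q\delta$ is the Gronwall term and picks up an extra summand $A_s''(y)[\zeta,Q(\eta_2-\eta_{1,2})]$, which is then estimated using the auxiliary \cref{S_p'Lipschitz} (Lipschitz continuity of $\mathcal{S}_s'$); your decomposition instead uses $-A_s'(y_1)Q\delta$ and the term $(A_s'(y)-A_s'(y_1))Q\xi$, which you control directly via the already established bound \eqref{eq:S2est} and the Lipschitz continuity of $A_s'$ in $L(\WW;\WW)$, so you do not need \cref{S_p'Lipschitz} at this point. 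One small imprecision: for the $r_2$ term you should pair the $L^2$-in-time norm of $r_2(y;y_1-y)/\|h_1\|$ (obtained via dominated convergence) with the $L^\infty$-in-time bound $\|Rh_2-Q\eta_2\|_{\CO{\ZZ}}\le C\|h_2\|_{\WZ{\XX}}$ (from $\WZ{\ZZ}\embed\CO{\ZZ}$), not an $L^2\times L^2$ ``Cauchy--Schwarz'' pairing, to land in $L^2(0,T;\WW)$ uniformly in $h_2$; this is exactly how the paper handles \eqref{6.4}.
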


\begin{lemma}
    \label{lem:Sp''Inequality}
    Assume that \cref{ass:AsAndJFrechet} (ii) and
    \cref{ass:AsAndJTwiceFrechet} (ii) and (iii) are fulfilled.  Then
    there exists a constant $C$ such that
    \begin{equation*}
        \begin{aligned}
            \norm{\mathcal{S}_s''(\ell_1)
            &- \mathcal{S}_s''(\ell_2)}{L(\WZ{\mathspace{X}};L(\WZ{\mathspace{X}};\WZ{\mathspace{W}}))} \\
            &\leq C \big( \norm{A_s''(R\ell_1 - Qz_1) - A_s''(R\ell_2 -
                Qz_2)}{\LZ{L(\mathspace{Z};L(\mathspace{Z};\mathspace{W}))}} +
            \norm{\ell_1 - \ell_2}{\WZ{\mathspace{X}}}\big)
        \end{aligned}
    \end{equation*}
    holds for all $\ell_1,\ell_2 \in \WZ{\mathspace{X}}$, where
    $z_i \defn  \mathcal{S}_s(\ell_i)$, $i=1,2$.
\end{lemma}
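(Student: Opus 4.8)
The plan is to follow exactly the pattern of the proofs of \cref{thm:SsFrechetDifferentiability} and \cref{prp:SpTwiceFrechetDifferentiable}: abbreviate $\xi_i \defn \mathcal{S}_s''(\ell_i)[h_1,h_2]$, subtract the two copies of \eqref{eq:xiEquation}, estimate the right-hand side of the resulting equation for $\delta\xi \defn \xi_1 - \xi_2$ in the $\WW$-norm, and close with Gronwall's inequality. Since $h_1,h_2 \in \WZ{\XX}$ are arbitrary, dividing the resulting bound by $\norm{h_1}{\WZ{\XX}}\norm{h_2}{\WZ{\XX}}$ and passing to the supremum then yields the asserted operator-norm estimate. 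Throughout I write $z_i \defn \mathcal{S}_s(\ell_i) \in \WZ{\YY}$, $y_i \defn R\ell_i - Qz_i \in \WZ{\YY}$, $\eta_k^{(i)} \defn \mathcal{S}_s'(\ell_i) h_k \in \WZ{\ZZ}$, and $\zeta_k^{(i)} \defn Rh_k - Q\eta_k^{(i)}$; from \cref{prp:solutionOperatorLipschitz} one gets $\norm{y_1(t) - y_2(t)}{\YY} \le C\norm{\ell_1-\ell_2}{\WZ{\XX}}$ (using $\WZ{\YY}\embed\CO{\YY}$), from \cref{thm:SsFrechetDifferentiability} the uniform bounds $\norm{\zeta_k^{(i)}}{\CO{\ZZ}} \le C\norm{h_k}{\WZ{\XX}}$, and from \eqref{eq:S2est} the bound $\norm{\xi_1}{\CO{\WW}} \le C\norm{h_1}{\WZ{\XX}}\norm{h_2}{\WZ{\XX}}$.

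As a preparatory step I would first establish the Lipschitz estimate
\[
  \norm{\mathcal{S}_s'(\ell_1)h - \mathcal{S}_s'(\ell_2)h}{\WZ{\ZZ}} \le C\,\norm{\ell_1-\ell_2}{\WZ{\XX}}\,\norm{h}{\WZ{\XX}},
\]
which is \emph{not} available merely from \eqref{eq:S2est} because it is needed in the $\ZZ$-norm, not the $\WW$-norm. It follows by subtracting the two copies of \eqref{eq:etaEquation} for $\eta_i \defn \mathcal{S}_s'(\ell_i)h$, writing the difference of the leading terms as $(A_s'(y_1)-A_s'(y_2))(Rh - Q\eta_1) - A_s'(y_2)Q(\eta_1 - \eta_2)$, and estimating in $\ZZ$: the first summand via the Lipschitz continuity of $A_s'$ from $\YY$ to $L(\ZZ;\ZZ)$ (\cref{ass:AsAndJTwiceFrechet}(ii)) together with the pointwise bound on $y_1-y_2$ and $\norm{Rh - Q\eta_1}{\ZZ} \le C(\norm{h(t)}{\XX} + \norm{h}{\WZ{\XX}})$, and the second summand as the Gronwall term, bounded by $C\norm{\eta_1 - \eta_2}{\ZZ}$ via \cref{ass:AsAndJFrechet}(ii). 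Gronwall's inequality gives the claim, and hence also $\norm{\zeta_k^{(1)} - \zeta_k^{(2)}}{\CO{\ZZ}} = \norm{Q(\eta_k^{(1)} - \eta_k^{(2)})}{\CO{\ZZ}} \le C\norm{\ell_1-\ell_2}{\WZ{\XX}}\norm{h_k}{\WZ{\XX}}$.

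For the main estimate, subtracting the two copies of \eqref{eq:xiEquation} and telescoping splits the difference of the bilinear terms into $(A_s''(y_1) - A_s''(y_2))[\zeta_1^{(1)},\zeta_2^{(1)}]$, $A_s''(y_2)[\zeta_1^{(1)} - \zeta_1^{(2)},\zeta_2^{(1)}]$, and $A_s''(y_2)[\zeta_1^{(2)},\zeta_2^{(1)} - \zeta_2^{(2)}]$, and the difference of the linear terms into $-(A_s'(y_1) - A_s'(y_2))Q\xi_1$ and $-A_s'(y_2)Q\,\delta\xi$. Using \cref{ass:AsAndJTwiceFrechet}(iii) (the bound $\norm{A_s''(y)[z_1,z_2]}{\WW} \le C\norm{z_1}{\ZZ}\norm{z_2}{\ZZ}$) for the first three, \cref{ass:AsAndJTwiceFrechet}(ii) (Lipschitz continuity of $A_s'$ from $\YY$ to $L(\WW;\WW)$ and the bound $\norm{A_s'(y)w}{\WW}\le C\norm{w}{\WW}$) for the last two, together with the uniform bounds on $\zeta_k^{(i)}$, $\xi_1$, the preparatory bound on $\zeta_k^{(1)} - \zeta_k^{(2)}$, and $\norm{y_1(t)-y_2(t)}{\YY}\le C\norm{\ell_1-\ell_2}{\WZ{\XX}}$, one arrives at the pointwise inequality
\[
  \norm{\boverdot{\delta\xi}(t)}{\WW} \le C\bigl(\norm{A_s''(y_1(t)) - A_s''(y_2(t))}{L(\ZZ;L(\ZZ;\WW))} + \norm{\ell_1-\ell_2}{\WZ{\XX}}\bigr)\norm{h_1}{\WZ{\XX}}\norm{h_2}{\WZ{\XX}} + C\norm{\delta\xi(t)}{\WW}
\]
for almost all $t$. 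Integrating in time and applying Gronwall's inequality bounds $\norm{\delta\xi}{\WZ{\WW}}$ by $C(\norm{A_s''(R\ell_1 - Qz_1) - A_s''(R\ell_2 - Qz_2)}{\LZ{L(\ZZ;L(\ZZ;\WW))}} + \norm{\ell_1-\ell_2}{\WZ{\XX}})\norm{h_1}{\WZ{\XX}}\norm{h_2}{\WZ{\XX}}$, and taking the supremum over $h_1,h_2$ in the unit balls of $\WZ{\XX}$ finishes the proof.

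The only genuinely delicate point—everything else is routine bookkeeping—is keeping the two norm gaps $\YY \embed \ZZ \embed \WW$ straight: the perturbations entering the bilinear form $A_s''(\cdot)[\cdot,\cdot]$, namely $\zeta_k^{(i)}$ and $\zeta_k^{(1)} - \zeta_k^{(2)}$, must be controlled in $\ZZ$, which is exactly why the auxiliary Lipschitz estimate for $\mathcal{S}_s'$ has to be carried out in the $\ZZ$-norm (where \cref{ass:AsAndJTwiceFrechet}(ii) provides the Lipschitz continuity of $A_s'$), rather than in the weaker $\WW$-norm where it would already be a consequence of \eqref{eq:S2est}; once the correct a priori bounds are drawn from the correct spaces, the Gronwall argument itself runs consistently in the $\WW$-norm.
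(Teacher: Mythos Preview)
Your proposal is correct and follows essentially the same approach as the paper: the same telescoping decomposition of $\boverdot{\xi}_1 - \boverdot{\xi}_2$, the same Gronwall argument in the $\WW$-norm, and the same reliance on the Lipschitz continuity of $\mathcal{S}_s'$ in the $\ZZ$-norm, which the paper states separately as \cref{S_p'Lipschitz} with a proof matching your preparatory step. Your closing remark about why the $\ZZ$-norm Lipschitz estimate for $\mathcal{S}_s'$ is essential (rather than the $\WW$-norm version implied by \eqref{eq:S2est}) is exactly the point.
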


\begin{proof} Let $\ell_1, \ell_2, h_1,h_2 \in \WZ{\mathspace{X}}$
    be arbitrary. We again abbreviate $z_i \defn  \mathcal{S}_s(\ell_i)$,
    $\eta_{i,j} \defn  \mathcal{S}'_s(\ell_i) h_j$,
    $\xi_{i} \defn  \mathcal{S}''_s(\ell_i)[h_1, h_2]$, and
    $y_i \defn  R\ell_i - Q z_i$ for $i,j \in \{ 1,2 \}$.  By the equation
    for $\mathcal{S}_s''$, we obtain for almost all $t \in [0,T]$
    \begin{align*}
        \boverdot{\xi}_{1} - \boverdot{\xi}_{2} 
        &= A_s''(y_1) [Rh_1 - Q\eta_{1,1}, Rh_2 - Q\eta_{1,2}] - A_s'(y_1) Q \xi_1\\
        &\quad  - A_s''(y_2) [Rh_1 - Q\eta_{2,1}, Rh_2 - Q\eta_{2,2}] - A_s'(y_2) Q \xi_2\\
        & = \big( A_s''(y_1)(Rh_1 - Q\eta_{1,1}) - A_s''(y_2) (Rh_1 - Q\eta_{2,1})\big) (Rh_2 - Q\eta_{1,2})\\
        &\quad  + A_s''(y_2) [Rh_1 - Q\eta_{2,1}, Q (\eta_{2,2} - \eta_{1,2})]
        + \big( A_s'(y_2) - A_s'(y_1) \big)Q \xi_1 + A_s'(y_2) Q (\xi_2 - \xi_1).
    \end{align*}
    With the help of
    \begin{multline*}
        A_s''(y_1) (Rh_1 - Q\eta_{1,1}) - A_s''(y_2) (Rh_1 - Q\eta_{2,1})\\
        = \big( A_s''(y_1) - A_s''(y_2) \big) (Rh_1 - Q\eta_{1,1}) +
        A_s''(y_2) Q(\eta_{2,1} - \eta_{1,1}),
    \end{multline*}
    and Gronwall's inequality, we thus arrive at
    \begin{align*}
        &\norm{\xi_1 - \xi_2}{\WZ{\mathspace{W}}} \\
        &\leq C \Big[\norm{Rh_1 - Q\eta_{2,1}}{\WZ{\mathspace{Z}}}\norm{\eta_{1,2} - \eta_{2,2}}{\WZ{\mathspace{Z}}}
            + \norm{y_1 - y_2}{\WZ{\mathspace{Y}}}\norm{\xi_1}{\WZ{\mathspace{W}}}\\
            & \qquad\; 
            + \Big(\norm{A_s''(y_1) - A_s''(y_2)}{L^2(0,T;L(\mathspace{Z};L(\mathspace{Z};\mathspace{W})))}
                \norm{Rh_1 - Q\eta_{1,1}}{\WZ{\mathspace{Z}}}  \\
            & \hspace*{30ex} + \norm{\eta_{1,1} - \eta_{2,1}}{\WZ{\mathspace{Z}}}  \Big) 
        \norm{Rh_2 - Q\eta_{1,2}}{\WZ{\mathspace{Z}}} \Big] \\[-1ex]
        &\leq 
        C \Big{(} \norm{A_s''(y_1)  - A_s''(y_2)}{L^2(0,T;L(\mathspace{Z};L(\mathspace{Z};\mathspace{W})))}
        + \norm{\ell_1 - \ell_2}{\WZ{\mathspace{X}}} \Big{)} \norm{h_1}{\WZ{\mathspace{X}}}
        \norm{h_2}{\WZ{\mathspace{X}}},
    \end{align*}
    where we used the estimate in \cref{thm:SsFrechetDifferentiability},~\eqref{eq:S2est}, and the Lipschitz continuity of $\mathcal{S}_s'$
    by \cref{S_p'Lipschitz} in the Appendix.  
\end{proof}

If $A_s''$ were Lipschitz continuous from $\YY$ to
$L(\ZZ;L(\ZZ;\WW))$, then \cref{lem:Sp''Inequality} would immediately
imply the Lipschitz continuity of $\mathcal{S}_s''$. However, to
obtain the continuity of the second derivative, this additional
assumption is not necessary as the following theorem shows:

\begin{theorem}[Second-Order Continuous Fr\'echet Differentiability of
    the Solution Operator]
    \label{thm:SsC2}
    Suppose that \cref{ass:AsAndJFrechet}(ii) and
    \cref{ass:AsAndJTwiceFrechet}(ii) \& (iii) are fulfilled. Then
    $\mathcal{S}_s \colon  \WZ{\mathspace{X}} \rightarrow \WZ{\mathspace{W}}$
    is twice continuously Fr\'{e}chet differentiable.  Its second
    derivative at $\ell\in \WZ{\XX}$ in directions
    $h_1, h_2 \in \WZ{\mathspace{X}}$ is given by the unique solution of
    \cref{eq:xiEquation}.
\end{theorem}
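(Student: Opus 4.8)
The plan is to bootstrap from the two results immediately preceding the theorem. \cref{prp:SpTwiceFrechetDifferentiable} already furnishes the existence of the second Fr\'echet derivative $\mathcal{S}_s''(\ell)$ together with its representation as the unique solution of~\eqref{eq:xiEquation}, and since a twice differentiable map is in particular $C^1$, the only thing left to prove is that $\ell \mapsto \mathcal{S}_s''(\ell)$ is \emph{continuous} as a map from $\WZ{\XX}$ into $L\bigl(\WZ{\XX};L(\WZ{\XX};\WZ{\WW})\bigr)$. By \cref{lem:Sp''Inequality} this in turn follows once we show that, for every sequence $\ell_n \to \ell$ in $\WZ{\XX}$ with $z_n \defn \mathcal{S}_s(\ell_n)$, $z \defn \mathcal{S}_s(\ell)$,
\[
    \norm{A_s''(R\ell_n - Qz_n) - A_s''(R\ell - Qz)}{\LZ{L(\ZZ;L(\ZZ;\WW))}} \longrightarrow 0,
\]
because the remaining contribution $\norm{\ell_n - \ell}{\WZ{\XX}}$ in that estimate obviously tends to zero.

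First I would pass to the arguments of $A_s''$. Since $\ell_n \to \ell$ in $\WZ{\XX}$, a fortiori in $\LZ{\XX}$, the global Lipschitz continuity of $\mathcal{S}_s$ from $\LZ{\XX}$ to $\WZ{\YY}$ in \cref{prp:solutionOperatorLipschitz} gives $z_n \to z$ in $\WZ{\YY}$; using $R\in L(\XX;\YY)$ and $Q\in L(\YY;\YY)$ pointwise in time, it follows that $y_n \defn R\ell_n - Qz_n \to y \defn R\ell - Qz$ in $\WZ{\YY}$, and hence, by the continuous embedding $\WZ{\YY}\embed \CO{\YY}$, uniformly on $[0,T]$: $\sup_{t\in[0,T]}\norm{y_n(t)-y(t)}{\YY}\to 0$. (This is the point where the norm gap between $\YY$ on the one hand and $\ZZ$, $\WW$ on the other is exploited.)

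Next I would run a dominated convergence argument. By \cref{ass:AsAndJTwiceFrechet}(iii) the mapping $y\mapsto A_s''(y)$ is continuous from $\YY$ to $L(\ZZ;L(\ZZ;\WW))$, so for every $t\in[0,T]$ we get $A_s''(y_n(t)) \to A_s''(y(t))$ in $L(\ZZ;L(\ZZ;\WW))$; moreover $t\mapsto \norm{A_s''(y_n(t))-A_s''(y(t))}{L(\ZZ;L(\ZZ;\WW))}$ is continuous, hence measurable, and it is bounded by $2C$ uniformly in $n$ and $t$ by virtue of the bound $\norm{A_s''(y)[z_1,z_2]}{\WW}\le C\norm{z_1}{\ZZ}\norm{z_2}{\ZZ}$ from \cref{ass:AsAndJTwiceFrechet}(iii). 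Lebesgue's dominated convergence theorem then yields the claimed convergence of the $\LZ{L(\ZZ;L(\ZZ;\WW))}$-norm to zero. Inserting this into \cref{lem:Sp''Inequality} shows $\mathcal{S}_s''(\ell_n)\to\mathcal{S}_s''(\ell)$ in $L\bigl(\WZ{\XX};L(\WZ{\XX};\WZ{\WW})\bigr)$, and since the sequence was arbitrary, $\mathcal{S}_s''$ is continuous; together with \cref{prp:SpTwiceFrechetDifferentiable} this gives the assertion, including the representation via~\eqref{eq:xiEquation}.

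I do not anticipate a genuine obstacle here, the heavy lifting having been done in \cref{prp:SpTwiceFrechetDifferentiable} and \cref{lem:Sp''Inequality}. The one detail worth stressing is that, in contrast to a Lipschitz estimate for $\mathcal{S}_s''$, the continuity of $\mathcal{S}_s''$ only needs \emph{continuity} of $A_s''$ (together with its uniform operator bound), so that the stronger Lipschitz property of $A_s''$ alluded to right before the theorem is genuinely not required; the price for dropping it is precisely the extra dominated convergence step above.
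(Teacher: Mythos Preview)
Your proposal is correct and follows essentially the same overall strategy as the paper: reduce to continuity of $\mathcal{S}_s''$ via \cref{prp:SpTwiceFrechetDifferentiable}, then feed the convergence $A_s''(y_n)\to A_s''(y)$ in $\LZ{L(\ZZ;L(\ZZ;\WW))}$ into \cref{lem:Sp''Inequality}. The one genuine difference is how that convergence is obtained. The paper first invokes an auxiliary compactness result (\cref{lem:compactSequenceSets}) to show that the union of the images $(R\ell_n-Qz_n)([0,T])$ together with $(R\ell-Qz)([0,T])$ is a compact subset of $\YY$, and then uses uniform continuity of $A_s''$ on this compact set to upgrade the uniform convergence $y_n\to y$ in $\CO{\YY}$ to uniform convergence of $A_s''(y_n)$ in $\CO{L(\ZZ;L(\ZZ;\WW))}$. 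You instead combine pointwise (in $t$) convergence of $A_s''(y_n(t))$ with the uniform operator bound from \cref{ass:AsAndJTwiceFrechet}(iii) and apply dominated convergence. Your route is slightly more elementary, avoiding the extra compactness lemma, at the cost of yielding only $L^2$- rather than uniform-in-$t$ convergence of $A_s''(y_n)$; since \cref{lem:Sp''Inequality} only needs the $\LZ{\cdot}$-norm, nothing is lost for the purpose of the theorem.
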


\begin{proof} Thanks to \cref{prp:SpTwiceFrechetDifferentiable}, we
    only have to show that
    $\mathcal{S}_s''$
    is continuous from
    $\WZ{\mathspace{X}}$
    to
    $L(\WZ{\mathspace{X}};L(\WZ{\mathspace{X}};\WZ{\mathspace{W}}))$.
    For this let
    $\sequence{\ell}{n} \subset \WZ{\mathspace{X}}$ and
    $\ell \in \WZ{\mathspace{X}}$ be given such that
    $\ell_n \rightarrow \ell$ in $\WZ{\mathspace{X}}$ so that in
    particular $\ell_n \rightarrow \ell$ in $\CO{\mathspace{X}}$. Then,
    \cref{prp:solutionOperatorLipschitz} implies
    $z_n \defn  \mathcal{S}_s(\ell_n) \rightarrow \mathcal{S}_s(\ell) \nfed  z$
    in $\CO{\mathspace{Y}}$. With this convergence results at hand, we
    can apply \cref{lem:compactSequenceSets} with $M = [0,T]$,
    $N = \mathspace{Y}$, $G_n = R\ell_n - Qz_n$ and $G = R\ell - Qz$ to
    see that
    \begin{equation*}
        U \defn  \Big{(} \bigcup_{n = 1}^\infty (R\ell_n - Qz_n)([0,T]) \Big{)} \cup
        \Big{(} (R\ell - Qz)([0,T]) \Big{)}
    \end{equation*}
    is compact. Therefore, thanks to the continuity assumption in
    \cref{ass:AsAndJTwiceFrechet}(iii),
    \linebreak  
    $A_s'' \colon  \mathspace{Y} \rightarrow
    L(\mathspace{Z};L(\mathspace{Z};\mathspace{W}))$ is uniformly
    continuous on $U$. Consequently, $A_s''(R\ell_n - Qz_n)$ converges
    to $A_s''(R\ell - Qz)$ in
    $\CO{L(\mathspace{Z};L(\mathspace{Z};\mathspace{W}))}$, which,
    together with \cref{lem:Sp''Inequality}, yields the assertion.  
\end{proof}

\begin{remark}
    It is to be noted that the regularized state equation~\eqref{eq:auxs} and the equations corresponding to the derivatives
    of $\mathcal{S}_s$, i.e.,~\eqref{eq:etaEquation} and~\eqref{eq:xiEquation}, provide more regular solutions under the
    hypotheses of \cref{ass:AsAndJFrechet}(ii) and
    \cref{ass:AsAndJTwiceFrechet}(ii) \& (iii).  Indeed, if
    $\ell,h_1,h_2 \in \WZ{\mathspace{X}}$, then the solutions of all
    three equations can be shown to be continuously differentiable in
    time with values in the respective spaces ($\YY$, $\ZZ$, and $\WW$,
    respectively).  Moreover, the time derivatives of $z$ and $\eta$ are
    absolutely continuous and the same would hold for $\xi$, if $A_s''$
    were Lipschitz continuous. However, we did not exploit this
    additional regularity, since the original unregularized problem~\eqref{eq:optprob} does not provide this property in general.
\end{remark}

With the above differentiability result at hand, it is now standard to
derive the following:

\begin{theorem}[Second-Order Sufficient Optimality Conditions for~\eqref{eq:optprobs}]
    \label{thm:SecondOrderOptimalityCondition}
    Assume that \cref{ass:AsAndJFrechet}, \cref{assu:controlconstr}, and
    \cref{ass:AsAndJTwiceFrechet} hold.  Let
    $(\overline{z},\overline{\ell}, \varphi) \in \WZ{\mathspace{Y}}
    \times (\WZ{\mathspace{X}_c} \cap \mathcal{U}) \times
    \LZ{\mathspace{H}}$ be a solution of the optimality system~\eqref{eq:firstOrderOptimalitySystem}.  Moreover, suppose that there
    is a $\delta > 0$ such that
    \begin{equation}\tag{SSC}\label{eq:ssc}
        F''(\overline{\ell}) h^2 \geq \delta \norm{h}{\WZ{\mathspace{X}_c}}^2
    \end{equation}
    for all $h \in \WZ{\mathspace{X}_c} \cap \mathcal{U}$, where $F$ is
    the reduced objective from~\eqref{eq:redobj}.  Then
    $(\overline{z},\overline{\ell})$ is locally optimal for
    \cref{eq:optprobs} and there exist $\varepsilon > 0$ and $\tau > 0$
    such that the following \emph{quadratic growth condition}
    \begin{equation}\label{eq:qgc}
        F(\ell) \geq F(\overline{\ell}) + \tau \norm{\ell - \overline{\ell}}{\WZ{\mathspace{X}_c}}^2
    \end{equation}
    holds for all $\ell \in \WZ{\mathspace{X}_c} \cap \mathcal{U}$ with
    $\|\ell - \overline{\ell}\|_{\WZ{\XX_c}} \leq \varepsilon$.
\end{theorem}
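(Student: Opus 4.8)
The plan is to deduce the assertion from a standard second-order Taylor argument for the reduced objective $F$ from~\eqref{eq:redobj}, exploiting that by \cref{assu:controlconstr} the admissible set $\WZ{\XX_c}\cap\UU$ is a \emph{linear subspace} of $\WZ{\XX_c}$. Thus $\overline{\ell} + h$ is admissible for every $h$ in this subspace, and one may freely pass between a perturbation $h$ and an admissible control $\ell = \overline{\ell} + h$.

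First I would record that $F\colon \WZ{\XX_c}\to\R$ is twice continuously Fr\'echet differentiable. Since $\XX_c\embed\XX$ continuously, \cref{thm:SsC2} shows that $\mathcal{S}_s$, regarded as a map from $\WZ{\XX_c}$ to $\WZ{\WW}$, is of class $C^2$; because $J\colon \WZ{\WW}\times\WZ{\XX_c}\to\R$ is twice continuously Fr\'echet differentiable by \cref{ass:AsAndJTwiceFrechet}(i), the chain rule yields $F\in C^2(\WZ{\XX_c};\R)$, and in particular $F''$ is continuous as a map from $\WZ{\XX_c}$ into the space of bounded bilinear forms on $\WZ{\XX_c}$. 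Moreover, since $(\overline{z},\overline{\ell},\varphi)$ solves the optimality system~\eqref{eq:firstOrderOptimalitySystem}, \cref{cor:derivativeZeroEquivalence} gives the purely primal first-order condition~\eqref{eq:objectiveFunctionDerivative}, i.e.\ $F'(\overline{\ell})h = 0$ for all $h\in\WZ{\XX_c}\cap\UU$.

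Next, for $h\in\WZ{\XX_c}\cap\UU$ the scalar function $g(t)\defn F(\overline{\ell} + th)$ is $C^2$ on $[0,1]$ with $g'(t) = F'(\overline{\ell}+th)h$ and $g''(t) = F''(\overline{\ell}+th)h^2$, so Taylor's theorem with integral remainder yields
\begin{equation*}
    F(\overline{\ell} + h) = F(\overline{\ell}) + F'(\overline{\ell})h + \int_0^1 (1-t)\, F''(\overline{\ell} + th)\, h^2\, dt.
\end{equation*}
Using $F'(\overline{\ell})h = 0$ and $\int_0^1(1-t)\,dt = \tfrac12$, and adding and subtracting $F''(\overline{\ell})$ inside the integral, one obtains
\begin{equation*}
    F(\overline{\ell} + h) - F(\overline{\ell}) = \tfrac12\, F''(\overline{\ell})h^2 + \int_0^1 (1-t)\big(F''(\overline{\ell} + th) - F''(\overline{\ell})\big) h^2\, dt.
\end{equation*}
By continuity of $F''$ there is $\varepsilon > 0$ such that $\norm{F''(\ell') - F''(\overline{\ell})}{L(\WZ{\XX_c};L(\WZ{\XX_c};\R))} \le \tfrac{\delta}{2}$ whenever $\norm{\ell' - \overline{\ell}}{\WZ{\XX_c}} \le \varepsilon$; hence for $\norm{h}{\WZ{\XX_c}} \le \varepsilon$ the remainder integral is bounded in modulus by $\tfrac12\cdot\tfrac{\delta}{2}\,\norm{h}{\WZ{\XX_c}}^2$, while $\tfrac12 F''(\overline{\ell})h^2 \ge \tfrac{\delta}{2}\norm{h}{\WZ{\XX_c}}^2$ by~\eqref{eq:ssc}. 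Therefore
\begin{equation*}
    F(\overline{\ell} + h) - F(\overline{\ell}) \ge \tfrac{\delta}{4}\,\norm{h}{\WZ{\XX_c}}^2 ,
\end{equation*}
which is exactly~\eqref{eq:qgc} with $\tau \defn \delta/4$ once we set $h = \ell - \overline{\ell}$. In particular $\overline{\ell}$ is a (strict) local minimizer of $F$ over $\WZ{\XX_c}\cap\UU$, and since $F(\ell) = J(\mathcal{S}_s(\ell),\ell)$, this shows that $(\overline{z},\overline{\ell})$ is locally optimal for~\eqref{eq:optprobs}.

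The only genuinely non-routine ingredient is the $C^2$-regularity of $F$, which is precisely where \cref{thm:SsC2} (and, through it, the two norm gaps built into \cref{ass:AsAndJFrechet}(ii) and \cref{ass:AsAndJTwiceFrechet}(ii) \& (iii)) enters; once this and \cref{cor:derivativeZeroEquivalence} are available, the remaining steps are the textbook quadratic-growth argument. A small point worth noting is that the integral-remainder Taylor formula is applied along the one-dimensional segment $t\mapsto \overline{\ell} + th$, which lies entirely inside $\WZ{\XX_c}$ (indeed inside $\WZ{\XX_c}\cap\UU$), so no difficulty with differentiability off the control space arises.
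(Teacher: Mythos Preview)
Your proof is correct and follows exactly the same approach as the paper: establish $C^2$-regularity of $F$ via \cref{thm:SsC2} and the chain rule, obtain $F'(\overline{\ell})h=0$ from \cref{cor:derivativeZeroEquivalence}, and then conclude by the standard quadratic-growth argument on the linear subspace $\WZ{\XX_c}\cap\UU$. The only difference is that the paper cites \cite[Satz~4.23]{troltzsch} for the final step, whereas you spell out the Taylor-expansion argument explicitly.
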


\begin{proof} Thanks to the assumptions on $J$ and \cref{thm:SsC2},
    the chain rule implies that the reduced objective function
    $F(\cdot) = J(\mathcal{S}_s(\cdot),\cdot) \colon  \WZ{\mathspace{X}_c}
    \rightarrow \mathbb{R}$ is twice continuously
    Fr\'{e}chet differentiable and, according to
    \cref{cor:derivativeZeroEquivalence}, the equation in~\eqref{eq:objectiveFunctionDerivative} holds for all
    $h \in \WZ{\mathspace{X}_c} \cap \mathcal{U}$. Since $\UU$ is a
    linear subspace, the claim then follows from standard arguments, see
    e.g.~\cite[Satz~4.23]{troltzsch}.  
\end{proof}

\begin{remark}
    As already mentioned in \cref{rem:ctrlconstr}, one could also
    account for additional control constraints. In this case, a critical
    cone would arise in the second-order conditions,
    cf.~e.g.~the survey article \cite{castro15}.
\end{remark}

Using the adjoint equation, the second derivative of the reduced
objective in~\eqref{eq:ssc} can be reformulated as follows:

\begin{corollary}\label{cor:ssc}
    Assume in addition to the hypotheses of
    \cref{ass:AsAndJTwiceFrechet}(iii) that
    $\|A''_s(y)[z_1, z_2]\|_\HH \leq C\, \|z_1\|_\ZZ \|z_2\|_{\ZZ}$ for
    all $y\in \YY$ and $z_1, z_2\in \ZZ$, i.e., the last inequality in
    \cref{ass:AsAndJTwiceFrechet} holds in $\HH$ instead of the weaker
    space $\WW$.  Then it holds for all $\ell, h \in \WZ{\HH}$ that
    \begin{equation*}
        F''(\ell)h^2 =  \Psi''(z, \ell)(\eta, h)^2 + \Phi''(\ell)h^2 - \big(\varphi, A_s''(R\ell - Q z)(R h - Q\eta)^2\big)_{\LZ{\mathspace{H}}},
    \end{equation*}
    where $z = \mathcal{S}_s(\ell)$, $\eta = \mathcal{S}_s'(\ell)h$, and
    $\varphi$ solves the adjoint equation in~\eqref{eq:adjeq}.
\end{corollary}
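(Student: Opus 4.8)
The plan is to apply the chain rule to the reduced objective and then to eliminate the second‑order state sensitivity by inserting it as a test function into the adjoint equation. First I would note that, under \cref{ass:AsAndJFrechet}, \cref{ass:AsAndJTwiceFrechet}, and the extra hypothesis, \cref{thm:SsC2} shows that $\mathcal{S}_s \colon \WZ{\XX} \to \WZ{\WW}$ is twice continuously Fr\'echet differentiable, so that the chain rule together with the twice continuous differentiability of $J$ and the splitting $J = \Psi + \Phi$ gives
\[
    F''(\ell)h^2 = \Psi''(z,\ell)(\eta,h)^2 + \Phi''(\ell)h^2 + J'_z(z,\ell)\xi,
\]
where $z \defn \mathcal{S}_s(\ell)$, $\eta \defn \mathcal{S}_s'(\ell)h$, $\xi \defn \mathcal{S}_s''(\ell)[h,h]$, and $\Psi''(z,\ell)(\eta,h)^2$ abbreviates the full second derivative of $\Psi$ evaluated at $((\eta,h),(\eta,h))$. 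Since $\Phi$ is independent of $z$, we have $J'_z = \partial_z\Psi$, and it only remains to identify $J'_z(z,\ell)\xi$ with $-\big(\varphi, A_s''(R\ell - Qz)(Rh - Q\eta)^2\big)_{\LZ{\HH}}$.

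The hard part will be to make $\xi$ admissible as a test function in the weak adjoint equation~\eqref{eq:adjeq}, i.e.\ to show $\xi \in \WZ{\HH}$ with $\xi(0)=0$; a priori \cref{prp:SpTwiceFrechetDifferentiable} only yields $\xi \in \WZ{\WW}$, which is too weak for the $\HH$-valued weak equation. This is exactly what the sharpened assumption $\|A_s''(y)[z_1,z_2]\|_\HH \le C\|z_1\|_\ZZ\|z_2\|_\ZZ$ is for. Indeed, $\eta \in \WZ{\ZZ}$ by \cref{thm:SsFrechetDifferentiability} and $Rh \in \WZ{\YY} \embed \WZ{\ZZ}$, hence $Rh - Q\eta \in \WZ{\ZZ} \embed \CO{\ZZ}$, so the inhomogeneity $A_s''(R\ell - Qz)(Rh-Q\eta)^2$ in~\eqref{eq:xiEquation} now takes values in $\HH$ and is bounded in time, thus belongs to $\LZ{\HH}$. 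Combined with $A_s'(R\ell - Qz)Q \in L(\HH;\HH)$ uniformly in time (by \cref{ass:AsAndJFrechet}(ii) and boundedness of $Q$ on $\HH$), a Banach fixed point argument applied to the integral form of~\eqref{eq:xiEquation}, exactly as in the proof of \cref{lem:frechetEquationExistence}, produces a unique solution of~\eqref{eq:xiEquation} in $\WZ{\HH}$. By uniqueness of the solution of~\eqref{eq:xiEquation} in the larger space $\WZ{\WW}$, this $\WZ{\HH}$-solution coincides with $\xi = \mathcal{S}_s''(\ell)[h,h]$, so $\xi \in \WZ{\HH}$ and $\xi(0)=0$ as required.

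Finally I would test the weak adjoint equation~\eqref{eq:adjeq} (with $\overline\ell,\overline z$ replaced by $\ell,z$, whose unique solution $\varphi \in \LZ{\HH}$ exists by \cref{lem:adex}) with $\eta \defn \xi$. This gives $-\big(\varphi,\boverdot{\xi}\big)_{\LZ{\HH}} = \big(\varphi, A_s'(R\ell - Qz)Q\xi\big)_{\LZ{\HH}} + J'_z(z,\ell)\xi$, that is, $J'_z(z,\ell)\xi = -\big(\varphi,\boverdot{\xi} + A_s'(R\ell-Qz)Q\xi\big)_{\LZ{\HH}}$. Substituting the differential equation for $\xi$, namely $\boverdot{\xi} + A_s'(R\ell-Qz)Q\xi = A_s''(R\ell-Qz)(Rh-Q\eta)^2$, turns the right-hand side into $-\big(\varphi, A_s''(R\ell-Qz)(Rh-Q\eta)^2\big)_{\LZ{\HH}}$, and inserting this into the chain-rule expansion from the first step yields the claimed formula for $F''(\ell)h^2$. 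The only genuine obstacle is the regularity upgrade for $\xi$ described in the second step; everything else is routine manipulation.
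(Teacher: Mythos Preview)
Your proposal is correct and follows essentially the same approach as the paper: chain rule for $F''$, then eliminate $J'_z(z,\ell)\xi$ by testing the weak adjoint equation with $\xi$ and substituting~\eqref{eq:xiEquation}. The paper compresses your second paragraph into a single sentence (``since $A''_s(y)$ is a bilinear form on $\HH$ by assumption, we obtain that $\xi \in \WZ{\HH}$''), but the regularity upgrade you spell out---via the $\HH$-valued inhomogeneity and uniqueness in $\WZ{\WW}$---is exactly what is being used implicitly.
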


\begin{proof} Let us again abbreviate $y = R\ell - Q z$.  According
    to the chain rule, the second derivative of the reduced objective is
    given by
    \begin{equation*}
        \begin{aligned}
            F''(\ell)h^2 &= \tddp{^2}{\ell^2} J(z, \ell)h^2 + \tddp{^2}{z^2}
            J(z, \ell)\eta^2
            + 2\,\tddp{^2}{\ell\partial z} J(z, \ell)[h, \eta] + \tddp{}{z}J(z, \ell) \xi \\
            &=\Psi''(z,\ell)(\eta,h)^2 + \Phi''(\ell)h^2 + \tddp{}{z}
            J(z,\ell) \xi
        \end{aligned}
    \end{equation*}
    with $z = \mathcal{S}_s(\ell)$, $\eta = \mathcal{S}_s'(\ell)h$, and
    $\xi = \mathcal{S}_s''(\ell)h^2$.  Now, since $A''_s(y)$ is a
    bilinear form on $\HH$ by assumption, we obtain that
    $\xi \in \WZ{\HH}$.  Therefore, we are allowed to test the adjoint
    equation in~\eqref{eq:adjeq} (in its weak form) with $\xi$, which
    results in
    \begin{equation*}
        \begin{aligned}
            \tddp{}{z} J(z,\ell) \xi = - (\varphi, \boverdot{\xi} + A'_s(y)
            Q \xi)_{\LZ{\mathspace{H}}} = - (\varphi, A_s''(y)(R h -
            Q\eta)^2)_{\LZ{\mathspace{H}}},
        \end{aligned}
    \end{equation*}
    where we used the precise form of $\mathcal{S}_s''(\ell)$
    in~\eqref{eq:xiEquation} for the last identity.  
\end{proof}

\section{Application to Optimal Control of Homogenized
Elastoplasticity}
\label{sec:7}

In the upcoming sections, we apply the analysis from the previous
sections to an optimal control problem governed by a system of
equations that arise as homogenization limit in elastoplasticity and
was derived in~\cite[Theorem.~2.2]{schweizerHomogenization}. It
describes the evolution of plastic deformation in a material with
periodic microstructure and formally (i.e., in its strong form) reads
as follows:
\begin{subequations}\label{eq:homogenizationEquations}
    \begin{alignat}{2}
        -\nabla_x \cdot \pi \Sigma &= f & \quad &\text{ in } \Omega, \\
        \Sigma &= \mathbb{C} (\nabla_x^s u + \nabla_y^sv - Bz) &&\text{ in } \Omega \times Y, \\
        -\nabla_y \cdot \Sigma &= 0  &&\text{ in } \Omega \times Y, \\
        \boverdot{z} &\in A(B^\top \Sigma - \mathbb{B}z) &&\text{ in } \Omega \times Y, \label{eq:evolstrong}\\
        u &= 0  &&\text{ on } \Gamma_D, \\
        \nu \cdot \pi \Sigma  &= g &&\text{ on } \Gamma_N, \\
        z(0) &= z_0 && \text{ in } \Omega \times Y.
    \end{alignat}
\end{subequations}
Herein, $\Omega \subset \R^d$, $d=2,3$, is a given domain occupied by
the body under consideration, while $Y = [0,1]^d$ is the unit cell.
The boundary of $\Omega$ consists of two disjoint parts, the Dirichlet
boundary $\Gamma_D$ and the Neumann boundary $\Gamma_N$.  Furthermore,
$u \colon  (0,T) \times \Omega \to \R^d$ is the displacement on the macro
level, while $v \colon  (0,T) \times \Omega \times Y \to \R^d$ is the
displacement reflecting the micro structure.  The stress tensor is
denoted by $\Sigma \colon  (0,T) \times \Omega \times Y \to \Rs$ and
$z\colon  (0,T) \times \Omega \times Y \to \mathbb{V}$ is the internal
variable describing changes in the material behavior under plastic
deformation (such as hardening), where $\mathbb{V}$ is a finite
dimensional Banach space.  Moreover,
$\nabla_x^s \defn  \frac{1}{2}(\nabla_x + \nabla_x^\top)$ is the
linearized strain in $\Omega$ and $\nabla^s_y$ is defined
analogously. The elasticity tensor
$\mathbb{C} \colon  \Omega \times Y \to \LL(\Rs)$ and the hardening
parameter $\mathbb{B} \colon  \Omega \times Y \to \LL(\mathbb{V})$ are given
linear and coercive mappings and, by
$B \colon  \Omega \times Y \rightarrow \LL(\mathbb{V};\Rs)$, one recovers
the plastic strain from the internal variables $z$. The evolution of
the internal variables is determined by a maximal monotone operator
$A\colon  \mathbb{V} \to 2^{\mathbb{V}}$.  In \cref{subsec:vonmises} below,
we present a concrete example for such an operator, namely the case of
linear kinematic hardening with von Mises yield condition. Finally,
$z_0$ is a given initial state and $\pi$ is the averaging over the
unit cell, i.e.,
\begin{equation}\label{eq:defpi}
    \pi \colon  \Sigma \mapsto \fint_Y \Sigma (\cdot,y)\,dy \defn  \frac{1}{|Y|} \int_Y \Sigma (\cdot,y)\,dy.
\end{equation}
The precise assumptions on these data as well as the precise notion of
solutions to~\eqref{eq:homogenizationEquations} are given below.

The volume force $f\colon (0,T) \times \Omega \to \R^d$ and the
boundary loads $g\colon (0,T) \times \Gamma_N \to \R^d$, serve as
control variables. In the following, we will frequently write $\ell$
for the tuple $(f,g)$.  Possible objectives could include a desired
displacement or stress distribution at end time, i.e.,
\begin{equation*}
    \begin{aligned}
        J(u, \Sigma, \ell) \defn  \frac{\alpha}{2}\int_\Omega |u(T) - u_d|^2
        \, d x + \frac{\beta}{2} \int_\Omega |(\pi\Sigma)(T) - \sigma_d|^2
        \, d x + \Phi(\ell),
    \end{aligned}
\end{equation*}
where $u_d\colon  \Omega \to \R^d$ and $\sigma_d\colon  \Omega \to \Rs$ are given
desired displacement and stress field, respectively,
$\alpha, \beta \geq 0 $, and $\Phi$ is a regularization term depending
on the choice of the control space that will be specified below, see
\cref{rem:objex}.

\subsection{Homogenized Plasticity -- Notation and Standing
Assumptions}

Before discussing the optimal control problem, we first have to
introduce the precise notion of solution for homogenized
elastoplasticity system in~\eqref{eq:homogenizationEquations}. For
this purpose, we need several assumptions and definitions. We start
with the following

\begin{assumption}[Hypotheses on the data in~\eqref{eq:homogenizationEquations}]\label{assu:data}\
    \begin{itemize}
        \item \emph{Regularity of the domain:} The domain
            $\Omega\subset\R^d$, $d\in\{2,3\}$, is bounded with Lipschitz
            boundary $\Gamma$. The boundary consists of two disjoint
            measurable parts $\Gamma_N$ and $\Gamma_D$ such that
            $\Gamma=\Gamma_N \cup \Gamma_D$. While $\Gamma_N$ is a relatively
            open subset, $\Gamma_D$ is a relatively closed subset of $\Gamma$
            with positive measure.  In addition, the set
            $\Omega \cup \Gamma_N$ is regular in the sense of Gr\"oger,
            cf.~\cite{Gro89}.
        \item \emph{Assumptions on the coefficients:} The elasticity tensor
            and the hardening parameter satisfy
            $\mathbb{C} \in L^\infty(\Omega \times Y; \LL(\Rs))$ and
            $\Bb\in L^\infty(\Omega \times Y; \LL(\mathbb{V}))$ and are
            symmetric and uniformly coercive, i.e., there exist constants
            $\underline{c}>0$ and $\underline{b} > 0$ such that
            \begin{equation*}
                \begin{aligned}
                    \mathbb{C} (x,y) \sigma \colon  \sigma & \geq \underline{c}\,
                    \|\sigma\|_{\R^{d\times d}}^2
                    & & \forall\, \sigma \in \Rs, & & \text{f.a.a.~} (x,y) \in \Omega \times Y,  \\
                    \Bb(x,y) \zeta \colon  \zeta & \geq \underline{b}\,
                    \|\zeta\|_{\mathbb{V}}^2 & & \forall\, \zeta \in \mathbb{V}, &
                    & \text{f.a.a.~} (x,y) \in \Omega \times Y .
                \end{aligned}
            \end{equation*}
            In addition,
            $B \in L^\infty(\Omega \times Y; \LL(\mathbb{V}; \Rs))$ is a given
            linear mapping.
    \end{itemize}
\end{assumption}

Next, we define the function spaces for the various variables in~\eqref{eq:homogenizationEquations}:

\begin{definition}[Function spaces]
    Let $s \in [1,\infty)$. For the quantities in~\eqref{eq:homogenizationEquations}, we define the following spaces:
    \begin{itemize}
        \item space for the macro displacement $u$:
            \begin{equation*}
                U^s \defn  W^{1,s}_D(\Omega;\R^d) \defn  
                \overline{\bigl\{\psi|_\Omega \colon  \psi \in C^\infty_0(\R^d;\R^d),\;
                        \operatorname{supp}(\psi)\cap \Gamma_D =
                \emptyset\bigr\}}^{W^{1,s}(\Omega;\R^d)} 
            \end{equation*}
        \item space for the internal variable $z$:
            \begin{equation*}
                Z^s \defn  L^s\bigl(\Omega \times Y;\mathbb{V}\bigr)
            \end{equation*}
        \item stress space for $\Sigma$:
            \begin{equation*}
                S^s \defn  L^s\bigl(\Omega \times Y; \Rs\bigr).
            \end{equation*}
        \item space for the micro displacement $v$:
            \begin{equation*}
                V^s \defn  L^{s}\bigl(\Omega;
                W^{1,s}_{\per,\perp}(Y;\R^d)\bigr). 
            \end{equation*}
    \end{itemize}
    For the latter, we denote by $C^\infty_{\per}(Y;\R^d)$ the space of
    $C^\infty(\R^d;\R^d)$ functions which are $Y$-periodic, identified
    with their restriction on $Y$, and define $W^{1,s}_\per(Y;\R^d)$ to be
    the closure of $C^\infty_\per(Y;\R^d)$ with respect to the
    $W^{1,s}(Y;\R^d)$ norm.  Further, $W^{1,s}_\perp(Y;\R^d)$ is the
    closed subspace of $W^{1,s}(Y;\R^d)$ consisting of functions of
    mean~$0$, and
    \begin{equation*}
        W^{1,s}_{\per,\perp}(Y;\R^d) = W^{1,s}_\per(Y;\R^d) \cap
        W^{1,s}_\perp(Y;\R^d).
    \end{equation*}
    We set the norm on $V^s$ to be
    \begin{equation*}
        \norm{v}{V^s} \defn  \norm{v}{L^{s}(\Omega \times Y;\R^d)} +
        \norm{\nabla_y^s v}{L^{s}(\Omega \times Y;\R^{d\times d})},
    \end{equation*}
    with which $V^s$ becomes a Banach space and for the case $s=2$ a
    Hilbert space with the obvious scalar product.
\end{definition}

\begin{assumption}[Maximal monotone operator]\label{assu:maxmon}
    The maximal monotone operator $A$ from the evolution law in~\eqref{eq:evolstrong} is a set-valued map in the Hilbert space
    $\HH = Z^2$, i.e., $A\colon  Z^2 \to 2^{Z^2}$.  It is assumed to satisfy
    our standing assumptions from \cref{sec:2}.  Moreover, we assume
    that there is a sequence of operators $\{A_n\}$ from $Z^2$ to $Z^2$
    satisfying \cref{ass:AnAssumption}.
\end{assumption}

In \cref{subsec:vonmises} below, we will investigate the maximal
monotone operator arising in the case of linear kinematic hardening
with von Mises yield condition and show how to construct the
approximating sequence of smooth operators for this particular case.
With the above definitions at hand, we are now in the position to
define our precise notion of solutions to~\eqref{eq:homogenizationEquations}:

\begin{definition}[Weak solutions]
    \label{def:definitionOfASolutionOfhomogenizationEquations}
    Let $\ell \in \WZ{(U^{2})^*}$ and $z_0 \in Z^2$. Then we say that a
    tuple
    \begin{equation*}
        (u,v,z,\Sigma ) \in \WZ{U^s} \times \WZ{V^2} \times \WZ{Z^2} \times \WZ{S^2}
    \end{equation*}
    is a solution of \cref{eq:homogenizationEquations}, if, for almost
    all $t\in (0,T)$, there holds
    \begin{subequations}\label{eq:homweak}
        \begin{align}
            \int_\Omega (\pi \Sigma (t))(x) \cdot \nabla_x^s \varphi(x)\,dx &= \dualpair{\ell(t)}{\varphi}{}
            &&\forall\, \varphi \in U^{2}, \label{eq:momOm}\\
            \Sigma(t)  &= \mathbb{C} (\pi_r^{-1}\nabla_x^s u(t) + \nabla_y^sv(t) - Bz(t))
            &&\text{in } S^2, \label{eq:constlaw}\\
            \int_{\Omega \times Y} \Sigma (t,x,y) \cdot \nabla_y^s \psi(x,y)\,dy &= 0
            &&\forall\, \psi \in V^{2}, \label{eq:momY}\\
            \boverdot{z}(t) &\in A(B^\top \Sigma (t) - \mathbb{B}z(t)) \quad &&\text{in } Z^2, \label{eq:plastevol}\\
            z(0) &= z_0  &&\text{in } Z^2, \label{eq:plastini}
        \end{align}   
    \end{subequations}
    where $\pi \colon  S^2 \to L^2(\Omega;\Rs)$ is the average mapping from~\eqref{eq:defpi} and
    \begin{equation*}
        \pi^{-1}_r \colon  L^2(\Omega;\Rs) \ni \varepsilon \mapsto 
        \Big( \Omega \times Y \ni (x,y) \mapsto \varepsilon(x) \in \Rs \Big) \in S^2.
    \end{equation*}
    In the following, we will frequently consider $\pi_r^{-1}$ in
    different domains and ranges, for simplicity denoted by the same
    symbol.
\end{definition}

\subsection{Reduction of the System}

In the following, we reduce the system~\eqref{eq:homweak} to an
equation in the internal variable $z$ only and it will turn out that
this equation has exactly the form of our general equation~\eqref{eq:aux}.
To this end we proceed analog to~\cite[Chapter~4]{groger}.
For this purpose, let us define the following
operators:

\begin{definition}
    Let $s\in [1, \infty)$. Then we define
    \begin{equation*}
        \symnabla_{(x,y)}\colon  U^s \times V^s \to S^s, \quad 
        \symnabla_{(x,y)}(u,v) \defn  \pi_r^{-1}\symnabla_x u + \symnabla_y v.
    \end{equation*}
    For its adjoint, we write
    \begin{equation*}
        \begin{aligned}
            & \Div_{(x,y)}\colon  S^{s'} \to  (U^{s})^* \times (V^{s})^*, \\
            & \dual{\Div_{(x,y)}\sigma}{(\varphi, \psi)}
            \begin{aligned}[t]
                & \defn  - \dual{{\symnabla_{(x,y)}}^*\sigma}{(\varphi, \psi)}
                = -\int_{\Omega\times Y} \sigma(x,y) \colon (\symnabla_x
                \varphi(x) + \symnabla_y \psi(x,y)) \,d(x,y).
            \end{aligned}
        \end{aligned}
    \end{equation*}
    With a slight abuse of notation, we denote these operators for
    different values of $s$ always by the same symbol.
\end{definition}

\begin{lemma}
    \label{lem:w1sExistenceHomogenization}
    Let \cref{assu:data} be fulfilled.  Then there is an index
    $\bar s > 2$ such that, for every $s\in [\bar s', \bar s]$ and every
    $(\mathfrak{f}, \mathfrak{g}) \in (U^{s'})^* \times (V^{s'})^*$,
    there exists a unique solution $(u,v) \in U^s \times V^s$ of
    \begin{equation}\label{eq:linsyshom}
        -\Div_{(x,y)} \big( \mathbb{C} \symnabla_{(x,y)} (u,v)\big) = (\mathfrak{f}, \mathfrak{g}) \quad \text{in } (U^{s'})^* \times (V^{s'})^*
    \end{equation}
    and there is a constant $C_s>0$, independent of $\mathfrak{f}$ and
    $\mathfrak{g}$, such that
    \begin{equation*}
        \|(u, v)\|_{U^s \times V^s} \leq C_s \big( \|\mathfrak{f}\|_{(U^{s'})^*} + \|\mathfrak{g}\|_{(V^{s'})^*} \big).
    \end{equation*}
\end{lemma}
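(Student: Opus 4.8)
The plan is to settle the Hilbert-space case $s=2$ by the Lax--Milgram lemma and then to pass to a full neighbourhood of $2$ by means of Gr\"oger-type $W^{1,p}$-regularity.

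\emph{Step 1: the case $s=2$.} Consider the bilinear form
\[
  a\bigl((u,v),(\varphi,\psi)\bigr) \defn \int_{\Omega\times Y} \bigl(\mathbb{C}\,\symnabla_{(x,y)}(u,v)\bigr) \colon \symnabla_{(x,y)}(\varphi,\psi)\, d(x,y)
\]
on $U^2\times V^2$, which is bounded since $\mathbb{C}\in L^\infty$ and $\symnabla_{(x,y)}\in L(U^2\times V^2;S^2)$. For its coercivity one uses that, for $v\in V^2$, integration by parts on the torus and the $Y$-periodicity of $v(x,\cdot)$ give $\int_Y \symnabla_y v(x,\cdot)\,dy = 0$ for a.e.\ $x\in\Omega$, so that $\pi_r^{-1}\symnabla_x u$ (being independent of $y$) and $\symnabla_y v$ are orthogonal in $S^2$ and hence $\|\symnabla_{(x,y)}(u,v)\|_{S^2}^2 = |Y|\,\|\symnabla_x u\|_{L^2(\Omega)}^2 + \|\symnabla_y v\|_{L^2(\Omega\times Y)}^2$. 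Combining this with the uniform coercivity of $\mathbb{C}$ from \cref{assu:data}, Korn's inequality on $U^2=W^{1,2}_D(\Omega;\R^d)$ (available because $\Gamma_D$ has positive measure) and the Korn--Poincar\'e inequality on the space $W^{1,2}_{\per,\perp}(Y;\R^d)$ of periodic mean-zero fields, one obtains $a((u,v),(u,v))\geq c\,\|(u,v)\|_{U^2\times V^2}^2$. The Lax--Milgram lemma then yields, for every $(\mathfrak{f},\mathfrak{g})\in(U^2)^*\times(V^2)^*$, a unique solution $(u,v)\in U^2\times V^2$ of \eqref{eq:linsyshom} together with the asserted estimate with a constant $C_2$ independent of the data; in other words $-\Div_{(x,y)}(\mathbb{C}\,\symnabla_{(x,y)}(\cdot))$ is a topological isomorphism from $U^2\times V^2$ onto $(U^2)^*\times(V^2)^*$.

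\emph{Step 2: higher integrability near $s=2$.} Here I would reduce the coupled two-scale system to a single mixed boundary value problem on $\Omega$. Testing \eqref{eq:linsyshom} only with $(0,\psi)$ shows that, for a.e.\ $x$, the microscopic part $v(x,\cdot)$ solves a cell problem on the torus $Y$ driven by $\symnabla_x u(x)$ and $\mathfrak{g}(x,\cdot)$; solving it (again by Lax--Milgram in the $L^2$-setting) expresses $v$ as an affine-linear, bounded function of $\symnabla_x u$ and $\mathfrak{g}$. Inserting this back into \eqref{eq:linsyshom} tested with $(\varphi,0)$ produces a homogenized equation $-\Div_x(\mathbb{C}^{\mathrm{hom}}\,\symnabla_x u)=\widetilde{\mathfrak{f}}$ in $(U^{s'})^*$, where the effective tensor $\mathbb{C}^{\mathrm{hom}}\in L^\infty(\Omega;\LL(\Rs))$ inherits symmetry and uniform coercivity from $\mathbb{C}$, and $\|\widetilde{\mathfrak{f}}\|_{(U^{s'})^*}\leq C(\|\mathfrak{f}\|_{(U^{s'})^*}+\|\mathfrak{g}\|_{(V^{s'})^*})$. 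Since $\Omega\cup\Gamma_N$ is regular in the sense of Gr\"oger by \cref{assu:data}, Gr\"oger's $W^{1,p}$-estimate for mixed problems, \cite{Gro89}, provides an exponent $\bar s>2$ such that $-\Div_x(\mathbb{C}^{\mathrm{hom}}\,\symnabla_x(\cdot))$ is an isomorphism from $U^s$ onto $(U^{s'})^*$ for all $s\in[\bar s',\bar s]$, with constants depending only on $s$ and the data; the corresponding higher-integrability exponent for the cell problems comes from the classical interior (Meyers) estimate on $Y$, uniformly in $x$. Recovering $v\in V^s$ and $\Sigma=\mathbb{C}\,\symnabla_{(x,y)}(u,v)\in S^s$ from $u$, and chaining the individual bounds, then gives the full claim; for $s<2$ one may alternatively use the symmetry of $a$, whose associated isomorphism on $U^{\bar s}\times V^{\bar s}$ has as adjoint the same operator with $\bar s$ replaced by $\bar s'$.

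The routine parts are the coercivity estimate of Step~1 and the bookkeeping of norms. \textbf{The main obstacle} is the two-scale nature of the operator in Step~2, which rules out a direct appeal to a single scalar Gr\"oger estimate: one has to combine boundary regularity on $\Omega$ (where the mixed Dirichlet/Neumann data and the Gr\"oger regularity of $\Omega\cup\Gamma_N$ enter) with interior/periodic regularity on the cell $Y$, and one must ensure that these two higher-integrability exponents can be chosen equal, so that the decoupling and recoupling stay within one Banach scale $[\bar s',\bar s]$; alternatively, a system-valued version of Gr\"oger's theorem could be invoked directly for the operator on $U^s\times V^s$.
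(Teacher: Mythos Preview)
Your Step~1 is essentially the paper's argument: Lax--Milgram on $U^2\times V^2$, with coercivity coming from the orthogonality $\int_Y\symnabla_y v\,dy=0$, Korn on $W^{1,2}_D(\Omega;\R^d)$, and Korn--Poincar\'e on the periodic mean-zero cell space.

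Step~2 is where you diverge. The paper does \emph{not} decouple the two-scale system at all. Instead it observes that the operator $-\Div_{(x,y)}\mathbb{C}\,\symnabla_{(x,y)}$ acts on the scale $\{U^s\times V^s\}_{s}$, shows this is a complex interpolation scale (the $U^s$ part is known; the $V^s$ part is handled in an appendix via complemented-subspace arguments for $W^{1,s}_{\per,\perp}(Y;\R^d)$), and then invokes \v{S}ne\u{\i}berg's stability theorem: the set of $s$ for which a bounded operator on an interpolation scale is an isomorphism is open, and it contains $s=2$ by Step~1. This gives the full interval $[\bar s',\bar s]$ in one stroke, with no separate treatment of macro and micro regularity and no duality argument for $s<2$.

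Your route---eliminate $v$ via the cell problem, apply a Gr\"oger-type estimate to the resulting homogenized system on $\Omega$, and a Meyers estimate on $Y$---is plausible but carries real overhead that you flag yourself: you need a \emph{system} version of Gr\"oger's result (the reference \cite{Gro89} in the paper is for scalar equations and is cited only for the notion of regular sets), you need the cell-problem $W^{1,s}$-bound to be uniform in $x\in\Omega$ when $\mathbb{C}(x,\cdot)$ is merely $L^\infty$ in $x$, and you must synchronize two a~priori different higher-integrability exponents before recoupling. None of this is insurmountable, but the \v{S}ne\u{\i}berg approach sidesteps all of it by treating the coupled operator abstractly on the product scale.
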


\begin{proof} The claim is equivalent to
    $-\Div_{(x,y)} \mathbb{C} \symnabla_{(x,y)}$ being a topological
    isomorphism between $U^s\times V^s$ and its dual space
    $(U^s)^* \times (V^s)^*$ for $s \in [\bar s',\bar s]$.  We start
    with the case $s=2$. For this, the left hand side of~\eqref{eq:linsyshom}
    gives rise to a bilinear form $\mathfrak{b}$ on the Hilbert space
    $U^2 \times V^2$:
    \begin{equation*}
        \mathfrak{b}\bigl((u,v),(\varphi,\psi)\bigr) \defn 
        \big(\mathbb{C} \symnabla_{(x,y)} (u, v), \symnabla_{(x,y)}
        (\varphi, \psi)\big)_{S^2} 
    \end{equation*}
    Clearly, $\mathfrak{b}$ is bounded. Due to Poincar\'e's inequality
    for functions with zero mean value, which implies
    \begin{align*}
        \mathfrak{b}\bigl((u,v),(u,v)\bigr) &\geq \underline{c}\,
        |Y| \int_\Omega |\symnabla_x u(x)|^2\,dx 
        + \underline{c} \int_{\Omega \times Y} | \symnabla_y v(x,y) |^2 \,d(x,y) \\
        &\geq C \big(\norm{u}{H^1_D(\Omega;\R^d)} +
        \norm{v}{V^2(\Omega \times Y;\R^d)} \big)^2 
        \quad \forall\, (u,v) \in U^2 \times V^2,
    \end{align*}
    it is also coercive so that the claim and isomorphism property for
    $-\Div_{(x,y)} \mathbb{C} \symnabla_{(x,y)}$ for $s=2$ follows from the
    Lax-Milgram lemma. 

    We next extrapolate this isomorphism property to $U^s \times V^s$
    for $s$ around $2$ using the fundamental stability theorem by
    {\v{S}}ne{\u{i}}berg~\cite{S74}. (See also the more accessible and
    extensive~\cite[Appendix~A]{ABES19}.) More precisely, we show that
    the spaces $U^s \times V^s$ and their duals form complex
    interpolation scales in $s$. Then the stability theorem shows that
    the set of scale parameters $s$ such that
    $-\Div_{(x,y)} \mathbb{C} \symnabla_{(x,y)}$ is a topological isomorphism
    between $U^s \times V^s$ and its dual space is open. Since the set includes
    $2$, as seen above, this then implies the claim.

    To establish the interpolation scale, it is enough to consider the
    primal case, since the dual interpolation scale is inherited from
    the primal one by duality properties of the complex interpolation
    functor~\cite[Theorem~1.11.3]{Triebel:1978}. So, we show that
    \begin{equation*}
        U^{s_\theta} \times V^{s_\theta} = \bigl[U^{s_0}\times V^{s_0},U^{s_1}\times
        V^{s_1}\bigr]_{\theta} \quad \text{for} \quad \frac1s =
        \frac{1-\theta}{s_0} + \frac\theta{s_1}
    \end{equation*}
    for all $s_0,s_1 \in (1,\infty)$ and $\theta \in (0,1)$. It is
    moreover sufficient to consider each component in the interpolation
    separately.

    For the $U^s = W^{1,s}_D(\Omega;\R^d)$ spaces, the interpolation
    scale property is well known by now in the setting of
    \cref{assu:data} and even much more general ones; we refer
    to~\cite{BecEge19}. The result for $V^s$ is proven by
    reducing the problem to the $W^{1,s}_{\per,\perp}(Y;\R^d)$ spaces
    and showing that these are complemented subspaces of
    $W^{1,s}(Y;\R^d)$ and thus inherit the latter's interpolation
    properties. This is done in the appendix,
    \cref{thm:interpolation-vs}, and finishes the proof.
\end{proof}

\begin{remark}\label{rem:sneiberg}
    In general, one cannot expect $\bar s$ to be
    significantly larger than 2, due to both the irregular coefficient
    tensors and the mixed boundary conditions, see
    e.g.~\cite{ERS07,Sha68,Mey63}.  This issue will become crucial in
    the discussion of second-order necessary optimality conditions in~\ref{subsec:opsys} below.
\end{remark}

Now we are in the position to reduce~\eqref{eq:homweak} to an equation
in the variable $z$ only.  For this purpose, we need the following

\begin{definition}[$Q$ and $R$ for the case of homogenized plasticity]\label{def:QR}
    Let $s\in [\bar s',\bar s]$ be given.  By
    \cref{lem:w1sExistenceHomogenization}, the solution operator
    associated with~\eqref{eq:linsyshom}, denoted by
    \begin{equation*}
        \GG \defn  \big(-\Div_{(x,y)} \mathbb{C} \symnabla_{(x,y)}\big)^{-1} \colon  (U^{s'})^* \times (V^{s'})^* \to U^s \times V^s,
    \end{equation*}
    is well defined, linear and bounded.  The components of $\GG$ are
    abbreviated by
    \begin{equation*}
        \GG_u \defn  (1,0) \,\GG\colon   (U^{s'})^* \times (V^{s'})^* \to U^s, \quad
        \GG_v \defn  (0,1) \, \GG\colon   (U^{s'})^* \times (V^{s'})^* \to V^s.
    \end{equation*}
    Based on this solution operator, we moreover define
    \begin{equation*}
        T\colon  Z^s \ni z \mapsto B^\top \mathbb{C} \symnabla_{(x,y)} \GG(-\Div_{(x,y)}(\mathbb{C} B z)) \in Z^s.
    \end{equation*}
    Now, we have everything at hand to define the mappings $R$ and $Q$
    from our general equation~\eqref{eq:aux} for the special case of
    homogenized plasticity:
    \begin{align}
        & R\colon  (U^{s'})^*\ni \ell \mapsto B^\top \mathbb{C} \symnabla_{(x,y)} \GG (\ell,0)\in Z^s, \label{eq:Rhom}\\
        & Q \colon  Z^s \ni z \mapsto (B^\top \mathbb{C} B + \mathbb{B} - T )z \in Z^s.   \label{eq:Qhom}
    \end{align}
    Again, with a slight abuse of notation, we denote all of the above
    operators for different values of $s\in [\bar s', \bar s]$ always by
    the same symbol.
\end{definition}

The reason for defining the operators $Q$ and $R$ in the way we did in
\cref{def:QR} is the following: Owing to
\cref{lem:w1sExistenceHomogenization}, given $z \in Z^2$, one can
solve~\eqref{eq:momOm}--\eqref{eq:momY} for $u$, $v$, and $\Sigma$ so
that the tuple $(u, v, \Sigma)\in U^2\times V^2\times S^2$ is uniquely
determined by $z$.  Even more, using the operators from \cref{def:QR},
we see that the solution of~\eqref{eq:momOm}--\eqref{eq:momY} for
given $z$ is
\begin{align}
    (u, v) &= \GG(-\Div_{(x,y)}(\mathbb{C} B z)) + (\ell, 0)), \label{eq:uv}\\
    \Sigma 
    &= \mathbb{C} \big[\symnabla_{(x,y)}\GG\big(-\Div_{(x,y)}(\mathbb{C} B z)) + (\ell, 0)\big) - Bz\big]. \label{eq:Sigma}
\end{align}
Inserting the last equation in~\eqref{eq:plastevol} and employing the
definition of $Q$ and $R$ in~\eqref{eq:Qhom} and~\eqref{eq:Rhom} then
yields
\begin{equation*}
    \boverdot{z} \in A(B^\top \Sigma - \mathbb{B}z) = A(R\ell - Q z), 
\end{equation*}
i.e., exactly an evolution equation of the general form in~\eqref{eq:aux}.  This shows that the system~\eqref{eq:homweak} of
homogenized elastoplasticity can equivalently be rewritten as an
abstract operator evolution equation of the form~\eqref{eq:aux}.

For the differentiability properties needed in sections~\ref{sec:5}
and~\ref{sec:6}, a norm gap is required such that it is no longer
sufficient to consider just the Hilbert space $\HH = Z^2$. In
accordance with the definitions of $R$ and $Q$, we therefore define
the spaces in the abstract setting in our concrete application problem
as follows:

\begin{definition}[Spaces in case of homogenized plasticity]\label{def:spaces}
    The spaces $\YY$, $\ZZ$, $\HH$, and $\WW$ from \cref{sec:2} are set
    to
    \begin{equation}\label{eq:spacesvonmises}
        \YY \defn  Z^{s_1}
        \embed \ZZ \defn  Z^{s_2}
        \embed \HH = Z^{2}
        \embed \WW \defn  Z^{s_3}
    \end{equation}
    with $s_1 \geq s_2\geq 2 \geq s_3$. The integrability indices $s_1$,
    $s_2$, and $s_3$ depend crucially on the differentiability
    properties of the regularized version of $A$ and will be specified
    for a concrete realization of $A$ in \cref{subsec:vonmises}
    below. Moreover, we choose
    \begin{equation*}
        \XX \defn  (U^{s_1'})^* = W^{1,s_1'}_D(\Omega;\R^d)^* \nfed  W^{-1,s_1}_D(\Omega;\R^d).
    \end{equation*}
    Furthermore, the control space is given by
    \begin{equation}\label{eq:sobolev}
        \XX_c \defn  L^p(\Omega;\R^d) \times L^r(\Gamma_N;\R^d)
        \quad \text{with} \quad p > \tfrac{d s_1}{d + s_1}
        \quad \text{and} \quad r > \tfrac{(d-1) s_1}{d}.
    \end{equation}
    Due to $s_1 \geq 2$, $\XX_c$ is reflexive and embeds compactly in
    $\XX$ by Sobolev embedding and trace theorems.
    Therefore, all our standing assumptions on the spaces in
    \cref{sec:2} are fulfilled.

    Of course, elements in $\XX_c$ are identified with those in $\XX$ by
    \begin{equation*}
        \dual{(f,g)}{u}_{(U^{s_1})^*, U^{s_1}} \defn  \int_\Omega f\,u\,dx + \int_{\Gamma_N} g\,u \, ds,\quad 
        (f,g) \in \XX_c, \, u \in U^{s_1}.
    \end{equation*}
\end{definition}

In order to apply our general theory to the present setting, we need
the following assumption on the regularity of the linear equation~\eqref{eq:linsyshom}. As we will see in subsections~\ref{subsec:opsys}
and~\ref{subsec:vonmises} below, this assumption may become fairly
restrictive, if one aims to establish second-order sufficient
optimality conditions, since, in this case, $s_1$ and the conjugate
index $s_3'$ may be rather large.

\begin{assumption}[Critical regularity condition]\label{assu:sneiberg}
    The index $\bar s$ from \cref{lem:w1sExistenceHomogenization}
    satisfies $\bar s \geq \max\{s_1, s_3'\}$, where $s_1$ and $s_3$ are
    the integrability indices from~\eqref{eq:spacesvonmises}.
\end{assumption}

\begin{proposition}
    Under \cref{assu:sneiberg} and with the spaces defined in
    \cref{def:spaces}, the operators $R$ and $Q$ from~\eqref{eq:Rhom}
    and~\eqref{eq:Qhom}, respectively, satisfy the standing assumptions
    from \cref{sec:2}, that is, $R$ is linear and bounded from
    $(U^{s_1'})^*$ to $Z^{s_1}$ and $Q$ is a linear and bounded operator
    from $Z^s$ to $Z^s$ for all $s\in [s_3, s_1]$ and, considered as an
    operator in $Z^2$, coercive and self-adjoint.
\end{proposition}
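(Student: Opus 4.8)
The plan is to verify, in turn, the $L^s$-mapping properties of $R$ and $Q$ on the scale $s\in[s_3,s_1]$, and then the self-adjointness and coercivity of $Q$ regarded as an operator in $Z^2=L^2(\Omega\times Y;\mathbb{V})$. The only non-elementary input is \cref{lem:w1sExistenceHomogenization}, which makes $\GG=(-\Div_{(x,y)}\mathbb{C}\symnabla_{(x,y)})^{-1}\in L\bigl((U^{s'})^*\times(V^{s'})^*;\,U^{s}\times V^{s}\bigr)$ available for $s\in[\bar s',\bar s]$. First I would settle the exponent arithmetic: from $\bar s>2$ we get $\bar s'<2\le s_1$, while \cref{assu:sneiberg} gives $s_1\le\bar s$ and $\bar s\ge s_3'$, the latter turning into $\bar s'\le s_3$ upon passing to conjugate exponents (conjugation reverses the order on $(1,\infty)$); hence $[s_3,s_1]\subset[\bar s',\bar s]$, so that $\GG$ is bounded between the relevant spaces for every scale occurring in \cref{def:spaces}. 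Since $\symnabla_{(x,y)}\colon U^{s}\times V^{s}\to S^{s}$ and $\Div_{(x,y)}\colon S^{s}\to(U^{s'})^*\times(V^{s'})^*$ are bounded by their very definitions, and multiplication by the $L^\infty$-coefficients $\mathbb{C}$, $B$, $B^\top$, $\mathbb{B}$ maps the corresponding $L^s$-spaces boundedly into one another, reading off the compositions in~\eqref{eq:Rhom}, \eqref{eq:Qhom} and in the definition of $T$ in \cref{def:QR} then gives $R\in L\bigl((U^{s_1'})^*;Z^{s_1}\bigr)$ and $Q\in L(Z^{s};Z^{s})$ for every $s\in[s_3,s_1]$; in particular $Q$ restricts boundedly to each of $\WW=Z^{s_3}$, $\HH=Z^{2}$, $\ZZ=Z^{s_2}$ and $\YY=Z^{s_1}$, which is precisely what the standing assumptions demand.

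For the structure of $Q$ in $Z^2$ I would introduce the weighted scalar product $(\sigma,\tau)\mapsto\scalarproduct{\mathbb{C}\sigma}{\tau}{S^2}$ on $S^2$, which is equivalent to the usual one because $\mathbb{C}$ is pointwise symmetric, bounded and uniformly coercive. Writing $w\defn\GG(-\Div_{(x,y)}(\mathbb{C} Bz))$ for $z\in Z^2$, the defining identity $-\Div_{(x,y)}\mathbb{C}\symnabla_{(x,y)} w=-\Div_{(x,y)}(\mathbb{C} Bz)$ becomes, after pairing with $\phi\in U^2\times V^2$, $\scalarproduct{\mathbb{C}(Bz-\symnabla_{(x,y)} w)}{\symnabla_{(x,y)}\phi}{S^2}=0$ for all such $\phi$. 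Since $\symnabla_{(x,y)}(U^2\times V^2)$ is closed in $S^2$ (this is the coercivity estimate from the proof of \cref{lem:w1sExistenceHomogenization}), this orthogonality says exactly that $\symnabla_{(x,y)} w=P(Bz)$, where $P$ denotes the orthogonal projection of $\bigl(S^2,\scalarproduct{\mathbb{C}\,\cdot}{\cdot}{S^2}\bigr)$ onto $\symnabla_{(x,y)}(U^2\times V^2)$; consequently $Tz=B^\top\mathbb{C} P(Bz)$.

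Self-adjointness of $Q=B^\top\mathbb{C} B+\mathbb{B}-T$ in $Z^2$ then follows from the self-adjointness of $P$ with respect to $\scalarproduct{\mathbb{C}\,\cdot}{\cdot}{S^2}$, the pointwise symmetry of $\mathbb{C}$ and $\mathbb{B}$, and $Tz=B^\top\mathbb{C} P(Bz)$: for $z_1,z_2\in Z^2$,
\begin{equation*}
    \scalarproduct{Tz_1}{z_2}{Z^2}=\scalarproduct{\mathbb{C} P(Bz_1)}{Bz_2}{S^2}
    =\scalarproduct{\mathbb{C} Bz_1}{P(Bz_2)}{S^2}=\scalarproduct{z_1}{Tz_2}{Z^2},
\end{equation*}
and $B^\top\mathbb{C} B$ and $\mathbb{B}$ are self-adjoint on $Z^2$ for the same reason. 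For coercivity, the residual $Bz-P(Bz)$ being $\scalarproduct{\mathbb{C}\,\cdot}{\cdot}{S^2}$-orthogonal to $P(Bz)$ together with the non-expansiveness of $P$ yields
\begin{equation*}
    \scalarproduct{Tz}{z}{Z^2}=\scalarproduct{\mathbb{C} P(Bz)}{P(Bz)}{S^2}
    \le\scalarproduct{\mathbb{C} Bz}{Bz}{S^2}=\scalarproduct{B^\top\mathbb{C} Bz}{z}{Z^2},
\end{equation*}
hence $\scalarproduct{Qz}{z}{Z^2}\ge\scalarproduct{\mathbb{B}z}{z}{Z^2}\ge\underline{b}\,\norm{z}{Z^2}^2$ by the uniform coercivity of $\mathbb{B}$ in \cref{assu:data}; that is, $Q$ is coercive in $Z^2$ with $\gamma_Q\ge\underline{b}$. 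The only genuinely delicate point is the projection step, namely recognizing $T$ as $z\mapsto B^\top\mathbb{C} P(Bz)$ and exploiting that $P$, being an orthogonal projection, is simultaneously self-adjoint and non-expansive; everything else is routine bookkeeping of bounded operators on the $L^s$-scale, the pointwise symmetry of the material tensors, and the elementary exponent arithmetic placing all relevant scales inside the {\v{S}}ne{\u{i}}berg range $[\bar s',\bar s]$.
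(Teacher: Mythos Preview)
Your proof is correct and follows essentially the same route as the paper: the mapping properties are obtained by composing the bounded pieces in \cref{def:QR} over the {\v{S}}ne{\u{i}}berg range (you even spell out the inclusion $[s_3,s_1]\subset[\bar s',\bar s]$ more carefully than the paper does), and the self-adjointness and coercivity of $Q$ in $Z^2$ rest on the same orthogonality relation $\scalarproduct{\mathbb{C}(Bz-\symnabla_{(x,y)} w)}{\symnabla_{(x,y)}\phi}{S^2}=0$ together with the coercivity of $\mathbb{B}$. The one presentational difference is that you package this relation as ``$\symnabla_{(x,y)} w=P(Bz)$ for the $\mathbb{C}$-weighted orthogonal projection $P$ onto the range of $\symnabla_{(x,y)}$,'' whereas the paper computes directly with the self-adjointness of $\GG$ and tests the equation with $(-u_z,-v_z)$; these are equivalent, and your projection viewpoint is arguably the cleaner formulation of the same identity.
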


\begin{proof}
    The required mapping properties of $Q$ and $R$ directly follow from
    their construction in \cref{def:QR} in combination with
    \cref{lem:w1sExistenceHomogenization} and \cref{assu:sneiberg},
    respectively.  It remains to show that $Q$ is coercive and
    self-adjoint. Since $\mathbb{B}$ is symmetric and coercive according
    to \cref{assu:data}, it is sufficient to prove that the operator
    $B^\top \mathbb{C} B - T \colon  Z^2 \rightarrow Z^2$ is symmetric and
    positive.  To prove the symmetry, first observe that
    $B^\top \mathbb{C} B$ is symmetric by the symmetry of $\mathbb{C} $.
    The symmetry of $\mathbb{C} $ moreover implies that
    $\GG\colon  (U^2)^* \times (V^2)^* \to U^2 \times V^2$, i.e., the solution
    operator of~\eqref{eq:linsyshom}, is self-adjoint. Therefore, the
    construction of $T$ in \cref{def:QR} implies for all
    $z_1, z_2 \in Z^2$ that
    \begin{equation*}
        \begin{aligned}
            (T z_1, z_2)_{Z^2}
            &= \dual{-\Div_{(x,y)} (\mathbb{C} B z_2)}{\GG(-\Div_{(x,y)}(\mathbb{C} B z_1))}\\
            &= \dual{\GG(-\Div_{(x,y)} (\mathbb{C} B z_2))}{-\Div_{(x,y)}(\mathbb{C} B
            z_1)} = (z_1, T z_2)_{Z^2}
        \end{aligned}
    \end{equation*}
    so that $T$ is also symmetric.  To show the positivity of
    $B^\top \mathbb{C} B - T$, let $z\in Z^2$ be arbitrary.  To shorten the
    notation, we abbreviate $(u_z, v_z) \defn  \GG(-\Div_{(x,y)}(\mathbb{C} B
    z))$. Then, by testing the equation for $(u_z, v_z)$, i.e.,~\eqref{eq:linsyshom} with
    $(\mathfrak{f}, \mathfrak{g}) = -\Div_{(x,y)}(\mathbb{C} B z)$, with
    $(-u_z, -v_z)$, we arrive at
    \begin{equation*}
        \big(\mathbb{C} (Bz - \symnabla_{(x,y)} (u_z, v_z), -\symnabla_{(x,y)} (u_z, v_z)\big)_{S^2} = 0.
    \end{equation*}         
    Since, by construction,
    $T z = B^\top \mathbb{C} \symnabla_{(x,y)} (u_z, v_z)$, the coercivity of
    $\mathbb{C} $ therefore implies
    \begin{equation*}
        \begin{aligned}
            \scalarproduct{(B^\top \mathbb{C} B - T)z}{z}{Z^2}
            &= \big(\mathbb{C} (B z - \symnabla_{(x,y)} (u_z, v_z), Bz\big)_{S^2} \\
            &= \big(\mathbb{C} (B z - \symnabla_{(x,y)} (u_z, v_z), Bz -
            \symnabla_{(x,y)} (u_z, v_z)\big)_{S^2}\geq 0.
        \end{aligned}
    \end{equation*}
    As $z$ was arbitrary, this proves the positivity.
\end{proof}

We point out that the whole analysis in sections~\ref{sec:3} and~\ref{sec:exopt} is carried out in the Hilbert space $\HH = Z^2$.
Therefore, for the mere existence and approximation results from these
two sections, the critical regularity condition in
\cref{assu:sneiberg} is \emph{automatically fulfilled} by setting
$s_1 = s_2 = s_3 = 2$ (so that $\YY = \ZZ = \WW = \HH = Z^2$). Note
that, in this case, the Lax-Milgram lemma guarantees the assertion of
\cref{assu:sneiberg} without any further regularity assumptions, see
the proof of \cref{lem:w1sExistenceHomogenization}.  The additional
crucial regularity assumption only comes into play, when first- and
second-order optimality conditions are investigated, see
\cref{rem:lessassus}. In \cref{subsec:vonmises} below, we will
elaborate in detail, where the critical \cref{assu:sneiberg} is needed
to ensure the required differentiability properties of the regularized
control-to-state map for the example of a specific yield condition.

We collect our findings so far in the following

\begin{theorem}[Homogenized plasticity as abstract evolution VI]\label{thm:approxhom}
    Under the Assumptions~\ref{assu:data} and~\ref{assu:maxmon}, the
    system of homogenized elastoplasticty in its weak form in~\eqref{eq:homweak} is equivalent to an abstract operator
    differential equation of the form
    \begin{equation}\label{eq:odehom}
        \boverdot{z} \in A(R\ell - Qz), \mediumspace z(0) = z_0,
    \end{equation}
    with $Q$ and $R$ as defined in~\eqref{eq:Rhom} and~\eqref{eq:Qhom}
    in the following sense: If $(u, v, z, \Sigma)$ solves~\eqref{eq:homweak}, then $z$ is a solution~\eqref{eq:odehom}, and
    vice versa, if $z$ solves~\eqref{eq:odehom}, then $z$ together with
    $(u,v)$ and $\Sigma$ as defined in~\eqref{eq:uv} and~\eqref{eq:Sigma}, respectively, form a solution of~\eqref{eq:homweak}.

    In addition, $Q$ and $R$ satisfy the standing assumptions from
    \cref{sec:2} (provided that the function spaces are chosen according
    to \cref{def:spaces}). Therefore, the existence and approximation
    results of Sections~\ref{sec:3} and~\ref{sec:exopt} hold for~\eqref{eq:odehom}, in particular:
    \begin{itemize}
        \item For every $\ell \in \UU(z_0, D(A))$, there is a unique
            solution $(u,v,z, \Sigma)$ of the weak system of homogenized
            plasticity in~\eqref{eq:homweak}, cf.~\cref{thm:auxExistence}.
        \item Optimal control problems governed by the weak system of
            homogenized elastoplasticity admit globally optimal solutions,
            provided that the standing assumptions on the objective are
            fulfilled, cf.~\cref{thm:existenceOfAGlobalSolution}.
        \item The approximation results of
            \cref{thm:regularizedOptimaConvergence} and
            \cref{cor:strongapprox} apply in case of homogenized
            elastoplasticity.
    \end{itemize}
\end{theorem}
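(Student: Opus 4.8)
The plan is to combine the reduction argument already sketched around \eqref{eq:uv}--\eqref{eq:Sigma} with the abstract results of Sections~\ref{sec:3} and~\ref{sec:exopt}, reading everything off in the Hilbert space $\HH = Z^2$, i.e.\ with the choice $s_1 = s_2 = s_3 = 2$ in \cref{def:spaces}. Under this choice $\YY = \ZZ = \WW = \HH = Z^2$, and, as explained in the text preceding the theorem, \cref{assu:sneiberg} is automatically satisfied, because the isomorphism property of $-\Div_{(x,y)}\mathbb{C}\,\symnabla_{(x,y)}$ on $U^2 \times V^2$ follows from the Lax--Milgram lemma under \cref{assu:data} alone (cf.\ the proof of \cref{lem:w1sExistenceHomogenization}). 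Hence the Proposition preceding the theorem applies and yields that $R \in L((U^2)^*; Z^2)$ and $Q \in L(Z^2)$ is coercive and self-adjoint; together with \cref{assu:maxmon} this gives that all standing assumptions of \cref{sec:2} hold.

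For the equivalence I would argue as follows. Let $z \in \WZ{Z^2}$ be given. Since $2 \in [\bar s', \bar s]$, \cref{lem:w1sExistenceHomogenization} shows that for almost every $t$ the linear system \eqref{eq:momOm}--\eqref{eq:momY} has a unique solution $(u(t), v(t), \Sigma(t))$, given explicitly by \eqref{eq:uv} and \eqref{eq:Sigma}. Because $\GG$, $\symnabla_{(x,y)}$, $\pi_r^{-1}$, $B$ and $\mathbb{C}$ are time-independent bounded linear operators, the functions $(u,v,\Sigma)$ inherit the $\WZ{\cdot}$-regularity of $z$, so $(u,v,\Sigma) \in \WZ{U^2} \times \WZ{V^2} \times \WZ{S^2}$. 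Substituting \eqref{eq:Sigma} into \eqref{eq:plastevol} and using the definitions \eqref{eq:Rhom} and \eqref{eq:Qhom} of $R$ and $Q$ gives $\boverdot{z} \in A(B^\top\Sigma - \mathbb{B}z) = A(R\ell - Qz)$, i.e.\ \eqref{eq:odehom}. Conversely, if $z \in \WZ{Z^2}$ solves \eqref{eq:odehom}, I would \emph{define} $(u,v)$ and $\Sigma$ by \eqref{eq:uv} and \eqref{eq:Sigma}; by construction these satisfy \eqref{eq:momOm}--\eqref{eq:momY}, and \eqref{eq:odehom} together with the definitions of $R$ and $Q$ returns \eqref{eq:plastevol}. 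The initial condition $z(0) = z_0$ is the same in both formulations, which completes the equivalence.

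The three bullet points then follow by transporting the abstract theorems along this equivalence. The first is \cref{thm:auxExistence} applied to \eqref{eq:odehom} for $z_0 \in Z^2$ and $\ell \in \UU(z_0, D(A))$, followed by the reconstruction of $(u,v,\Sigma)$ above. The second is \cref{thm:existenceOfAGlobalSolution}. The third combines \cref{thm:regularizedOptimaConvergence} and \cref{cor:strongapprox}; here \cref{assu:maxmon} is precisely what supplies the regularizing sequence $\{A_n\}$ satisfying \cref{ass:AnAssumption}, which is the hypothesis of those results. The whole argument is essentially bookkeeping, and I expect the only point needing genuine care to be the verification that the reconstructed $(u,v,\Sigma)$ indeed lie in the stated Bochner--Sobolev spaces; but since every operator involved is time-independent and bounded, this is immediate from composing with the $\WZ{Z^2}$-regularity of $z$. (The finer statement $u \in \WZ{U^s}$ for $s > 2$ would additionally require $\ell \in \WZ{(U^{s'})^*}$ and \cref{assu:sneiberg}, but this is not part of the present theorem.)
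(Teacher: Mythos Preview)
Your proposal is correct and matches the paper's approach: the theorem is explicitly introduced in the paper as ``We collect our findings so far in the following,'' so it has no separate proof beyond the reduction argument around \eqref{eq:uv}--\eqref{eq:Sigma} and the preceding Proposition verifying the standing assumptions for $Q$ and $R$, which is exactly what you invoke. Your remark that it suffices to work with $s_1=s_2=s_3=2$ (so that \cref{assu:sneiberg} is automatic via Lax--Milgram) is also precisely the point made in the paragraph following \cref{assu:sneiberg}.
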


\begin{remark}
    Existence and uniqueness of solutions to~\eqref{eq:homweak} was
    already established in~\cite{schweizerHomogenization}.
\end{remark}

\begin{remark}\label{rem:objex}
    The example for objective functionals mentioned above, i.e.,
    \begin{equation}\label{eq:objex}
        \begin{aligned}
            J(u, \Sigma,f,g) \defn
            \tfrac{\alpha}{2}\int_\Omega |u(T) - u_d|^2 \,dx +
            \tfrac{\beta}{2}\int_\Omega |(\pi\Sigma)(T) - \sigma_d|^2 \,dx +
            \tfrac{\gamma}{2} \|(f,g)\|_{H^1(0,T;\XX_c)}^2
        \end{aligned}
    \end{equation}
    with $u_D \in L^2(\Omega;\R^d)$ and $\sigma_D \in L^2(\Omega;\Rs)$
    and $\alpha, \beta \geq 0$, $\gamma> 0$, satisfies the standing
    assumptions on the objective functional, as we will see in the
    following.  In this case, the functional $\Psi\colon  (z,\ell) \to \R$ in
    the general setting consists of the two integrals at end point $T$.
    Let us consider the first one containing the displacement $u$.
    Since the latter is given by the first component of the solution
    operator of~\eqref{eq:uv}, which maps
    $H^1(0,T;Z^2) \times H^1(0,T;\XX_c)$ to
    $H^1(0,T;U^{2}) \embed C([0,T];L^2(\Omega;\R^d))$, this integral is
    well defined. (Note that the operators in~\eqref{eq:uv} just act
    pointwise in time and the time regularity of $z$ and $\ell$ carries
    over to $u$ and $v$.)  Clearly, this solution operator is linear and
    bounded.  In case of the second integral involving $\Sigma$, one can
    argue completely analogously based on the solution operator of~\eqref{eq:Sigma}. Thus, $\Psi$ is convex and continuous, hence
    weakly lower semicontinuous, and in addition, bounded from below by
    zero.  Moreover, the purely control part of the objective is given
    by $\Phi(f,g) = \frac{\gamma}{2} \|(f,g)\|_{H^1(0,T;\XX_c)}^2$ and
    therefore clearly weakly lower semicontinuous and coercive as
    required.  Thus all standing assumptions are fulfilled as
    claimed. Of course, various other objective functionals are possible
    as well, such as tracking type objectives over the whole
    space-time-cylinder, but to keep the discussion concise, we just
    mention the example above.
\end{remark}

\subsection{Optimality System}\label{subsec:opsys}

In the following section, we establish necessary and sufficient
optimality conditions for the optimal control of regularized
homogenized elastoplasticity. To be more precise, we consider a single
element of the sequence of regularizations of the maximal monotone
operator $A$ from \cref{assu:maxmon}, which we again denote by $A_s$,
and apply the general theory from \cref{sec:5} and \cref{sec:6}. In
view of the norm gap needed for the differentiability of $A_s$, we
will consider $A_s$ in different domains and ranges (denoted by the
same symbol) and assume that $A_s$ maps $\YY = Z^{s_1}$ to
itself. Accordingly, we treat the regularized version of the state
equation (with $A_s$ instead of $A$) in the same manner, i.e., with
integrability index $s_1$ instead of $2$, see~\eqref{eq:ocps} below.
To keep the discussion concise, we moreover assume in all what follows
that $s_1 \geq 2$ is such that $p = r =2$ satisfy the conditions in~\eqref{eq:sobolev}. For $d= \dim(\Omega)=3$, this implies $s_1 < 3$
and, in case without boundary control, i.e., $g\equiv 0$, $s_1 < 6$ is
sufficient. This will become important in the discussion of
second-order sufficient conditions, as we will see below. Motivated by~\eqref{eq:objex}, we consider an optimal control problem of the form
\begin{equation}\label{eq:ocps}
    \left\{ \quad   
        \begin{aligned}
            \min \quad &
            \begin{aligned}[t]
                J(u, \Sigma, f, g) & \defn  F_1(u, \Sigma) + F_2(u(T), \Sigma(T)) \\
                & \qquad+
                \frac{\gamma}{2}\Big(\|\boverdot{f}\|_{\LZ{L^2(\Omega;\R^d)}}^2
                + \|\boverdot{g}\|_{\LZ{L^2(\Gamma_N;\R^d)}}^2\Big)
            \end{aligned}\\
            \text{s.t.} \quad &
            (f, g) \in H^1(0,T;L^2(\Omega;\R^d)\times L^2(\Gamma_N;\R^d)), \\
            & (u, v, z, \Sigma) \in H^1(0,T;U^{s_1} \times V^{s_1} \times Z^{s_1} \times S^{s_1}),\\
            \text{and} \quad &
            \begin{aligned}[t]
                -\Div_{(x,y)} \Sigma &= ((f, g),0),\\
                \Sigma &= \mathbb{C} \big( \symnabla_{(x,y)} (u, v) - B z\big) ,\\
                \boverdot{z} &= A_s(B^\top \Sigma - \Bb z), \quad z(0) = z_0,
            \end{aligned} \\
            & \ell(0) = 0.
        \end{aligned}
    \right.
\end{equation}
Since, in many applications, displacement and stress on the macro
level are of special interest, especially at end time, we focus on
objectives with this particular structure with continuously
Fr\'echet differentiable mappings
\begin{equation}\label{eq:objuSigma}
    F_1 \colon  L^2(0,T;U^2) \times L^2(0,T;S^2) \to \R, \quad F_2\colon  U^2 \times S^2 \to \R.
\end{equation}
In order to apply our general theory, we not only have to reduce the
state system to an equation of the form~\eqref{eq:odehom}, but also
have to reduce the objective.  For this purpose, let us denote the
solution operators of~\eqref{eq:uv} and~\eqref{eq:Sigma} by
$\mfu \colon  (\ell, z) \mapsto u$ and $\mfS\colon  (\ell, z) \mapsto \Sigma$.  To
shorten the notation, we will consider $\mfu$ and $\mfS$ with
different domains and ranges, e.g.\
$\mfu\colon  (U^{s'})^* \times Z^s \to U^s$ and
$\mfu\colon  L^2(0,T;(U^{s'})^*) \times L^2(0,T;Z^s) \to L^2(0,T;U^s)$ with
$s\in [\bar s', \bar s]$ and analogously for $\mfS$. Note again that
the time regularity of $z$ and $\ell$ directly carries over to the
time regularity of $u$ and $\Sigma$.  Given these operators, we define
\begin{equation*}
    \begin{aligned}
        &\Psi_1\colon  L^2(0,T;Z^2) \times L^2(0,T;(U^2)^*) \to \R, & & \Psi_1(z, \ell) \defn  F_1(\mfu(z, \ell), \mfS(z, \ell)),\\
        &\Psi_2\colon  Z^2 \times (U^2)^* \to \R, & & \Psi_2(z, \ell) \defn 
        F_2(\mfu(z, \ell), \mfS(z, \ell)),
    \end{aligned}
\end{equation*}
so that the objective in~\eqref{eq:ocps} becomes
\begin{equation}
    J(z, \ell) = \Psi_1(z,\ell) + \Psi_2(z(T), \ell(T)) + \tfrac{\gamma}{2} \|(\boverdot f, \boverdot g)\|_{L^2(0,T;\XX_c)}^2, 
\end{equation}
i.e., exactly an objective of the form in~\eqref{eq:regpsi} and~\eqref{eq:regpsi2}, respectively.  Since $\mfu$ and $\mfS$ are linear
and bounded and $F_1$ and $F_2$ are assumed to be continuously
Fr\'echet differentiable, the chain rule implies the differentiability
of $\Psi_1$ and $\Psi_2$ so that \cref{ass:AsAndJFrechet}(i) is met,
if we set $s_3 = 2$ so that $\WW = Z^2$.  To apply the results of
\cref{sec:5} in order to establish an optimality system for~\eqref{eq:ocps}, we additionally need that $A_s$ satisfies
\cref{ass:AsAndJFrechet}(ii), which is ensured by the following
\begin{assumption}\label{assu:Asmooth1}
    We set $s_2 = s_3 = 2$, i.e., $\WW = \ZZ = Z^2$, and assume that
    $A_s$ fulfills \cref{ass:AsAndJFrechet}(ii) with $\YY = Z^{s_1}$,
    $s_1 \geq 2$, i.e., in particular that $A_s$ is
    Fr\'echet differentiable from $\ZZ^{s_1}$ to $\ZZ^2$.
\end{assumption}

In light of \cref{lem:w1sExistenceHomogenization},
\cref{assu:Asmooth1} does not impose any restriction for practical
realizations of $A_s$, as we will see in \cref{subsec:vonmises} below.
Given this assumption, \cref{thm:firstOrderOptimalityCondition} and
\cref{ex:ode} imply for a locally optimal solution
$(\overline \ell) = (\overline f, \overline g)$ with associated
optimal internal variable $\overline z$:
\begin{subequations}
    \begin{align}
        & \boverdot{\overline{z}}(t) = A_s(R\overline{\ell} - Q\overline{z})(t), \quad \overline{z}(0) = z_0, \\
        &\boverdot{\varphi}(t) = \big(Q A'_s(R\overline{\ell} - Q\overline{z})^*\varphi\big)(t) + \ddp{\Psi_1}{z}(\overline{z},\overline{\ell})(t), 
        \quad \varphi(T) = -\ddp{\Psi_2}{z}(\overline{z}(T), \overline{\ell}(T)),\\
        & \left\{ \quad
            \begin{aligned}
                \gamma\, \partial_t^2 \overline\ell(t) + R^*
                A_s'\big(R\overline\ell(t) - Q\overline z(t)\big)^* \varphi(t)
                &= \ddp{\Psi_1}{\ell}(\overline{z}, \overline{\ell})(t), \\
                \overline\ell(0) = 0, \quad \partial_t\overline{\ell}(T) &= -
                \ddp{\Psi_2}{\ell}(\overline{z}(T), \overline{\ell}(T)).
            \end{aligned}
        \right.
    \end{align}
\end{subequations}
Then, owing to the precise structure of $R$, $Q$, $\Psi_1$, and
$\Psi_2$, this leads us to the following

\begin{theorem}[KKT-system for optimal control of homogenized plasticity]\label{thm:nochom}
    Let \cref{assu:data} be satisfied and assume that $A_s$ fulfills
    \cref{assu:Asmooth1}.  Suppose moreover that the regularity
    condition in \cref{assu:sneiberg} is satisfied, i.e.,
    $\bar s \geq s_1$.  Then, if
    $(\overline f, \overline g) \in H^1(0,T;L^2(\Omega)) \times
    H^1(0,T;L^2(\Gamma_N))$ is locally optimal for~\eqref{eq:ocps} with
    associated state
    $(\overline{u}, \overline{v}, \overline{z}, \overline{\Sigma}) \in
    H^1(0,T;U^{s_1}) \times H^1(0,T;V^{s_1}) \times H^1(0,T;Z^{s_1})
    \times H^1(0,T;S^{s_1})$, then there exists an adjoint state
    \begin{equation*}
        \begin{gathered}
            (w, q, \varphi, \Upsilon)
            \in H^1(0,T;U^{2}) \times H^1(0,T;V^{2}) \times H^1(0,T;Z^{2}) \times H^1(0,T;S^{2}),\\
            (w_T, q_T, \Upsilon_T) \in U^2\times V^2\times S^2
        \end{gathered}
    \end{equation*}
    such that the following \emph{optimality system} is satisfied:
    \begin{subequations}\label{eq:opsyshom}
        \begin{align}
            \intertext{State equation:}
            -\Div_{(x,y)} \overline \Sigma &= ((\overline f, \overline g),0), \label{eq:mom}\\
            \overline\Sigma &= \mathbb{C} \big( \symnabla_{(x,y)} (\overline{u}, \overline{v}) - B \overline z\big) ,\\
            \boverdot{\overline{z}} &= A_s(B^\top \overline \Sigma - \Bb\overline{z}), \quad \overline{z}(0) = z_0, \\
            \intertext{Adjoint equation:}
            -\Div_{(x,y)} \Upsilon &= 
            \big(\tddp{}{u}F_1(\overline u, \overline \Sigma), 0\big) 
            - \Div_{(x,y)} \big( \mathbb{C} B A_s'(B^\top \overline \Sigma - \Bb \overline z)^*\varphi\big),\\
            \Upsilon &= \mathbb{C} \big(\symnabla_{(x,y)} (w,q) - \tddp{}{\Sigma}F_1(\overline{u}, \overline{\Sigma})\big),\\
            \boverdot\varphi &= (B^\top \mathbb{C} B + \Bb) A_s'(B^\top \overline \Sigma - \Bb \overline z)^*\varphi + B^\top \Upsilon,
            \;\; \varphi(T) = - B^\top \Upsilon_T,\\
            -\Div_{(x,y)} \Upsilon_T &= \big(\tddp{}{u}F_2(\overline u(T), \overline \Sigma(T)), 0\big), \\
            \Upsilon_T &= \mathbb{C} \big(\symnabla_{(x,y)} (w_T,q_T) - \tddp{}{\Sigma}F_2(\overline{u}(T), \overline{\Sigma}(T))\big),\\
            \intertext{Gradient equation:}
            \gamma \,\partial_t^2 \overline f + w &= 0, 
            \qquad \overline f(0) = 0, \quad \gamma\,\partial_t \overline{f}(T) + w_T = 0, \label{eq:gradeqvol}\\
            \gamma \,\partial_t^2 \overline g + w &= 0, 
            \qquad \overline g(0) = 0, \quad \gamma\,\partial_t \overline{g}(T) + w_T = 0. \label{eq:gradeqbdry}
        \end{align}
    \end{subequations}
\end{theorem}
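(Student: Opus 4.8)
The plan is to obtain \eqref{eq:opsyshom} by specializing the abstract optimality system of \cref{thm:firstOrderOptimalityCondition} --- together with the reformulation of the gradient equation in \cref{ex:ode} --- to the concrete operators $R$, $Q$ and the concrete objective, and then to \emph{un-reduce} the resulting $z$-only adjoint evolution equation back into a coupled elliptic/evolution system mirroring the reduced forward equations.

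First I would verify that the abstract framework of \cref{sec:5} is applicable. By \cref{thm:approxhom} the state system is equivalent to $\boverdot{z}=A_s(R\ell-Qz)$, $z(0)=z_0$, with $R$, $Q$ from \cref{def:QR}, which by the preceding proposition satisfy the standing assumptions --- in particular $Q$ is coercive and self-adjoint on $Z^2$ --- once \cref{assu:sneiberg} holds with $\bar s\geq s_1$. Since $\mfu$ and $\mfS$ are linear and bounded and $F_1$, $F_2$ are continuously Fr\'echet differentiable, the chain rule makes $\Psi_1(z,\ell)=F_1(\mfu(z,\ell),\mfS(z,\ell))$ and $\Psi_2(z,\ell)=F_2(\mfu(z,\ell),\mfS(z,\ell))$ Fr\'echet differentiable, so $J$ has the structure required in \eqref{eq:regpsi}/\eqref{eq:regpsi2} and \cref{ass:AsAndJFrechet}(i) is satisfied; part (ii) of that assumption is precisely \cref{assu:Asmooth1}. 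Finally, since $R$ is injective in the present setting, the admissible set $\UU$ from \cref{assu:controlconstr} reduces to $\{\ell:\ell(0)=0\}$, i.e.\ $M=\{-Qz_0\}$. Hence \cref{thm:firstOrderOptimalityCondition} and \cref{ex:ode} apply and yield $\overline z$, a strong adjoint state $\varphi\in H^1(0,T;Z^2)$ with $\boverdot{\varphi}=QA_s'(R\overline\ell-Q\overline z)^*\varphi+\partial_z\Psi_1(\overline z,\overline\ell)$, $\varphi(T)=-\partial_z\Psi_2(\overline z(T),\overline\ell(T))$, together with the gradient equation $\gamma\partial_t^2\overline\ell+R^*A_s'(R\overline\ell-Q\overline z)^*\varphi=\partial_\ell\Psi_1(\overline z,\overline\ell)$, $\overline\ell(0)=0$, $\gamma\partial_t\overline\ell(T)=-\partial_\ell\Psi_2(\overline z(T),\overline\ell(T))$.

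The heart of the proof is then to make these compressed objects explicit. Writing $Q=B^\top\mathbb{C}B+\Bb-T$ with $T$ as in \cref{def:QR}, I would use the self-adjointness of the linear elliptic solution operator $\GG$ --- a consequence of the symmetry of $\mathbb{C}$, as shown in the proof of the preceding proposition --- together with $\symnabla_{(x,y)}^*=-\Div_{(x,y)}$ and the fact that $\mfu$, $\mfS$, $R$, $Q$ all act pointwise in time, to compute the relevant adjoints: $\partial_z\Psi_1=\mfu_z^*\partial_u F_1+\mfS_z^*\partial_\Sigma F_1$ with $\mfu_z^*(\cdot)=B^\top\mathbb{C}\symnabla_{(x,y)}\GG(\cdot,0)$ and $\mfS_z^*(\cdot)=B^\top\mathbb{C}\symnabla_{(x,y)}\GG(-\Div_{(x,y)}(\mathbb{C}\,\cdot))-B^\top\mathbb{C}(\cdot)$; likewise $R^*(\cdot)=\GG_u(-\Div_{(x,y)}(\mathbb{C}B\,\cdot))$ and $\partial_\ell\Psi_1=\GG_u(\partial_u F_1,0)+\GG_u(-\Div_{(x,y)}(\mathbb{C}\partial_\Sigma F_1))$, with the analogous identities at time $T$ for $\Psi_2$. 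I would then \emph{introduce} $(w,q)$ and $\Upsilon$ as the solution (Lax--Milgram, at integrability $2$, so no extra regularity of the linear system is needed here) of $-\Div_{(x,y)}\Upsilon=(\partial_u F_1,0)-\Div_{(x,y)}(\mathbb{C}B A_s'(B^\top\overline\Sigma-\Bb\overline z)^*\varphi)$, $\Upsilon=\mathbb{C}(\symnabla_{(x,y)}(w,q)-\partial_\Sigma F_1)$, and $(w_T,q_T,\Upsilon_T)$ as the analogous time-independent triple built from $F_2$; the adjoint formulas above then give exactly $B^\top\Upsilon=\partial_z\Psi_1-TA_s'(B^\top\overline\Sigma-\Bb\overline z)^*\varphi$ and $B^\top\Upsilon_T=\partial_z\Psi_2$. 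Substituting these into the abstract adjoint evolution equation turns it into the $\varphi$-line of \eqref{eq:opsyshom} coupled with the adjoint elliptic equations, while the same manipulations identify $R^*A_s'(B^\top\overline\Sigma-\Bb\overline z)^*\varphi-\partial_\ell\Psi_1$ with (the restriction to $\Omega$ and the Neumann trace of) the single function $w$ --- which is why $w$ appears in both \eqref{eq:gradeqvol} and \eqref{eq:gradeqbdry} --- so the gradient equation becomes \eqref{eq:gradeqvol}--\eqref{eq:gradeqbdry}, while \eqref{eq:mom} is just the reduced state equation spelled out via \eqref{eq:uv}--\eqref{eq:Sigma}.

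The main obstacle is not conceptual: everything beyond \cref{thm:approxhom}, \cref{thm:firstOrderOptimalityCondition}, \cref{ex:ode}, and the self-adjointness of $\GG$ amounts to careful bookkeeping of adjoints, sign conventions, and duality identifications (notably the identification of $R^*A_s'(\cdots)^*\varphi\in(U^{s_1'})^*$ with its volume and boundary components in $\XX_c=L^2(\Omega;\R^d)\times L^2(\Gamma_N;\R^d)$), plus checking that each introduced quantity lies in the asserted space. The one genuinely technical point is the claimed $H^1$-in-time regularity of the adjoint variables: $\varphi\in H^1(0,T;Z^2)$ comes from \cref{thm:firstOrderOptimalityCondition}, and $(w,q,\Upsilon)$ inherit it because they are produced from $\varphi$, $\overline\Sigma$, and the $F_1$-data by the time-independent bounded operator $\GG$ (this is immediate when $F_1\equiv0$, as in the motivating objective \eqref{eq:objex}).
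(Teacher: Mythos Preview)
Your proposal is correct and follows essentially the same route as the paper: apply \cref{thm:firstOrderOptimalityCondition} together with \cref{ex:ode} to obtain the abstract optimality system in the reduced variable $z$, and then unpack the concrete definitions of $R$, $Q$, $\Psi_1$, $\Psi_2$ (using the self-adjointness of $\GG$) to arrive at~\eqref{eq:opsyshom}. The paper in fact presents this derivation even more tersely---it simply writes out the abstract system and states that ``owing to the precise structure of $R$, $Q$, $\Psi_1$, and $\Psi_2$'' one obtains~\eqref{eq:opsyshom}---so your bookkeeping of the adjoints and your remark on the $H^1$-in-time regularity of $(w,q,\Upsilon)$ add useful detail rather than deviate from the intended argument.
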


\begin{remark}
    A passage to the limit w.r.t.\ the regularization in order to obtain
    an optimality system for the original optimal control problem
    involving the maximal monotone operator $A$ would of course be of
    particular interest.  The results of~\cite{Wac16} however indicate
    that the optimality conditions obtained in this way are in general
    rather weak. In~\cite{Wac16}, an optimal control problem governed by
    quasi-static elastoplasticity (without homogenization) is
    considered, which provides substantial similarities to~\eqref{eq:ocps}. This system could also be treated by means of a
    reduction to the internal variable similar to our procedure for~\eqref{eq:homogenizationEquations}.  In~\cite{Wac16} however, a time
    discretization followed by a regularization was employed for the
    derivation of first-order optimality conditions. The reason for the
    comparatively weak optimality conditions obtained for the original
    (non-smooth) problem is the poor regularity of the dual variables in
    the limit, in particular the adjoint state.  We expect a similar
    behavior in case of~\eqref{eq:opsyshom}, when the regularization is
    driven to zero.  This however is subject to future research.
\end{remark}

Next, we turn to second-order sufficient optimality conditions. Now,
$\Psi_1$, $\Psi_2$, and $A_s$ have to fulfill
\cref{ass:AsAndJTwiceFrechet}.  For this purpose, we require the
following

\begin{assumption}\label{assu:twice}
    The mappings $F_1$ and $F_2$ from~\eqref{eq:objuSigma} are twice
    Fr\'echet diff\-er\-entiable. Moreover, $A_s$ satisfies
    \cref{ass:AsAndJFrechet} (ii) and \cref{ass:AsAndJTwiceFrechet}(ii)
    and (iii) with $\WW = \HH = Z^2$ (i.e., $s_3 = 2$), $\ZZ = Z^{s_2}$,
    $s_2 \geq 2$, and $\YY = Z^{s_1}$, $s_1 \geq s_2$.
\end{assumption}

In contrast to \cref{assu:Asmooth1}, this assumption is very
restrictive. If we assume that $A_s$ arises as a Nemyzki operator from
a nonlinear function $A_s\colon  \mathbb{V} \to \mathbb{V}$ (for simplicity
denoted by the same symbol), then the last condition in
\cref{ass:AsAndJTwiceFrechet}, i.e.,
\begin{equation}\label{eq:cruineq}
    \|A_s''(z)[z_1, z_2]\|_{\WW} \leq C\, \|z_1\|_{Z^{s_2}} \|z_2\|_{Z^{s_2}} \quad \forall\, z\in Z^{s_1}, \; z_1, z_2 \in Z^{s_2}
\end{equation}
with $\WW = Z^2$, may only hold---even in case
$A_s''(z) \in L^\infty(\Omega\times Y; \LL(\mathbb{V},
\mathbb{V}))$---provided that $s_2 \geq 2 \,s_3 = 4$.  In order to
have that $A_s$ is Fr\'echet differentiable from $Z^{s_1}$ to
$Z^{s_2}$, we therefore need $s_1 > 4$ such that \cref{assu:sneiberg}
indeed becomes very restrictive, see \cref{rem:sneiberg} and
\cref{rem:groesser4} below.  Moreover, as described at the beginning
of this subsection, if one sets $r=2$, i.e., considers to boundary
loads in $H^1(0,T;L^2(\Gamma_N;\R^d))$, then, in view of~\eqref{eq:sobolev}, $s_1 < 3$ has to hold (at least in three spatial
dimensions) so that our second-order analysis cannot be applied in
case of boundary controls (at least if $r=2$ and $d=3$). Therefore, we
restrict to volume forces, i.e., controls in
$H^1(0,T;L^2(\Omega;\R^d))$ in what follows.

Then, given all these assumptions, we can apply
\cref{thm:SecondOrderOptimalityCondition} and \cref{cor:ssc} to~\eqref{eq:ocps} to obtain the following

\begin{theorem}[Second-order sufficient conditions for optimal control of homogenized plasticity]
    Let Assumptions~\ref{assu:data} and~\ref{assu:twice} hold and
    suppose that $\bar s \geq s_1$ so that \cref{assu:sneiberg} is
    fulfilled.  Assume moreover that
    $\overline{f} \in H^1(0,T;L^2(\Omega;\R^d))$ together with its
    associated state
    $(\overline u, \overline v, \overline z, \overline \Sigma)$ and an
    adjoint state $(w, q, \varphi, \Upsilon, w_T, q_T, \Upsilon_T)$
    satisfies the optimality system~\eqref{eq:mom}--\eqref{eq:gradeqvol}
    (without~\eqref{eq:gradeqbdry} because boundary controls are
    omitted) and, in addition, that there exists $\delta > 0$ such that
    \begin{equation*}
        \begin{aligned}
            & \tddp{^2}{u^2} F_1(\overline u, \overline \Sigma) \eta_u^2
            + \tddp{^2}{\Sigma^2} F_1(\overline u, \overline \Sigma) \eta_\Sigma^2 \\
            &\; + \tddp{^2}{u^2} F_2(\overline u(T), \overline \Sigma(T))
            \eta_u(T)^2 + \tddp{^2}{\Sigma^2} F_2(\overline u(T), \overline
            \Sigma(T)) \eta_\Sigma(T)^2
            + \gamma \,\|\boverdot h\|_{L^2(0,T;L^2(\Omega;\R^d))}^2 \\
            & \qquad\qquad + \int_0^T\int_\Omega \big(\varphi , A_s''(B^\top
                \overline\Sigma - \Bb \overline z)(B^\top \eta_\Sigma - \Bb
            \eta_z)^2\big)_{\mathbb{V}} \,dx\,dt \geq \delta\,
            \|h\|_{H^1(0,T; L^2(\Omega;\R^d))}^2
        \end{aligned}
    \end{equation*}
    holds for all $h\in H^1(0,T;L^2(\Omega;\R^d))$ with $h(0) = 0$,
    where
    $(\eta_u, \eta_v, \eta_z, \eta_\Sigma) \in H^1(0,T;U^2 \times V^2
    \times Z^2\times S^2)$ is the solution of the linearized state
    system associated with $h$, i.e.,
    \begin{equation*}
        \begin{aligned}
            -\Div_{(x,y)} \eta_\Sigma &= (h,0), \\
            \eta_\Sigma &= \mathbb{C} \big( \symnabla_{(x,y)} (\eta_u,\eta_v) - B \eta_z\big) ,\\
            \boverdot{\eta_z} &= A_s'(B^\top \overline \Sigma -
            \Bb\overline{z})(B^\top \eta_\Sigma - \Bb\eta_z), \quad
            \eta_z(0) = 0.
        \end{aligned}
    \end{equation*}     
    Then $\overline{f}$ is a strict local minimizer of~\eqref{eq:ocps}
    and satisfies the quadratic growth condition~\eqref{eq:qgc}.
\end{theorem}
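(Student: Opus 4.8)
The plan is to recast problem~\eqref{eq:ocps} into the abstract setting of \cref{sec:6} and then to apply \cref{thm:SecondOrderOptimalityCondition} together with \cref{cor:ssc}. First I would invoke \cref{thm:approxhom}, applied with the maximal monotone operator $A$ replaced by its regularization $A_s$: the state system in~\eqref{eq:ocps} is then equivalent to the abstract evolution equation $\boverdot z = A_s(R\ell - Qz)$, $z(0) = z_0$, with $R$, $Q$ from~\eqref{eq:Rhom}--\eqref{eq:Qhom}, and $u$, $v$, $\Sigma$ are recovered from $z$ and $\ell = (f,g)$ through the linear and bounded operators $\mfu$, $\mfS$ of~\eqref{eq:uv}--\eqref{eq:Sigma}. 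With the spaces chosen as in \cref{assu:twice}, i.e.\ $\YY = Z^{s_1} \embed \ZZ = Z^{s_2} \embed \HH = \WW = Z^2$, the critical regularity condition \cref{assu:sneiberg} (which under $s_3 = 2$ reduces to $\bar s \geq s_1$) guarantees the mapping properties of $R$ and $Q$ required in \cref{sec:2}, while \cref{assu:twice} provides that $A_s$ satisfies \cref{ass:AsAndJFrechet}(ii) and \cref{ass:AsAndJTwiceFrechet}(ii)\&(iii). Since here $\WW = \HH$, the last inequality in \cref{ass:AsAndJTwiceFrechet}(iii) already holds in $\HH$, so the additional hypothesis of \cref{cor:ssc} is automatically met.

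Next I would handle the objective. Setting $\Psi_1 \defn F_1 \circ (\mfu, \mfS)$ and $\Psi_2 \defn F_2 \circ (\mfu, \mfS)$ with domains and ranges as in \cref{subsec:opsys}, the functional in~\eqref{eq:ocps} has exactly the form~\eqref{eq:regpsi2}, so that \cref{ex:ode} applies. As $F_1$, $F_2$ are twice continuously Fr\'echet differentiable by \cref{assu:twice} and $\mfu$, $\mfS$ are linear and bounded, the chain rule shows $\Psi_1$, $\Psi_2$ to be twice continuously Fr\'echet differentiable, which gives \cref{ass:AsAndJFrechet}(i) and \cref{ass:AsAndJTwiceFrechet}(i). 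Thus all hypotheses of \cref{thm:SecondOrderOptimalityCondition} are in place. Moreover, the system~\eqref{eq:mom}--\eqref{eq:gradeqvol} is, by \cref{thm:nochom}, precisely the concrete spelling-out of~\eqref{eq:firstOrderOptimalitySystem} for the reduced problem (boundary controls being absent, so that $\XX_c = L^2(\Omega;\R^d)$ and~\eqref{eq:gradeqbdry} is dropped), so that the given $(\overline f, \dots, \varphi, \dots)$ corresponds to a solution of~\eqref{eq:firstOrderOptimalitySystem}.

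It then remains to translate the abstract coercivity assumption~\eqref{eq:ssc}. Applying \cref{cor:ssc} with $\ell = \overline\ell$, $z = \overline z$, $\eta = \mathcal{S}_s'(\overline\ell)h$ and the adjoint state $\varphi$ gives
\begin{equation*}
    F''(\overline\ell)h^2 = \Psi''(\overline z,\overline\ell)(\eta, h)^2 + \Phi''(\overline\ell)h^2 - \bigl(\varphi,\, A_s''(R\overline\ell - Q\overline z)(Rh - Q\eta)^2\bigr)_{\LZ{\HH}}.
\end{equation*}
Differentiating the linear relations~\eqref{eq:uv}--\eqref{eq:Sigma} and using that the second derivatives of $\mfu$, $\mfS$ vanish, one identifies $\eta_u \defn \mfu'(\overline z,\overline\ell)(\eta,h)$ and $\eta_\Sigma \defn \mfS'(\overline z,\overline\ell)(\eta,h)$ with the $u$- and $\Sigma$-components of the linearized state system displayed in the theorem, while $\eta_z = \eta$. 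Running the reduction of \cref{subsec:opsys} for this linearized system yields the identity $Rh - Q\eta = B^\top \eta_\Sigma - \Bb\, \eta_z$. Consequently $\Psi''(\overline z,\overline\ell)(\eta,h)^2$ splits via the chain rule into the $F_1''$- and $F_2''$-contributions appearing in the hypothesis (the latter evaluated at time~$T$), $\Phi''(\overline\ell)h^2 = \gamma\,\|\boverdot h\|_{\LZ{L^2(\Omega;\R^d)}}^2$, and the last term becomes the space--time integral of $\varphi$ against $A_s''(B^\top\overline\Sigma - \Bb\overline z)(B^\top\eta_\Sigma - \Bb\eta_z)^2$ with the $Z^2$-inner product written out. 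The coercivity inequality assumed in the theorem is therefore exactly $F''(\overline\ell)h^2 \geq \delta\,\|h\|_{H^1(0,T;L^2(\Omega;\R^d))}^2$ for all admissible $h$ (on $\{h(0)=0\}$ the $H^1$-in-time norm is equivalent to the $L^2$-norm of the time derivative), so \cref{thm:SecondOrderOptimalityCondition} delivers that $\overline f$ is a strict local minimizer of~\eqref{eq:ocps} and fulfils the quadratic growth condition~\eqref{eq:qgc}.

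\textbf{Main obstacle.} The hard part is the bookkeeping in the last step: one must keep careful track of the various domains and ranges in which $\mfu$, $\mfS$, $R$, $Q$, $A_s'$ and $A_s''$ are considered, so that every chain-rule and linearity manipulation is licit---in particular that $\eta \in \WZ{\ZZ}$ is enough to form $\mfu'(\cdot)(\eta,h) \in H^1(0,T;U^2)$ and $\mfS'(\cdot)(\eta,h) \in H^1(0,T;S^2)$, which is where the second norm gap of \cref{rem:normgap2} enters---and one has to verify the linearized reduction identity $Rh - Q\eta = B^\top\eta_\Sigma - \Bb\eta_z$. The latter is the infinitesimal analogue of the passage from~\eqref{eq:homweak} to~\eqref{eq:odehom} and hinges on the linearity of $\mfS$.
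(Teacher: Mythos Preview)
Your proposal is correct and follows exactly the intended route: the paper does not give a separate proof of this theorem, since it is meant to be a direct specialization of \cref{thm:SecondOrderOptimalityCondition} together with \cref{cor:ssc} to the concrete setting of \cref{sec:7}, using the reduction $(u,v,\Sigma)\mapsto z$ from \cref{thm:approxhom} and the structure of the objective in~\eqref{eq:regpsi2}. Your verification of the hypotheses, the identification $Rh - Q\eta = B^\top\eta_\Sigma - \Bb\,\eta_z$, and the unpacking of $F''(\overline\ell)h^2$ via \cref{cor:ssc} are precisely the steps the paper leaves implicit.
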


\begin{remark}\label{rem:groesser3}
    As indicated above, \cref{assu:twice} in combination with
    \cref{assu:sneiberg} is very restrictive.  One can however weaken
    these assumptions, if the objective provides certain properties. Let
    us for instance consider an objective of the form
    \begin{equation}
        J(u, f) \defn  
        F_3(u) + \tfrac{\gamma}{2} \, \|\boverdot f\|_{L^2(0,T;L^2(\Omega;\R^d))}^2
    \end{equation}
    with a twice Fr\'echet differentiable functional
    \begin{equation*}
        F_3 \colon  H^1(0,T;L^2(\Omega;\R^d)) \to \R.
    \end{equation*}
    In this case, it is sufficient to choose $s_3$ such that
    $\mfu\colon  (z, \ell) \mapsto u$ maps $\WW \times \XX_c$ with
    $\WW = Z^{s_3}$ to $W^{1,p}(\Omega;\R^d) \embed L^2(\Omega;\R^d)$,
    i.e., $p \geq 6/5$ for $d=3$.  Moreover, as seen above, in order to
    have that the Nemyzki operator $A_s$ fulfills~\eqref{eq:cruineq}, we
    need $s_1 > 2 s_3$. Thus, we are tempted to choose $s_3$ as small as
    possible. However, the crucial, limiting condition is the regularity
    assumption in \cref{assu:sneiberg} involving the conjugate exponent,
    i.e., $\bar s \geq \max\{s_1, s_3'\}$, and this leads to the
    following equilibration of $s_1$ and $s_3$ in the case $d=3$:
    $s_1 > 3$, $s_3 = 3/2$ (such that $s_3' = 3$).  Then, in view of~\eqref{eq:uv}, $\mfu$ maps $Z^{s_3}$ to $W^{1,s_3}(\Omega;\R^d)$,
    which is continuously embedded in $L^2(\Omega;\R^d)$ for $d\leq 3$
    as desired.  In the next subsection, we will present an example for
    a Nemyzki operator $A_s$ fulfilling all assumptions for $s_1$
    arbitrarily close to $3$. But nonetheless, $\bar s > 3$ in
    \cref{assu:sneiberg} is still a rather restrictive assumption and
    will not be satisfied in general (depending on the regularity of
    $\mathbb{C} $ and the boundary of $\Omega$). This shows that the
    second-order analysis for problems of type~\eqref{eq:optprob}
    (resp.~its regularized counterparts, to be precise) is in general a
    delicate issue, mainly due to the quasi-linear structure of the
    state equation.
\end{remark}

\subsection{A Concrete Flow Rule}\label{subsec:vonmises}

In the following, we will discuss a concrete realization of the
maximal monotone operator $A$ and its regularization, respectively, in
order to demonstrate how the Assumptions~\ref{assu:maxmon},~\ref{assu:Asmooth1}, and~\ref{assu:twice} can be satisfied in practice
and how restrictive they are, in particular
Assumption~\ref{assu:twice}.

We consider the case of linear kinematic hardening with von Mises
yield condition, cf.~\cite{HanReddy1999} for a detailed description of this
model.  In this case, the finite dimensional space for the internal
variable is given by $\mathbb{V} = \Rs$ and $B\colon  \Rs\to \Rs$ is the
identity so that $Z^2 = S^2$. Moreover, $A$ is the convex
subdifferential of the indicator functional $I_\KK$ of following set
of admissible stresses
\begin{equation*}
    \KK \defn  \{ \tau \in S^s \colon  |\tau^D(x,y)| \leq \sigma_0 \quad \text{f.a.a.~} (x,y) \in \Omega \times Y\},
\end{equation*}
where $\tau^D \defn  \tau - \frac{1}{d} \operatorname{tr}(\tau)I$ is the
deviator of $\tau \in \Rs$ and $\sigma_0$ denotes the initial
uni-axial yield stress, a given material parameter.  The domain of
$A = \partial I_\KK$ is trivially $\KK$, which is closed and
convex. Furthermore, it is easily seen that $A^0(\tau) = 0$ for all
$\tau \in D(A) = \KK$ so that all of our standing assumptions are
fulfilled in this case.  Note moreover that $A$ satisfies
\cref{ass:AIsSubdifferential} so that the second approximation result
on the convergence of the optimal states in \cref{cor:strongapprox}
applies in this case.  For the Yosida-approximation of
$\partial I_\KK$, one obtains
\begin{equation}\label{eq:proj}
    A_\lambda = \frac{1}{\lambda}(I - \pi_{\mathcal{K}}) 
    = \frac{1}{\lambda} \max \Big\{ 0, 1 - \frac{\sigma_0}{|\tau^D|} \Big\} \tau^D,
\end{equation}
cf.~\cite{hermey11}, where $\pi_\KK$ denotes the projection on $\KK$ in
$Z^2$, i.e.,
$\pi_\KK(\sigma) \defn  \argmin_{\tau \in \KK} \|\tau - \sigma\|_{Z^2}^2$.
Herein, with a slight abuse of notation, we denote the Nemyzki
operator in $L^\infty(\Omega \times Y)$ associated with the pointwise
maximum, i.e., $\R \ni r \mapsto \max\{0,r\} \in \R$, by the same
symbol.  In addition, we set $\max\{0, 1 - \sigma_0/r\} \defn  0$, if
$r = 0$.

The precise form of $A_\lambda$ in~\eqref{eq:proj} immediately
suggests the following regularization of the Yosida approximation:
\begin{equation*}
    A_{\lambda, \epsilon}(\tau) \defn  \frac{1}{\lambda} \maxs_\epsilon \Big( 1 - \frac{\sigma_0}{|\tau^D|} \Big) \tau^D,
\end{equation*}
where $\maxs_\epsilon$ is a regularized version of the max-function,
depending on a regularization parameter $\epsilon > 0$.  To be more
precise,
$\maxs_\epsilon\colon  L^\infty(\Omega \times Y) \to L^\infty(\Omega \times
Y)$ is the Nemyzki operator associated with a real valued function
(again denoted by the same symbol) with the following properties:
\begin{enumerate}
    \item For every $\epsilon > 0$, there holds
        $\maxs_\epsilon \in C^2(\R)$,
    \item $\maxs_\epsilon(r) = \max\{0, r\}$ for $|r| \geq \frac{1}{2}$
        and every $0 < \epsilon \leq 1/2$,
    \item
        $| \maxs_\epsilon(r) - \max\{0, r\} | \leq \mathcal{O}(\epsilon)$
        for all $r \in \mathbb{R}$.
\end{enumerate}

\begin{example}
    An example for a function satisfying the above conditions is
    \begin{equation*}
        \maxs_\epsilon(r) \defn  
        \begin{cases}
            \max\{0,r\}, & |r| \geq \epsilon, \\
            \tfrac{1}{16\epsilon^3} (r+\epsilon)^3 (3\epsilon-r), & |r| <
            \epsilon.
        \end{cases}
    \end{equation*}
    One easily verifies that $\maxs_\varepsilon$ is twice continuously
    differentiable and that
    $|\maxs_\varepsilon(r) = \max\{0,r\}| \leq \frac{3}{16} \epsilon$.
\end{example}

\begin{lemma}
    Let $\sequence{\lambda}{n} \subset \R^+$ and
    $\sequence{\epsilon}{n}\subset \R^+$ be null sequences satisfying
    \begin{equation}\label{eq:epslambda}
        \epsilon_n = o\Big(\lambda_n^2 \exp\big(-\tfrac{T\|Q\|}{\lambda_n}\big)\Big), 
    \end{equation}
    and define $A_n \defn  A_{\lambda_n, \epsilon_n}\colon  Z^2 \to Z^2$. Then,
    the sequence $\sequence{A}{n}$ satisfies
    \cref{ass:AnAssumption}. Thus, \cref{assu:maxmon} is fulfilled in
    this case so that the approximation results from
    \cref{thm:approxhom} apply.
\end{lemma}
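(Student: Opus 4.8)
The plan is to verify, for $A_n \defn A_{\lambda_n,\epsilon_n}$, the two requirements collected in \cref{ass:AnAssumption}: that each $A_n$ is globally Lipschitz continuous as a map from $Z^2$ to $Z^2$, and that the product of $\lambda_n^{-1}\exp(T\|Q\|/\lambda_n)$ with $\sup_{h\in Z^2}\norm{A_n(h)-A_{\lambda_n}(h)}{Z^2}$ tends to zero, while $\lambda_n\searrow 0$ holds by assumption.

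For the Lipschitz continuity I would first reduce to the real-valued map $\phi_\epsilon\colon\Rs\to\Rs$, $\phi_\epsilon(\sigma)\defn\maxs_\epsilon(1-\sigma_0/|\sigma|)\,\sigma$ (with $\phi_\epsilon(0)\defn 0$): since $\tau\mapsto\tau^D$ is linear and bounded, once $\phi_\epsilon$ is globally Lipschitz so is $g_\epsilon\colon\tau\mapsto\maxs_\epsilon(1-\sigma_0/|\tau^D|)\tau^D=\phi_\epsilon(\tau^D)$, hence the associated Nemyzki operator is globally Lipschitz on $Z^2=L^2(\Omega\times Y;\Rs)$ ($\Omega\times Y$ being bounded), and $A_n=\tfrac{1}{\lambda_n}g_{\epsilon_n}(\cdot)$ inherits this with constant $C/\lambda_n$. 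To see $\phi_\epsilon$ is Lipschitz, note that by property~(2) (together with $\max\{0,r\}=0$ for $r\le-1/2$) the prefactor $\maxs_\epsilon(1-\sigma_0/|\sigma|)$ vanishes for $|\sigma|\le 2\sigma_0/3$, so $\phi_\epsilon\equiv 0$ near the origin, while for $\sigma\neq 0$ it is $C^1$ because $\maxs_\epsilon\in C^2(\R)$. A direct computation shows that $\nabla\phi_\epsilon(\sigma)$ is a sum of $\maxs_\epsilon(1-\sigma_0/|\sigma|)$ times a tensor of norm $\le 1$ and of $\maxs_\epsilon'(1-\sigma_0/|\sigma|)\,(\sigma_0/|\sigma|)$ times a tensor of norm $\le 1$; since $\maxs_\epsilon'$ is bounded, $|\maxs_\epsilon(1-\sigma_0/|\sigma|)|\le 1+\mathcal{O}(\epsilon)$, and $\sigma_0/|\sigma|\le 3/2$ on the set $|\sigma|\ge 2\sigma_0/3$ where $\maxs_\epsilon'(1-\sigma_0/|\sigma|)$ is nonzero, $\nabla\phi_\epsilon$ is bounded on all of $\Rs$, whence $\phi_\epsilon$ is globally Lipschitz.

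For the convergence condition I would use, as in~\eqref{eq:proj}, the identity
\[
A_n(\tau)-A_{\lambda_n}(\tau)=\frac{1}{\lambda_n}\Big[\maxs_{\epsilon_n}\big(1-\tfrac{\sigma_0}{|\tau^D|}\big)-\maxs\big\{0,\,1-\tfrac{\sigma_0}{|\tau^D|}\big\}\Big]\,\tau^D
\]
(understood as a Nemyzki operator). By property~(2) the bracket vanishes unless $|1-\sigma_0/|\tau^D||<1/2$, i.e.\ unless $|\tau^D|<2\sigma_0$, and by property~(3) the bracket is bounded by $C\epsilon_n$; hence, pointwise almost everywhere and uniformly in the argument, $|A_n(\tau)-A_{\lambda_n}(\tau)|\le 2\sigma_0 C\,\epsilon_n/\lambda_n$. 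Integrating over $\Omega\times Y$ gives $\sup_{h\in Z^2}\norm{A_n(h)-A_{\lambda_n}(h)}{Z^2}\le C'\epsilon_n/\lambda_n$ with $C'\defn 2\sigma_0 C\,|\Omega\times Y|^{1/2}$, so that
\[
\frac{1}{\lambda_n}\exp\Big(\frac{T\|Q\|}{\lambda_n}\Big)\sup_{h\in Z^2}\norm{A_n(h)-A_{\lambda_n}(h)}{Z^2}\le C'\,\frac{\epsilon_n}{\lambda_n^{2}}\exp\Big(\frac{T\|Q\|}{\lambda_n}\Big)\longrightarrow 0
\]
by the coupling~\eqref{eq:epslambda}. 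This establishes \cref{ass:AnAssumption}; since $A=\partial I_\KK$ with $D(A)=\KK$ closed and convex and $A^0\equiv 0$ fulfills all standing assumptions, \cref{assu:maxmon} is met and the approximation results of \cref{thm:approxhom} apply. The one point requiring care is the \emph{uniformity in the argument} of the bound on $A_n-A_{\lambda_n}$: property~(3) alone would only give an $\mathcal{O}(\epsilon_n)$ discrepancy multiplied by the unbounded factor $\tau^D$; it is the cutoff property~(2), confining the discrepancy to the bounded region $|\tau^D|\le 2\sigma_0$, that turns this into a genuinely uniform estimate and thus, via~\eqref{eq:epslambda}, into the required convergence.
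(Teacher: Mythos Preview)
Your proof is correct and follows essentially the same route as the paper for the convergence condition: both arguments bound the pointwise discrepancy $|\maxs_{\epsilon_n}(1-\sigma_0/|\tau^D|)-\max\{0,1-\sigma_0/|\tau^D|\}|\,|\tau^D|$ by $C\epsilon_n$ using properties~(2) and~(3) of $\maxs_\epsilon$, integrate, and invoke~\eqref{eq:epslambda}. You are in fact more explicit than the paper on two points: you spell out the Lipschitz continuity of each $A_n$ (which \cref{ass:AnAssumption} requires but the paper does not re-verify here), and you isolate clearly that property~(2) is what confines the support of the discrepancy to the bounded region $|\tau^D|\le 2\sigma_0$, turning the a~priori unbounded factor $|\tau^D|$ into a harmless constant.
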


\begin{proof}
    Based on our assumptions on $\maxs_\epsilon$, we find for every
    $\tau \in Z^2$ and all $n\in \N$ such that $\epsilon_n \leq 1/2$
    that
    \begin{equation*}
        \begin{aligned}
            & \norm{A_n(\tau) - A_{\lambda_n}(\tau)}{Z^2}^2 \\
            &\quad = \frac{1}{\lambda_n^2} \int_\Omega
            \Big|\maxs_{\epsilon_n} \big( 1 - \tfrac{\sigma_0}{|\tau^D|}
                \big) - \max \big\{ 0, 1 - \tfrac{\sigma_0}{|\tau^D|}
            \big\}\Big|^2 |\tau^D|^2 \,dx \leq
            C\,\frac{\epsilon_n^2}{\lambda_n^2}.
        \end{aligned}
    \end{equation*}
    The coupling of $\epsilon_n$ and $\lambda_n$ in~\eqref{eq:epslambda}
    then implies that~\eqref{eq:approxassu} is fulfilled.
\end{proof}

\begin{remark}
    We point out that we neither claim that the coupling of $\lambda$
    and $\epsilon$ in~\eqref{eq:epslambda} is optimal nor that our
    regularization approach is the most efficient one for this specific
    flow rule.
\end{remark}

Let us now fix $n \in \mathbb{N}$ and set $\lambda_s \defn  \lambda_n$,
$\maxs_s \defn  \maxs_{\epsilon_n}$, and $A_s \defn  A_n$.  As before, we will
denote the Nemyzki operators generated by $\maxs_s$ and its
derivatives by the same symbol.  The following result can be proven as
in~\cite[Prop.~2.11]{herzog2} by using~\cite[Theorem~7]{GKT92}:

\begin{lemma}\label{lem:Asdiff1}
    Let $s>r\geq 1$ be arbitrary. Then $A_s$ is continuously
    Fr\'echet differentiable from $Z^s$ to $Z^r$ and its directional
    derivative at $\tau \in Z^s$ in direction $h \in Z^r$ is given by
    \begin{equation*}
        A_s'(\tau)h = \frac{1}{\lambda_s} \maxs_s'\Big(1 - \frac{\sigma_0}{|\tau^D|}\Big) \frac{\sigma_0}{|\tau^D|^3}
        (\tau^D \colon h^D) \tau^D + \frac{1}{\lambda_s} \maxs_s\Big(1 - \frac{\sigma_0}{|\tau^D|}\Big) h^D.
    \end{equation*}
\end{lemma}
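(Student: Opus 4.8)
The plan is to view $A_s$ as the Nemytskii (superposition) operator generated by the pointwise nonlinearity
\[
 a_s \colon \Rs \to \Rs, \qquad a_s(\tau) \defn \frac{1}{\lambda_s}\,\maxs_s\Big(1 - \frac{\sigma_0}{|\tau^D|}\Big)\,\tau^D \quad(\text{with } a_s(\tau)\defn 0 \text{ if } \tau^D = 0),
\]
and to deduce Fréchet differentiability of $A_s\colon Z^s \to Z^r$ from a general differentiability theorem for Nemytskii operators between Lebesgue spaces with a norm gap, namely~\cite[Theorem~7]{GKT92}, exactly as in~\cite[Prop.~2.11]{herzog2}. The formula for $A_s'(\tau)h$ is then just the pointwise chain and product rule applied to $a_s$.

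First I would establish that $a_s \in C^2(\Rs;\Rs)$ with $a_s$, $a_s'$, $a_s''$ globally bounded. On the open set $\{\tau : \tau^D \neq 0\}$ the map $\tau \mapsto |\tau^D|$ is $C^\infty$, so there $a_s$ is a composition of $C^\infty$ maps with $\maxs_s \in C^2(\R)$ and hence $C^2$. The only delicate point is the behavior near the subspace $\{\tau^D = 0\}$, where the argument $1 - \sigma_0/|\tau^D|$ tends to $-\infty$: by property~(2) of $\maxs_\epsilon$ one has $\maxs_s(1 - \sigma_0/|\tau^D|) = \max\{0,\,1 - \sigma_0/|\tau^D|\} = 0$ whenever $|\tau^D| \leq 2\sigma_0/3$, so $a_s$ vanishes identically on the open neighborhood $\{|\tau^D| < 2\sigma_0/3\}$ of $\{\tau^D = 0\}$; this gives $a_s \in C^2(\Rs;\Rs)$. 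Boundedness of the derivatives follows because $\maxs_s$, $\maxs_s'$, $\maxs_s''$ are bounded on $\R$ (they agree with the exact maximum and its derivatives outside $[-1/2,1/2]$), while the singular prefactors $|\tau^D|^{-1}, |\tau^D|^{-2}, |\tau^D|^{-3}$ always occur multiplied by $\maxs_s'$ or $\maxs_s''$ (which vanish for $|\tau^D| \leq 2\sigma_0/3$) and against powers of $\tau^D$ of at least the same order, so every term stays bounded as $|\tau^D| \to 0$ and decays as $|\tau^D| \to \infty$.

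With $a_s \in C^1(\Rs;\Rs)$ and $a_s'$ bounded, \cite[Theorem~7]{GKT92} applies and yields that the Nemytskii operator $A_s$ is continuously Fréchet differentiable from $L^s(\Omega \times Y;\Rs) = Z^s$ to $L^r(\Omega \times Y;\Rs) = Z^r$ for every pair $s > r \geq 1$, with $A_s'(\tau)$ the Nemytskii operator generated by $a_s'(\tau)$; the strict inequality $s > r$ is precisely what is needed to absorb the remainder via Hölder's inequality and Lebesgue's dominated convergence theorem, since $a_s'$ is merely bounded and continuous and not more. It then remains to identify $a_s'(\tau)$: using that the deviator $\tau \mapsto \tau^D$ is the orthogonal projection onto trace-free matrices (so $\tau^D : h = \tau^D : h^D$ and $\mathrm{D}[\tau\mapsto|\tau^D|](\tau)h = (\tau^D:h^D)/|\tau^D|$ for $\tau^D \neq 0$), the chain rule gives $\mathrm{D}[\tau\mapsto 1 - \sigma_0/|\tau^D|](\tau)h = \sigma_0(\tau^D:h^D)/|\tau^D|^3$, and the product rule applied to $a_s(\tau) = \lambda_s^{-1}\maxs_s(1-\sigma_0/|\tau^D|)\tau^D$ produces exactly the two terms in the asserted expression for $A_s'(\tau)h$ (trivially also on $\{\tau^D=0\}$, where both sides vanish).

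The main obstacle I anticipate is not the Lebesgue-space differentiability step, which is a black-box application of~\cite[Theorem~7]{GKT92}, but the bookkeeping needed to verify that $a_s$ is a genuinely $C^2$ function on all of $\Rs$ with bounded derivatives despite the apparent singularity at $\{\tau^D = 0\}$; this hinges entirely on property~(2) of the mollified maximum, which forces $\maxs_s$ and every one of its derivatives appearing in $a_s'$ and $a_s''$ to vanish there. (The $C^2$-bound, though stronger than needed for the present lemma, is worth recording because it underlies the twice-differentiability statements required under \cref{ass:AsAndJTwiceFrechet} later on.)
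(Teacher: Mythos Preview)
Your proposal is correct and follows exactly the approach the paper indicates: the paper's proof is the one-line remark that the result ``can be proven as in~\cite[Prop.~2.11]{herzog2} by using~\cite[Theorem~7]{GKT92}'', and you have unpacked precisely this---viewing $A_s$ as a Nemytskii operator, verifying the pointwise nonlinearity is $C^1$ with bounded derivative (using property~(2) of $\maxs_\epsilon$ to handle the apparent singularity at $\tau^D=0$), and then invoking~\cite[Theorem~7]{GKT92}. Your treatment is in fact more detailed than the paper's, which simply defers to the references.
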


As a consequence of this result, we obtain the following

\begin{corollary}
    \cref{assu:Asmooth1} is fulfilled by setting $s_1 \defn  \bar s$, where
    $\bar s>2$ is the exponent, whose existence is guaranteed by
    \cref{lem:w1sExistenceHomogenization}. Thus, in case of linear
    kinematic hardening with von Mises yield condition and the
    regularization introduced above, the optimality condition in
    \cref{thm:nochom} are indeed necessary for local optimality without
    any further assumptions (except our standing \cref{assu:data}).
\end{corollary}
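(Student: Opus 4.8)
The plan is to verify \cref{assu:Asmooth1} with the choice $s_1 \defn \bar s$ directly from the explicit formula for $A_s$ together with \cref{lem:Asdiff1}, and then to read off the assertion from \cref{thm:nochom}. Since $\bar s > 2$ while $\WW = \ZZ = Z^2$ and $\HH = Z^2$, the critical regularity condition in \cref{assu:sneiberg} reads $\bar s \geq \max\{s_1,s_3'\} = \max\{\bar s,2\} = \bar s$ and therefore holds trivially, and \cref{ass:AsAndJFrechet}(i) has already been established above. Hence it only remains to check \cref{ass:AsAndJFrechet}(ii) for $A_s$ with $\YY = Z^{\bar s}$ and $\ZZ = \HH = Z^2$.

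First I would show that $A_s \colon Z^{\bar s} \to Z^{\bar s}$ is globally Lipschitz. The operator $A_s$ is the Nemytskii operator generated by $g \colon \Rs \to \Rs$, $g(\tau) \defn \lambda_s^{-1}\maxs_s\big(1 - \sigma_0/|\tau^D|\big)\,\tau^D$. For $|\tau^D| \leq \sigma_0/(1+\epsilon_s)$ the argument of $\maxs_s$ is $\leq -\epsilon_s$, so $g$ vanishes identically there; on the complement $g$ is $C^1$ with gradient bounded by a constant depending only on $\lambda_s, \epsilon_s, \sigma_0$, using that $\maxs_s'$ is bounded on $\R$, that $0 \leq \maxs_s(1 - \sigma_0/|\tau^D|) \leq 1$, and that the factor $\sigma_0/|\tau^D|$ stays below $1+\epsilon_s$ on the set where $\maxs_s'(1-\sigma_0/|\tau^D|) \neq 0$. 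Thus $g$ is globally Lipschitz on $\Rs$, and since $\bar s < \infty$ the induced Nemytskii operator is Lipschitz from $Z^{\bar s}$ to itself. Continuous Fr\'echet differentiability of $A_s$ from $Z^{\bar s}$ to $Z^2$ then follows at once from \cref{lem:Asdiff1} applied with $s = \bar s$ and $r = 2$, which is admissible because $\bar s > 2 \geq 1$.

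Next I would produce the bounded extension of $A_s'(y)$ to $L(Z^2;Z^2)$; as $\ZZ = \HH = Z^2$, this single operator serves for both bounds required in \cref{ass:AsAndJFrechet}(ii). Using the pointwise formula for $A_s'(\tau)h$ from \cref{lem:Asdiff1}, the second summand is a.e.\ bounded by $\lambda_s^{-1}\maxs_s(1-\sigma_0/|\tau^D|)\,|h^D| \leq C\lambda_s^{-1}|h|$, whereas in the first summand the factor $\sigma_0|\tau^D|^{-3}|\tau^D|^2 = \sigma_0|\tau^D|^{-1}$ is bounded by $1+\epsilon_s$ on the support of $\tau \mapsto \maxs_s'(1-\sigma_0/|\tau^D|)$ (there $1 - \sigma_0/|\tau^D| > -\epsilon_s$), so that, together with the boundedness of $\maxs_s'$, one gets $|A_s'(\tau)h| \leq C\lambda_s^{-1}|h|$ a.e., with $C$ independent of $\tau$ and $h$. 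Integrating the $Z^2$-norm yields $\|A_s'(y)h\|_{Z^2} \leq C\lambda_s^{-1}\|h\|_{Z^2}$ for all $y \in Z^{\bar s}$, $h \in Z^2$, so $A_s'(y)$ extends by density from $Z^{\bar s}$ to $Z^2$ to an element of $L(Z^2;Z^2)$ with operator norm uniform in $y$. This establishes \cref{ass:AsAndJFrechet}(ii), hence \cref{assu:Asmooth1}; combined with the standing \cref{assu:data} and the trivial validity of \cref{assu:sneiberg}, all hypotheses of \cref{thm:nochom} are met and the claim follows. The one genuinely non-routine step — and the point I expect to be the main obstacle — is the \emph{uniform} (in $y$) boundedness of the extensions $A_s'(y) \in L(Z^2;Z^2)$, which hinges precisely on controlling the apparently singular factor $|\tau^D|^{-1}$ via the support properties of $\maxs_\epsilon$.
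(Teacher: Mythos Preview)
Your proposal is correct and follows essentially the same route as the paper's proof: verify \cref{ass:AsAndJFrechet}(ii) with $\YY=Z^{\bar s}$ and $\ZZ=\HH=Z^2$ by invoking \cref{lem:Asdiff1} for Fr\'echet differentiability, then use the support/boundedness properties of $\maxs_s$ to get both the Lipschitz continuity of $A_s$ in $Z^{\bar s}$ and the uniform bound $\|A_s'(y)h\|_{Z^2}\leq C\|h\|_{Z^2}$. One small slip: you repeatedly use $\epsilon_s$ as the threshold at which $\maxs_s$ agrees with $\max\{0,\cdot\}$, but the general assumption on the regularization only guarantees $\maxs_\epsilon(r)=\max\{0,r\}$ for $|r|\geq 1/2$ (the specific example satisfies the stronger bound, but the corollary is stated for the general construction); replacing $\epsilon_s$ by $1/2$ throughout fixes this and leaves the argument intact.
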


\begin{proof}
    We have to verify \cref{ass:AsAndJFrechet}(ii) for
    $\YY = Z^{\bar s}$ and $\ZZ = \HH = Z^2$.  The
    Fr\'echet differentiability from $Z^{\bar s}$ to $Z^2$ follows from
    \cref{lem:Asdiff1}. Moreover, the (global) Lipschitz continuity of
    $A_s$ in $Z^{\bar s}$ can be deduced from the smoothness of
    $\maxs_s$ and the condition $\maxs_s(r) = \max\{0,r\}$ for all
    $|r| \geq 1/2$. The latter condition also guarantees that
    $\|A_s'(y)h\|_{Z^2} \leq C\,\|h\|_{Z^2}$ for all $y\in Z^{\bar s}$
    and all $h\in Z^2$. This completes the proof.
\end{proof}

Furthermore, following the lines of~\cite{wachsmuth2} and~\cite[Theorem~9]{GKT92}, one proves the following

\begin{lemma}\label{lem:Asdiff2}
    For every $s > 2$ and $1 \geq r < s/2$, $A_s$ is twice
    Fr\'echet differentiable and its second derivative at $\tau \in Z^s$
    in directions $h_1, h_2 \in Z^r$ is given by
    \begin{equation*}
        \begin{aligned}
            A_s''(\tau)[h_1, h_2 ]
            &= \tfrac{\gamma} {\lambda_s |\tau^D|^3} \Big[
                \begin{aligned}[t]
                    & \maxs_s''\big(1 - \tfrac{\gamma}{|\tau^D|} \big)
                    \tfrac{\gamma}{|\tau^D|^3}
                    (\tau^D \colon h_1^D) (\tau^D \colon h_2^D) \tau^D\\
                    &+ \maxs_s'\big(1 - \tfrac{\gamma}{|\tau^D|} \big) \Big(
                        \begin{aligned}[t]
                            -\tfrac{3}{|\tau^D|^2} (\tau^D \colon h_1^D) (\tau^D \colon
                            h_2^D)\tau^D
                            + (h_1^D \colon h_2^D)  \tau^D &\\[-1mm]
                            + (\tau^D \colon h_1^D) h_2^D + (\tau^D \colon h_2^D) h_1^D
                    & \Big)\Big].
                \end{aligned}
            \end{aligned}
        \end{aligned}
    \end{equation*}
\end{lemma}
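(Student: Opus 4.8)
The plan is to view $A_s$ as the Nemytskii (superposition) operator generated by the single tensor-valued function
\[
a_s\colon \Rs \to \Rs, \qquad a_s(\tau) \defn \frac{1}{\lambda_s}\,\maxs_s\!\Big(1 - \frac{\sigma_0}{|\tau^D|}\Big)\tau^D ,
\]
and to invoke the differentiability calculus for such operators between Lebesgue spaces, exactly as in the proof of \cref{lem:Asdiff1} (which rests on \cite[Theorem~7]{GKT92}) and for an analogous operator in \cite{wachsmuth2}. For the second derivative the relevant tool is \cite[Theorem~9]{GKT92}, whose hypotheses have to be verified for $a_s$; once this is done, the displayed formula follows by differentiating the expression for $A_s'(\tau)h$ from \cref{lem:Asdiff1} once more by the product and chain rules, and by reading off the bound $\norm{A_s''(\tau)[h_1,h_2]}{Z^r}\le C\,\norm{h_1}{Z^r}\norm{h_2}{Z^r}$.

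The genuinely delicate point is to show that $a_s$ is of class $C^2$ on \emph{all} of $\Rs$, with $a_s'$ and $a_s''$ bounded and $a_s''$ continuous. On $\{\tau^D \neq 0\}$ the map $\tau \mapsto 1 - \sigma_0/|\tau^D|$ is $C^\infty$ and $\tau \mapsto \tau^D$ is linear, so $a_s$ is $C^2$ there because $\maxs_s \in C^2(\R)$ by construction. Near the singular set $\{\tau^D = 0\}$ one exploits the defining property $\maxs_s(r) = \max\{0,r\}$ for $|r| \geq 1/2$: on $\{|\tau^D| < \tfrac{2}{3}\sigma_0\}$ one has $1 - \sigma_0/|\tau^D| \le -\tfrac12 < 0$, hence $a_s \equiv 0$ on this open neighbourhood of $\{\tau^D = 0\}$, so $a_s$ is $C^2$ (indeed locally constant) there as well, and the two descriptions agree on the overlap. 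The factors $1/|\tau^D|^3$ appearing in the derivative formulas are harmless for the same reason: they only occur on $\{\tau^D \neq 0\}$, where they decay as $|\tau^D|\to\infty$ and are bounded on $\{|\tau^D|\ge \tfrac23\sigma_0\}$; together with $|\maxs_s'|,|\maxs_s''|\le C$ (a consequence of $\maxs_s \in C^2(\R)$ and its flat behaviour for $|r|\ge 1/2$) this yields $|a_s'(\tau)|\le C$ and $|a_s''(\tau)|\le C$ for all $\tau$, and the continuity of $a_s''$. That $a_s$ itself is merely globally Lipschitz, not bounded, is irrelevant here.

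With these properties at hand, \cite[Theorem~9]{GKT92} shows that the superposition operator induced by $a_s$ is twice continuously Fr\'echet differentiable from $Z^s = L^s(\Omega\times Y;\Rs)$ to $Z^r = L^r(\Omega\times Y;\Rs)$ as soon as $r < s/2$; the ratio condition $r < s/2$ is precisely what controls the quadratic Taylor remainder $\tfrac12\bigl(a_s''(\xi)-a_s''(\tau)\bigr)[h,h]$, which lies in $Z^r$ with norm bounded by a constant times $\norm{h}{Z^{2r}}^2 \le C\,\norm{h}{Z^s}^2$, so that boundedness and continuity of $a_s''$ together with Lebesgue's dominated convergence theorem turn it into an $o(\norm{h}{Z^s}^2)$ term. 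Differentiating $A_s'(\tau)h$ from \cref{lem:Asdiff1} once more --- differentiating the scalar factors $\tau\mapsto 1-\sigma_0/|\tau^D|$ and $\tau\mapsto \sigma_0/|\tau^D|^3$ and the linear factor $\tau\mapsto\tau^D$ --- then produces the stated expression for $A_s''(\tau)[h_1,h_2]$, which is a routine computation. I expect the $C^2$-regularity at $\{\tau^D=0\}$ to be the only real obstacle: it is exactly the true-$\max$ behaviour of $\maxs_\epsilon$ away from $0$, i.e.\ the first two defining properties of the regularized max-function, that makes the composition $C^2$ rather than merely continuous there; everything after that is off-the-shelf Nemytskii calculus.
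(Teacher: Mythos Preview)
Your proposal is correct and follows essentially the same approach as the paper, which merely states that the result is obtained ``following the lines of~\cite{wachsmuth2} and~\cite[Theorem~9]{GKT92}'' without further detail. You have supplied precisely the missing verification: that the generating function $a_s$ is $C^2$ on all of $\Rs$ with bounded first and second derivatives, the crucial observation being that $a_s$ vanishes identically on a neighbourhood of $\{\tau^D=0\}$ thanks to the property $\maxs_s(r)=\max\{0,r\}$ for $|r|\ge 1/2$, after which the Nemytskii calculus from~\cite[Theorem~9]{GKT92} applies directly.
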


\begin{corollary}
    The conditions on $A_s$ in \cref{assu:twice} are satisfied, if the
    index $\bar s$ from \cref{lem:w1sExistenceHomogenization} fulfills
    $\bar s > 4$.
\end{corollary}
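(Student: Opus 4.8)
The plan is to instantiate the abstract spaces of \cref{sec:5} and \cref{sec:6} by $\YY \defn Z^{s_1}$, $\ZZ \defn Z^{s_2}$, $\WW = \HH \defn Z^2$ with $s_1 \defn \bar s$ and $s_2$ to be fixed in the \emph{open} interval $(4,\bar s)$, which is nonempty \emph{precisely because} $\bar s > 4$; note that with $s_1 = \bar s$ and $s_3 = 2$ one has $\max\{s_1, s_3'\} = \max\{\bar s, 2\} = \bar s$, so \cref{assu:sneiberg} holds automatically. It then remains to verify \cref{ass:AsAndJFrechet}(ii) and \cref{ass:AsAndJTwiceFrechet}(ii) and (iii) for the Nemyzki operator $A_s$ generated by $\Rs \ni \tau \mapsto \tfrac{1}{\lambda_s}\maxs_s(1 - \sigma_0/|\tau^D|)\,\tau^D$, where now $\mathbb V = \Rs$ and $B = \mathrm{id}$, so $Z^2 = S^2$. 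The whole argument rests on two elementary observations plus the explicit derivative formulas in \cref{lem:Asdiff1} and \cref{lem:Asdiff2}.

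The first observation is that the nonlinearity is \emph{localized}: since $\maxs_s(r)=\max\{0,r\}$ for $|r|\ge 1/2$, both $\maxs_s'$ and $\maxs_s''$ vanish outside the compact annulus $\{\,2\sigma_0/3 \le |\tau^D| \le 2\sigma_0\,\}$, on which the factors $|\tau^D|^{-1},|\tau^D|^{-2},|\tau^D|^{-3}$ occurring in \cref{lem:Asdiff1} and \cref{lem:Asdiff2} are bounded. Hence $A_s$, $A_s'$ and $A_s''$ are the Nemyzki operators of, respectively, a bounded and globally Lipschitz map $\Rs\to\LL(\Rs;\Rs)$-valued nonlinearity, a bounded and globally Lipschitz map $\Rs\to\LL(\Rs)$ (whose pointwise derivative is the bounded map $A_s''$), and a bounded, \emph{merely continuous} map $\Rs\to\LL(\Rs;\LL(\Rs;\Rs))$ — the last one only continuous because $\maxs_s''$ is only $C^0$. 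The second observation is the accompanying Hölder bookkeeping: a bounded $\LL(\Rs)$-valued multiplier acts on $L^p(\Omega\times Y;\Rs)$, $1<p<\infty$, with operator norm equal to its $L^\infty$-norm, and a bounded bilinear multiplier $m$ satisfies $\|m(z_1,z_2)\|_{L^q}\le\|m\|_{L^\infty}\|z_1\|_{L^{q_1}}\|z_2\|_{L^{q_2}}$ whenever $\tfrac1q=\tfrac1{q_1}+\tfrac1{q_2}$; moreover, because these generating functions are \emph{bounded}, their Nemyzki operators map $Z^{\bar s}$ continuously into \emph{every} $L^p$, $p<\infty$ (by a.e.\ convergence along subsequences plus dominated convergence), which removes the usual upper bound on intermediate exponents. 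Together with the boundedness just noted, these facts immediately deliver the extensions, the uniform operator bounds, and the Lipschitz estimates for $A_s$ on $Z^{\bar s}$ and for $A_s'$ required in \cref{ass:AsAndJFrechet}(ii) and \cref{ass:AsAndJTwiceFrechet}(ii); the bilinear bound $\|A_s''(y)(z_1,z_2)\|_{Z^2}\le C\,\|z_1\|_{Z^{s_2}}\|z_2\|_{Z^{s_2}}$ uses $Z^{s_2}\embed L^4$, i.e.\ $s_2\ge 4$, which is exactly where the doubling $2s_3 = 4$ — hence $\bar s>4$ — enters, cf.~\eqref{eq:cruineq}.

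For the differentiability, \cref{lem:Asdiff1} with $s=\bar s>r=s_2$ gives $A_s\in C^1(Z^{\bar s};Z^{s_2})$, and \cref{lem:Asdiff2} with $s=\bar s$ and $r=2<\bar s/2$ gives $A_s\in C^2(Z^{\bar s};Z^2)$, hence $A_s'\in C^1(Z^{\bar s};L(Z^{\bar s};Z^2))$; the argument space is upgraded from $Z^{\bar s}$ to $Z^{s_2}$ via the bilinear bound above, which yields \cref{ass:AsAndJTwiceFrechet}(iii) up to the continuity statement. The Fr\'echet (rather than merely G\^ateaux) remainder estimates are obtained, exactly as in the proof of \cref{thm:SsFrechetDifferentiability}, by writing increments of $A_s'$ as $\int_0^1 A_s''(\,\cdot\,)\,dt$, estimating with H\"older using exponents $\bar s$ for the differentiation direction and $s_2$ for the operator slot — admissible since $\tfrac1{\bar s}+\tfrac1{s_2}<\tfrac14+\tfrac14=\tfrac12$ — and passing to the limit by dominated convergence. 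The one genuinely delicate point, and the expected main obstacle, is the \emph{continuity} (not Lipschitz continuity, since $\maxs_s''$ is only $C^0$) of $y\mapsto A_s''(y)$ as a map $Z^{\bar s}\to L(Z^{s_2};L(Z^{s_2};Z^2))$ demanded in \cref{ass:AsAndJTwiceFrechet}(iii). For this I would mimic the device used in the proof of \cref{thm:SsC2}: if $\tau_n\to\tau$ in $Z^{\bar s}$ then, along a subsequence, $\tau_n\to\tau$ a.e.\ in $\Omega\times Y$, so the (bounded, continuous) generating function of $A_s''$ evaluated at $\tau_n^D$ converges a.e.\ and hence, by dominated convergence, in every $L^a$, $a<\infty$; since $s_2>4$, the product of two $Z^{s_2}$-functions lies in $L^{s_2/2}$ with $s_2/2>2$, which leaves a strictly positive H\"older exponent for the coefficient difference, and one concludes convergence in $L(Z^{s_2};L(Z^{s_2};Z^2))$ along the subsequence, then for the whole sequence by the usual subsequence-of-every-subsequence argument. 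This is also the place where the exponent arithmetic becomes tight: together with the requirement that $A_s''$ take values in $\WW=Z^{s_3}=Z^2$, it forces $s_2>4$ and thus $s_1=\bar s>4$, whereas all remaining requirements would already be met for any $\bar s$ merely larger than $s_2\ge 2$.
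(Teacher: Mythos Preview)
Your approach coincides with the paper's: set $s_1=\bar s$, $s_2\in(4,\bar s)$, $s_3=2$, invoke \cref{lem:Asdiff1} and \cref{lem:Asdiff2} for the differentiability statements, and read off the uniform operator bounds and the bilinear estimate $\|A_s''(y)[z_1,z_2]\|_{Z^2}\le C\|z_1\|_{Z^4}\|z_2\|_{Z^4}$ from the explicit formulas together with the behaviour of $\maxs_s$ for $|r|\ge 1/2$. The paper's proof is much terser (three sentences), whereas you spell out the H\"older bookkeeping and the continuity of $y\mapsto A_s''(y)$ in detail; these elaborations are correct and useful.

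There is, however, one factual slip in your ``localization'' observation. You claim that both $\maxs_s'\bigl(1-\sigma_0/|\tau^D|\bigr)$ and $\maxs_s''\bigl(1-\sigma_0/|\tau^D|\bigr)$ vanish outside the annulus $\{2\sigma_0/3\le|\tau^D|\le 2\sigma_0\}$. This is true for $\maxs_s''$ but \emph{false} for $\maxs_s'$: when $|\tau^D|>2\sigma_0$ one has $1-\sigma_0/|\tau^D|>1/2$, so $\maxs_s'\equiv 1$ there. The conclusions you draw --- that the pointwise symbol of $A_s'(\tau)$ is bounded and globally Lipschitz in $\tau$, and that the bilinear symbol of $A_s''(\tau)$ is bounded --- are still correct, but for the $\maxs_s'$-terms above the annulus the reason is the decay of the accompanying factors $|\tau^D|^{-k}$ in the formulas of \cref{lem:Asdiff1} and \cref{lem:Asdiff2} (e.g.\ the $\maxs_s'$-part of $A_s''(\tau)[h_1,h_2]$ is $O(|\tau^D|^{-2}|h_1||h_2|)$ for large $|\tau^D|$), not the vanishing of $\maxs_s'$. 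Below the annulus, $1-\sigma_0/|\tau^D|<-1/2$ and both $\maxs_s'$ and $\maxs_s$ vanish, so no issue arises there. Once this point is corrected your argument is complete and agrees with the paper's.
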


\begin{proof}
    If we set $s_1 = \bar s > 4$, $s_2 \in ]4, s_1[$, and $s_3 = 2$,
    then \cref{lem:Asdiff2} implies the differentiability conditions in
    \cref{ass:AsAndJTwiceFrechet}(iii) with $\YY = Z^{s_1}$,
    $\ZZ=Z^{s_2}$, and $\WW = Z^{s_3}$.  The Lipschitz continuity of
    $A_s'$ from $Z^{s_1}$ to $L(Z^{s_2})$ as well as the estimate
    $\|A'_s(y)w\|_{Z^2} \leq C\, \|w\|_{Z^2}$ follow from the condition
    $\maxs_s(r) = \max\{0,r\}$ for all $|r| \geq 1/2$. This condition
    also ensures that
    $\|A_s''(y)[z_1, z_2]\|_{Z^2} \leq C\, \|z_1\|_{Z^{4}}
    \|z_2\|_{Z^{4}}$, which in turn implies the last condition in
    \cref{ass:AsAndJTwiceFrechet}(iii) thanks to $s_2 > 4$.
\end{proof}

\begin{remark}\label{rem:groesser4}
    As indicated in \cref{rem:sneiberg}, the assumption $\bar s > 4$ is
    very restrictive. However, if $\WW = Z^2$, then any Nemyzki operator
    is only twice Fr\'echet differentiable from $\YY$ to $\WW$, if
    $\YY = Z^s$ with $s>4$, see e.g.~\cite{GKT92} and the references
    therein. In light of this observation, the above regularization is
    rather well-behaved. As explained in \cref{rem:groesser3}, one can
    reduce the value of $s_3$, if only the $L^2$-norm of the
    displacement appears in the objective.  However, one still needs
    $\bar s > 3$ in this case, which is not guaranteed by
    \cref{lem:w1sExistenceHomogenization} in general.  But again, one
    can show that any $\bar s > 3$ is sufficient for our concrete
    example, no matter how close it is to 3.  This concrete realization
    of $A_s$ thus allows for the weakest possible regularity
    assumptions.
\end{remark}


\appendix

\section{Second Derivative of the Solution
Operator}\label{sec:secondderiv}

Before we are in the position to show that $\mathcal{S}$ is twice
Fr\'echet differentiable, we need the following result on the
Lipschitz continuity of the first derivative, which is also needed
in the proof of \cref{lem:Sp''Inequality}.

\begin{lemma}
    \label{S_p'Lipschitz}
    Assume that \cref{ass:AsAndJFrechet}(ii) and
    \cref{ass:AsAndJTwiceFrechet}(ii) are fulfilled.  Then
    $\mathcal{S}_s'$ is Lipschitz continuous from $\WZ{\mathspace{X}}$
    to $L(\WZ{\mathspace{X}};\WZ{\mathspace{Z}})$.
\end{lemma}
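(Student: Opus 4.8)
The plan is a direct Gronwall estimate on the difference of the two linearised equations, performed in the space $\ZZ$; in contrast to \cref{thm:SsFrechetDifferentiability}, no remainder‑term analysis is needed. Fix $\ell_1,\ell_2,h\in\WZ{\XX}$ and abbreviate $z_i\defn\mathcal{S}_s(\ell_i)\in\WZ{\YY}$, $\eta_i\defn\mathcal{S}_s'(\ell_i)h\in\WZ{\ZZ}$ (the latter is legitimate by \cref{thm:SsFrechetDifferentiability}) and $y_i\defn R\ell_i-Qz_i\in\WZ{\YY}$, $i=1,2$. Subtracting the two instances of~\eqref{eq:etaEquation} and inserting the intermediate term $A_s'(y_2)(Rh-Q\eta_1)$ gives, for almost all $t\in(0,T)$,
\begin{equation*}
\boverdot{\eta}_1(t)-\boverdot{\eta}_2(t) = \big(A_s'(y_1(t))-A_s'(y_2(t))\big)\big(Rh(t)-Q\eta_1(t)\big) + A_s'(y_2(t))\,Q\big(\eta_2(t)-\eta_1(t)\big),
\end{equation*}
together with $(\eta_1-\eta_2)(0)=0$.

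Next I would estimate the right‑hand side in $\ZZ$. For the first summand, the Lipschitz continuity of $A_s'\colon\YY\to L(\ZZ;\ZZ)$ from \cref{ass:AsAndJTwiceFrechet}(ii) bounds it by $C\,\norm{y_1(t)-y_2(t)}{\YY}\,\norm{(Rh-Q\eta_1)(t)}{\ZZ}$; for the second summand, the bound $\norm{A_s'(y)z}{\ZZ}\le C\norm{z}{\ZZ}$ from \cref{ass:AsAndJFrechet}(ii) together with $Q\in L(\ZZ;\ZZ)$ bounds it by $C\,\norm{\eta_1(t)-\eta_2(t)}{\ZZ}$. Since $y_1-y_2\in\WZ{\YY}\embed\CO{\YY}$ and $Rh-Q\eta_1\in\WZ{\ZZ}\embed\CO{\ZZ}$, integrating in time and applying Gronwall's inequality to $t\mapsto\norm{(\eta_1-\eta_2)(t)}{\ZZ}$ yields
\begin{equation*}
\norm{\eta_1-\eta_2}{\CO{\ZZ}}\le C\,\norm{y_1-y_2}{\CO{\YY}}\,\norm{Rh-Q\eta_1}{\LZ{\ZZ}},
\end{equation*}
and re‑inserting this into the pointwise identity above and taking the $\LZ{\ZZ}$‑norm in $t$ produces the same bound for $\norm{\boverdot{\eta}_1-\boverdot{\eta}_2}{\LZ{\ZZ}}$, hence for $\norm{\eta_1-\eta_2}{\WZ{\ZZ}}$.

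Finally I would control the two remaining factors. By \cref{prp:solutionOperatorLipschitz}, $\mathcal{S}_s$ is Lipschitz from $\LZ{\XX}$ to $\WZ{\YY}$, and together with $R\in L(\XX;\YY)$, $Q\in L(\YY;\YY)$ and the embedding $\WZ{\YY}\embed\CO{\YY}$ this gives $\norm{y_1-y_2}{\CO{\YY}}\le C\,\norm{\ell_1-\ell_2}{\WZ{\XX}}$; the a priori estimate of \cref{thm:SsFrechetDifferentiability} together with $R\in L(\XX;\YY)\embed L(\XX;\ZZ)$, $Q\in L(\ZZ;\ZZ)$ gives $\norm{Rh-Q\eta_1}{\WZ{\ZZ}}\le C\,\norm{h}{\WZ{\XX}}$. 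Combining the three estimates yields $\norm{\mathcal{S}_s'(\ell_1)h-\mathcal{S}_s'(\ell_2)h}{\WZ{\ZZ}}\le C\,\norm{\ell_1-\ell_2}{\WZ{\XX}}\norm{h}{\WZ{\XX}}$ with $C$ independent of $\ell_1,\ell_2,h$, and taking the supremum over $\norm{h}{\WZ{\XX}}\le1$ proves the asserted Lipschitz bound. I expect the only genuine point of care to be the bookkeeping around the norm gap $\YY\embed\ZZ$: the hypotheses of \cref{ass:AsAndJFrechet}(ii) and \cref{ass:AsAndJTwiceFrechet}(ii) are arranged precisely so that the difference term involving $A_s'(y_1)-A_s'(y_2)$ lands in $\ZZ$ and the factors $Q\eta_1$ and $Rh$ are already of class $\CO{\ZZ}$, which is exactly what lets the Gronwall estimate close in $\ZZ$ rather than in the larger space $\WW$.
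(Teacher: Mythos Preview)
Your argument is correct and follows essentially the same route as the paper: the same splitting $\boverdot{\eta}_1-\boverdot{\eta}_2 = (A_s'(y_1)-A_s'(y_2))(Rh-Q\eta_1) + A_s'(y_2)Q(\eta_2-\eta_1)$, the same use of the Lipschitz bound on $A_s'\colon\YY\to L(\ZZ;\ZZ)$ from \cref{ass:AsAndJTwiceFrechet}(ii) for the first term and the uniform bound from \cref{ass:AsAndJFrechet}(ii) for the second, followed by Gronwall and the a priori estimates from \cref{prp:solutionOperatorLipschitz} and \cref{thm:SsFrechetDifferentiability}. The only cosmetic difference is that you pass through $\CO{\ZZ}$ before bounding $\norm{\eta_1-\eta_2}{\WZ{\ZZ}}$, whereas the paper goes directly to the $\WZ{\ZZ}$ estimate.
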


\begin{proof}[Proof of \cref{prp:SpTwiceFrechetDifferentiable}] Let
    $\ell_1, \ell_2, h \in \WZ{\mathspace{X}}$ be arbitrary and
    abbreviate
    \begin{equation*}
        z_i \defn  \mathcal{S}_s(\ell_i), \quad \eta_i \defn  \mathcal{S}_s'(\ell_i) h, \quad \text {and} \quad 
        y_i \defn  R\ell_i - Q z_i, \quad i=1,2.    
    \end{equation*}
    Using the first Lipschitz-assumption in
    \cref{ass:AsAndJTwiceFrechet}(ii), we deduce for almost all
    $t \in [0, T]$ that
    \begin{align*}
        \norm{\boverdot{\eta}_1(t) - \boverdot{\eta}_2(t)}{\mathspace{Z}}
        &= \norm{\big(A_s'(y_1(t)) - A_s'(y_2(t))\big)(Rh(t) - Q \eta_1(t))
        + A_s'(y_2(t)) Q (\eta_1(t) - \eta_2(t))}{\mathspace{Z}} \\
        &\leq C\big( \norm{y_1(t) - y_2(t)}{Y}	\norm{Rh(t) - Q \eta_1(t)}{\mathspace{Z}} 
        +  \norm{\eta_1(t) - \eta_2(t) \rangle}{\mathspace{Z}}\big).
    \end{align*}
    Gronwall's inequality and the definition of $y_1$ and $y_2$ then
    yield
    \begin{align*}
        \norm{\eta_1 - \eta_2}{\WZ{\mathspace{Z}}}
        &\leq C \norm{R(\ell_1 - \ell_2) - Q(z_1 - z_2)}{\LZ{Y}} \norm{Rh - Q \eta_1}{\WZ{\mathspace{Z}}} \\
        &\leq C \norm{\ell_1 - \ell_2}{\LZ{\mathspace{X}}} \norm{h}{\WZ{\mathspace{X}}},
    \end{align*}
    where we used \cref{prp:solutionOperatorLipschitz} and the
    estimate in \cref{thm:SsFrechetDifferentiability}.  
\end{proof}

Now, we are ready to prove that the solution operator is twice
Fr\'echet-dif\-fer\-en\-ti\-a\-ble.  The proof is based on an estimate of the
remainder term and thus similar to the one of
\cref{thm:SsFrechetDifferentiability}.

\begin{proof} The proof is similar to the one of
    \cref{thm:SsFrechetDifferentiability}.  Let
    $\ell, h_1, h_2 \in \WZ{\mathspace{X}}$ be arbitrary and define
    $z \defn  \mathcal{S}_s(\ell)$, $z_1 \defn  \mathcal{S}_s(\ell + h_1)$,
    $\eta_{i} \defn  \mathcal{S}'_s(\ell) h_i \in \WZ{\mathspace{Z}}$,
    $i \in \{ 1,2 \}$, and
    $\eta_{1,2} \defn  \mathcal{S}'_s(\ell + h_1) h_2$.

    We first address the existence of solutions to~\eqref{eq:xiEquation}. We argue similarly to
    \cref{lem:frechetEquationExistence} and set
    \begin{align*}
        w \colon  [0, T] \rightarrow \mathspace{W}, \quad t \mapsto A_s''(R\ell(t) - Qz(t))[Rh_1(t) - Q\eta_{1}(t), Rh_2(t) - Q\eta_2(t)].
    \end{align*}
    From the estimate in \cref{ass:AsAndJTwiceFrechet}(iii) it follows
    that
    \begin{align*}
        \norm{w(t)}{\mathspace{W}}
        \leq C \norm{Rh_1(t) - Q\eta_{1}(t)}{\mathspace{Z}} \norm{Rh_2(t) - Q\eta_2(t)}{\mathspace{Z}},
    \end{align*}
    and, since the limit of Bochner measurable functions is Bochner
    measurable, we obtain $w \in \LZ{\mathspace{W}}$.  Since $A_s'(y)$
    is assumed to be bounded in $\WW$ by~\ref{ass:AsAndJTwiceFrechet}(ii), we can now follow the proof of
    \cref{lem:frechetEquationExistence} (with $\WW$ instead of $\ZZ$)
    to deduce the existence of a unique solution
    $\xi \in \WZ{\mathspace{W}}$ of \cref{eq:xiEquation}.  The
    (bi-)linearity of the associated solution operator w.r.t.~$h_1$
    and $h_2$ is straightforward to see.  For its continuity, we
    calculate
    \begin{align*}
        \norm{\boverdot{\xi}(t)}{\mathspace{W}}
        &\leq C \norm{Rh_1(t) - Q\eta_{1}(t)}{\mathspace{Z}} \norm{Rh_2(t) - Q\eta_2(t)}{\mathspace{Z}}
        + C \norm{\xi(t)}{\mathspace{W}}
    \end{align*}
    so that Gronwall's inequality and the estimate in
    \cref{thm:SsFrechetDifferentiability} give
    \begin{equation*}
        \norm{\xi}{\WZ{\mathspace{W}}} 
        \leq C \norm{h_1}{\WZ{\mathspace{X}}} \norm{h_2}{\WZ{\mathspace{X}}}.
    \end{equation*}
    This shows also~\eqref{eq:S2est} (after having proved that
    $\xi = \mathcal{S}''_s(\ell)[h_1, h_2]$).

    It only remains to prove the remainder term property. To this end,
    we define
    \begin{equation*}
        y \defn  R\ell - Qz, \quad \zeta \defn   R h_1 - Q(z_1 - z).
    \end{equation*}    	
    Then, the equations for $\eta_{1,2}$, $\eta_2$, and $\xi$ lead to
    \begin{align*}
        \boverdot{\eta}_{1,2} - \boverdot{\eta}_{2} - \boverdot{\xi}
        & = \big( A_s'(y + \zeta) - A_s'(y) \big) (Rh_2 - Q\eta_{1,2}) \\
        & \quad - A_s''(y)[ Rh_1 - Q\eta_{1}, Rh_2 - Q\eta_2]
        - A_s'(y) Q(\eta_{1,2} - \eta_2 - \xi) \\
        & = A_s''(y)[\zeta, Rh_2 - Q\eta_{1,2}] + r_2(y;\zeta)( Rh_2 - Q\eta_{1,2}) \\
        & \quad - A_s''(y) [ Rh_1 - Q\eta_{1}, Rh_2 - Q\eta_2]  - A_s'(y) Q(\eta_{1,2} - \eta_2 - \xi) \\
        & = A_s''(y)[\zeta, Q(\eta_{2} - \eta_{1,2})]  - A_s''(y) [Q(z_1 - z - \eta_{1}), Rh_2 - Q\eta_2] \\
        & \quad + r_2(y;\zeta)(Rh_2 - Q\eta_{1,2})  - A_s'(y) Q(\eta_{1,2} - \eta_2 - \xi) ,
    \end{align*}
    where
    $r_2(y;\zeta) \defn  A_s'(y + \zeta) - A_s'(y) - A_s''(y)\zeta \in
    \LZ{L(\ZZ;\WW)}$ denotes the corresponding remainder term. The
    estimate in \cref{ass:AsAndJTwiceFrechet}(iii) thus implies
    \begin{align*}
        & \norm{\boverdot{\eta}_{1,2}(t) - \boverdot{\eta}_{2}(t) - \boverdot{\xi}(t)}{\mathspace{W}} \\
        &\; \leq C \big(\norm{\zeta(t))}{\mathspace{Z}} \norm{\eta_{2}(t) - \eta_{1,2}(t)}{\mathspace{Z}} 
            + \norm{z_1(t) - z(t) - \eta_{1}(t)}{\mathspace{Z}} \norm{Rh_2(t) - Q\eta_2(t)}{\mathspace{Z}} \\
            & \qquad\quad + \norm{r_2(y(t), \zeta(t))}{L(\mathspace{Z};\mathspace{W})}
            \norm{Rh_2(t) - Q\eta_{1,2}(t)}{\mathspace{Z}} 
        + \norm{\eta_{1,2}(t) - \eta_2(t) - \xi(t)}{\mathspace{W}} \big)
    \end{align*}
    for almost all $t \in [0,T]$ such that Gronwall's inequality
    yields
    \begin{align*}
      & \norm{\eta_{1,2} - \eta_2 - \xi}{\WZ{\mathspace{W}}} \\
      & \leq C \big( \norm{Rh_1 - Q(z_1 - z)}{L^\infty(0,T;\ZZ)}
      \norm{\eta_{2} - \eta_{1,2}}{\LZ{\mathspace{Z}}}
      + \norm{z_1 - z - \eta_{1}}{L^\infty(0,T;\ZZ)}
      \norm{Rh_2 - Q\eta_2}{\LZ{\mathspace{Z}}} \\
      &\qquad\qquad + \norm{r_2(y; \zeta)}{\LZ{L(\mathspace{Z};\mathspace{W})}} 
      \norm{Rh_2 - Q\eta_{1,2}}{\WZ{\mathspace{Z}}} \big) \\
      &\leq C \norm{h_2}{\WZ{\mathspace{X}}}  
      \big(\norm{h_1}{\WZ{\mathspace{X}}}^2 + \norm{z_1 - z - \eta_{1}}{\WZ{\mathspace{Z}}}
      + \norm{r_2(y; \zeta)}{\LZ{L(\mathspace{Z};\mathspace{W})}} \big),
    \end{align*}
    where we used \cref{prp:solutionOperatorLipschitz},
    \cref{S_p'Lipschitz} and the estimate in
    \cref{thm:SsFrechetDifferentiability}. Denoting the solution
    operator of \cref{eq:xiEquation} already by
    $\mathcal{S}''_s(\ell)[h_1, h_2]$, we have thus shown
    \begin{multline*}
        \norm{\mathcal{S}'_s(\ell + h_1) - \mathcal{S}'_s(\ell) - \mathcal{S}''_s(\ell) h_1}{L(\WZ{\mathspace{X}};\WZ{\mathspace{W}})} \\
        \leq C \big( \norm{h_1}{\WZ{\mathspace{X}}}^2 +
            \norm{\mathcal{S}_s(\ell + h_1) - \mathcal{S}_s(\ell) - \mathcal{S}_s'(\ell)h_1}{\WZ{\mathspace{Z}}}
            + \norm{r_2(y; \zeta)}{\LZ{L(\mathspace{Z};\mathspace{W})}}
        \big).
    \end{multline*}
    Therefore, thanks to the Fr\'{e}chet differentiability of
    $\mathcal{S}_s \colon  \WZ{\mathspace{X}} \rightarrow
    \WZ{\mathspace{Z}}$, it only remains to show that
    \begin{align}
        \label{6.4}
        \frac{\norm{r_2(y; \zeta)}{\LZ{L(\mathspace{Z};\mathspace{W})}}}{\norm{h_1}{\WZ{\mathspace{X}}}} \rightarrow 0,
    \end{align}
    as $0 \neq h_1 \rightarrow 0$ in $\WZ{\mathspace{X}}$.  To this
    end, we note that the embedding $\WZ{\YY} \embed \CO{\YY}$ and
    \cref{prp:solutionOperatorLipschitz} yield for all $t\in [0,T]$
    \begin{align}
        \label{6.2}
        \frac{\norm{\zeta(t)}{\mathspace{Y}}}{\norm{h_1}{\WZ{\mathspace{X}}}}
        \leq C\, \frac{\norm{\zeta}{\WZ{\mathspace{Y}}}}{\norm{h_1}{\WZ{\mathspace{X}}}}
        = C\,\frac{\norm{Rh_1 - Q(z_1 - z)}{\WZ{\mathspace{Y}}}}{\norm{h_1}{\WZ{\mathspace{X}}}} \leq C
    \end{align}
    Hence, thanks to the Fr\'{e}chet differentiability of
    $A_s' \colon  \mathspace{Y} \rightarrow L(\mathspace{Z};\mathspace{W})$,
    we have for almost all $t \in [0, T]$
    \begin{align*}
        \frac{\norm{r_2(y; \zeta)(t)}{L(\mathspace{Z};\mathspace{W})}}{\norm{h_1}{\WZ{\mathspace{X}}}}
        \leq C \frac{\norm{r_2(y; \zeta)(t)}{L(\mathspace{Z};\mathspace{W})}}{\norm{\zeta(t)}{\mathspace{Y}}} \rightarrow 0
    \end{align*}
    as $0 \neq h_1 \rightarrow 0$ in
    $\WZ{\mathspace{X}}$. Furthermore, using the Lipschitz continuity
    of
    $A_s' \colon  \mathspace{Y} \rightarrow L(\mathspace{Z};\mathspace{Z})$,
    the estimate for $A_s''$ in \cref{ass:AsAndJTwiceFrechet}(iii) and
    again \cref{6.2}, we deduce
    \begin{equation*}
        \begin{aligned}
            \frac{\norm{r_2(y; \zeta)(t)}{L(\mathspace{Z};\mathspace{W})}}{\norm{h_1}{\WZ{\mathspace{X}}}}
        = \frac{\norm{A_s'(y(t) + \zeta(t)) - A_s'(y(t)) -
            A_s''(y(t))
            \zeta(t)}{L(\mathspace{Z};\mathspace{W})}}{\norm{h_1}{\WZ{\mathspace{X}}}}
        \leq C
            \frac{\norm{\zeta(t)}{\mathspace{Y}}}{\norm{h_1}{\WZ{\mathspace{X}}}}
            \leq C
        \end{aligned}
    \end{equation*}
    for almost all $t \in [0,T]$. The convergence in \cref{6.4} now
    follows from Lebesgue's dominated convergence theorem.  
\end{proof}


\section{Interpolation for the $V^s$ spaces}

We prove that the spaces
$V^s = L^s(\Omega;W^{1,s}_{\per,\perp}(Y;\R^d))$ defined in
\cref{sec:7} form a complex interpolation scale in $s$. This result
is a cornerstone in the proof of
\cref{lem:w1sExistenceHomogenization}.

\begin{lemma}
    \label{lem:interpolation-periodic}
    Let $\theta \in (0,1)$ and $s_0,s_1 \in (1,\infty)$ and set $\frac1s =
    \frac{1-\theta}{s_0} + \frac\theta{s_1}$. Then
    \begin{equation}
        \label{eq:per-interp}
        \Bigl[W^{1,s_0}_{\per}(Y;\R^d),
        W^{1,s_1}_{\per}(Y;\R^d)\Bigr]_\theta = W^{1,s}_{\per}(Y;\R^d).
    \end{equation}
\end{lemma}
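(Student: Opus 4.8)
The plan is to deduce \eqref{eq:per-interp} from the corresponding classical statement on $\R^d$ by a retraction--coretraction argument, after identifying the periodic Sobolev spaces on $Y$ with Sobolev spaces on the flat torus $\mathbb{T}^d = \R^d/\mathbb{Z}^d$. Since the complex interpolation functor commutes with retractions, it suffices to construct a retraction--coretraction pair between $W^{1,s}(\mathbb{T}^d;\R^d)$ and $W^{1,s}(\R^d;\R^d)$ that is bounded for every $s\in(1,\infty)$, and to invoke the known identity $[W^{1,s_0}(\R^d;\R^d),W^{1,s_1}(\R^d;\R^d)]_\theta = W^{1,s}(\R^d;\R^d)$.

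First I would record the (essentially definitional) identification $W^{1,s}_{\per}(Y;\R^d) = W^{1,s}(\mathbb{T}^d;\R^d)$: an element of the completion of $C^\infty_\per(Y;\R^d)$ in the $W^{1,s}(Y;\R^d)$--norm is exactly a function whose $\mathbb{Z}^d$--periodic extension $\tilde f$ lies in $W^{1,s}_{\mathrm{loc}}(\R^d;\R^d)$ (the "seam'' across the faces of $Y$ causes no singular part precisely because smooth \emph{periodic} functions are used), and $\|f\|_{W^{1,s}(Y)} = \|\tilde f\|_{W^{1,s}((0,1)^d)}$. Hence \eqref{eq:per-interp} is equivalent to $[W^{1,s_0}(\mathbb{T}^d;\R^d),W^{1,s_1}(\mathbb{T}^d;\R^d)]_\theta = W^{1,s}(\mathbb{T}^d;\R^d)$.

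Next I would fix a function $\psi \in C^\infty_c((-1,1)^d)$ with $\sum_{k\in\mathbb{Z}^d}\psi(\,\cdot-k)^2 \equiv 1$ on $\R^d$ (obtained by normalising a suitable nonnegative bump function whose $\mathbb{Z}^d$--translates cover $\R^d$), and define, componentwise, the coretraction $S\colon W^{1,s}(\mathbb{T}^d;\R^d)\to W^{1,s}(\R^d;\R^d)$, $Sf \defn \psi\,\tilde f$, and the retraction $R\colon W^{1,s}(\R^d;\R^d)\to W^{1,s}(\mathbb{T}^d;\R^d)$, $Rg \defn \sum_{k\in\mathbb{Z}^d}(\psi g)(\,\cdot-k)$. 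Because $\psi$ is compactly supported, $Sf\in W^{1,s}(\R^d;\R^d)$ with $\|Sf\|_{W^{1,s}(\R^d)}\le C\|f\|_{W^{1,s}(\mathbb{T}^d)}$ by the product rule, and the sum defining $Rg$ is locally finite, $\mathbb{Z}^d$--periodic, and satisfies $\|Rg\|_{W^{1,s}(\mathbb{T}^d)}\le C\|g\|_{W^{1,s}(\R^d)}$ (the constants depending only on $d$ and $\psi$, in particular finite for every $s$). The decisive identity is $RS=\mathrm{id}$, which follows from the $\mathbb{Z}^d$--periodicity of $\tilde f$:
\begin{equation*}
  (RSf)(x) = \sum_{k\in\mathbb{Z}^d}\psi(x-k)^2\,\tilde f(x-k) = \tilde f(x)\sum_{k\in\mathbb{Z}^d}\psi(x-k)^2 = \tilde f(x).
\end{equation*}

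Finally I would apply the retraction theorem for the complex method (cf.\ \cite[Theorem~1.2.4]{Triebel:1978}) together with the classical fact $[W^{1,s_0}(\R^d;\R^d),W^{1,s_1}(\R^d;\R^d)]_\theta = W^{1,s}(\R^d;\R^d)$ for $1/s=(1-\theta)/s_0+\theta/s_1$, which holds for $1<s_0,s_1<\infty$ as a consequence of complex interpolation of Bessel potential spaces and the Mihlin multiplier theorem (cf.\ \cite[\S2.4]{Triebel:1978}). The pair $(S,R)$ then transports this identity to the torus: $[W^{1,s_0}(\mathbb{T}^d;\R^d),W^{1,s_1}(\mathbb{T}^d;\R^d)]_\theta$ consists of those $f$ with $Sf\in W^{1,s}(\R^d;\R^d)$, with equivalent norms $\|f\|_{[\,\cdot\,]_\theta}\asymp\|Sf\|_{W^{1,s}(\R^d)}\asymp\|f\|_{W^{1,s}(\mathbb{T}^d)}$ (the last equivalence because $f=RSf$ and $R$ is bounded), so this space is $W^{1,s}(\mathbb{T}^d;\R^d)$; undoing the identification of the second paragraph yields \eqref{eq:per-interp}. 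The only point requiring care is the bookkeeping behind that identification --- namely that the completion defining $W^{1,s}_\per(Y;\R^d)$ really is the space of restrictions of periodic $W^{1,s}_{\mathrm{loc}}(\R^d;\R^d)$ functions, so that $S$, $R$ and the norm equalities make sense across the periodic boundary; everything else is a routine application of the retraction theorem and the $\R^d$--result.
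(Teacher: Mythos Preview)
Your proof is correct and takes a genuinely different route from the paper's. The paper identifies $W^{1,r}_\per(Y;\R^d)$ with the closed subspace of $W^{1,r}(Y^\#;\R^d)$ of periodic extensions, where $Y^\#\supset Y$ is a larger regular bounded domain, and then invokes the complemented subspace interpolation theorem~\cite[Theorem~1.17.1]{Triebel:1978} after asserting (without full details) the existence of a common projection $P_\per$ built by ``wrapping around the torus'' via a smooth partition of unity. You instead identify $W^{1,s}_\per(Y;\R^d)$ with $W^{1,s}(\mathbb{T}^d;\R^d)$ and construct an explicit retraction--coretraction pair between $W^{1,s}(\mathbb{T}^d;\R^d)$ and $W^{1,s}(\R^d;\R^d)$, then apply the retraction theorem together with the classical interpolation result on $\R^d$. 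Conceptually the two arguments are cousins---a projection is a special retraction---but your construction is fully explicit (the maps $S$, $R$ and the identity $RS=\mathrm{id}$ are written down and checked), avoids the auxiliary domain $Y^\#$, and lands directly on $\R^d$ where the interpolation scale is most classical. The paper's approach buys compatibility with the bounded-domain setting used elsewhere in the appendix, but at the cost of leaving the projection $P_\per$ only sketched; your route is more self-contained.
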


\begin{proof}
    The proof relies on the complemented subspace interpolation
    theorem~\cite[Theorem~1.17.1]{Triebel:1978} which essentially says
    that one can transfer interpolation properties to complemented
    subspaces provided there exists a common projection onto these
    subspaces on all involved spaces.

    In this spirit, we first consider a larger regular domain
    $Y^\# \supset Y$ which includes a finite open covering of $Y$, and,
    for all $1 < r < \infty$, identify $W^{1,r}_\per(Y;\R^d)$
    isomorphically with the closed subspace
    $W^{1,r}_{\per,Y}(Y^\#;\R^d)$ of $W^{1,r}(Y^\#;\R^d)$ consisting of
    periodic extensions of $W^{1,r}_\per(Y;\R^d)$-functions. This is
    possible since the periodic extension of a $W^{1,r}_\per(Y;\R^d)$
    function will preserve the Sobolev
    regularity~\cite[Proposition~3.50]{CD99}.

    We next argue that there exists a projection $P_\per$ mapping 
    $W^{1,r}(Y^\#;\R^d)$ onto $W^{1,r}_{\per,Y}(Y^\#;\R^d)$. (We will not
    give a detailed proof of this since the details are somewhat tedious
    and lengthy.)  To this end, we first wrap $u \in W^{1,r}(Y^\#;\R^d)$
    around the torus $\mathbb{T}^d$ in a smooth manner using a fixed
    smooth partition of unity on $\mathbb{T}^d$ derived from the open
    covering of $Y$, and then pull it back. One checks that this indeed
    yields a function $P_\per u$ which is periodic on $Y$. Moreover,
    $P_\per$ is in fact a continuous linear operator on
    $W^{1,r}(Y^\#;\R^d)$, which in addition acts as the identity on the
    periodic extensions of $C^\infty_\per(Y;\R^d)$. This implies that
    $P_\per$ is indeed the searched-for projection of
    $W^{1,r}(Y^\#;\R^d)$ onto $W^{1,r}_{\per,Y}(Y^\#;\R^d)$.

    The  complemented subspace interpolation
    theorem~\cite[Theorem~1.17.1]{Triebel:1978} then allows to argue as follows:
    \begin{multline*}
        \Bigl[W^{1,s_0}_{\per}(Y;\R^d),
            W^{1,s_1}_{\per}(Y;\R^d)\Bigr]_\theta =  \Bigl[W^{1,s_0}_{\per,Y}(Y^\#;\R^d),
        W^{1,s_1}_{\per,Y}(Y^\#;\R^d)\Bigr]_\theta \\ =
        \Bigl[W^{1,s_0}(Y^\#;\R^d) \cap W^{1,\max(s_0,s_1)}_{\per,Y}(Y^\#;\R^d),
            W^{1,s_1}(Y^\#;\R^d)\cap
        W^{1,\max(s_0,s_1)}_{\per,Y}(Y^\#;\R^d)\Bigr]_\theta \\ =
        \Bigl[W^{1,s_0}(Y^\#;\R^d), W^{1,s_1}(Y^\#;\R^d)\Bigr]_\theta \cap
        W_{\per,Y}^{1,\max(s_0,s_1)}(Y^\#;\R^d) \\ = W^{1,s}(Y^\#;\R^d)\cap
        W_{\per,Y}^{1,\max(s_0,s_1)}(Y^\#;\R^d) =
        W_{\per,Y}^{1,s}(Y^\#;\R^d) = W^{1,s}_{\per}(Y;\R^d).
    \end{multline*}
    Here, the interpolation of $W^{1,r}(Y^\#;\R^d)$ is classical since
    we have assumed $Y^\#$ to be regular. Overall, this gives the claim.
\end{proof}

\begin{lemma}
    \label{lem:interpolation-periodic-mean0}
    Let $\theta \in (0,1)$ and $s_0,s_1 \in (1,\infty)$ and set $\frac1s =
    \frac{1-\theta}{s_0} + \frac\theta{s_1}$. Then
    \begin{equation*}
        \Bigl[W^{1,s_0}_{\per,\perp}(Y;\R^N),
        W^{1,s_1}_{\per,\perp}(Y;\R^N)\Bigr]_\theta = W^{1,s}_{\per,\perp}(Y;\R^N).
    \end{equation*}  
\end{lemma}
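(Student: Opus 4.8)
The plan is to reduce the claim to \cref{lem:interpolation-periodic} by realizing $W^{1,s}_{\per,\perp}(Y;\R^N)$ as a complemented subspace of $W^{1,s}_{\per}(Y;\R^N)$ with an $s$\nobreakdash-independent projection, and then to invoke the complemented subspace interpolation theorem \cite[Theorem~1.17.1]{Triebel:1978}, exactly in the spirit of the proof of \cref{lem:interpolation-periodic}.

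First I would introduce the mean-subtraction operator
\begin{equation*}
    P_\perp \colon u \mapsto u - \fint_Y u(y)\,dy .
\end{equation*}
Since $Y$ has finite measure, $u \mapsto \fint_Y u(y)\,dy$ is a bounded linear functional on $L^1(Y;\R^N)$ and hence, a fortiori, on $W^{1,r}_{\per}(Y;\R^N)$ for every $r \in (1,\infty)$; moreover the constant functions belong to each of these spaces. Consequently $P_\perp$ is a bounded linear projection on $W^{1,r}_{\per}(Y;\R^N)$ whose range is precisely $W^{1,r}_{\per,\perp}(Y;\R^N)$, and---this is the crucial point---it is one and the same linear map, bounded simultaneously on all spaces of the scale $W^{1,r}_{\per}(Y;\R^N)$, $r \in (1,\infty)$. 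This is the common projection required to apply \cite[Theorem~1.17.1]{Triebel:1978}.

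With this at hand, the complemented subspace interpolation theorem applied to the couple $\bigl(W^{1,s_0}_{\per}(Y;\R^N), W^{1,s_1}_{\per}(Y;\R^N)\bigr)$ yields
\begin{equation*}
    \Bigl[W^{1,s_0}_{\per,\perp}(Y;\R^N), W^{1,s_1}_{\per,\perp}(Y;\R^N)\Bigr]_\theta
    = P_\perp\Bigl(\bigl[W^{1,s_0}_{\per}(Y;\R^N), W^{1,s_1}_{\per}(Y;\R^N)\bigr]_\theta\Bigr).
\end{equation*}
By \cref{lem:interpolation-periodic} the interpolation space on the right equals $W^{1,s}_{\per}(Y;\R^N)$, and applying $P_\perp$ to it gives $W^{1,s}_{\per,\perp}(Y;\R^N)$, which is the asserted identity.

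I do not expect a genuine obstacle here: the only point requiring a moment of care is the well-definedness of $P_\perp$ as a single operator across the whole scale, which is immediate from the boundedness of the mean-value functional on $L^1$ together with the membership of constants in every $W^{1,r}_{\per}(Y;\R^N)$. One could alternatively argue via the finite codimensionality of $W^{1,s}_{\per,\perp}$ in $W^{1,s}_{\per}$ (the kernel of the $\R^N$-valued mean functional), but the explicit projection $P_\perp$ is the cleanest route and parallels the structure of the preceding proof.
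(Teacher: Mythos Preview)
Your proposal is correct and follows essentially the same approach as the paper: both introduce the mean-subtraction projection $P_\perp u = u - \fint_Y u\,dy$, observe that it is a common bounded projection across the whole scale $W^{1,r}_{\per}(Y;\R^N)$, and then invoke the complemented subspace interpolation theorem~\cite[Theorem~1.17.1]{Triebel:1978} together with \cref{lem:interpolation-periodic}. The only cosmetic difference is that the paper spells out the resulting identity via the intersection formulation from Triebel, whereas you write it as $P_\perp$ applied to the interpolated periodic space; the content is identical.
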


\begin{proof}
    We again argue via the complemented subspace interpolation
    theorem. For every $1 < r < \infty$,
    the operator
    \begin{equation*}
        P_\perp u := u - \fint_{Y} u \, dy
    \end{equation*}
    is a projection of $W^{1,r}(Y;\R^d)$ onto $W^{1,r}_{\perp}(Y;\R^d)$.
    Note that $P_\perp$ maps the closed subspace $W^{1,r}_\per(Y;\R^d)$
    into itself, hence $W^{1,r}_{\per,\perp}(Y;\R^d)$ is a complemented
    subspace of $W^{1,r}_\per(Y;\R^d)$ by means of the projection
    $P_\perp$. Using~\cite[Theorem~1.17.1]{Triebel:1978} and
    \cref{lem:interpolation-periodic}, we thus obtain
    \begin{multline*}
        \Bigl[W^{1,s_0}_{\per,\perp}(Y;\R^d),
        W^{1,s_1}_{\per,\perp}(Y;\R^d)\Bigr]_\theta \\ =
        \Bigl[W^{1,s_0}_{\per}(Y;\R^d) \cap
            W^{1,\max(s_0,s_1)}_{\perp}(Y;\R^d), W^{1,s_1}_{\per}(Y;\R^d)\cap
        W^{1,\max(s_0,s_1)}_{\perp}(Y;\R^d)\Bigr]_\theta \\ =
        \Bigl[W^{1,s_0}_{\per}(Y;\R^d),
        W^{1,s_1}_{\per}(Y;\R^d)\Bigr]_\theta \cap
        W^{1,\max(s_0,s_1)}_{\perp}(Y;\R^d) \\= W^{1,s}_\per(Y;\R^d) \cap
        W^{1,\max(s_0,s_1)}_{\perp}(Y;\R^d) =
        W^{1,s}_{\per,\perp}(Y;\R^d),
    \end{multline*}
    as desired.
\end{proof}

\begin{theorem}
    \label{thm:interpolation-vs}
    Let $\theta \in (0,1)$ and $s_0,s_1 \in (1,\infty)$ and set $\frac1s =
    \frac{1-\theta}{s_0} + \frac\theta{s_1}$. Then
    \begin{equation*}
        \bigl[V_{s_0},V_{s_1}\bigr]_\theta = V_s.
    \end{equation*}
\end{theorem}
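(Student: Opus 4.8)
The plan is to deduce the claim from the fibrewise interpolation identity of \cref{lem:interpolation-periodic-mean0} together with the classical fact that complex interpolation commutes with the formation of vector-valued Lebesgue spaces. As a harmless preliminary reduction, note that by definition $V_s = V^s = L^s\bigl(\Omega; W^{1,s}_{\per,\perp}(Y;\R^d)\bigr)$ as a set, and by a Korn-type inequality combined with Poincar\'e's inequality on $W^{1,s}_{\per,\perp}(Y;\R^d)$ (using that $\int_Y \symnabla_y v\,dy = 0$ for periodic $v$), the $V^s$-norm introduced in \cref{sec:7} is equivalent to the canonical Bochner--Sobolev norm of $L^s\bigl(\Omega; W^{1,s}_{\per,\perp}(Y;\R^d)\bigr)$. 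Since the complex interpolation functor is insensitive to replacing the norms of a couple by equivalent ones, we may work with these canonical norms throughout.

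Next I would verify the hypotheses of the vector-valued interpolation theorem and apply it. By \cref{assu:data} the domain $\Omega$ is bounded, hence of finite Lebesgue measure and in particular $\sigma$-finite. Since $Y = [0,1]^d$ is bounded we have continuous embeddings $W^{1,s_j}_{\per,\perp}(Y;\R^d) \embed L^1(Y;\R^d)$ for $j = 0,1$, so that $\bigl(W^{1,s_0}_{\per,\perp}(Y;\R^d), W^{1,s_1}_{\per,\perp}(Y;\R^d)\bigr)$ is an admissible interpolation couple, both spaces embedding continuously into the Hausdorff space $L^1(Y;\R^d)$. The theorem on complex interpolation of vector-valued $L^p$-spaces, see~\cite[Theorem~1.18.4]{Triebel:1978}, together with \cref{lem:interpolation-periodic-mean0}, then yields for $\frac1s = \frac{1-\theta}{s_0} + \frac\theta{s_1}$
\begin{align*}
    \bigl[V_{s_0}, V_{s_1}\bigr]_\theta
    &= \Bigl[L^{s_0}\bigl(\Omega; W^{1,s_0}_{\per,\perp}(Y;\R^d)\bigr),\, L^{s_1}\bigl(\Omega; W^{1,s_1}_{\per,\perp}(Y;\R^d)\bigr)\Bigr]_\theta \\
    &= L^s\Bigl(\Omega;\, \bigl[W^{1,s_0}_{\per,\perp}(Y;\R^d), W^{1,s_1}_{\per,\perp}(Y;\R^d)\bigr]_\theta\Bigr) \\
    &= L^s\bigl(\Omega;\, W^{1,s}_{\per,\perp}(Y;\R^d)\bigr) = V_s,
\end{align*}
which is the assertion.

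There is no genuine obstacle left here: the substance of the statement has already been established in \cref{lem:interpolation-periodic-mean0}. The only point requiring a little care is to invoke the vector-valued interpolation theorem in the form actually needed, i.e.\ for a couple whose fibre spaces $W^{1,s_0}_{\per,\perp}(Y;\R^d)$ and $W^{1,s_1}_{\per,\perp}(Y;\R^d)$ genuinely differ, and to check its (very mild) measurability and $\sigma$-finiteness hypotheses; the version in~\cite[Section~1.18]{Triebel:1978} covers exactly this generality.
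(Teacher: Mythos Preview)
Your proposal is correct and follows essentially the same route as the paper: both invoke \cite[Theorem~1.18.4]{Triebel:1978} to pass the complex interpolation functor through the vector-valued $L^s(\Omega;\cdot)$ spaces and then apply \cref{lem:interpolation-periodic-mean0} for the fibre. Your preliminary remark on the equivalence of the $V^s$-norm with the canonical Bochner norm and your verification of the hypotheses of the vector-valued interpolation theorem are details the paper leaves implicit, but the argument is otherwise identical.
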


\begin{proof}
    By~\cite[Thm.~1.18.4]{Triebel:1978}, we have
    \begin{align*}
        \bigl[V_{s_0},V_{s_1}\bigr]_\theta  & =
        \Bigl[L^{s_0}\bigl(\Omega;W^{1,s_0}_{\per,\perp}(Y;\R^d)\bigr),
        L^{s_1}\bigl(\Omega;W^{1,s_1}_{\per,\perp}(Y;\R^d)\bigr)\Bigr]_\theta
        \\ & =   L^{s}\bigl(\Omega;\bigl[W^{1,s_0}_{\per,\perp}(Y;\R^d),
        W^{1,s_1}_{\per,\perp}(Y;\R^d)\bigr]_\theta\bigr) \\ & =
        L^s\bigl(\Omega;W^{1,s}_{\per,\perp}(Y;\R^d)\bigr) = V_s,
    \end{align*}
    where the interpolation identity for
    $W^{1,s}_{\per,\perp}(Y;\R^d)$ is a consequence of
    \cref{lem:interpolation-periodic-mean0}. 
\end{proof}


\section{Auxiliary Results}

\begin{lemma}
    \label{lem:pointwiseConvergenceInpliesUniformlyConvergence}
    Let $\mathspace{M}$ be a compact metric space and $\mathspace{N}$
    a metric space. Furthermore, let
    $\sequence{G}{n} \subset C(\mathspace{M};\mathspace{N})$,
    $G \in C(\mathspace{M};\mathspace{N})$ with
    $G_n(x) \rightarrow G(x)$ for all $x \in \mathspace{M}$ and
    suppose that $G_n$ is uniformly Lipschitz continuous, that is,
    there exists a constant $L$ such that
    \begin{align*}
        d_\mathspace{N}(G_n(x), G_n(y)) \leq L d_\mathspace{M}(x,y)	
    \end{align*}
    for all $n \in \mathbb{N}$, $x,y \in \mathspace{M}$. \\
    Then $G_n \rightarrow G$ in $C(\mathspace{M};\mathspace{N})$.
\end{lemma}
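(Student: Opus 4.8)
The plan is to run the classical finite‑net argument that upgrades pointwise convergence to uniform convergence under equi‑Lipschitz control, exploiting compactness of $\mathspace{M}$. First I would record the elementary but crucial observation that the limit $G$ is itself $L$‑Lipschitz: for $x,y \in \mathspace{M}$, continuity of the metric $d_{\mathspace{N}}$ together with $G_n(x) \to G(x)$, $G_n(y)\to G(y)$ gives
\begin{equation*}
    d_{\mathspace{N}}(G(x),G(y)) = \lim_{n\to\infty} d_{\mathspace{N}}(G_n(x),G_n(y)) \leq L\, d_{\mathspace{M}}(x,y).
\end{equation*}
This is the only place where we need to be slightly careful; the rest is routine.

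Next, fix $\varepsilon > 0$ and set $\delta \defn  \varepsilon/(3L)$ (if $L=0$ all functions are constant and the claim is immediate from pointwise convergence, so assume $L>0$). Since $\mathspace{M}$ is compact, the open cover $\{B_{\mathspace{M}}(x,\delta)\}_{x\in\mathspace{M}}$ admits a finite subcover with centers $x_1,\dots,x_k \in \mathspace{M}$. By pointwise convergence, for each $i\in\{1,\dots,k\}$ there is $N_i\in\mathbb{N}$ with $d_{\mathspace{N}}(G_n(x_i),G(x_i)) < \varepsilon/3$ for all $n\geq N_i$; put $N \defn  \max_{i} N_i$.

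Finally, let $x\in\mathspace{M}$ and $n\geq N$ be arbitrary, and pick an index $i$ with $d_{\mathspace{M}}(x,x_i) < \delta$. Using the triangle inequality, the uniform Lipschitz bound on $G_n$, the choice of $N$, and the Lipschitz bound on $G$ established above, we get
\begin{equation*}
    d_{\mathspace{N}}(G_n(x),G(x)) \leq d_{\mathspace{N}}(G_n(x),G_n(x_i)) + d_{\mathspace{N}}(G_n(x_i),G(x_i)) + d_{\mathspace{N}}(G(x_i),G(x)) \leq L\delta + \tfrac{\varepsilon}{3} + L\delta = \varepsilon.
\end{equation*}
Since $x$ was arbitrary, $\sup_{x\in\mathspace{M}} d_{\mathspace{N}}(G_n(x),G(x)) \leq \varepsilon$ for all $n\geq N$, which is precisely $G_n \to G$ in $C(\mathspace{M};\mathspace{N})$. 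I do not anticipate any genuine obstacle here; the only point worth stating explicitly is the passage of the Lipschitz property to the limit $G$, since without it the third term in the triangle inequality cannot be controlled.
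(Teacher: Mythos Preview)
Your proof is correct. The paper takes a different route: it argues by contradiction, assuming there exist $\varepsilon>0$, a subsequence $n(k)$, and points $x_k$ with $d_{\mathspace{N}}(G_{n(k)}(x_k),G(x_k))\geq\varepsilon$, then extracts a convergent subsequence $x_{k_j}\to x$ by compactness and splits
\[
    d_{\mathspace{N}}(G_{n(k_j)}(x_{k_j}),G(x_{k_j})) \leq L\, d_{\mathspace{M}}(x_{k_j},x) + d_{\mathspace{N}}(G_{n(k_j)}(x),G(x_{k_j})),
\]
with the second term tending to zero by pointwise convergence at $x$ and continuity of $G$. Your finite-net argument is constructive and makes the Lipschitz inheritance of $G$ explicit, which is a nice byproduct; the paper's approach is marginally more economical in that it never needs $G$ to be Lipschitz---the assumed continuity of $G$ suffices to handle $G(x_{k_j})\to G(x)$. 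Both are standard and equally short.
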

\begin{proof} We argue by contradiction. Assume that there exists
    $\varepsilon > 0$ and a strictly monotonically increasing function
    $n \colon  \mathbb{N} \rightarrow \mathbb{N}$, such that for all
    $k \in \mathbb{N}$ there exists $x_k \in \mathspace{M}$ with
    \begin{align*}
        \varepsilon \leq d_\mathspace{N}(G_{n(k)}(x_k), G(x_k))
    \end{align*}
    for all $k \in \mathbb{N}$. Since $\mathspace{M}$ is compact, we
    can extract a subsequence $x_{k_j}$ of $x_k$ such that
    $x_{k_j} \rightarrow x$ in $\mathspace{M}$, thus
    \begin{align*}
        d_\mathspace{N}(G_{n({k_j})}(x_{k_j}), G(x_{k_j}))
        &\leq d_\mathspace{N}(G_{n({k_j})}(x_{k_j}), G_{n({k_j})}(x))
        + d_\mathspace{N}(G_{n({k_j})}(x), G(x_{k_j})) \\
        &\leq L d_\mathspace{M}(x_{k_j}, x) + d_\mathspace{N}(G_{n({k_j})}(x), G(x_{k_j}))
        \rightarrow 0,
    \end{align*}
    which gives the contradiction.  
\end{proof}

\begin{lemma}
    \label{lem:compactSequenceSets}
    Let $\mathspace{M}$ be a compact metric space and $\mathspace{N}$
    a metric space. Furthermore, let
    $\sequence{G}{n} \subset C(\mathspace{M};\mathspace{N})$,
    $G \in C(\mathspace{M};\mathspace{N})$ with $G_n \rightarrow G$ in
    $C(\mathspace{M};\mathspace{N})$.  Define
    $U_n \defn  G_n(\mathspace{M})$ and $U_0 \defn  G(\mathspace{M})$.  Then
    the set $U \defn  \cup_{n = 0}^\infty U_n$ is compact.
\end{lemma}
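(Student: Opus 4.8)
The plan is to show that $U = \bigcup_{n=0}^\infty U_n$ is compact by using sequential compactness: take an arbitrary sequence $\{p_k\}_{k\in\mathbb N} \subset U$ and extract a subsequence converging in $U$. For each $k$ there is an index $m(k) \in \{0,1,2,\dots\}$ and a point $x_k \in \mathspace M$ with $p_k = G_{m(k)}(x_k)$ (where we write $G_0 \defn G$).

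First I would distinguish two cases according to the behaviour of the index sequence $\{m(k)\}$. If $\{m(k)\}$ is bounded, then it takes some value $m_0$ infinitely often, so along a subsequence $p_{k_j} = G_{m_0}(x_{k_j})$; since $\mathspace M$ is compact we may pass to a further subsequence with $x_{k_j} \to x$ in $\mathspace M$, and continuity of $G_{m_0}$ gives $p_{k_j} \to G_{m_0}(x) \in U_{m_0} \subset U$. If $\{m(k)\}$ is unbounded, extract a subsequence (not relabelled) with $m(k) \to \infty$; again use compactness of $\mathspace M$ to arrange $x_k \to x$ in $\mathspace M$ along a further subsequence. Then estimate, using the triangle inequality in $\mathspace N$,
\begin{equation*}
    d_{\mathspace N}\big(G_{m(k)}(x_k), G(x)\big) \leq d_{\mathspace N}\big(G_{m(k)}(x_k), G_{m(k)}(x)\big) + d_{\mathspace N}\big(G_{m(k)}(x), G(x)\big).
\end{equation*}
The second term tends to $0$ because $G_n \to G$ in $C(\mathspace M;\mathspace N)$, which in particular gives pointwise convergence at $x$. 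The first term tends to $0$ because $G_n \to G$ uniformly: indeed $d_{\mathspace N}(G_{m(k)}(x_k),G_{m(k)}(x)) \leq d_{\mathspace N}(G_{m(k)}(x_k),G(x_k)) + d_{\mathspace N}(G(x_k),G(x)) + d_{\mathspace N}(G(x),G_{m(k)}(x))$, where the outer two terms are bounded by $\|G_{m(k)} - G\|_{C(\mathspace M;\mathspace N)} \to 0$ and the middle term tends to $0$ by continuity of $G$ and $x_k \to x$. Hence $p_k \to G(x) \in U_0 \subset U$.

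In both cases the subsequence of $\{p_k\}$ converges to a point of $U$, so $U$ is sequentially compact; since $U$ is a subset of the metric space $\mathspace N$, sequential compactness is equivalent to compactness, which proves the claim. The main obstacle is the case of unbounded index $m(k)$: there one cannot rely on any single continuous map, and the argument genuinely needs the \emph{uniform} convergence $G_n \to G$ (pointwise convergence would not suffice), combined with the compactness of $\mathspace M$ to control the moving evaluation points $x_k$; this is precisely the place where the hypothesis $G_n \to G$ in $C(\mathspace M;\mathspace N)$, rather than merely pointwise, is essential.
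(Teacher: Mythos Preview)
Your proof is correct and follows essentially the same route as the paper: sequential compactness, a reduction to indices tending to infinity (the paper phrases your bounded-index case as ``a finite union of compact sets is compact''), and then the combination of uniform convergence $G_n \to G$ with continuity of $G$. A minor streamlining, which the paper uses, is to split directly via $G(x_k)$ instead of $G_{m(k)}(x)$, i.e.\ $d_{\mathspace N}(G_{m(k)}(x_k),G(x)) \leq d_{\mathspace N}(G_{m(k)}(x_k),G(x_k)) + d_{\mathspace N}(G(x_k),G(x))$, so that no second triangle inequality is needed.
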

\begin{proof} Let $\sequence{y}{k} \subset U$. Since a finite
    union of compact sets is compact, we can assume that there exists
    a subsequence $\{ y_{k_j} \}_{j \in \mathbb{N}}$ and a strictly
    monotonically increasing function
    $n \colon  \mathbb{N} \rightarrow \mathbb{N}$, such that
    $y_{k_j} \in U_{n(j)}$.  Then there exists a sequence
    $\sequence{x}{j} \subset \mathspace{M}$, with
    $y_{k_j} = G_{n(j)}(x_j)$. Because $\mathspace{M}$ is compact we
    can select a subsequence, again denoted by $x_j$, and a limit
    $x \in \mathspace{M}$, such that $x_j \rightarrow x$, hence,
    \begin{align*}
        d_\mathspace{N}(y_{k_j}, G(x))
        \leq d_\mathspace{N}(y_{k_j}, G(x_j)) + d_\mathspace{N}(G(x_j), G(x)) \rightarrow 0,
    \end{align*}
    thus the proof is complete.  
\end{proof}

\bibliographystyle{jnsao}
\bibliography{references}

\newcommand{\noop}[1]{}
\begin{thebibliography}{10}

\bibitem{AO14}
L{.\nobreak\kern 0.33333em}Adam and J{.\nobreak\kern 0.33333em}Outrata, On
  optimal control of a sweeping process coupled with an ordinary differential
  equation, \emph{Discrete Contin. Dyn. Syst. Ser. B} 19 (2014),  2709--2738,
  \href{https://dx.doi.org/10.3934/dcdsb.2014.19.2709}{\nolinkurl{doi:10.3934/dcdsb.2014.19.2709}}.

\bibitem{AC18}
C.\,E{.\nobreak\kern 0.33333em}Arroud and G{.\nobreak\kern 0.33333em}Colombo, A
  maximum principle for the controlled sweeping process, \emph{Set-Valued Var.
  Anal.} 26 (2018),  607--629,
  \href{https://dx.doi.org/10.1007/s11228-017-0400-4}{\nolinkurl{doi:10.1007/s11228-017-0400-4}}.

\bibitem{ABES19}
P{.\nobreak\kern 0.33333em}Auscher, S{.\nobreak\kern 0.33333em}Bortz,
  M{.\nobreak\kern 0.33333em}Egert, and O{.\nobreak\kern 0.33333em}Saari,
  Nonlocal self-improving properties: a functional analytic approach,
  \emph{Tunisian J. Math.} 1 (2019),  151--183,
  \href{https://dx.doi.org/10.2140/tunis.2019.1.151}{\nolinkurl{doi:10.2140/tunis.2019.1.151}}.

\bibitem{barbu84}
V{.\nobreak\kern 0.33333em}Barbu, \emph{Optimal Control of Variational
  Inequalities}, volume 100 of Research Notes in Mathematics, Pitman, Boston,
  1984.

\bibitem{barbu}
V{.\nobreak\kern 0.33333em}Barbu, \emph{Analysis and Control of Nonlinear
  Infinite Dimensional Systems}, Academic Press Boston, 1993.

\bibitem{BecEge19}
S{.\nobreak\kern 0.33333em}Bechtel and M{.\nobreak\kern 0.33333em}Egert,
  Interpolation theory for Sobolev functions with partially vanishing trace on
  irregular open sets, \emph{Journal of Fourier Analysis and Applications}
  (2019),
  \href{https://dx.doi.org/10.1007/s00041-019-09681-1}{\nolinkurl{doi:10.1007/s00041-019-09681-1}}.

\bibitem{brezis}
H{.\nobreak\kern 0.33333em}Br{\'e}zis, \emph{Op{\'e}rateurs Maximaux
  Monotones}, North-Holland, Amsterdam, 1973.

\bibitem{brezis2010functional}
H{.\nobreak\kern 0.33333em}Brezis, \emph{Functional Analysis, Sobolev Spaces
  and Partial Differential Equations}, Springer Science \& Business Media,
  2010,
  \href{https://dx.doi.org/10.1007/978-0-387-70914-7}{\nolinkurl{doi:10.1007/978-0-387-70914-7}}.

\bibitem{Bro87}
M{.\nobreak\kern 0.33333em}Brokate, \emph{Optimale {S}teuerung von
  gew\"{o}hnlichen {D}ifferentialgleichungen mit {N}ichtlinearit\"{a}ten vom
  Hysteresis-{T}yp}, volume~35 of Methoden und Verfahren der Mathematischen
  Physik, Verlag Peter D. Lang, Frankfurt am Main, 1987.

\bibitem{BK13}
M{.\nobreak\kern 0.33333em}Brokate and P{.\nobreak\kern
  0.33333em}Krej\v{c}\'{i}, Optimal control of {ODE} systems involving a rate
  independent variational inequality, \emph{Discrete Contin. Dyn. Syst. Ser. B}
  18 (2013),  331--348,
  \href{https://dx.doi.org/10.3934/dcdsb.2013.18.331}{\nolinkurl{doi:10.3934/dcdsb.2013.18.331}}.

\bibitem{BK15}
M{.\nobreak\kern 0.33333em}Brokate and P{.\nobreak\kern
  0.33333em}Krej\v{c}\'{i}, Weak differentiability of scalar hysteresis
  operators, \emph{Discrete Contin. Dyn. Syst.} 35 (2015),  2405--2421,
  \href{https://dx.doi.org/10.3934/dcds.2015.35.2405}{\nolinkurl{doi:10.3934/dcds.2015.35.2405}}.

\bibitem{castro02}
E{.\nobreak\kern 0.33333em}Casas and F{.\nobreak\kern 0.33333em}Tr{\"o}ltzsch,
  Second order necessary and sufficient conditions for optimization problems
  and applications to control theory, \emph{SIAM Journal on Optimization} 13
  (2002),  406--431,
  \href{https://dx.doi.org/10.1137/S1052623400367698}{\nolinkurl{doi:10.1137/s1052623400367698}}.

\bibitem{castro15}
E{.\nobreak\kern 0.33333em}Casas and F{.\nobreak\kern 0.33333em}Tr{\"o}ltzsch,
  Second order optimality conditions and their role in PDE control,
  \emph{Jahresbericht der Deutschen Mathematiker-Vereinigung} 117 (2015),
  3--44,
  \href{https://dx.doi.org/10.1365/s13291-014-0109-3}{\nolinkurl{doi:10.1365/s13291-014-0109-3}}.

\bibitem{CD99}
D{.\nobreak\kern 0.33333em}Cioranescu and P{.\nobreak\kern 0.33333em}Donato,
  \emph{An Introduction to Homogenization}, Oxford University Press, 1999.

\bibitem{CHHM15}
G{.\nobreak\kern 0.33333em}Colombo, R{.\nobreak\kern 0.33333em}Henrion,
  N.\,D{.\nobreak\kern 0.33333em}Hoang, and B.\,S{.\nobreak\kern
  0.33333em}Mordukhovich, Discrete approximations of a controlled sweeping
  process, \emph{Set-Valued Var. Anal.} 23 (2015),  69--86,
  \href{https://dx.doi.org/10.1007/s11228-014-0299-y}{\nolinkurl{doi:10.1007/s11228-014-0299-y}}.

\bibitem{CHHM16}
G{.\nobreak\kern 0.33333em}Colombo, R{.\nobreak\kern 0.33333em}Henrion,
  N.\,D{.\nobreak\kern 0.33333em}Hoang, and B.\,S{.\nobreak\kern
  0.33333em}Mordukhovich, Optimal control of the sweeping process over
  polyhedral controlled sets, \emph{J. Differential Equations} 260 (2016),
  3397--3447,
  \href{https://dx.doi.org/10.1016/j.jde.2015.10.039}{\nolinkurl{doi:10.1016/j.jde.2015.10.039}}.

\bibitem{CP16}
G{.\nobreak\kern 0.33333em}Colombo and M{.\nobreak\kern 0.33333em}Palladino,
  The minimum time function for the controlled {M}oreau's sweeping process,
  \emph{SIAM J. Control Optim.} 54 (2016),  2036--2062,
  \href{https://dx.doi.org/10.1137/15M1043364}{\nolinkurl{doi:10.1137/15m1043364}}.

\bibitem{ERS07}
J{.\nobreak\kern 0.33333em}Elschner, J{.\nobreak\kern 0.33333em}Rehberg, and
  G{.\nobreak\kern 0.33333em}Schmidt, Optimal regularity for elliptic
  transmission problems including {$C^1$} interfaces, \emph{Interfaces Free
  Bound.} 9 (2007),  233--252,
  \href{https://dx.doi.org/10.4171/ifb/163}{\nolinkurl{doi:10.4171/ifb/163}}.

\bibitem{emmrich}
E{.\nobreak\kern 0.33333em}Emmrich, \emph{Gew{\"o}hnliche und
  Operator-Differentialgleichungen}, Vieweg+Teubner Verlag, Wiesbaden, 2004,
  \href{https://dx.doi.org/10.1007/978-3-322-80240-8}{\nolinkurl{doi:10.1007/978-3-322-80240-8}}.

\bibitem{gajewski}
H{.\nobreak\kern 0.33333em}Gajewski, K{.\nobreak\kern 0.33333em}Gr{\"o}ger, and
  K{.\nobreak\kern 0.33333em}Zacharias, Nichtlineare {O}peratorgleichungen und
  {O}peratordifferentialgleichungen, \emph{Mathematische Nachrichten} 67
  (1975),  iv--iv,
  \href{https://dx.doi.org/10.1002/mana.19750672207}{\nolinkurl{doi:10.1002/mana.19750672207}}.

\bibitem{GW18}
T{.\nobreak\kern 0.33333em}Geiger and D{.\nobreak\kern 0.33333em}Wachsmuth,
  Optimal control of an evolution equation with non-smooth dissipation,
  \emph{arXiv}  (2018),
  \href{https://arxiv.org/abs/1801.04077}{\nolinkurl{arXiv:1801.04077}}.

\bibitem{GKT92}
H{.\nobreak\kern 0.33333em}Goldberg, W{.\nobreak\kern 0.33333em}Kampowsky, and
  F{.\nobreak\kern 0.33333em}Tr\"{o}ltzsch, On {N}emytskij operators in
  {$L^p$}-spaces of abstract functions, \emph{Mathematische Nachrichten} 155
  (1992),  127--140,
  \href{https://dx.doi.org/10.1002/mana.19921550110}{\nolinkurl{doi:10.1002/mana.19921550110}}.

\bibitem{Gro89}
K{.\nobreak\kern 0.33333em}Gr{\"{o}}ger, A {$W^{1,p}$}-estimate for solutions
  to mixed boundary value problems for second order elliptic differential
  equations, \emph{Math. Ann.} 283 (1989),  679--687,
  \href{https://dx.doi.org/10.1007/bf01442860}{\nolinkurl{doi:10.1007/bf01442860}}.

\bibitem{groger}
K{.\nobreak\kern 0.33333em}Gröger, Initial value problems for elastoplastic
  and elasto-viscoplastic systems, in \emph{Nonlinear Analysis, Function Spaces
  and Applications}, BSB B. G. Teubner Verlagsgesellschaft, 1979,  95--127,
  \url{http://eudml.org/doc/220893}.

\bibitem{HanReddy1999}
W{.\nobreak\kern 0.33333em}Han and B.\,D{.\nobreak\kern 0.33333em}Reddy,
  \emph{Plasticity}, volume~9 of Interdisciplinary Applied Mathematics,
  Springer-Verlag, New York, 1999,
  \href{https://dx.doi.org/10.1007/978-1-4614-5940-8}{\nolinkurl{doi:10.1007/978-1-4614-5940-8}}.

\bibitem{hermey11}
R{.\nobreak\kern 0.33333em}Herzog and C{.\nobreak\kern 0.33333em}Meyer, Optimal
  control of static plasticity with linear kinematic hardening, \emph{Journal
  of Applied Mathematics and Mechanics (ZAMM)} 91 (2011),  777--794,
  \href{https://dx.doi.org/10.1002/zamm.200900378}{\nolinkurl{doi:10.1002/zamm.200900378}}.

\bibitem{herzog2}
R{.\nobreak\kern 0.33333em}Herzog, C{.\nobreak\kern 0.33333em}Meyer, and
  G{.\nobreak\kern 0.33333em}Wachsmuth, C-stationarity for optimal control of
  static plasticity with linear kinematic hardening, \emph{SIAM Journal on
  Control and Optimization} 50 (2012),  3052--3082,
  \href{https://dx.doi.org/10.1137/100809325}{\nolinkurl{doi:10.1137/100809325}}.

\bibitem{Mey63}
N.\,G{.\nobreak\kern 0.33333em}Meyers, An {$L^p$}-estimate for the gradient of
  solutions of second order elliptic divergence equations, \emph{Annali della
  Scuola Normale Superiore di Pisa - Classe di Scienze} 17 (1963),  189--206,
  \url{http://eudml.org/doc/83302}.

\bibitem{MielkeRoubicek2015}
A{.\nobreak\kern 0.33333em}{Mielke} and T{.\nobreak\kern
  0.33333em}{Roub\'{\i}\v{c}ek}, \emph{Rate-Independent Systems}, volume 193,
  Springer, New York, 2015,
  \href{https://dx.doi.org/10.1007/978-1-4939-2706-7}{\nolinkurl{doi:10.1007/978-1-4939-2706-7}}.

\bibitem{Mor73}
J.\,J{.\nobreak\kern 0.33333em}Moreau, Probl\`eme d'\'{e}volution associ\'{e}
  \`a un convexe mobile d'un espace {H}ilbertien, \emph{C. R. Acad. Sci. Paris
  S\'{e}r. A-B} 276 (1973),  A791--A794.

\bibitem{Rin08}
F{.\nobreak\kern 0.33333em}Rindler, Optimal control for nonconvex
  rate-independent evolution processes, \emph{SIAM J. Control Optim.} 47
  (2008),  2773--2794,
  \href{https://dx.doi.org/10.1137/080718711}{\nolinkurl{doi:10.1137/080718711}}.

\bibitem{Rin09}
F{.\nobreak\kern 0.33333em}Rindler, Approximation of rate-independent optimal
  control problems, \emph{SIAM J. Numer. Anal.} 47 (2009),  3884--3909,
  \href{https://dx.doi.org/10.1137/080744050}{\nolinkurl{doi:10.1137/080744050}}.

\bibitem{schweizerHomogenization}
B{.\nobreak\kern 0.33333em}Schweizer and M{.\nobreak\kern 0.33333em}Veneroni,
  Homogenization of plasticity equations with two-scale convergence methods,
  \emph{Applicable Analysis} 94 (2015),  375--398,
  \href{https://dx.doi.org/10.1080/00036811.2014.896992}{\nolinkurl{doi:10.1080/00036811.2014.896992}}.

\bibitem{Sha68}
E{.\nobreak\kern 0.33333em}Shamir, Regularization of mixed second-order
  elliptic problems, \emph{Israel J. Math.} 6 (1968),  150--168,
  \href{https://dx.doi.org/10.1007/bf02760180}{\nolinkurl{doi:10.1007/bf02760180}}.

\bibitem{showalter}
R.\,E{.\nobreak\kern 0.33333em}Showalter, \emph{Monotone Operators in Banach
  Space and Nonlinear Partial Differential Equations}, volume~49, American
  Mathematical Soc., 2013,
  \href{https://dx.doi.org/10.1090/surv/049}{\nolinkurl{doi:10.1090/surv/049}}.

\bibitem{S74}
I.\,J{.\nobreak\kern 0.33333em}{\v{S}}ne{\u{\i}}berg, Spectral properties of
  linear operators in interpolation families of {B}anach spaces, \emph{Mat.
  Issled.} 9 (1974),  214--229, 254--255.

\bibitem{SWW17}
U{.\nobreak\kern 0.33333em}Stefanelli, D{.\nobreak\kern 0.33333em}Wachsmuth,
  and G{.\nobreak\kern 0.33333em}Wachsmuth, Optimal control of a
  rate-independent evolution equation via viscous regularization,
  \emph{Discrete Contin. Dyn. Syst. Ser. S} 10 (2017),  1467--1485,
  \href{https://dx.doi.org/10.3934/dcdss.2017076}{\nolinkurl{doi:10.3934/dcdss.2017076}}.

\bibitem{Triebel:1978}
H{.\nobreak\kern 0.33333em}Triebel, \emph{Interpolation Theory, Function
  Spaces, Differential Operators}, volume~18 of North-Holland Mathematical
  Library, North-Holland Publishing Co., Amsterdam-New York, 1978.

\bibitem{troltzsch}
F{.\nobreak\kern 0.33333em}Tr{\"o}ltzsch, \emph{Optimale Steuerung partieller
  Differentialgleichungen}, Springer, 2005,
  \href{https://dx.doi.org/10.1007/978-3-322-96844-9}{\nolinkurl{doi:10.1007/978-3-322-96844-9}}.

\bibitem{Wac12}
G{.\nobreak\kern 0.33333em}Wachsmuth, Optimal control of quasi-static
  plasticity with linear kinematic hardening, {P}art {I}: {E}xistence and
  discretization in time, \emph{SIAM J. Control Optim.} 50 (2012),  2836--2861
  + loose erratum,
  \href{https://dx.doi.org/10.1137/110839187}{\nolinkurl{doi:10.1137/110839187}}.

\bibitem{wachsmuth2}
G{.\nobreak\kern 0.33333em}Wachsmuth, Differentiability of implicit functions:
  Beyond the implicit function theorem, \emph{Journal of Mathematical Analysis
  and Applications} 414 (2014),  259--272,
  \href{https://dx.doi.org/10.1016/j.jmaa.2014.01.007}{\nolinkurl{doi:10.1016/j.jmaa.2014.01.007}}.

\bibitem{Wac15}
G{.\nobreak\kern 0.33333em}Wachsmuth, Optimal control of quasistatic plasticity
  with linear kinematic hardening {II}: {R}egularization and differentiability,
  \emph{Z. Anal. Anwend.} 34 (2015),  391--418,
  \href{https://dx.doi.org/10.4171/ZAA/1546}{\nolinkurl{doi:10.4171/zaa/1546}}.

\bibitem{Wac16}
G{.\nobreak\kern 0.33333em}Wachsmuth, Optimal control of quasistatic plasticity
  with linear kinematic hardening {III}: {O}ptimality conditions, \emph{Z.
  Anal. Anwend.} 35 (2016),  81--118,
  \href{https://dx.doi.org/10.4171/ZAA/1556}{\nolinkurl{doi:10.4171/zaa/1556}}.

\bibitem{zeidler2a}
E{.\nobreak\kern 0.33333em}Zeidler, \emph{Nonlinear Functional Analysis and Its
  Applications II/A: Linear Monotone Operators}, Springer New York,
  \noop{2}1990,
  \href{https://dx.doi.org/10.1007/978-1-4612-0985-0}{\nolinkurl{doi:10.1007/978-1-4612-0985-0}}.

\bibitem{zeidler3}
E{.\nobreak\kern 0.33333em}Zeidler, \emph{Nonlinear Functional Analysis and its
  Applications III: Variational Methods and Optimization}, Springer New York,
  \noop{3}1985,
  \href{https://dx.doi.org/10.1007/978-1-4612-5020-3}{\nolinkurl{doi:10.1007/978-1-4612-5020-3}}.

\end{thebibliography}

\end{document}